
\documentclass[11pt]{article}
\usepackage[centertags,intlimits]{amsmath}                     
\usepackage{amsfonts,amssymb}                     
\usepackage{amsthm}
\usepackage{graphics}
\usepackage{color}
\allowdisplaybreaks[2]
\setlength{\textwidth}{6.5in}
\setlength{\textheight}{9in}
\setlength{\oddsidemargin}{0in}
\setlength{\topmargin}{-0.25in}
\setlength{\headheight}{0in}

\numberwithin{equation}{section}
\newtheorem{theorem}{Theorem}[section]
\newtheorem{proposition}{Proposition}[section]
\newtheorem{lemma}{Lemma}[section]
\newtheorem{corollary}{Corollary}[section]

\newtheorem{condition}{Condition}[section]
\theoremstyle{remark}
\newtheorem{remark}{Remark}[section]

\providecommand{\abs}[1]{\lvert #1\rvert}
\providecommand{\norm}[1]{\lVert #1\rVert}

\newcommand{\nc}{\newcommand}
\nc{\vb}{\mathbf{v}}
\nc{\bx}{\mathbf{x}}
\nc{\by}{\mathbf{y}}
\nc{\bz}{\mathbf{z}}
\nc{\bu}{\mathbf{u}}
\nc{\bv}{\mathbf{v}}
\nc{\ba}{\mathbf{a}}
\nc{\bs}{\mathbf{s}}
\nc{\bq}{\mathbf{q}}
\nc{\bd}{\mathbf{d}}
\nc{\bb}{\mathbf{b}}
\nc{\bc}{\mathbf{c}}
\nc{\bi}{\mathbf{i}}
\nc{\bfr}{\mathbf{r}}
\nc{\bA}{\mathbf{A}}
\nc{\R}{\mathbb R}
\nc{\N}{\mathbb N}
\nc{\bC}{\mathbb{C}} 
\nc{\D}{\mathbb D}
\nc{\Z}{\mathbb Z}
\nc{\F}{\mathbf F}
\nc{\bbS}{\mathbb S}
\nc{\B}{\cal B}
\nc{\br}{\bigr}
\nc{\bl}{\bigl}
\nc{\Bl}{\Bigl}
\nc{\Br}{\Bigr}
\nc{\ind}{\mathbf{1}}
\nc{\bP}{\mathbf{P}}
\DeclareMathOperator*{\esssup}{ess\,sup\;}  
\title{On large deviations
 of coupled   diffusions\\ with time scale separation}
\author{Anatolii A. Puhalskii\\
University of Colorado Denver  and\\
 Institute for Problems in Information
Transmission}
\begin{document}

\maketitle
\sloppy
\begin{abstract}
\item 
We consider two  It\^o equations that evolve on
  different time scales.
The equations are fully coupled in the sense
  that all of the coefficients may depend on both the ``slow'' and the
  ``fast'' motion and the diffusion
  terms may be correlated. The diffusion term in the slow process is small.
A large deviation principle is obtained
  for the joint distribution of the slow process and of the empirical
process of the fast variable. 
By projecting on the slow and fast
  variables, we arrive at new results on large deviations in the
  averaging framework and on large deviations of the empirical
  measures of ergodic diffusions, respectively. The proof of the main
  result
 relies
  on the property that exponential tightness implies large deviation
  relative compactness. The identification of the large deviation rate
  function is accomplished by analysing the large deviation limit of an
  exponential martingale. 
\end{abstract}

\section{Introduction}
\label{sec:derivation}
Consider the coupled diffusions specified by the stochastic differential
equations 
\begin{equation}
        \label{eq:67} 
  \begin{split}
  dX^\epsilon_t&={A}(X^\epsilon_t,x^\epsilon_{t})\,dt
+\sqrt{\epsilon}\,{B}(X^\epsilon_t,x^\epsilon_{t})\,dW^\epsilon_t,\\
dx^\epsilon_t&=\frac{1}{\epsilon}\,{a}(X^\epsilon_t,x^\epsilon_t)\,dt+
\frac{1}{\sqrt{\epsilon}}\,{b}(X^\epsilon_t,x^\epsilon_t)\,  
dW^\epsilon_t\,,
  \end{split}
\end{equation}
where $\epsilon>0$ is a small parameter\,.
Here, 
 ${A}(u,x)$, where $u\in\R^n$ and $x\in \R^l$, is an $n$-vector, 
${B}(u,x)$ is an $n\times k$-matrix, ${a}(u,x)$ is an
$l$-vector,   ${b}(u,x)$ is an $l\times k$-matrix, and 
$W^\epsilon=(W^\epsilon_t,
\,t\in\R_+)$ 
is an $\R^k$-valued standard
Wiener process.
Accordingly, the stochastic process $X^\epsilon=(X^\epsilon_t,\,t\in\R_+)$ takes
values in $\R^n$ and the stochastic process $x^\epsilon=
(x^\epsilon_t,\,t\in\R_+)$ takes
values in $\R^l$.  The processes $X^\epsilon$ and
$x^\epsilon$ are seen to evolve on different time scales in that  time for
$x^\epsilon$ is accelerated by a factor of $1/\epsilon$\,. 
In a number of application areas, one is concerned with finding
 the logarithmic asymptotics of large deviations for
the ``slow'' process $X^\epsilon$ as $\epsilon\to0$\,.
(As a matter of fact, our interest in this setup
has been aroused by an application to optimal portfolio
selection.) 
When no diffusion
term is present in the equation for the slow process, this sort of
result is usually referred to as ``the averaging principle''. For
contributions,
see Freidlin \cite{Fre78}, 
Veretennikov
\cite{Ver13,Ver94,Ver99,Ver00},
 Feng and Kurtz \cite[Section 11]{FenKur06}, and references therein.
A different perspective  has been offered by
Liptser \cite{Lip96} whose insight was to   consider
 the joint distribution of the
slow process and of the empirical process associated with the fast
variable. For the case where  the processes $X^\epsilon$ and $x^\epsilon$ are
one-dimensional, the coefficients
 $a(u,x)$ and $b(u,x)$ do not depend on the first variable and 
 the  Wiener processes driving the diffusions can be taken
independent, 
they derived  a large deviation principle (LDP)
 for the pair $(X^\epsilon,\mu^\epsilon)$ and
identified the associated large deviation rate function, where 
$\mu^\epsilon$ represents the empirical process associated with 
$x^\epsilon_t$\,. 
 The large
 deviation principle  for the slow process then follows by projection.

In  this paper, we  extend the joint LDP in Liptser
\cite{Lip96} to the
 multidimensional case. It is assumed that the process dimensions are
arbitrary and that all
 coefficients may depend on both variables in a continuous fashion, 
on the time variable in a
 measurable fashion, and on $\epsilon$\,.
The diffusions driving the slow and the fast processes do
 not have to be uncorrelated.
We prove the large deviation principle for the distribution of 
$(X^\epsilon,\mu^\epsilon)$ 
and produce  the large
deviation rate function.
Projections on the first and second coordinates yield   LDPs 
 for $X^\epsilon$ and $\mu^\epsilon$\,, respectively.

The  results  in the literature 
that obtain  an LDP 
and identify the large deviation rate function
for $X^\epsilon$\,,  with a
nondegenerate diffusion term  being present
 in the first of  equations (\ref{eq:67}),
concern  the time
homogeneous case where the
diffusion coefficient in the equation for the fast process does not
depend on the slow process,
Veretennikov 
\cite{Ver98a,Ver_letter,Ver00},  Liptser \cite{Lip96}, 
Feng and Kurtz \cite[Section 11]{FenKur06}.
 The latter restriction can be removed in the
setting of the averaging principle provided
 the state space of the
fast process  is compact,  Veretennikov
\cite{Ver13,Ver98b,Ver99}. 
This paper   fills in the  gaps
by tackling a  case 
 of fully coupled diffusions  in a noncompact
 state space.
 In addition, the coefficients may depend on the time variable explicitly.
 The results cover both the setup with a
 nondegenerate diffusion term 
 and the setup with no diffusion term
 in the equation for the slow process. The form of the large deviation
 rate function for the slow process is new.
For the time-homogeneous
case, the continuity and nondegeneracy 
conditions on the coefficients   are similar to those in 
the literature, except that  additional smoothness properties are assumed of
 $b(u,x)$ as a function of $x$\,, as it is done in
 Liptser \cite{Lip96}. In return, we obtain that 
the probability measures for which
the large deviation rate functions are finite 
must have  weakly differentiable densities whose square roots belong
to the Sobolev space $\mathbb{W}^{1,2}(\R^l)$\,.
In particular, additional insight is gained into the LDP 
for the empirical measures of
ergodic diffusion processes. 
On the other hand, the ergodicity requirements on the fast process in
the nongradiental case are   more restrictive than those in some of
 the literature.

As in Liptser \cite{Lip96},  an important part in our approach is played by
 the property that exponential
 tightness implies  large deviation relative  compactness so that
 once exponential tightness has been shown, 
establishing that
 a large deviation limit point is unique   concludes
an  LDP proof.
  Liptser \cite{Lip96} identifies
 the large deviation rate function by evaluating limits of
the  probabilities that the process in question resides in small balls.
We use   a different device.
The general idea is to consider a  characterisation
of  stochastic processes that admits taking the large
 deviation limit.
 Such a characterisation may be the
 property that a certain process be a martingale,
 it may also arise out of the description
of the process dynamic.
The large deviation rate function is  identified by 
the limiting relation,  cf.
Puhalskii \cite{Puh97,Puh01,Puhrgr}, Puhalskii and Vladimirov \cite{PuhVla07}.
In this paper, similarly to Puhalskii \cite{Puh97,Puh01}, the large deviation limit is taken
in an exponential martingale problem that has the 
 distribution of $(X^\epsilon,\mu^\epsilon)$ 
 as a solution. We then undertake a study of
  the limit equation. On the one hand,  
 regularity properties of  solutions
are investigated. That analysis 
has much in common with and
 uses the results and methods of the regularity theory of
 elliptic partial differential equations. 
On the other hand, the domain of the validity of 
 the equation is expanded. Put together, those tools enable us
 to show that  the equation has a unique solution
and to identify that solution. 

The rest of the paper is organised as follows. In
Section~\ref{sec:results}, the main results
are stated, their implications are discussed, and earlier
contributions are given a more detailed consideration. 
 Section \ref{sec:an-outline-approach} outlines the proof
strategy. It is implemented in Sections
\ref{sec:expon-tightn}--\ref{sec:appr-large-devi}. The proof is
completed in Section~\ref{sec:proof-theor-refth}.

We conclude the introduction by giving
 a list of  notation and conventions adopted in the paper. 
The   blackboard bold font is
reserved for  topological spaces, the boldface font is used for
entities associated with probability.
Vectors are treated as column vectors. 
The Euclidean length of a 
vector $x=(x_1,\ldots,x_d)$ from $\R^d$\,, where $d\in\N$\,, 
is denoted by $\abs{x}$\,, 
$^T$ stands for the transpose of a matrix or a vector.
  For a matrix $A$,   $\norm{A}$  denotes
the operator norm and $A^\oplus$ denotes the Moore-Penrose 
pseudoinverse, 
if $A$ is
square then $\text{tr}(A)$  represents the trace of $A$.
Given  a positive semidefinite symmetric matrix
 $\Delta$ and a matrix $z$ of a suitable dimension, which may be a
 vector,
  we define
$  \norm{z}^2_{\Delta}=z^T \Delta\, z$\,.
Derivatives are understood as  weak, or Sobolev, derivatives.
For the definitions and basic properties, the reader is referred
either to 
Adams and Fournier~\cite{MR56:9247} or to 
Gilbarg and Trudinger~\cite{GilTru83}.
For an $\R$-valued  function $f$ on $\R^d$, 
 $D  f$ denotes the gradient and
 $D ^2 f$ 
denotes the Hessian matrix of 
$f$\,.  If $f$ assumes its values in $\R^{d_1}$, then 
$D f$ is the $d\times d_1$-matrix with entries 
$\partial f_i/\partial x_j$ and $\text{div}f$\,
represents the divergence of $f$\,, where $d_1\in\N$\,.
 Subscripts may be added to indicate that
 differentiation is carried out with respect to a
specific variable.
For instance, for an $\R$-valued function $f(t,u,x)$, 
 where
$u=(u_1,\ldots,u_d)$ and $x=(x_1,\ldots,x_{d_1})$\,,
 $D_xf$ and $D_uf$ refer to gradients  in the
third and the second variables, respectively, $D ^2_{uu}f$ is the
matrix with entries $\partial^2 f/\partial u_i\partial u_j$,
$D ^2_{xx}f$ is the
matrix with entries $\partial^2 f/\partial x_i\partial x_j$, and 
$D ^2_{ux}f$ is the
matrix with entries $\partial^2 f/\partial u_i\partial x_j$\,.
The divergence of a matrix is computed rowwise.
If $q>1$, we will denote by $q'$ the conjugate: $q'=q/(q-1)$\,.
  We use  standard notation for
spaces of differentiable functions, e.g., 
$\mathbb{C}^{1,2}(\Upsilon)$ denotes the space
of $\R$-valued 
functions that are  continuously differentiable once in the first variable
and   twice in the second variable over a
domain $\Upsilon$ in $\R^d$,
$\mathbb{C}^{1,2}_{0}(\Upsilon)$ is the subspace of $\mathbb{C}^{1,2}(\Upsilon)$ of functions
of compact support, $\mathbb{C}^{1}_0(\Upsilon)$ is the space of continuously
differentiable functions of compact support, 
and $\mathbb{C}^\infty_0(\Upsilon)$ is the space of infinitely
differentiable functions of compact support. 
  Given a 
measurable function ${c}(x)$
on $\Upsilon$ with  values 
in the set of
 positive definite symmetric $d\times d$-matrices
 and  an $\R_+$-valued measurable function ${m}(x)$   on
$\Upsilon$, we will denote by
$\mathbb{L}^2(\Upsilon,\R^{d},{c}(x),{m}(x)\,dx)$ the Hilbert space of
$\R^d$-valued measurable functions  on $\Upsilon$ with the norm
$\norm{f}_{{c}(\cdot),{m}(\cdot)}
=\bl(\int_{\Upsilon}\norm{f(x)}_{c(x)}^2\,{m}(x)\,dx\br)^{1/2}$\,. 
If  ${c}(x)$ is the identity matrix, the notation
 will be shortened to $\mathbb{L}^2(\Upsilon,\R^{d},m(x)\,dx)$ and to 
 $\mathbb{L}^2(\Upsilon,\R^{d})$ if, in addition,
${m}(x)=1$\,. Spaces
 $\mathbb{L}^2(\Upsilon,{m}(x)\,dx)$
and $\mathbb{L}^2(\Upsilon)$ are defined similarly and consist of
$\R$-valued functions. Space
 $\mathbb{L}^2(\Upsilon,\R^d,\mu(dx))$ is defined via integration with
 respect to measure $\mu$\,.
Also,  standard notation for Sobolev spaces
is adhered to, e.g., $\mathbb{W}^{1,2}(\Upsilon)$ is the Hilbert 
space of  $\R$-valued functions $f$ that
possess the first
Sobolev derivatives with the norm $\norm{f}_{\mathbb{W}^{1,2}(\Upsilon)}
=\norm{f}_{\mathbb L^2(\Upsilon)}+\norm{Df}_{\mathbb
  L^2(\Upsilon,\R^d)}\,.$
The local version of a function space, e.g.,
$\mathbb{W}^{1,2}_{\text{loc}}(\Upsilon)$\,, consists
of functions whose  products 
with  arbitrary $\mathbb{C}_0^\infty$-functions belong
to that space, i.e.,  $\mathbb{W}^{1,2}(\Upsilon)$
in this case, and  is endowed with the weakest topology
under which the mappings that associate with  functions such products
are  continuous.
We let
$\mathbb{W}^{1,2}(\Upsilon, m(x)\,dx)$ denote the set of functions 
$f\in\mathbb{W}^{1,1}_{\text{loc}}(\Upsilon)$ such that
$f\in \mathbb{L}^2(\Upsilon, m(x)\,dx)$ and
$Df\in \mathbb{L}^2(\Upsilon,\R^d, m(x)\,dx)$ equipped with the norm 
$\norm{f}_{\mathbb W^{1,2}(\Upsilon,\, m(x)\,dx)}
=\norm{f}_{\mathbb L^2(\Upsilon, m(x)\,dx)}+\norm{Df}_{\mathbb L^2(\Upsilon,\R^d, m(x)\,dx)}$ and let 
$\mathbb H^{1,2}(\Upsilon, m(x)\,dx)$ denote the closure of the set
$\mathbb C^\infty(\Upsilon)\cap \mathbb{W}^{1,2}(\Upsilon, m(x)\,dx)$ 
in $\mathbb
  W^{1,2}(\Upsilon,\, m(x)\,dx)$\,. 
Spaces $\mathbb W^{1,2}(\Upsilon, c(x),\, m(x)\,dx)$ and $\mathbb
H^{1,2}(\Upsilon, c(x),\, m(x)\,dx)$ are defined similarly.
We let
$\mathbb{L}_0^{1,2}(\Upsilon,\R^d, c(x), m(x)\,dx)$ represent
 the closure  of the set of
the gradients of functions from 
 $\mathbb{C}_0^\infty(\Upsilon)$
 in
$\mathbb{L}^2(\Upsilon,\R^d, c(x), m(x)\,dx)$\,.
The space of  continuous functions on $\R_+$ with values in  metric
space $\mathbb{S}$ is denoted
by $\mathbb{C}(\R_+,\mathbb{S})$\,. 
It is endowed with the compact-open
topology.
If 
function $X=(X_s,\,s\in\R_+)$ from $\bC(\R_+,\R^d)$ is absolutely
continuous w.r.t. Lebesgue measure, $\dot X_s$ denotes its derivative at $s$\,.
We let $\mathbb{M}(\R^d)$ (respectively, $\mathbb{M}_1(\R^d)$)
represent the set of  finite (respectively, probability) measures 
on $ \R^d$ endowed with the weak topology, see, e.g., Tops\oe \cite{Top};
  $\mathbb{P}(\R^d)$ denotes the set of probability densities $m(x)$
on $\R^d$ such that
 $m\in\mathbb{W}^{1,1}_{\text{loc}}(\R^d)$
 and  $\sqrt{m}\in\mathbb{W}^{1,2}(\R^d)$\,.
Topological spaces are equipped with Borel $\sigma$-algebras, 
except for $\R_+$ which is equipped with the Lebesgue $\sigma$-algebra,
products of topological spaces are equipped with product topologies, and
products of measurable spaces are equipped with product $\sigma$-algebras.
The ``overbar'' notation is reserved for the closures of sets,
 $\ind_\Gamma$
denotes the indicator function of a set $\Gamma$\,,
$\lfloor a\rfloor$ stands for the integer part of  real number $a$\,,
 $a\wedge
b=\min(a,b)$\,, 
$a\vee b=\max(a,b)$\,, and $a^+=a\vee 0$\,.
Notation $U\subset\subset V$\,, where $U$ and $V$ are open subsets of
$\R^d$, 
is to signify that the closure of $U$ is a compact subset of $V$\,.
Throughout, the conventions that
$\inf_{\emptyset}=\infty$ and $0/0=0$ are adopted.
The terms ``absolutely continuous'', ``a.e.'', ``almost all''
refer to Lebesgue measure unless specified otherwise.
All suprema in the time variable are understood as essential suprema
with respect to Lebesgue measure.

We say that a net of probability measures $\mathbf{P}^\epsilon$, where
$\epsilon>0$, defined on  metric space $\mathbb{S}$ obeys the large deviation
principle (LDP) 
with a (tight) large deviation (rate) function $\mathbf{I}$ for rate
$1/\epsilon$ as $\epsilon\to0$ if $\mathbf{I}$ is a 
function from $\mathbb{S}$ to $[0,\infty]$ such that the sets 
$\{z\in \mathbb{S}:\,\mathbf{I}(z)\le \delta\}$ 
are compact for all $\delta\in\R_+$,
$  \liminf_{\epsilon\to0}\epsilon\ln \mathbf{P}^\epsilon(G)
\ge -\inf_{z\in G}\mathbf{I}(z)$ for all open sets $G\subset \mathbb{S}$\,,
and $  \limsup_{\epsilon\to0}\epsilon\ln \mathbf{P}^\epsilon(F)
\le -\inf_{z\in F}\mathbf{I}(z)$
 for all closed sets $F\subset \mathbb{S}$.
We say that the net $\mathbf{P}^\epsilon$ is exponentially tight for
rate $1/\epsilon$ if 
$\inf_{K}\limsup_{\epsilon\to0}\bl(\mathbf{P}^\epsilon(\mathbb{S}\setminus
K)\br)^{\epsilon}=0$ where $K$ ranges over the collection 
of compact subsets of
$\mathbb S$\,.

\section{Main results}
\label{sec:results}
We will  consider a    time nonhomogeneous version of \eqref{eq:67} 
in which the coefficients may depend on $\epsilon$ as well: 
\begin{subequations}
  \begin{align}
     \label{eq:1}
 dX^\epsilon_t&=A^\epsilon_t(X^\epsilon_t,x^\epsilon_{t})\,dt
+\sqrt{\epsilon}\,B^\epsilon_t(X^\epsilon_t,x^\epsilon_{t})\,dW^\epsilon_t,\\
  \label{eq:5}
dx^\epsilon_t&=\frac{1}{\epsilon}\,a^\epsilon_t(X^\epsilon_t,x^\epsilon_t)\,dt+
\frac{1}{\sqrt{\epsilon}}\,b^\epsilon_t(X^\epsilon_t,x^\epsilon_t)\,  
dW^\epsilon_t\,.
\end{align}
\end{subequations}
As above,
 ${A^\epsilon_t}(u,x)$ is an $n$-vector, 
$B^\epsilon_t(u,x)$ is an $n\times k$-matrix, $a^\epsilon_t(u,x)$ is an
$l$-vector,   $b^\epsilon_t(u,x)$ is an $l\times k$-matrix, and 
$W^\epsilon=(W^\epsilon_t,
\,t\in\R_+)$ 
is an $\R^k$-valued standard
Wiener process.
The stochastic process $X^\epsilon=(X^\epsilon_t,\,t\in\R_+)$ takes
values in $\R^n$ and the stochastic process $x^\epsilon=
(x^\epsilon_t,\,t\in\R_+)$ takes
values in $\R^l$.
We assume that the functions $A^\epsilon_t(u,x)$, $a^\epsilon_t(u,x)$,
$B^\epsilon_t(u,x)$, and $b^\epsilon_t(u,x)$ are  measurable and
locally bounded in
$(t,u,x)$ and are such that 
  the
equations (\ref{eq:1}) and (\ref{eq:5}) admit 
 a  weak solution $(X^\epsilon,x^\epsilon)$ with trajectories in
 $\bC(\R_+,\R^n\times \R^l)$ for every
initial condition $(X^\epsilon_0,x^\epsilon_0)$\,.
More specifically, we assume that there exists  a complete probability space
$(\Omega^\epsilon,\mathcal{F}^\epsilon,\mathbf{P}^\epsilon)$ with filtration
$\mathbf{F}^\epsilon=(\mathcal{F}^\epsilon_t,\,t\in\R_+)$ such that $(W^\epsilon_t,\,t\in\R_+)$ is
a Wiener process relative to $\mathbf{F}^\epsilon$, the processes
$X^\epsilon=(X^\epsilon_t,\,t\in\R_+)$ and 
$x^\epsilon=(x^\epsilon_t,\,t\in\R_+)$ are $\mathbf{F}^\epsilon$-adapted, have
continuous trajectories, and the relations 
(\ref{eq:1}) and (\ref{eq:5}) hold for all $t\in\R_+$
$\mathbf{P}^\epsilon$-a.s.
(To ensure uniqueness which we do not assume apriori, 
one may require, in addition to the above hypotheses,
 that the coefficients be
Lipschitz continuous.) For background information, see 
  Ethier and Kurtz \cite{EthKur86}, Ikeda and Watanabe \cite{IkeWat}, 
Stroock and Varadhan \cite{StrVar79}. We note that since the
dimensions $n$, $k$, and $l$ are arbitrary, the assumption that
both $X^\epsilon$ and $x^\epsilon$ are driven by the same Wiener
process does not constitute a loss of generality.

Let us denote $C^\epsilon_t(u,x)=B^\epsilon_t(u,x)B^\epsilon_t(u,x)^T$ and 
$c^\epsilon_t(u,x)=b^\epsilon_t(u,x)b^\epsilon_t(u,x)^T$\,.
We  introduce the boundedness and growth conditions that for all 
$N>0$ and $t>0$
\begin{subequations}
  \begin{align}
\label{eq:10}
\limsup_{\epsilon\to0}\sup_{s\in[0,t]}
\sup_{x\in\R^l}\sup_{u\in\R^n:\,\abs{u}\le
  N}\norm{c^\epsilon_s(u,x)}&<\infty\,,\\
\label{eq:10a}
\limsup_{\epsilon\to0}\sup_{s\in[0,t]}
\sup_{x\in\R^l}\sup_{u\in\R^n:\,\abs{u}\le N}\abs{A^\epsilon_s(u,x)}&<\infty\,,\\
    \label{eq:7}
\limsup_{\epsilon\to0}\sup_{s\in[0,t]}
\sup_{x\in\R^l}  \sup_{u\in  \R^n}\frac{u^T
  A^\epsilon_s(u,x)}{1+|u|^2}&<\infty,
\intertext{and}
\limsup_{\epsilon\to0}\sup_{s\in[0,t]}
\sup_{x\in\R^l}\sup_{u\in  \R^n}\frac{\norm{C^\epsilon_s(u,x)}}{1+|u|^2}&<\infty
  \label{eq:8}\,.
\end{align}
\end{subequations}

We also assume as  given ''limit coefficients'':
 ${A_t}(u,x)$ is an $n$-vector, 
$B_t(u,x)$ is an $n\times k$-matrix, $a_t(u,x)$ is an
$l$-vector, and  $b_t(u,x)$ is an $l\times k$-matrix\,.
Let 
$ C_t(u,x)=B_t(u,x)B_t(u,x)^T$ and
$c_t(u,x)=b_t(u,x)b_t(u,x)^T$\,.
The following regularity properties 
will be needed.
\begin{condition}
\label{con:coeff}
The functions  ${A_t}(u,x)$\,, 
$B_t(u,x)$\,, and   $b_t(u,x)$ are measurable and are bounded
locally in  $(t,u)$ and globally in $x$ and are
 continuous in $(u,x)$\,,
the function $a_t(u,x)$ is measurable and 
locally bounded in $(t,u,x)$
and  is Lipschitz continuous in $x$
locally uniformly in $(t,u)$\,, 
the functions $a_t(u,x)$ and
   $c_t(u,x)$ are continuous in $u$ locally uniformly in $t$ and
   uniformly in $x$\,, 
$c_t(u,x)$ is of class $\mathbb{C}^1$ in $x$\,, with the first
partial derivatives  
being bounded and
Lipschitz continuous in $x$ locally uniformly in $(t,u)$\,,
 and $\text{div}_x\,c_t(u,x)$ 
is continuous in
$(u,x)$\,.

\end{condition}
Another set of regularity requirements is furnished by the next condition.
We introduce
\begin{equation}
  \label{eq:40}
  G_t(u,x)=B_t(u,x)b_t(u,x)^T\,.
\end{equation} 
\begin{condition}
  \label{con:positive}
The matrix $c_t(u,x)$   is  positive definite uniformly in $x$ and
locally uniformly in $(t,u)$\,.  
Either  
$C_t(u,x)=0$ for all $(t,u,x)$
and $A_t(u,x)$ is locally Lipschitz continuous in  $u$ locally uniformly in
$t$ and uniformly in $x$\,, or the matrix 
$C_t(u,x)-G_t(u,x)c_t(u,x)^{-1}G_t(u,x)^T$ is positive definite
 uniformly in $x$ and
locally uniformly in $(t,u)$\,.

\end{condition}

Finally, certain  stability properties will be required:
 for all $N>0$ and $t>0$\,,
 \begin{subequations}
   \begin{align}
  \label{eq:9}
\lim_{M\to\infty}
\limsup_{\epsilon\to0}
\sup_{s\in[0,t]}\sup_{x\in\R^l:\,\abs{x}\ge M}\sup_{u\in\R^n:\,\abs{u}\le
  N}a^\epsilon_s(u,x)^T\frac{x}{\abs{x}}
=-\infty\intertext{ and }
  \label{eq:116}
  \lim_{\abs{x}\to\infty}
\sup_{s\in[0,t]}\sup_{u\in\R^n:\,\abs{u}\le
  N}a_s(u,x)^T\frac{x}{\abs{x}}
=-\infty\,.
\end{align}
 \end{subequations}

 Let  $\bC_\uparrow(\R_+,\mathbb{M}(\R^l))$ represent
the subset of $\bC(\R_+,\mathbb{M}(\R^l))$ of 
functions $\mu=(\mu_t,\,t\in\R_+)$ such  that $\mu_t-\mu_s$ is an
element of $\mathbb{M}(\R^l)$ for $t\ge s$ and $\mu_t(\R^l)=t$\,.
It is endowed with the subspace topology and is a complete separable 
metric space, being
 closed in
$\bC(\R_+,\mathbb{M}(\R^l))$\,.
The   stochastic process
  $\mu^\epsilon=(\mu^\epsilon_t\,,t\in\R_+)$\,,
where \begin{equation*}
\mu^\epsilon_t(\Theta)=\int_0^t
\ind_{ \Theta}(x^\epsilon_s) \,ds\,,
\end{equation*}
for $\Theta\in\mathcal{B}(\R^l)$\,,
is a random
element of $\bC_\uparrow(\R_+,\mathbb{M}(\R^l))$\,.
We will regard $(X^\epsilon,\mu^\epsilon)$ as a random element of 
$\mathbb{C}(\R_+,\R^n)\times \bC_\uparrow(\R_+,\mathbb{M}(\R^l))$\,.
It is worth noting that 
the elements of $\bC_\uparrow(\R_+,\mathbb{M}(\R^l))$ can be also
regarded as $\sigma$-finite measures on $\R_+\times \R^l$\,.
We will then use notation $\mu(dt,dx)$ for $\mu$\,.

Let $\Gamma$ represent the  set of $(X,\mu)$ such that
the  function 
$X=(X_s,\,s\in\R_+)$ from $\bC(\R_+,\R^n)$ 
 is absolutely continuous w.r.t.  Lebesgue measure on $\R_+$ and
 function $\mu=(\mu_s,\,s\in\R_+)$ from
$\bC_\uparrow(\R_+,\mathbb{M}(\R^l))$\,,
when considered as a measure on $\R_+\times\R^l$\,, is
absolutely continuous w.r.t.  Lebesgue measure on $\R_+\times\R^l$\,, i.e.,
 $\mu(ds,dx)=m_s(x)\,dx\,ds$\,, where
   $m_s(x)$\,,   as a function of $x$\,, belongs to 
$\mathbb{P}(\R^l)$
  for almost all $s$\,. Given $(X,\mu)\in\Gamma$\,, we define
\begin{multline}
  \label{eq:77}
  \mathbf{I}'(X,\mu)=
\int_0^\infty
\sup_{\lambda\in\R^n}\Bl(
\lambda^T\bl(\dot{X}_s-
\int_{\R^l} A_s(X_s,x)\,m_s(x)\,dx\br)
-\frac{1}{2}\, \norm{\lambda}_{\int_{\R^l}C_s(X_s,x)\,m_s(x)\,dx}^2\\
+\sup_{h\in \mathbb{C}_0^1(\R^l)}\int_{\R^l}\Bl( D  h(x)^T \bl(
\frac{1}{2}\,\text{div}_x\,\bl(c_s(X_s,x)m_s(x)\br)-\bl(
a_s(X_s,x)+  G_s(X_s,x)^T\lambda\br)m_s(x)
\br)
\\-\frac{1}{2}\,
\norm{D
  h(x)}_{c_s(X_s,x)}^2\,m_s(x)\Br)\,dx\Br)\,ds\,.  \end{multline}
We let   $\mathbf{I}'(X,\mu)=\infty$ if $(X,\mu)\not\in\Gamma$\,.
It follows, on letting $\lambda=0$\,, that if 
$\mathbf{I}'(X,\mu)<\infty$ then, for $t\in\R_+$\,,
\begin{multline*}
   \int_0^t  \sup_{h\in \mathbb{C}_0^1(\R^l)}\int_{\R^l}\bl( D  h(x)^T \bl(
\frac{1}{2}\,\text{div}_x\,\bl(c_s(X_s,x)m_s(x)\br)
-a_s(X_s,x)m_s(x)
\br)
\\-\frac{1}{2}\,
\norm{D  h(x)}_{c_s(X_s,x)}^2m_s(x)\br)\,dx\,ds<\infty
\end{multline*}
so that, thanks to $G_s(X_s,x)$ being bounded, for all $\lambda\in\R^n$\,, 
\begin{multline}
  \label{eq:16}
  \int_0^t  \sup_{h\in \mathbb{C}_0^1(\R^l)}\int_{\R^l}\bl( D  h(x)^T \bl(
\frac{1}{2}\,\text{div}_x\,\bl(c_s(X_s,x)m_s(x)\br)
-\bl(a_s(X_s,x)+  G_s(X_s,x)^T\lambda\br)m_s(x)
\br)
\\-\frac{1}{2}\,
\norm{D  h(x)}_{c_s(X_s,x)}^2m_s(x)\br)\,dx\,ds<\infty\,.
\end{multline}
We introduce the following convergence condition.

\begin{condition}
  \label{con:int_conv}
If  $\mathbf I'(X,\mu)<\infty$\,, then there exists 
a nonincreasing 
$[0,1]$-valued $\mathbb C^1_0(\R_+)$-function $\eta(y)$
 that equals $1$ for $y\in[0,1]$ and equals $0$ for 
$y\ge 2$\,,  such that
\begin{equation}
  \label{eq:118}
\int_1^2 \frac{\abs{D\eta(y)}^2}{1-\eta(y)}\,dy<\infty\,,
\end{equation}
and, for  arbitrary $t\in\R_+$ and $\lambda\in\R^n$\,,
\begin{multline}
  \label{eq:125}
\lim_{r\to\infty}
  \int_0^t  \sup_{h\in \mathbb{C}_0^1(\R^l)}\int_{\R^l}\bl( D  h(x)^T \bl(
\frac{1}{2}\,\text{div}_x\,\bl(c_s(X_s,x)m_s(x)
\br)
-\bl(a_s(X_s,x)+  G_s(X_s,x)^T\lambda\br)m_s(x)
\br)\\
-\frac{1}{2}\,
\norm{D
  h(x)}_{c_s(X_s,x)}^2m_s(x)\br)\eta\bl(\frac{\abs{x}}{r}\br)^2\,dx\,ds\\
=  \int_0^t  \sup_{h\in \mathbb{C}_0^1(\R^l)}\int_{\R^l}\bl( D  h(x)^T \bl(
\frac{1}{2}\,\text{div}_x\,\bl(c_s(X_s,x)m_s(x)
\br)
-\bl(a_s(X_s,x)+  G_s(X_s,x)^T\lambda\br)m_s(x)\br)
\\-\frac{1}{2}\,
\norm{D
  h(x)}_{c_s(X_s,x)}^2m_s(x)\br)\,dx\,ds\,,
\end{multline}
where $m_s(x)=\mu(ds,dx)/(ds\,dx)$\,.
\end{condition}
We note that \eqref{eq:118} is satisfied if $\eta(y)=1-e^{-1/(y-1)}$ in a
right neighbourhood of $1$\,.
The next lemma, whose proof is relegated to the appendix,
 furnishes a way of verifying Condition \ref{con:int_conv}.
\begin{lemma}
  \label{le:gradient}
Suppose that    Conditons \ref{con:coeff} and \ref{con:positive} hold
and that  $\mathbf I'(X,\mu)<\infty$\,.
 If 
\begin{equation}
  \label{eq:19}
\int_0^t
\int_{\R^l} \abs{a_s(X_s,x)}^2\,m_s(x)\,dx\,ds<\infty\,,  
\end{equation}
then
Condition \ref{con:int_conv} holds.
 
 If   either 
  \begin{equation}
    \label{eq:193}
  \limsup_{\abs{x}\to\infty}
\sup_{s\in[0,t]}a_s(X_s,x)^T\frac{x}{\abs{x}^2}
<0
  \end{equation}
 or there exists real-valued function $\hat a_s(x)$ which belongs to
 $\mathbb
W^{1,q}_{\text{loc}}(\R^l)$  in $x$\,, 
where $q>2$ and $q\ge l$\,, such that
  \begin{equation}
    \label{eq:30}
    c_s(X_s,x)^{-1}\bl(a_s(X_s,x)-
\frac{1}{2}\,\text{div}_x\,c_s(X_s,x)\br)
=D_x \hat a_s(x)\,,  
\end{equation}
 then \eqref{eq:19} holds.
\end{lemma}
We state the main result.
\begin{theorem}
  \label{the:ldp}
Let  \eqref{eq:10}--\eqref{eq:8}, \eqref{eq:9}, 
 \eqref{eq:116}, and
Conditions \ref{con:coeff}, \ref{con:positive}, and 
\ref{con:int_conv}  hold.
If the net $X^\epsilon_0$ obeys the LDP in
$\R^n$  with large deviation  function
$\mathbf{I}_0$ for rate $1/\epsilon$ as $\epsilon\to0$,
 the net $x^\epsilon_0$ is exponentially tight in
$\R^l$ for rate $1/\epsilon$ as $\epsilon\to0$\,, and,
 for all $t>0$ and $N>0$\,,  the convergences
\begin{multline}
  \label{eq:14}
  \lim_{\epsilon\to0}\sup_{s\in[0,t]}
\sup_{x\in\R^l:\,\abs{x}\le N}\sup_{u\in\R^n:\,\abs{u}\le N}
\bl(\abs{A^\epsilon_s(u,x)-A_s(u,x)}+\abs{a^\epsilon_s(u,x)-a_s(u,x)}\\
+\norm{B^\epsilon_s(u,x)-B_s(u,x)}+\norm{b^\epsilon_s(u,x)-b_s(u,x)}\br)=0
\end{multline}
hold, then
the net $(X^\epsilon,\mu^\epsilon)$ obeys the LDP in $\bC(\R_+,
\R^n)\times\mathbb{C}_\uparrow(\R_+,\mathbb{M}(\R^l))$ for rate
$1/\epsilon$ as $\epsilon\to0$ with large deviation  function
$\mathbf{I}$ defined as follows:
\begin{equation*}
\mathbf{I}(X,\mu)=
\begin{cases}
\mathbf{I}_0(X_0)+
  \mathbf{I}'(X,\mu),&\text{ if }(X,\mu)\in\Gamma,\\
\infty,&\text{ otherwise.}
\end{cases}
\end{equation*}

\end{theorem}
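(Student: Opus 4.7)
The plan is to implement the program sketched in the introduction: first establish exponential tightness of $(X^\epsilon,\mu^\epsilon)$ in $\mathbb{C}(\R_+,\R^n)\times\mathbb{C}_\uparrow(\R_+,\mathbb{M}(\R^l))$, then exploit the fact that exponential tightness implies large deviation relative compactness, reducing the problem to uniqueness of any large deviation accumulation point, and finally identify this accumulation point by passing to the large deviation limit in an exponential martingale problem that characterises the joint law of $(X^\epsilon,\mu^\epsilon)$.

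For exponential tightness, I would combine It\^o's formula applied to $\log(1+|X^\epsilon_s|^2)$ with the sublinear growth bounds \eqref{eq:7} and \eqref{eq:8} to get a supermartingale-type estimate giving exponential tightness of $X^\epsilon$, using the exponential tightness of $X^\epsilon_0$ as the base case. For $\mu^\epsilon$, the key is the Foster--Lyapunov-type drift bound \eqref{eq:9}, which together with the exponential tightness of $x^\epsilon_0$ implies that with $\mathbf{P}^\epsilon$-probability at least $1-e^{-c/\epsilon}$ the measures $\mu^\epsilon_t$ concentrate on bounded balls for each $t$, and oscillation estimates for $\mu^\epsilon$ in the weak topology follow from the almost sure equicontinuity built into the definition of $\bC_\uparrow$.

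The identification step uses exponential martingales. For $\lambda\in\R^n$ and $h\in\mathbb{C}_0^1(\R^l)$, apply It\^o's formula to $\exp\bigl(\epsilon^{-1}(\lambda^T X^\epsilon_t+\epsilon\,h(x^\epsilon_t))\bigr)$ to obtain an $\mathbf{F}^\epsilon$-martingale whose exponent, divided by $\epsilon$, is a continuous functional on $\bC(\R_+,\R^n)\times\bC_\uparrow(\R_+,\mathbb{M}(\R^l))$ (after the cut-off $\eta(|x|/r)$ from Condition \ref{con:int_conv} is inserted so that the integrand is bounded and continuous). The martingale identity $\mathbf{E}^\epsilon[\exp(\epsilon^{-1}F^\epsilon_{\lambda,h,r,t})]=1$ combined with Varadhan's lemma forces any accumulation rate function $\mathbf{I}^*$ to satisfy $\inf_{(X,\mu)}\bigl(\mathbf{I}^*(X,\mu)-F_{\lambda,h,r,t}(X,\mu)\bigr)=0$. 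Sending $r\to\infty$ via \eqref{eq:125}, then taking the supremum over $\lambda$ and $h$ and using Legendre duality on the inner concave-then-concave variational problem in \eqref{eq:77}, yields the lower bound $\mathbf{I}^*\ge\mathbf{I}_0(X_0)+\mathbf{I}'(X,\mu)$. The matching upper bound $\mathbf{I}^*\le\mathbf{I}_0(X_0)+\mathbf{I}'(X,\mu)$ is obtained by constructing, for each $(X,\mu)\in\Gamma$ with $\mathbf{I}'<\infty$, a Girsanov-type change of measure that tilts $(X^\epsilon,\mu^\epsilon)$ toward $(X,\mu)$ and whose log Radon--Nikodym derivative is shown, via Theorem \ref{le:vid} and Lemma \ref{le:gradient}, to approach $\mathbf{I}_0(X_0)+\mathbf{I}'(X,\mu)$.

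The principal obstacle will be the identification step, specifically the transition from the compactly-supported variational problem to the unbounded-domain formula \eqref{eq:77}. Three interlocking difficulties arise: (i) justifying the truncation limit $r\to\infty$ requires uniform control of the drift term $a_s$ and the divergence $\mathrm{div}_x\, c_s$ on large spheres, which is exactly what Condition \ref{con:int_conv} together with Lemma \ref{le:gradient} provides; (ii) establishing that any $(X,\mu)$ with $\mathbf{I}^*(X,\mu)<\infty$ automatically has $\mu(ds,dx)=m_s(x)\,dx\,ds$ with $\sqrt{m_s}\in\mathbb{W}^{1,2}(\R^l)$ requires extracting quantitative Sobolev information from the inner supremum in \eqref{eq:77} with $\lambda=0$, as announced in Theorem \ref{le:vid}; (iii) expanding the domain of validity of the limit equation from $h\in\mathbb{C}_0^1$ to a richer class of admissible test functions, using elliptic regularity theory for the operator with coefficients $c_s(X_s,\cdot)$ and drift $a_s(X_s,\cdot)$, to ensure that the variational identification is sharp. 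The noncompactness of the fast state space and the correlation between the driving noises, absent in Liptser \cite{Lip96}, make this step substantially more delicate than in the earlier work.
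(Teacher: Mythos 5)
Your tightness step and your derivation of the limiting variational equation via exponential martingales and Varadhan's lemma match the paper's route (Sections 4--5: It\^o's formula for $\ln(1+|X^\epsilon_t|^2)$, the drift condition \eqref{eq:9} for concentration of $\mu^\epsilon_t$, the trivial $1$-Lipschitz modulus of $t\mapsto\mu^\epsilon_t$, and the identity $\mathbf{E}^\epsilon\exp\bl(\epsilon^{-1}U^\epsilon_{t\wedge\tau}+V^\epsilon_{t\wedge\tau}\br)=1$ giving $\sup_z\bl(U(z)-\tilde{\mathbf I}(z)\br)=0$ for every LD limit point $\tilde{\mathbf I}$). This only yields the inequality $\mathbf I^{\ast\ast}\le\tilde{\mathbf I}$, and your analysis of $\mathbf I^{\ast\ast}$ (items (i)--(iii)) corresponds to the paper's Section 6. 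The genuine gap is in the opposite inequality, $\tilde{\mathbf I}\le\mathbf I_0(X_0)+\mathbf I'$, which you propose to obtain by a Girsanov tilt of $(X^\epsilon,\mu^\epsilon)$ toward a prescribed $(X,\mu)$ and a one-sentence claim that the log Radon--Nikodym derivative approaches $\mathbf I_0(X_0)+\mathbf I'(X,\mu)$. As stated this does not work: to make the tilted fast process have $m_s(\cdot)$ as its instantaneous occupation density you must add a drift of the form $c_s(X_s,x)Dh_s(x)$ with $Dh_s$ solving a Poisson-type equation built from $\Phi_{s,m_s(\cdot),X_s}$ and $\Psi_{s,m_s(\cdot),X_s}$, and then you must prove that under the tilted law the empirical process of the fast variable tracks $m_s(x)\,dx\,ds$ with \emph{superexponentially} small error probability, uniformly on compact time intervals, for a time-inhomogeneous, fully coupled diffusion on the noncompact space $\R^l$. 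That local ergodic theorem with superexponential rates is not supplied by Theorem \ref{le:vid} or Lemma \ref{le:gradient}, is not available off the shelf in this generality, and is precisely the obstruction that the paper's argument is engineered to avoid: instead of a change of measure, the paper shows (Lemma \ref{le:sup_ld_function} and Theorem \ref{the:iden_reg}) that for regular $(\hat X,\hat\mu)$ the supremum defining $\mathbf I^{\ast\ast}$ is attained by admissible test pairs built from $\hat w_s,\hat v_s$ (elliptic regularity, Lemma \ref{le:reg}), extracts maximizers $\gamma^{N,i}$ of $\theta^{N,i}-\tilde{\mathbf I}$ on the compact sublevel sets $K_\delta$, and passes to the limit to force $\tilde{\mathbf I}(\hat X,\hat\mu)\le\mathbf I^{\ast\ast}(\hat X,\hat\mu)$ (part 1 of Theorem \ref{the:id}); general $(X,\mu)$ are then reached by the approximation Theorem \ref{the:appr}. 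If you insist on the tilting route you would in any case still need an analogue of Theorem \ref{the:appr}, since points with $\mathbf I'<\infty$ can have quite irregular densities, and the tilted dynamics is only well posed after such a regularization.

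Two smaller inaccuracies: Condition \ref{con:int_conv} and the cut-off $\eta(|x|/r)$ play no role at the pre-limit martingale stage -- in the paper continuity of $U^{\lambda(\cdot),f}_{t\wedge\tau}$ is secured by taking $f$ compactly supported in $x$ and by the stopping time $\tau$, and Condition \ref{con:int_conv} enters only in the final approximation argument (Theorem \ref{the:appr}); and the passage from test functions $h\in\mathbb C^1_0(\R^l)$ to the closed-form expression \eqref{eq:22} is not mere Legendre duality but requires the projection structure ($\Phi$, $\Psi$, the spaces $\mathbb L^{1,2}_0$) and the measurability/regularity work of Theorem \ref{le:vid}, which you cite but whose role in making the maximizers admissible is where the real work lies.
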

\begin{remark}
  Condition \ref{con:int_conv} may be superfluous 
 as far as  the validity of 
  Theorem \ref{the:ldp} is concerned.
 It is used at the final stage of
the proof only, see  Theorem~\ref{the:appr}.
\end{remark}
\begin{remark}
  By Lemma~\ref{le:zero}, $\mathbf{I}(X,\mu)=0$ provided that
a.e.
\begin{align*}
\dot{X}_s=\int_{\R^l}A_s(X_s,x)\,m_s(x)\,dx\intertext{and }
  \int_{\R^l}\bl(\frac{1}{2}\,\text{tr}\,(c_s(X_s,x)\,  D^2 p(x))
+D p(x)^T a_s(X_s,x)\br)\, m_s(x)\,dx  =0,
\end{align*}
where $X_0$ satisfies the equality  $\mathbf{I}_0(X_0)=0$ and the  
latter equation 
 holds for all  $p\in \mathbb{C}_0^\infty(\R^l)$\,. Consequently,
$m_s(\cdot)$ is the invariant density of the diffusion process with the
infinitesimal 
drift $a_s(X_s,\cdot)$ and diffusion matrix $c_s(X_s,\cdot)$\,.
\end{remark}
\begin{remark}
Conditions \ref{con:coeff}
and \eqref{eq:14}
imply that
  \begin{equation}
      \label{eq:93}\limsup_{\epsilon\to0}\sup_{s\in[0,t]}
\sup_{x\in\R^l:\,\abs{x}\le N}\sup_{u\in\R^n:\,\abs{u}\le
  N}\abs{a^\epsilon_s(u,x)}
<\infty\,.
\end{equation}
Conditions \eqref{eq:10}--\eqref{eq:8} and \eqref{eq:14}
  imply that
  \begin{subequations}
  \begin{align}
      \label{eq:63}
\sup_{s\in[0,t]}
\sup_{x\in\R^l}\sup_{u\in\R^n:\,\abs{u}\le
  N}\norm{c_s(u,x)}&<\infty\,,\\\label{eq:63a}
\sup_{s\in[0,t]}
\sup_{x\in\R^l}\sup_{u\in\R^n:\,\abs{u}\le N}\abs{A_s(u,x)}&<\infty\,,\\
    \label{eq:122a}
\sup_{s\in[0,t]}
\sup_{x\in\R^l}  \sup_{u\in  \R^n}\frac{u^T
  A_s(u,x)}{1+|u|^2}&<\infty,
\intertext{and}
  \label{eq:82}
\sup_{s\in[0,t]}
\sup_{x\in\R^l}\sup_{u\in  \R^n}\frac{\norm{C_s(u,x)}}{1+|u|^2}&<\infty\,.
\end{align}
\end{subequations}
In particular, some of the boundedness requirements in Condition
\ref{con:coeff} are consequences of the
other hypotheses of Theorem \ref{the:ldp}.
\end{remark}
\begin{remark}
If $c_t(u,x)$  and $C_t(u,x)$ are  positive definite uniformly in $x$ and
locally uniformly in $(t,u)$\,, then,
since $b_t(u,x)^T
c_t(u,x)^{-1}b_t(u,x)$ is the orthogonal projection operator onto
the range of $b_t(u,x)^T$\,,  the condition that $C_t(u,x)-
G_t(u,x)c_t(u,x)^{-1}G_t(u,x)^T$ be positive definite
 uniformly in $x$ and
locally uniformly in $(t,u)$ is implied by the following 
 ``angle condition'':
 for any bounded region of $(t,u)$\,,
there exists $\ell\in(0,1)$\,
 such that $\abs{y_1^Ty_2}\le \ell\abs{y_1}\abs{y_2}$ for all $y_1$
 and $y_2$ from the ranges of $B_t(u,x)^T$ and $b_t(u,x)^T$\,,
 respectively, where $x$ is arbitrary and $(t,u)$ belongs to the
 region.
To put it another way, the condition requires that 
the angles between the elements of
the range of $B_t(u,x)^T$, on the one hand, and the elements of the
range of $b_t(u,x)^T$, on the other hand,
 be bounded away from zero 
uniformly in $x$ and
locally uniformly in $(t,u)$\,.
It  ensures that  the processes
$X^\epsilon$ and $x^\epsilon$  are ''sufficiently random''
in relation to each other. Under that condition, the
ranges of $B_t(u,x)^T$ and $b_t(u,x)^T$ do not have common  nontrivial
subspaces and  $k\ge n+l$\,. On the other hand, if $\norm{C_t(u,x)}$
is bounded uniformly in $x$ and locally uniformly in $(t,u)$, as is
the case under the hypotheses of Theorem~\ref{the:ldp} according to
\eqref{eq:82}, the converse is also true: if $C_t(u,x)-
G_t(u,x)c_t(u,x)^{-1}G_t(u,x)^T$ is  positive definite
 uniformly in $x$ and
locally uniformly in $(t,u)$\,, then the angle condition holds.
\end{remark}
 The solution of the variational problem in \eqref{eq:77}
plays an important part in the proof below, so we
proceed with describing it.
Let  $c(x)$ represent a measurable function
defined for $x\in \R^{d}$ and taking values 
in the space of
 positive definite symmetric $d\times d$-matrices,
 let  $m(x)$ represent a probability density on
$\R^{d}$\,, 
 and let  $S_i$ represent an open ball of radius $i$ 
centred at the origin in $\R^d$\,,  where $d\in\N$
and $i\in \N$\,.
For  function $\psi_j\in
\mathbb{L}_0^{1,2}(S_j,\R^d,c(x),m(x)\,dx)$ and $ j\ge i$\,, where $j\in\N$\,,
 we let $\pi_{ji}\psi_{j}$ denote the
orthogonal 
projection of the restriction of $\psi_j$ to $S_i$ onto
$\mathbb{L}_0^{1,2}(S_i,\R^d,c(x),m(x)\,dx)$ in
$\mathbb{L}^2(S_i,\R^d,c(x),m(x)\,dx)$\,.
Thus, the function $\pi_{ji}\psi_{j}$ is the element 
of $\mathbb{L}_0^{1,2}(S_i,\R^d,c(x),m(x)\,dx)$ such that
$
\int_{S_i}D p(x)^T
c(x)\pi_{ji}\psi_{j}(x)\,m(x)\,dx=
\int_{S_i}D p(x)^T
c(x)\psi_j(x)\,m(x)\,dx$ for all $p\in
\mathbb{C}_0^\infty(S_i)$\,.
We note that  if the density $m(x)$ is locally bounded away
from zero, then
$\pi_{ji}\psi_{j}$ is a certain gradient:
$\pi_{ji}\psi_{j}=D \chi_{ji}$\,, where $\chi_{ji}$ is the weak
solution of  the
Dirichlet problem $\text{div}\,(c(x)m(x)\,D
\chi_{ji}(x))=
\text{div}\,(c(x)m(x)\psi_j(x))$ for $x\in S_i$ with a zero boundary
condition
(cf. the proof of Lemma~\ref{le:reg}).
Since, for $i\le j\le k$, $\pi_{ji}\circ\pi_{kj}=\pi_{ki}$\,,
the family  $(\mathbb{L}_0^{1,2}(S_j,\R^d,c(x),m(x)\,dx),\,\pi_{ji})$
is a projective (or inverse)  system  in the category of sets.
Given a function $\phi\in
\mathbb{L}^2_{\text{loc}}(\R^d,\R^d,c(x),m(x)\,dx)$\,, 
the orthogonal projections $\phi_i$
of the restrictions of $\phi$ to $S_i$ onto
$\mathbb{L}_0^{1,2}(S_i,\R^d,c(x),m(x)\,dx)$ in
$\mathbb{L}^2(S_i,\R^d,c(x),m(x)\,dx)$ are such  that
$\pi_{ji}\phi_j=\phi_i$\,, provided $i\le j$\,, so they specify an element of the
projective (or inverse) limit of
$(\mathbb{L}_0^{1,2}(S_j,\R^d,c(x),m(x)\,dx),\,\pi_{ji})$\,, which we
denote  by
$\Pi_{c(\cdot), m(\cdot)}\phi $\,. 
On extending the $\phi_i$ by zero outside of $S_i$, 
one has that, for $i\le j$, 
$\norm{\phi_j}^2_{c(\cdot),m(\cdot)}-
\norm{\phi_i}^2_{c(\cdot),m(\cdot)}=
\norm{\phi_j- \phi_i}^2_{c(\cdot),m(\cdot)}
$\,, where the norms are taken in $\R^d$\,.
Hence, if 
$\lim_{i\to\infty}\norm{\phi_i}^2_{c(\cdot),m(\cdot)}<\infty$,
then  the sequence $\phi_i$ converges in
$\mathbb{L}^2(\R^d,\R^d,c(x),m(x)\,dx)$ as $i\to\infty$ and 
 one can identify
$\Pi_{c(\cdot),m(\cdot)}\phi $ with the limit, so
 $\Pi_{c(\cdot),m(\cdot)}\phi\in
\mathbb{L}^2(\R^d,\R^d,c(x),m(x)\,dx)$\,. 
It is uniquely specified by the requirements that $\Pi_{c(\cdot),m(\cdot)}\phi\in
\mathbb{L}_0^{1,2}(\R^d,\R^d,c(x),m(x)\,dx)$ and that,
for all $p\in \mathbb{C}_0^\infty(\R^d)$\,,
\begin{equation}
  \label{eq:186}
\int_{\R^d} Dp(x)^T c(x)\Pi_{c(\cdot),m(\cdot)}\phi(x)\,m(x)\,dx=
    \int_{\R^d} Dp(x)^T c(x) \phi(x)\,m(x)\,dx\,.
\end{equation}
In particular,  if $\phi$ is an element of
$\mathbb{L}^2(\R^d,\R^d,c(x),m(x)\,dx)$\,, then $\Pi_{c(\cdot),m(\cdot)}\phi$
is the orthogonal projection of $\phi$ onto 
$\mathbb{L}_0^{1,2}(\R^d,\R^d,c(x),m(x)\,dx)$\,.
For results on 
the existence and uniqueness for equation \eqref{eq:186} when 
$\Pi_{c(\cdot),m(\cdot)}\phi$ is a gradient, see
 Pardoux and Veretennikov \cite{ParVer01}. 

In the setting of Theorem \ref{the:ldp}, $d=l$\,.
Since, under the hypotheses of Theorem~\ref{the:ldp}, 
 the matrix functions $c_t(u,\cdot)^{-1}G_t(u,\cdot)^T$
 are 
bounded,  so  the matrix function
$\Pi_{c_t(u,\cdot),m(\cdot)}\bl(c_t(u,\cdot)^{-1}G_t(u,\cdot)^T\br)$\,,
whose  columns are the projections of the $n$ columns
of $c_t(u,\cdot)^{-1}G_t(u,\cdot)^T$ onto the space
$\mathbb{L}_0^{1,2}(\R^l,\R^l,c_t(x),m(x)\,dx)$\,, is a well defined element
of the space  $\mathbb{L}^2(\R^l,\R^{l\times n},c_t(u,x),m(x)\,dx)$
 and we denote it 
 by
$\Psi_{t,m(\cdot),u}$\,.
We also  define
\begin{equation}
  \label{eq:88}
    Q_{t,m(\cdot)}(u,x)=C_t(u,x)-\norm{
\Psi_{t,m(\cdot),u}(x)}^2_{ c_t(u,x)}\,.
\end{equation}
The function $Q_{t,m(\cdot)}(u,x)$ assumes values in the space of
 positive semi-definite $n\times n$-matrices.
If the matrix $C_t(u,x)-G_t(u,x)c_t(u,x)^{-1}G_t(u,x)^T$ 
is positive definite uniformly in $x$
and locally uniformly in $(t,u)$\,, then  the matrix 
$\int_{\R^l}Q_{t,m(\cdot)}(u,x)m(x)\,dx$ is positive definite locally
uniformly in $(t,u)$\,.
We also  introduce $\Phi_{t,m(\cdot),u}=\Pi_{c_t(u,\cdot),m(\cdot)}
\bl(c_t(u,\cdot)^{-1}\bl(
a_t(u,\cdot)-\text{div}_x\,c_t(u,\cdot)/2\br)\br)$\,. 
Since $a_t(u,\cdot)$ is not necessarily  square
integrable with respect to $m(x)\,dx$, the function 
$\Pi_{c_t(u,\cdot),m(\cdot)}
\bl(c_t(u,\cdot)^{-1}\bl(
a_t(u,\cdot)-\text{div}_x\,c_t(u,\cdot)/2\br)\br)$, as a function of 
$x\in\R^l$,    might not be an element of
 $\mathbb{L}^2(\R^l,\R^{l},c_t(u,x),m(x)\,dx)$\,.

    For future reference, we note that, according to \eqref{eq:186}, a.e.,
\begin{subequations}
  \begin{align}
\label{eq:79}
\int_{{\R^l}}  D  p(x)^T
c_s(u,x) \Phi_{s,m(\cdot),u}(x)
m(x)\,dx=
\int_{{\R^l}}  D  p(x)^T \bl(
a_s(u,x)-\frac{1}{2}\,\text{div}_x c_s(u,x)\br)
m(x)\,dx
\intertext{and
}
  \label{eq:151}
\int_{\R^l} D p(x)^Tc_s(u,x)\Psi_{s,m(\cdot),u}(x)
\,m(x)
\,dx=
    \int_{\R^l}D p(x)^TG_s(u,x)^T\,m(x)\,dx\,,
\end{align}
\end{subequations}
for all $p\in\mathbb{C}_0^\infty(\R^l)$\,.
In addition, \eqref{eq:151} extends to  $Dp$ representing an
arbitrary element of $\mathbb{L}_0^{1,2}(\R^l,\R^l,c_s(u,x),m(x)\,dx)$\,.
A similar extension property  holds for \eqref{eq:79}, provided
 $a_s(u,\cdot)
\in \mathbb{L}^2(\R^l,\R^l,c_s(u,x),\,m(x)\,dx)$\,.

\begin{proposition}
  \label{cor:ldp} If, under the hypotheses of Theorem \ref{the:ldp},
$\mathbf I'(X,\mu)<\infty$\,, then 
  $\Phi_{s,m_s(\cdot),X_s}$ belongs to the space
$\mathbb{L}^2(\R^l,\R^l,c_s(X_s,x),m_s(x)\,dx)$ 
for almost all $s$ and
\begin{equation*}
\dot{X}_s-\int_{\R^l}A_s(X_s,x)m_s(x)\,dx-
\int_{\R^l}
G_s(X_s,x)\,\bl(\frac{D_xm_s(x)}{2m_s(x)}
-\Phi_{s,m_s(\cdot),X_s}(x)
\br)\,m_s(x)\,dx  
\end{equation*}
belongs to the
range of $\int_{\R^l}Q_{s,m_s(\cdot)}(X_s,x)m_s(x)\,dx$ 
  for almost all $s$\,. Furthermore,  $\Phi_{s,m_s(\cdot),X_s}(x)$
and $\Psi_{s,m_s(\cdot),X_s}(x)$ are
measurable in $(s,x)$ so that
   in the statement of
 Theorem~\ref{the:ldp},
 \begin{multline}
   \label{eq:22}
     \mathbf{I}(X,\mu)=\mathbf{I}_0(X_0)+\frac{1}{2}\,\int_0^\infty
\Bl(
\int_{\R^l}\norm{\frac{D_xm_s(x)}{2m_s(x)}-
\Phi_{s,m_s(\cdot),X_s}(x)}^2_{c_s(X_s,x)}m_s(x)\,dx\\
+\norm{\dot{X}_s-\int_{\R^l}A_s(X_s,x)m_s(x)\,dx\\-
\int_{\R^l}
G_s(X_s,x)\,\bl(\frac{D_xm_s(x)}{2m_s(x)}-
\Phi_{s,m_s(\cdot),X_s}(x)\br)\,
m_s(x)\,dx}^2_{(\int_{\R^l}Q_{s,m_s(\cdot)}(X_s,x)m_s(x)\,dx)^\oplus}\Br)\,ds\,.
\end{multline}
\end{proposition}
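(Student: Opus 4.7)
The plan is to identify $\mathbf{I}'(X,\mu)$, under the hypothesis $\mathbf{I}'(X,\mu)<\infty$, by first reducing the inner supremum over $h\in\mathbb{C}_0^1(\R^l)$ in \eqref{eq:77} to a Hilbert space projection problem, and then carrying out the outer supremum over $\lambda\in\R^n$ as a finite-dimensional quadratic optimization. Setting $\lambda=0$ in \eqref{eq:77} forces the inner sup to be finite for almost every $s$, so Theorem~\ref{le:vid} gives $\sqrt{m_s}\in\mathbb{W}^{1,2}(\R^l)$ a.e., and the boundedness of $c_s$ from \eqref{eq:63} then yields $\tilde\eta_s:=D_x m_s/(2m_s)\in H_s$, where $H_s:=\mathbb{L}^2(\R^l,\R^l,c_s(X_s,\cdot),m_s\,dx)$.

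To treat the inner sup, I rewrite $\frac{1}{2}\text{div}_x(c_s m_s)=\frac{1}{2}(\text{div}_x c_s)m_s+c_s\tilde\eta_s m_s$ and apply \eqref{eq:79} and \eqref{eq:151}, so that the linear-in-$Dh$ integrand in \eqref{eq:77} becomes $Dh(x)^T c_s(X_s,x)\bigl(\tilde\eta_s(x)-\Phi_{s,m_s(\cdot),X_s}(x)-\Psi_{s,m_s(\cdot),X_s}(x)\lambda\bigr)m_s(x)$. By density of $\{Dh:h\in\mathbb{C}_0^\infty(\R^l)\}$ in $\mathbb{L}_0^{1,2}(\R^l,\R^l,c_s(X_s,\cdot),m_s\,dx)$ and the standard Hilbert space identity $\sup_{\phi\in K}[\langle\phi,v\rangle_{H_s}-\frac{1}{2}\|\phi\|_{H_s}^2]=\frac{1}{2}\|\Pi_K v\|_{H_s}^2$, the inner sup equals $\frac{1}{2}\|\Pi(\tilde\eta_s-\Phi-\Psi\lambda)\|_{H_s}^2$, where $\Pi$ denotes the $H_s$-orthogonal projection onto $\mathbb{L}_0^{1,2}$. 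Because $\Phi$ and the columns of $\Psi$ already lie in $\mathbb{L}_0^{1,2}$ by construction, identifying this expression with $\frac{1}{2}\|\tilde\eta_s-\Phi-\Psi\lambda\|_{H_s}^2$ reduces to the claim that $\tilde\eta_s\in\mathbb{L}_0^{1,2}$. I plan to establish this by approximating $\tilde\eta_s$ with the gradients of $\eta_r(x)\cdot\frac{1}{2}\log(m_s(x)+\delta)$ as $\delta\downarrow 0$ and $r\uparrow\infty$, with $\eta_r$ a smooth cutoff of the ball of radius $r$, and controlling the approximation error in the weighted $L^2$-norm via $\sqrt{m_s}\in\mathbb{W}^{1,2}$ together with integrability of $m_s$ at infinity. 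This weighted Sobolev density step is the main technical obstacle. Finiteness of the inner sup at $\lambda=0$ together with $\tilde\eta_s\in H_s$ then also forces $\Phi_{s,m_s(\cdot),X_s}\in H_s$ for a.e.~$s$.

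With the inner sup in closed form, I expand $\|\tilde\eta_s-\Phi-\Psi\lambda\|_{H_s}^2$ and invoke \eqref{eq:151}, extended by density from $Dp$ to arbitrary elements of $\mathbb{L}_0^{1,2}$, to identify $\int_{\R^l}\Psi^T c_s\Psi\,m_s\,dx=\int_{\R^l}\Psi^T G_s^T m_s\,dx$ and $\int_{\R^l}\Psi^T c_s(\tilde\eta_s-\Phi)m_s\,dx=\int_{\R^l}G_s(\tilde\eta_s-\Phi)m_s\,dx$. The expression in \eqref{eq:77} then collapses to $\frac{1}{2}\|\tilde\eta_s-\Phi\|_{H_s}^2+\sup_{\lambda\in\R^n}\bigl[\lambda^T V_s-\frac{1}{2}\lambda^T\bar{Q}_s\lambda\bigr]$, with $V_s$ and $\bar{Q}_s$ the vector and matrix appearing in \eqref{eq:22}. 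Finiteness of this supremum for a.e.~$s$ forces $V_s\in\text{Range}(\bar{Q}_s)$, in which case the sup equals $\frac{1}{2}\|V_s\|_{\bar{Q}_s^\oplus}^2$, and integration over $s$ gives \eqref{eq:22}. Joint measurability of $\Phi_{s,m_s(\cdot),X_s}(x)$ and $\Psi_{s,m_s(\cdot),X_s}(x)$ in $(s,x)$ follows from continuity of the coefficients in Condition~\ref{con:coeff}, measurability of $m_s(x)$ in $(s,x)$, and the weak Dirichlet problem characterization of the projection on each ball $S_i$, which the projective limit inherits.
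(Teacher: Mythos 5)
Your overall strategy (compute the inner supremum over $h$ as a Hilbert-space projection in $\mathbb{L}^2(\R^l,\R^l,c_s(X_s,\cdot),m_s\,dx)$, then do the finite-dimensional quadratic supremum over $\lambda$ with the range condition and the pseudo-inverse) is the same as the paper's, which carries out exactly this computation in the proof of Theorem~\ref{le:vid} (see \eqref{eq:85}--\eqref{eq:55}). However, the step you yourself single out as the main obstacle is a genuine gap, and your sketch does not close it. To replace $\frac{1}{2}\norm{\Pi(\tilde\eta_s-\Phi-\Psi\lambda)}^2$ by $\frac{1}{2}\norm{\tilde\eta_s-\Phi-\Psi\lambda}^2$ you must show $\tilde\eta_s=D_xm_s/(2m_s)\in\mathbb{L}_0^{1,2}(\R^l,\R^l,c_s(X_s,\cdot),m_s\,dx)$, i.e.\ that it is approximable by gradients of $\mathbb{C}_0^\infty$-functions \emph{in the weighted norm}. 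Your proposed approximants $\eta_r(x)\,\tfrac12\ln(m_s(x)+\delta)$ are compactly supported $\mathbb{W}^{1,2}$-functions, but they are not smooth, and mollifying them converges in unweighted $\mathbb{L}^2_{\text{loc}}$ only; since $m_s$ is merely in $\mathbb{W}^{1,1}_{\text{loc}}$ with $\sqrt{m_s}\in\mathbb{W}^{1,2}$ (in particular possibly unbounded and vanishing), convergence in $\mathbb{L}^2(m_s\,dx)$ does not follow. This is precisely the ``$\mathbb H^{1,2}=\mathbb W^{1,2}$'' problem, which the paper does not handle by a direct cutoff but by Lemma~\ref{le:spaces} (the R\"ockner--Zhang argument, truncating in the value of $\ln m$ and using closability of the Dirichlet form), combined with the identification of $\mathbb{L}_0^{1,2}$ with the closure of smooth gradients taken from the weighted space (Heinonen--Kilpel\"ainen--Martio), as invoked at the end of Lemma~\ref{le:density}. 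Without an argument of this strength your identity \eqref{eq:22} is not established.

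Two further points. First, your rewriting of the linear term via \eqref{eq:79} presupposes that $\Phi_{s,m_s(\cdot),X_s}$ is an honest element of the weighted $\mathbb{L}^2$ space, whereas a priori it is only a projective-limit object (the paper explicitly warns that $a_s(X_s,\cdot)$ need not be square integrable against $m_s\,dx$); deriving $\Phi\in H_s$ afterwards from finiteness at $\lambda=0$ makes the argument circular as written. The paper avoids this by first representing the bounded linear functional through the Riesz theorem (the function $\psi$ of Lemma~\ref{le:density}) and only then identifying $\tilde\eta_s-\psi$ with $\Phi$; your version can be repaired by working with the ball projections $\phi_i$ and the monotonicity $\norm{\phi_j}^2-\norm{\phi_i}^2=\norm{\phi_j-\phi_i}^2$, but that repair needs to be spelled out. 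Second, the joint measurability of $\Phi_{s,m_s(\cdot),X_s}(x)$ and $\Psi_{s,m_s(\cdot),X_s}(x)$ cannot be read off the Dirichlet-problem characterization of the projections, because the paper states that characterization only when $m$ is locally bounded away from zero, which is not available here; the paper instead uses a measurable-selection theorem (Aubin--Frankowska, Corollary 8.2.13) applied to the measurably varying closed subspaces, and some such device is needed in your argument as well.
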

\begin{remark}
If $m(x)$ is an element of
$\mathbb{W}^{1,1}_{\text{loc}}(\R^l)$\,,
then  $Dm(x)=0$ for almost all $x$ on the
set where $m(x)=0$\,, so
we will assume throughout that 
 $Dm(x)/m(x)=0$ a.e. on that set.
\end{remark}

\begin{remark}
  The expression on the righthand side of
 \eqref{eq:22} serves both the case where
  $C_t(u,x)=0$ for all $(t,u,x)$ 
and $A_t(u,x)$ is locally Lipschitz continuous in  $u$ locally uniformly in
$t$ and uniformly in $x$, and  the case 
where $C_t(u,x)-G_t(u,x)c_t(u,x)^{-1}G_t(u,x)^T$
 is positive definite uniformly in $x$ and locally
uniformly in $(t,u)$\,.
In each of the two cases, however,
it simplifies as follows. If $C_t(u,x)=0$ for all $(t,u,x)$
and $A_t(u,x)$ is locally Lipschitz continuous in  $u$ locally uniformly in
$t$ and uniformly in $x$\,, then
$Q_{s,m_s(\cdot)}(u,x)=0$ for all $(s,u,x)$\,, so,
in order for $\mathbf{I}(X,\mu)$ to be finite,
it is necessary  that, a.e.,
\begin{equation*}
\dot{X}_s=\int_{\R^l}A_s(X_s,x)m_s(x)\,dx
\end{equation*}
so that 
\begin{equation}
  \label{eq:20}
  \mathbf{I}(X,\mu)  =\mathbf{I}_0(X_0)+\frac{1}{2}\,
\int_0^\infty\int_{\R^l}\norm{
\frac{D_xm_s(x)}{2m_s(x)}-\Phi_{s,m_s(\cdot),X_s}(x)}^2_{c_s(X_s,x)}m_s(x)\,dx\,ds  \,.
\end{equation}
If $C_t(u,x)-G_t(u,x)c_t(u,x)^{-1}G_t(u,x)^T$ is
positive definite 
uniformly in $x$ and locally
uniformly in $(t,u)$\,,
then the matrix 
$\int_{\R^l}Q_{s,m_s(\cdot)}(X_s,x)m_s(x)\,dx$ is invertible, so
its pseudo-inverse is the same as the inverse and the range condition
in the statement of Proposition \ref{cor:ldp} is supefluous.
\end{remark}
\begin{remark}\label{re:square}
By    Theorem \ref{le:vid}, in order  for 
$\mathbf I(X,\mu)$ 
  to be finite it is necessary that
$\int_0^t  \int_{\R^l}\bl(\abs{D_xm_s(x)}^2/m_s(x)\,dx\,ds+
\abs{\Phi_{s,m_s(\cdot),X_s}(x)}^2\br)\,dx\,ds<\infty$ 
 for all
$t\in\R_+$\,.
\end{remark}
\begin{remark}
  The large deviation function in \eqref{eq:22}
 can  also be written  as
\begin{equation*}
  \mathbf{I}(X,\mu)=\mathbf{I}_0(X_0)+
\frac{1}{2}\,\int_0^\infty\int_{\R^l}
\abs{B_s(X_s,x)^T
\hat\lambda_s+b_s(X_s,x)^T\hat
g_s(x)}^2\,m_s(x)\,dx\,ds,
\end{equation*}
where the pair $(\hat\lambda_s,\hat
g_s(x))$ attains
the supremum in \eqref{eq:77},
with $\hat g_s$ assuming the role of $Dh\,$\,: 
\begin{align*}
\hat \lambda_s&=(\int_{\R^l}Q_{s,m_s(\cdot)}(X_s,x)m_s(x)\,dx)^\oplus\bl(
\dot{X}_s-\int_{\R^l}A_s(X_s,x)m_s(x)\,dx\\&-
\int_{\R^l}
G_s(X_s,x)\,\bl(\frac{D_xm_s(x)}{2m_s(x)}-
\Phi_{s,m_s(\cdot),X_s}(x)\br)\,
m_s(x)\,dx\br)
\intertext{ and }
  \hat g_s(x)&=
\frac{D_xm_s(x)}{2m_s(x)}-\Phi_{s,m_s(\cdot),X_s}(x)
-\Psi_{s,m_s(\cdot),X_s}(x)\hat \lambda_s\,.
\end{align*}


\end{remark}


  In the symmetric case where $c_t(u,x)^{-1}\bl(
2a_t(u,x)-\text{div}_x\,c_t(u,x)\br)\linebreak=D_x\hat m_t(u,x)/\hat
m_t(u,x)$\,, 
for some positive probability density $\hat m_t(u,\cdot)$ from
$\mathbb W^{1,1}_{\text{loc}}(\R^l)$\,, one can identify
$\Phi_{t,m_t(\cdot),u}$
with $D_x\hat m_t(u,\cdot)/(2\hat m_t(u,\cdot))$\,.
(We note that
 the diffusion process with the infinitesimal
drift coefficient $a_t(u,\cdot)$ and diffusion matrix $c_t(u,\cdot)$
has $\hat m_t(u,\cdot)$ as an  invariant density.)
One can then write the  large deviation function in \eqref{eq:20} by using
 a Dirichlet form: 
\begin{equation*}
\mathbf{I}(X,\mu)=\mathbf{I}_0(X_0)+
\frac{1}{2}\,
\int_0^\infty\int_{\R^l}
\norm{D_x\sqrt{\frac{m_s(x)}{\hat m_s(X_s,x)}}}^2_{c_s(X_s,x)}
\,\hat m_s(X_s,x)\,dx\,ds  \,,
\end{equation*}
provided  $D_x\hat m_t(u,\cdot)/\hat m_t(u,\cdot)\in
\mathbb{L}^{2}(\R^l,\R^l,c_t(x),m_t(x)\,dx)$\,.

Let us  look at a one-dimensional example:
\begin{align*}
   dX^\epsilon_t&=A_t(X^\epsilon_t,x^\epsilon_{t})\,dt
+\sqrt{\epsilon}\, 
B_t(X^\epsilon_t,x^\epsilon_{t})\,dW^\epsilon_{1,t}\,,\\
dx^\epsilon_t&=\frac{1}{\epsilon}\,a_t(X^\epsilon_t,x^\epsilon_t)\,dt+
\frac{1}{\sqrt{\epsilon}}\, b_t(X^\epsilon_t,x^\epsilon_t)\,  
dW^\epsilon_{2,t}\,,
\end{align*}
where all  coefficients are scalars and $W^\epsilon_{1,t}$ and 
$W^\epsilon_{2,t}$ are  one-dimensional
standard Wiener processes. Assuming that 
$\mathbf{E}^\epsilon(W^\epsilon_{1,t}W^\epsilon_{2,t})= \rho t$\,, where
 $\abs{\rho}<1$\,,
this setup can be cast as (\ref{eq:1}) and (\ref{eq:5}) with
$W_t^\epsilon=(W^\epsilon_{1,t},W^\epsilon_{3,t})^T$\,,
$B^\epsilon_t(u,x)=( B_t(u,x),0)$\,,
and $b^\epsilon_t(u,x)=(\rho \, b_t(u,x),
\sqrt{1-\rho^2}\, b_t(u,x))$\,, where 
$W^\epsilon_{3,t}$ represents a standard one-dimensional Wiener process that
is independent of $W^\epsilon_{1,t}$\,.
If $ B_t(u,x)$ is bounded away from zero,
the large
deviation function in \eqref{eq:22} takes the form
\begin{multline*}
  \mathbf{I}(X,\mu)
=\mathbf{I}_0(X_0)+\int_0^\infty
\Bl(\frac{1}{8}\,
\int_{\R}
\abs{\frac{D_xm_s(x)}{m_s(x)}-\frac{
D_x\hat{m}_s(X_s,x)}{\hat m_s(X_s,x)}
}^2\, b_s(X_s,x)^2\,m_s(x)\,dx\\
+\frac{1}{2(1-\rho^2)}\,\dfrac{1}{\int_{\R} B_s(X_s,x)^2m_s(x)\,dx}
\abs{\dot{X}_s-\int_{\R}A_s(X_s,x)m_s(x)\,dx\\-
\frac{\rho}{2}\,
\,\int_{\R}
  B_s(X_s,x)\, b_s(X_s,x)\,\bl(
\frac{D_xm_s(x)}{m_s(x)}-
\frac{D_x\hat{m}_s(X_s,x)}{\hat m_s(X_s,x)}\br)\,
m_s(x)\,dx}^2
\Br)\,ds\,.
\end{multline*}
If $ B_t(u,x)=0$\,, then according to \eqref{eq:20},
\begin{align*}
\mathbf{I}(X,\mu)  =\mathbf{I}_0(X_0)+
\frac{1}{8}\,\int_0^\infty
\int_{\R}
\abs{\frac{D_xm_s(x)}{m_s(x)}
-\frac{D_x\hat{m}_s(X_s,x)}{\hat m_s(X_s,x)}
}^2\, b_s(X_s,x)^2\,&m_s(x)\,dx\,ds\,,
\end{align*}
provided  $\dot{X}_s=\int_{\R^l}
A_s(X_s,x)\,m_s(x)\,dx$  a.e. 
For the special case that $A_s(u,x)$ and
$ B_s(u,x)$ do not depend on $s$, $a_s(u,x)$ and $ b_s(u,x)$ do
not depend on either $s$ or $u$, and $\rho=0$\,, 
this large deviation function appears in 
Liptser \cite{Lip96}.

We now project to obtain an LDP for $X^\epsilon$\,.
The device of Lemma \ref{le:sup} and the minimax theorem,
 see, e.g., 
Theorem 7 on p.319  in Aubin and Ekeland
\cite{AubEke84}, yield the following expression for
$\inf_\mu\mathbf{I}(X,\mu)$\,. 
\begin{corollary}
  \label{the:ave}
Under the hypotheses of Theorem 
\ref{the:ldp},
the net $X^\epsilon$ obeys the LDP in $\bC(\R_+,
\R^n)$ for rate
$1/\epsilon$ as $\epsilon\to0$ with large deviation  function
$\mathbf{I}^X$ defined as follows.
If  function 
$X=(X_s,\,s\in\R_+)$ from $\bC(\R_+,\R^n)$ 
 is absolutely continuous w.r.t.  Lebesgue measure on $\R_+$\,,   then
\begin{multline*}
  \mathbf{I}^X(X)=\mathbf{I}_0(X_0)+
\int_0^\infty
\sup_{\lambda\in\R^n}\Bl(
\lambda^T\dot{X}_s-\sup_{m\in \mathbb{P}(\R^l)}\Bl(
\lambda^T\int_{\R^l} A_s(X_s,x)\,m(x)\,dx
+\frac{1}{2}\, \norm{\lambda}_{\int_{\R^l}C_s(X_s,x)\,m(x)\,dx}^2
\\-\sup_{h\in \mathbb{C}_0^1(\R^l)}\int_{\R^l}\Bl( D  h(x)^T \bl(
\frac{1}{2}\,\text{div}_x\,\bl(c_s(X_s,x)m(x)\br)
-\bl(
a_s(X_s,x)+  G_s(X_s,x)^T\lambda\br)m(x)
\br)\\
-\frac{1}{2}\,
\norm{D  h(x)}_{c_s(X_s,x)}^2\,m(x)\Br)\,dx\Br)\Br)\,ds\,.  \end{multline*}
Otherwise, $\mathbf{I}^X(X)=\infty$.
\end{corollary}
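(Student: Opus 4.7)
The plan is to derive Corollary~\ref{the:ave} from Theorem~\ref{the:ldp} by the contraction principle, followed by two successive variational interchanges that reduce $\inf_\mu\mathbf{I}(X,\mu)$ to the formula in the statement. The coordinate projection $\pi\colon\mathbb{C}(\R_+,\R^n)\times\bC_\uparrow(\R_+,\mathbb{M}(\R^l))\to\mathbb{C}(\R_+,\R^n)$, $\pi(X,\mu)=X$, is continuous, so Theorem~\ref{the:ldp} and the contraction principle yield the LDP for $X^\epsilon$ with large deviation function $\mathbf{I}^X(X)=\mathbf{I}_0(X_0)+\inf_{\mu}\mathbf{I}'(X,\mu)$, the infimum being restricted to $\mu$ with $(X,\mu)\in\Gamma$, i.e.\ $\mu(ds,dx)=m_s(x)\,dx\,ds$ with $m_s\in\mathbb{P}(\R^l)$. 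Compactness of the sublevel sets of $\mathbf{I}^X$ is automatic, as they are continuous images of the compact sublevel sets of $\mathbf{I}$.

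Denote by $F_s(\lambda,m)$ the $s$-integrand of \eqref{eq:77}, so that $\mathbf{I}'(X,\mu)=\int_0^\infty\sup_\lambda F_s(\lambda,m_s)\,ds$. First, using the device of Lemma~\ref{le:sup} and a measurable-selection argument, move the infimum over $\mu$ inside the time integral:
\[
\inf_{\mu}\int_0^\infty\sup_\lambda F_s(\lambda,m_s)\,ds\;=\;\int_0^\infty\inf_{m\in\mathbb{P}(\R^l)}\sup_{\lambda\in\R^n} F_s(\lambda,m)\,ds.
\]
Next, for each fixed $s$, apply the minimax theorem (Theorem~7, p.~319 in Aubin--Ekeland~\cite{AubEke84}) to swap $\inf_m$ and $\sup_\lambda$. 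The required structure becomes transparent after writing $\sup_\lambda F_s(\lambda,m)=\sup_{(\lambda,h)}\tilde F_s(\lambda,h,m)$, where $\tilde F_s$ denotes the common integrand: $\tilde F_s$ is affine (hence convex) in $m$ for fixed $(\lambda,h)$, and jointly concave in $(\lambda,h)$ for fixed $m$, because its quadratic part rearranges as
\[
-\tfrac{1}{2}\int_{\R^l}\bl\|B_s(X_s,x)^T\lambda+b_s(X_s,x)^T Dh(x)\br\|^2\,m(x)\,dx\le 0,
\]
using $C_s=B_sB_s^T$, $c_s=b_sb_s^T$, $G_s=B_sb_s^T$. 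The coercive quadratic terms $-\tfrac12\|\lambda\|^2_{\int C_sm}$ and $-\tfrac12\int\|Dh\|^2_{c_s}m\,dx$ supply the level-boundedness required by the minimax theorem on sublevel sets of $\mathbf{I}'$, which are compact in a weighted-Sobolev sense by Theorem~\ref{le:vid} and Remark~\ref{re:square}. The resulting expression $\sup_\lambda\inf_m F_s(\lambda,m)$ coincides with the stated integrand of $\mathbf{I}^X$ via the algebraic identity $-\sup_m(-\,\cdot\,)=\inf_m(\,\cdot\,)$.

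The main obstacle is the justification of the minimax swap: the set $\mathbb{P}(\R^l)$ is not compact in its natural weak topology, and $F_s(\lambda,m)$ is not continuous in $m$, since the inner $\sup_h$ involves $\text{div}_x(c_sm)$ and thus interacts with the Sobolev regularity of $m$. The argument must therefore be confined to a priori regular and tight sublevel sets of $\mathbf{I}'$, where lower semicontinuity and coercivity in $m$ can be verified by means of the square-integrability estimates of Theorem~\ref{le:vid} and Proposition~\ref{cor:ldp}; only on those sets may Aubin--Ekeland be applied. A secondary difficulty is the measurable-selection step underlying Lemma~\ref{le:sup}, which requires that the function $s\mapsto\inf_m\sup_\lambda F_s(\lambda,m)$ be suitably measurable and that nearly-optimal $m_s$ may be chosen measurably in $s$.
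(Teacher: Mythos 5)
Your proposal follows essentially the same route as the paper: the paper obtains Corollary \ref{the:ave} by projecting the LDP of Theorem \ref{the:ldp} onto the first coordinate and then computing $\inf_\mu\mathbf{I}(X,\mu)$ via precisely the two tools you invoke, namely the device of Lemma \ref{le:sup} (to move the infimum over $\mu$ inside the time integral) and the minimax theorem of Aubin and Ekeland \cite{AubEke84} (to interchange $\inf_m$ and $\sup_\lambda$), your rewriting of the quadratic part as $-\tfrac12\int_{\R^l}\abs{B_s(X_s,x)^T\lambda+b_s(X_s,x)^T Dh(x)}^2 m(x)\,dx$ being the natural way to see the required concavity. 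Your attempt is correct at the level of detail the paper itself provides.
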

If $X^\epsilon$ is decoupled from $x^\epsilon$\,, i.e., $A_t(u,x)$
and $B_t(u,x)$ do not depend on $x$\,, then Corollary \ref{cor:ldp}
yields the  LDP  for  It\^o processes with  small diffusions, cf. Freidlin
and Wentzell \cite{wf2}: 
with
$A_s(u,x)=A_s(u)$\,, $B_s(u,x)=B_s(u)$\,, and $C_s(u)=B_s(u)B_s(u)^T$\,,
\begin{equation*}
  \mathbf{I}^X(X)=\mathbf{I}_0(X_0)+
\int_0^\infty
\frac{1}{2}\,\norm{\dot{X}_s- A_s(X_s)}^2_{C_s(X_s)^\oplus}\,ds\,,
\end{equation*}
provided $\dot{X}_s- A_s(X_s)$ belongs to the range of $C_s(X_s)$
a.e. and $\mathbf{I}^X(X)=\infty$\,, otherwise.

If one projects the LDP of Theorem~\ref{the:ldp} on the second
variable, then an LDP for $\mu^\epsilon$ is obtained. In particular,
if $x^\epsilon$ is decoupled from $X^\epsilon$ so that
 $a_t(u,x)$ and $b_t(u,x)$ do not depend on
$u$\,, we have the following results
on the large deviations of the empirical processes and empirical
measures of 
diffusion processes.
\begin{corollary}
  \label{cor:emp}
Suppose that
\begin{equation*}
  d\tilde x^\epsilon_t=\frac{1}{\epsilon}\,\tilde 
a^\epsilon_t(\tilde x^\epsilon_t)\,dt+
\frac{1}{\sqrt{\epsilon}}\,\tilde b^\epsilon_t(\tilde x^\epsilon_t)\,  
d\tilde W^\epsilon_t\,,
\end{equation*}
where $\tilde x^\epsilon_t\in \R^l$\,,
$\tilde a^\epsilon_t(x)\in \R^l$\,,  $\tilde
b^\epsilon_t(x)\in\R^{l\times k}$\,, and $\tilde
W^\epsilon_t\in\R^k$\,,
with the coefficients being locally bounded.
Assume that, for all $t\in\R_+$\,,
\begin{align*}
  \limsup_{\epsilon\to0}\sup_{s\in[0,t]}
\sup_{x\in\R^l}\norm{\tilde b^\epsilon_s(x)\tilde b^\epsilon_s(x)^T}&<\infty\,,\\
\lim_{M\to\infty}
\limsup_{\epsilon\to0}\sup_{s\in[0,t]}\sup_{x\in\R^l:\,\abs{x}\ge M}
\tilde a^\epsilon_s(x)^T\frac{x}{\abs{x}}
&=-\infty\,.  
\end{align*}
If, for all $t\in\R_+$ and all $N\in\R_+$\,,
\begin{align*}
    \lim_{\epsilon\to0}\sup_{s\in[0,t]}
\sup_{x\in\R^l:\,\abs{x}\le N}\bl(
\abs{\tilde a^\epsilon_s(x)-\tilde a_s(x)}+
\norm{\tilde b^\epsilon_s(x)-\tilde b_s(x)}\br)=0,
\end{align*}
the matrix $\tilde c_t(x)=
\tilde b_t(x)\tilde b_t(x)^T$   is  positive definite uniformly in $x$
and locally uniformly in $t$\,, is of
class $\mathbb C^1$ in $x$\,, with the first partial derivatives being
Lipschitz continuous and bounded in $x$
locally uniformly in $t$\,,
 $\tilde a_t(x)$ is Lipschitz continuous in $x$ locally uniformly in
$t$\,, 
$\sup_{s\in[0,t]}
\sup_{x\in\R^l}\norm{\tilde c_s(x)}<\infty$\,,
$ \limsup_{\abs{x}\to\infty}
\sup_{s\in[0,t]}\tilde a_s(x)^Tx/\abs{x}^2
<0$ for all $t\in\R_+$\,,
and the net $x^\epsilon_0$ is exponentially tight in
$\R^l$ for rate $1/\epsilon$ as $\epsilon\to0$\,, then
the net $\tilde\mu^\epsilon$\,, where
$\tilde\mu^\epsilon_t(dx)=\int_0^t\ind_{  dx}(\tilde x^\epsilon_s)
\,ds$\,,
 obeys the LDP in $\mathbb{C}_\uparrow(\R_+,\mathbb{M}(\R^l))$ for rate
$1/\epsilon$ as $\epsilon\to0$ with large deviation  function
$\mathbf{J}$ defined as follows.

If  function $\mu=(\mu_s,\,s\in\R_+)$ from
$\bC_\uparrow(\R_+,\mathbb{M}(\R^l))$\,,
when considered as a measure on $\R_+\times\R^l$\,, is
absolutely continuous w.r.t.  Lebesgue measure on $\R_+\times\R^l$\,, i.e.,
 $\mu(ds,dx)=m_s(x)\,dx\,ds$, 
   $m_s(x)$\,,   as a function of $x$\,, 
 belongs to $\mathbb{P}(\R^l)$ for almost all $s$\,,  
and $\tilde\Phi_{s,m_s(\cdot)}$\,, which represents
$\Pi_{\tilde c_s(\cdot),m_s(\cdot)}
\bl(\tilde c_s(\cdot)^{-1}\bl(
\tilde a_s(\cdot)-\text{div}_x\,
\tilde c_s(\cdot)/2\br)\br)$\,, 
is an element of $\mathbb{L}^2(\R^l,\R^l,\tilde c_s(x),m_s(x)\,dx)$ for
almost all $s$\,, then
\begin{multline*}
  \mathbf{J}(\mu)=
\int_0^\infty\sup_{h\in \mathbb{C}_0^1(\R^l)}\int_{\R^l}\Bl( D  h(x)^T \bl(
\frac{1}{2}\,\text{div}_x\,\bl(\tilde c_s(x)m_s(x)\br)
-\tilde a_s(x)\,m_s(x)
\br)
-\frac{1}{2}\,
\norm{D  h(x)}_{  \tilde c_s(x)}^2\,m_s(x)\Br)\,dx\Br)\,ds
\\=\frac{1}{2}\,
\int_0^\infty\int_{\R^l}\norm{
\frac{D_xm_s(x)}{2m_s(x)}
-\tilde\Phi_{s,m_s(\cdot)}(x)}^2_{\tilde c_s(x)}m_s(x)\,dx\,ds\,.
\end{multline*}

Otherwise, $\mathbf{J}(\mu)=\infty$.

\end{corollary}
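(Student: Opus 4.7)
The plan is to deduce Corollary \ref{cor:emp} from Theorem \ref{the:ldp} by augmenting the fast diffusion $\tilde x^\epsilon$ with a trivial slow component and then projecting. Specifically, take $n$ to be any positive integer, set $A^\epsilon_t \equiv 0$ and $B^\epsilon_t \equiv 0$, fix the (deterministic) initial condition $X^\epsilon_0=0$, and identify $a^\epsilon_t(u,x)=\tilde a^\epsilon_t(x)$, $b^\epsilon_t(u,x)=\tilde b^\epsilon_t(x)$. Then $X^\epsilon_t \equiv 0$ while $x^\epsilon = \tilde x^\epsilon$, and $\mu^\epsilon = \tilde\mu^\epsilon$. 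The net $X^\epsilon_0$ satisfies the LDP with $\mathbf{I}_0(0)=0$ and $\mathbf{I}_0=\infty$ elsewhere. Theorem \ref{the:ldp} will then give an LDP for $(X^\epsilon,\mu^\epsilon)=(0,\tilde\mu^\epsilon)$, and the contraction principle applied to the continuous projection onto the second coordinate will yield the LDP for $\tilde\mu^\epsilon$ with $\mathbf{J}(\mu)=\inf_X \mathbf{I}(X,\mu)$.

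Next, I verify that the hypotheses of Theorem \ref{the:ldp} hold. The bounds \eqref{eq:10a}, \eqref{eq:7}, \eqref{eq:8} are trivial since $A^\epsilon\equiv 0$ and $C^\epsilon\equiv 0$, while \eqref{eq:10} is the hypothesis on $\tilde b^\epsilon_s\tilde b^\epsilon_s{}^{T}$. The stability \eqref{eq:9} is the assumed drift condition on $\tilde a^\epsilon$, and \eqref{eq:116} follows from $\limsup_{|x|\to\infty}\sup_s \tilde a_s(x)^{T}x/|x|^2<0$. In Condition \ref{con:coeff}, the requirements on $A_t$ and $B_t$ are vacuous, and the requirements on $a_t$, $b_t$, $c_t$ translate directly from the hypotheses on $\tilde a_t$, $\tilde b_t$, $\tilde c_t$ (the uniform-in-$x$ continuity in $u$ is trivial since there is no $u$-dependence). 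Condition \ref{con:positive} holds through its first alternative, since $C_t\equiv 0$ and $A_t\equiv 0$ is trivially locally Lipschitz. For Condition \ref{con:int_conv}, note that the stronger stability \eqref{eq:193} is precisely what is assumed; hence Lemma \ref{le:gradient} provides the square integrability \eqref{eq:19} whenever $\mathbf{I}'(X,\mu)<\infty$, and, as remarked immediately after \eqref{eq:19}, this combined with Condition \ref{con:coeff} implies Condition \ref{con:int_conv}. Finally, the convergence \eqref{eq:14} reduces to the assumed convergence of $\tilde a^\epsilon_s$ and $\tilde b^\epsilon_s$.

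To simplify the rate function, observe that for $X$ with $\mathbf{I}(X,\mu)<\infty$ one must have $X_0=0$; moreover, from \eqref{eq:77}, since $A_s\equiv 0$, $C_s\equiv 0$ and $G_s\equiv 0$, the supremum over $\lambda\in\R^n$ of $\lambda^{T}\dot X_s$ is finite only when $\dot X_s=0$ a.e., so $X\equiv 0$. For such $X$, the $\lambda$-dependence in \eqref{eq:77} drops out and the integrand becomes
\begin{equation*}
\sup_{h\in\mathbb{C}_0^1(\R^l)}\int_{\R^l}\Bl(Dh(x)^{T}\bl(\tfrac{1}{2}\,\text{div}_x(\tilde c_s(x)m_s(x))-\tilde a_s(x)m_s(x)\br)-\tfrac{1}{2}\,\norm{Dh(x)}^2_{\tilde c_s(x)}m_s(x)\Br)\,dx,
\end{equation*}
which yields the first form of $\mathbf{J}$. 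For the equivalent form, apply Proposition \ref{cor:ldp} with $X\equiv 0$: since $C_s=G_s=0$, also $Q_{s,m_s(\cdot)}\equiv 0$, so the vector in the range-condition is zero (trivially in the range of the zero matrix) and the $\|\cdot\|^2_{Q^\oplus}$ contribution in \eqref{eq:22} vanishes, leaving exactly the half-Dirichlet form
\begin{equation*}
\tfrac{1}{2}\,\int_0^\infty\int_{\R^l}\norm{\frac{D_xm_s(x)}{2m_s(x)}-\tilde\Phi_{s,m_s(\cdot)}(x)}^2_{\tilde c_s(x)}m_s(x)\,dx\,ds.
\end{equation*}
The equality of the two expressions is in fact built into Proposition \ref{cor:ldp} in the general case.

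The only delicate step is Condition \ref{con:int_conv}: without the strong stability hypothesis \eqref{eq:193} one would have to establish the truncation limit \eqref{eq:125} directly, which is the reason the corollary imposes the stronger drift condition rather than the weaker \eqref{eq:116}. Given \eqref{eq:193}, the verification reduces to an invocation of Lemma \ref{le:gradient}, and the remaining checks are essentially bookkeeping.
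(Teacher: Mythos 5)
Your proposal is correct and follows essentially the same route as the paper: Corollary \ref{cor:emp} is obtained by specialising Theorem \ref{the:ldp} to the decoupled case with trivial slow dynamics ($A\equiv0$, $B\equiv0$, $X^\epsilon_0=0$), verifying Condition \ref{con:int_conv} via \eqref{eq:193} and Lemma \ref{le:gradient}, and projecting on the second coordinate, with the two forms of $\mathbf{J}$ identified through Proposition \ref{cor:ldp} (formula \eqref{eq:20}). The hypothesis checks and the observation that finiteness of the rate function forces $X\equiv0$ are exactly the bookkeeping the paper leaves implicit.
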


\begin{corollary}
  \label{co:emp_meas}
Suppose that
\begin{equation*} 
    dY_t=\breve a(Y_t)\,dt+\breve b(Y_t)\,d\breve W_t\,,\,Y_0=0,
\end{equation*}
where $ Y_t\in \R^l$\,,
$\breve a(x)\in \R^l$\,,  $
\breve b(x)\in\R^{l\times k}$\,, and $\breve W_t\in\R^k$\,, with the
coefficients being locally bounded.

If
the  matrix $\breve c(x)=
\breve b(x)\breve b(x)^T$   is  uniformly positive definite,
$  \norm{\breve c(x)}$ is bounded,
 $\breve c(\cdot)\in 
\mathbb{C}^1(\R^l,\R^{\l\times l})$\,, with Lipschitz continuous
bounded first partial
derivatives,  $\breve a(\cdot)$ is Lipschitz continuous,
 and $  \limsup_{\abs{x}\to\infty}
\breve a(x)^Tx/\abs{x}^2
<0\,,$
then the empirical measures $(1/t)\int_0^t\ind_{ dx}(Y_s)\,ds$
obey the LDP in $\mathbb{M}_1(\R^l)$ for rate $t$ as $t\to\infty$
 with the large deviation function
\begin{multline*}
    \breve{\mathbf{J}}(\mu)=
\sup_{h\in \mathbb{C}_0^1(\R^l)}\int_{\R^l}\Bl( D  h(x)^T \bl(
\frac{1}{2}\,\text{div}\,\bl(\breve c(x)m(x)\br)
- \breve a(x)\,m(x)\br)
-\frac{1}{2}\,
\norm{D  h(x)}_{\breve c(x)}^2\,m(x)\Br)\,dx\\=\frac{1}{2}\,
\int_{\R^l}\norm{\frac{Dm(x)}{2m(x)}-\breve \Phi_{m(\cdot)}(x)
}^2_{\breve c(x)}m(x)\,dx
\end{multline*}
provided  probability measure  $\mu$ on $\R^l$ has   density
 $m$\,, which 
 is an element of $\mathbb{P}(\R^l)$\,,
and $\breve\Phi_{m(\cdot)}=
\Pi_{\breve c(\cdot),m(\cdot)}
\bl(\breve c(\cdot)^{-1}\bl(
\breve a(\cdot)-\text{div}\,
\breve c(\cdot)/2\br)\br)$
is an element of $\mathbb{L}^2(\R^l,\R^l,\breve c(x),m(x)\,dx)$\,.
Otherwise, $\breve{\mathbf{J}}(\mu)=\infty$\,.
\end{corollary}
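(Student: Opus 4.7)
The plan is to deduce Corollary~\ref{co:emp_meas} from Corollary~\ref{cor:emp} by a time-rescaling of $Y$ followed by the contraction principle. First, set $\tilde x^\epsilon_s=Y_{s/\epsilon}$. By the scaling property of Brownian motion, $\tilde W^\epsilon_s=\sqrt{\epsilon}\,\breve W_{s/\epsilon}$ is a standard $\R^k$-valued Wiener process and
\begin{equation*}
d\tilde x^\epsilon_s=\frac{1}{\epsilon}\,\breve a(\tilde x^\epsilon_s)\,ds+\frac{1}{\sqrt\epsilon}\,\breve b(\tilde x^\epsilon_s)\,d\tilde W^\epsilon_s,\qquad \tilde x^\epsilon_0=0\,.
\end{equation*}
The time-independent choice $\tilde a^\epsilon_s=\tilde a_s=\breve a$ and $\tilde b^\epsilon_s=\tilde b_s=\breve b$ satisfies every hypothesis of Corollary~\ref{cor:emp}: the deterministic initial condition is trivially exponentially tight, and the stated regularity and nondegeneracy of $\breve a$, $\breve b$, $\breve c$ are automatically uniform in the (vacuous) time variable. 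Hence $\tilde\mu^\epsilon$ obeys the LDP in $\bC_\uparrow(\R_+,\mathbb{M}(\R^l))$ at rate $1/\epsilon$ with rate function $\mathbf{J}$.

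Second, the change of variables $u=s/\epsilon$ gives
\begin{equation*}
\tilde\mu^\epsilon_1(dx)=\int_0^1\ind_{dx}(Y_{s/\epsilon})\,ds=\frac{1}{T}\int_0^T\ind_{dx}(Y_u)\,du,\qquad T=1/\epsilon\,,
\end{equation*}
so the empirical measure of $Y$ on $[0,T]$ is exactly the time-$1$ evaluation of $\tilde\mu^\epsilon$. The projection $\pi_1:\mu\mapsto\mu_1$ from $\bC_\uparrow(\R_+,\mathbb{M}(\R^l))$ to $\mathbb{M}_1(\R^l)$ is continuous (the compact-open topology dominates evaluation at a fixed time, and $\mu_1(\R^l)=1$). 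By the contraction principle one therefore obtains the LDP in $\mathbb{M}_1(\R^l)$ at rate $T$ as $T\to\infty$ with
\begin{equation*}
\breve{\mathbf{J}}(\nu)=\inf\bl\{\mathbf{J}(\mu):\,\mu_1=\nu\br\}\,.
\end{equation*}

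Third, the infimum must be identified with the expression in the statement. For $\nu$ with density $m\in\mathbb{P}(\R^l)$ the constraint reads $\int_0^1m_s(x)\,ds=m(x)$. Write the integrand of $\mathbf J$ in its dual form
\begin{equation*}
F(m')=\sup_{h\in\mathbb{C}_0^1(\R^l)}\int_{\R^l}\Bl(Dh(x)^T\bl(\tfrac{1}{2}\,\text{div}(\breve c(x)m'(x))-\breve a(x)m'(x)\br)-\tfrac{1}{2}\,\norm{Dh(x)}^2_{\breve c(x)}m'(x)\Br)\,dx\,.
\end{equation*}
For each fixed $h$ the integrand is linear in $m'$ (the $Dm'$ appearing inside the divergence term is linear in $m'$), so $F$ is a supremum of affine functionals of $m'$, hence convex and nonnegative, with $F(0)=0$ when evaluated at $h=0$. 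Jensen's inequality applied to the constraint yields
\begin{equation*}
\int_0^1 F(m_s)\,ds\ge F\Bl(\int_0^1 m_s\,ds\Br)=F(m)\,,
\end{equation*}
so $\breve{\mathbf{J}}(\nu)\ge F(m)$. For the matching upper bound, take $m_s\equiv m$ on $[0,1]$ and $m_s\equiv\hat m$ for $s>1$, where $\hat m$ is the smooth strictly positive invariant density of $Y$ furnished by uniform ellipticity and the Lyapunov condition; $\hat m$ satisfies the stationary Fokker--Planck equation in weak form, which is precisely the statement $F(\hat m)=0$. The resulting $\mu(ds,dx)=m_s(x)\,dx\,ds$ is an element of $\bC_\uparrow(\R_+,\mathbb{M}(\R^l))$ with $\mu_1=\nu$ and $\mathbf{J}(\mu)=F(m)$. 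Therefore $\breve{\mathbf{J}}(\nu)=F(m)$, which is the first expression in the statement. The equivalent quadratic form involving $\breve\Phi_{m(\cdot)}$ is nothing but the identification already established in Corollary~\ref{cor:emp}, specialised to a time-constant density.

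The main obstacle is this third, identification, step: one must verify that the dual integrand really is an affine functional of $m$ (the only nontrivial point, resolved by integrating $Dh^T\text{div}(\breve c m)$ by parts), and then produce an admissible $\mu\in\bC_\uparrow$ realising equality in Jensen's inequality together with a cost-free extension to $s>1$ via the invariant density $\hat m$. The rescaling and application of the contraction principle are routine.
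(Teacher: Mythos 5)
Your proposal is correct and follows essentially the same route as the paper, whose derivation of Corollary~\ref{co:emp_meas} consists precisely of the rescaling $\epsilon=1/t$, $\tilde x^\epsilon_s=Y_{st}$, an appeal to Corollary~\ref{cor:emp}, and the (implicit) projection onto the time-one marginal. Your Jensen lower bound and the zero-cost extension by the invariant density simply spell out the contraction-principle identification that the paper leaves to the reader (with the remaining "otherwise infinite" case handled by the Riesz-representation argument already contained in Lemma~\ref{le:density}).
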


In order to derive Corollary~\ref{co:emp_meas} from
Corollary~\ref{cor:emp}, one takes $\epsilon=1/t$ and defines
$\tilde x^\epsilon_s=Y_{st}$\,.

One can thus write the large deviation function of Theorem
\ref{the:ldp} as
\begin{multline}
  \label{eq:94}
      \mathbf{I}(X,\mu)=\mathbf{I}_0(X_0)+
\int_0^\infty
\sup_{\lambda\in\R^n}\Bl(
\lambda^T\bl(\dot{X}_s-
\int_{\R^l} A_s(X_s,x)\,\nu_s(dx)\br)
-\frac{1}{2}\, \norm{\lambda}_{\int_{\R^l}C_s(X_s,x)\,\nu_s(dx)}^2
\\+\mathbf{J}^{s,X_s,\lambda}(\nu_s)\Br)\,ds\,,
\end{multline}
with $\nu_s(dx)=m_s(x)\,dx$\,, and
 the large deviation function of Corollary
\ref{the:ave} as
\begin{multline}
  \label{eq:77a}
    \mathbf{I}^X(X)=\mathbf{I}_0(X_0)+
\int_0^\infty
\sup_{\lambda\in\R^n}\Bl(
\lambda^T\dot{X}_s-\sup_{\nu\in \mathbb{M}_1(\R^l)}\Bl(
\lambda^T\int_{\R^l} A_s(X_s,x)\,\nu(dx)
+\frac{1}{2}\, \norm{\lambda}_{\int_{\R^l}C_s(X_s,x)\,\nu(dx)}^2\\
-\mathbf{J}^{s,X_s,\lambda}(\nu)\Br)\Br)\,ds\,, 
\end{multline}
where $\mathbf{J}^{s,u,\lambda}$ represents the large deviation function for
the empirical measures 
$\nu_t^{s,u,\lambda}(dx)=
(1/t)\int_0^t\ind_{ dx}(y^{s,u,\lambda}_{r})\,dr$ for rate $t$ 
as $t\to\infty$ and
\begin{equation*}
  dy^{s,u,\lambda}_t=\bl(a_s(u,y^{s,u,\lambda}_t)+
G_s(u,y^{s,u,\lambda}_t)^T\lambda\br)\,dt
+b_s(u,y^{s,u,\lambda}_t)\,dw_t\,, \;y^{s,u,\lambda}_0=0\,, 
\end{equation*}
$(w_t)$ being a
$k$-dimensional standard Wiener process. In particular,
if $G_t(u,x)=0$ so that the diffusions driving the slow and the
fast processes are virtually uncorrelated, then 
$\mathbf{J}^{s,u,\lambda}$ does not depend on $\lambda$ and by 
Corollary~\ref{cor:emp}, Corollary~\ref{co:emp_meas}, and \eqref{eq:94} 
the large deviation
function $\mathbf{I}(X,\mu)$ is the sum of the large deviation function
of the slow process, with the coefficients being averaged over the
''current'' empirical
measure of the fast variable, and of the large deviation function 
of the empirical process of the 
fast variable, with the coefficients ''frozen'' at the current value
of  the slow variable.

The first results on  large deviation
asymptotics for the system (\ref{eq:67})
in the setup of the averaging principle 
available in the literature  appear in
Freidlin \cite{Fre78}, see also
the exposition in  Freidlin and Wentzell \cite[Section 9 of Chapter
7]{wf2}.
Freidlin \cite{Fre78} 
considers
the equations
\begin{align*}
  \dot{x}^\epsilon_t&=b(x^\epsilon_t,y^\epsilon_t),\\
\dot{y}^\epsilon_t&=\frac{1}{\epsilon}\bl[B(x^\epsilon_t,y^\epsilon_t)+g(y^\epsilon_t)\br] +\frac{1}{\sqrt{\epsilon}}\,c(y^\epsilon_t)\dot{w}_t\,.
\end{align*}
It is assumed 
that the state space is a compact manifold.
A noncompact setting is considered by 
Veretennikov \cite{Ver94}.
 Veretennikov 
\cite{Ver13,Ver98b,Ver99}
allows the diffusion coefficient in the fast process to
depend on both variables:
\begin{align*}
  dX^\epsilon_t&=f(X^\epsilon_t,Y^\epsilon_t)\,dt,\\
dY^\epsilon_t&=\epsilon^{-2}B(X^\epsilon_t,Y^\epsilon_t)\,dt
+\epsilon^{-1}C(X^\epsilon_t,Y^\epsilon_t)\,dW_t\,.
\end{align*}
The state space of the fast process is a compact manifold.

 Veretennikov 
\cite{Ver_letter,Ver98a,Ver00} tackles the case where
 the slow process has a small diffusion term and
the state space of the fast process may be  noncompact  but
 the diffusion coefficient in the equation for the fast
process does not depend on the slow process so that
\begin{equation}
  \label{eq:178}
  \begin{split}
  dX_t^\epsilon&=f(X^\epsilon_t,Y^\epsilon_t)\,dt
+\epsilon\bl(\sigma_1(X^\epsilon_t,Y^\epsilon_t)\,dW^1_t
+\sigma_3(X^\epsilon_t,Y^\epsilon_t)\,dW^3_t\br),\\
\\
dY^\epsilon_t&=\epsilon^{-2}B(X^\epsilon_t,Y^\epsilon_t)\,dt
+\epsilon^{-1}(C_1(Y^\epsilon_t)\,dW_t^1+C_2(Y^\epsilon_t)\,dW_t^2)\,,
\end{split}
\end{equation}
where the Wiener processes are independent.
The stability condition on the slow process is similar to
\eqref{eq:9} and
\eqref{eq:116}.

In those papers, results on the LDP
 for the slow processes are obtained  in  the space of 
 continuous functions on the $[0,L]$
 interval
 endowed with uniform norm, where $L>0$\,. The large deviation rate functions  are of
 the form
 \begin{equation*}
      \mathbf{I}(X)=\int_0^L \sup_{\lambda}
\bl(\lambda^T\dot{X}_t-H(X_t,\lambda)\br)\,dt\,,
 \end{equation*}
provided $X_t,\,t\in[0,L],$ is an absolutely continuous function
with a suitable initial condition. Otherwise, 
$ \mathbf{I}(X)=\infty$\,.
Here, with the notation of \eqref{eq:178},
\begin{multline}
  \label{eq:32}
    H(u,\lambda)=\lim_{t\to\infty}\frac{1}{t}
\,\ln \mathbf{E}\exp\bl(\int_0^t \bl(\lambda^Tf(u,y^{u,\lambda}_s)+
\frac{1}{2}\,\lambda^T (\sigma_1\sigma_1^T(u,y^{u,\lambda}_s)
+\sigma_3\sigma_3^T(u,y^{u,\lambda}_s))\lambda\br)\,ds\br)\,,
\end{multline}
where 
\begin{multline*}
    dy^{u,\lambda}_t=\bl(B(u,y^{u,\lambda}_t)+C_1(y^{u,\lambda}_t)
\sigma_1(u,y^{u,\lambda}_t)^T
\lambda\br)\,dt
+(C_1(Y^{u,\lambda}_t)\,dW_t^1+C_2(Y^{u,\lambda}_t)\,dW_t^2)\,,
\;y^{u,\lambda}_0=0 \,.
\end{multline*}
 The existence of the limit in
(\ref{eq:32}) is proved by invoking the Frobenius theorem for compact positive
operators. 

Let us note that if one
assumes the LDP at rate $t$   as $t\to\infty$
 of the empirical measures
$\nu^{u,\lambda}_t(dx)=(1/t)\int_0^t\ind_{  dx}(y^{u,\lambda}_{s})
\,ds$
with large deviation rate function 
$\mathbf{J}^{u,\lambda}$\,,
 then, in view of
 Varadhan's lemma and \eqref{eq:32}, under suitable assumptions,
\begin{multline*}
  H(u,\lambda)=\sup_{\nu\in\mathbb{M}_1(\R^l)}
\bl(\int_{\R^l}\bl(\lambda^Tf(u,x)+
\frac{1}{2}\,\lambda^T
(\sigma_1\sigma_1^T(u,x)
+\sigma_3\sigma_3^T(u,x))
\lambda\br)\,\nu(dx)-\mathbf{J}^{u,\lambda}(\nu)\br)\,, 
\end{multline*}
which is consistent with  \eqref{eq:77a}.

Section 11.6 of
Feng and Kurtz \cite{FenKur06} is concerned with
 the process $X^\epsilon$
satisfying   equations \eqref{eq:67}.
Conditions for the LDP to hold are obtained. 
They 
 require the existence of functions with certain
properties and are not easily
translated into conditions on the
coefficients. When the authors give explicit 
conditions on the coefficients, 
they need, in particular,
  $b(u,x)$ not to depend on $u$ 
(see Lemma 11.60 on p.278). 
The large deviation rate function is identified as having the
 form
 \eqref{eq:77a} corresponding to the time-homogeneous setting,
 provided
$B(u,x)b(u,x)^T=0$ and certain
additional hypotheses hold (see Theorem 11.6.5 on p.282).
The authors choose not to pursue the setup of the averaging principle.



The LDP for the empirical measures of continuous-time Markov processes, such as
in Corollary~\ref{co:emp_meas},
is a well explored
subject, see  Donsker and Varadhan \cite{dv1,dv3}, 
Deuschel and Stroock \cite{DeuStr01}.
The canonical form of the large deviation rate function is
$\sup_{f} \int_{\R^l}-\mathcal{L}f/f\,d\mu$\,, where $\mathcal{L}$
represents the infinitesimal generator of the Markov process, see, e.g.,
Theorem 4.2.43 in Deuschel and Stroock \cite{DeuStr01}.
The form
in Corollary~\ref{co:emp_meas} follows by taking  $f(x)=e^{-h(x)}$\,.
G\"artner \cite{Gar77} and Veretennikov \cite{MR1191695}
characterise the large deviation functions via limits similar to
that in \eqref{eq:32}, the latter author allowing discontinuous
coefficients. Theorem 12.7 on p.291 of Feng and Kurtz~\cite{FenKur06} tackles 
associated empirical processes, cf. Corollary \ref{cor:emp}.

\section{Some generalities}
\label{sec:an-outline-approach}
This section contains general results on the LDP
 that underlie
the proof of Theorem 
\ref{the:ldp}, 
cf. Puhalskii \cite{Puh01}.
Let $\Sigma$ represent a directed set, let
 $\mathbf{P}_\sigma$\,, where $\sigma\in \Sigma$\,,
 represent a net of probability measures   on a
metric space $\mathbb{S}$ indexed with the elements of $\Sigma$\, and
let $r_\sigma$ represent an $\R_+$-valued function which tends to infinity
as $\sigma\in\Sigma$\,.
A $[0,\infty]$-valued function $\mathbf I$ on $\mathbb S$ is referred
to as a  large deviation function if the sets
$K_\delta=\{z\in\mathbb{S}:\,\mathbf I(z)\le \delta\}$ are compact for all $\delta\in\R_+$\,.
We say that the net  $\mathbf{P}_\sigma$ obeys the LDP
 with a  large deviation  function $\mathbf{I}$ for rate
$r_\sigma$ as $\sigma\in\Sigma$ if 
$  \liminf_{\sigma\in \Sigma}r^{-1}_\sigma\ln \mathbf{P}_\sigma(G)
\ge -\inf_{z\in G}\mathbf{I}(z)$ for all open sets $G\subset \mathbb{S}$
and $  \limsup_{\sigma\in \Sigma}r^{-1}_\sigma\ln \mathbf{P}_\sigma(F)
\le -\inf_{z\in F}\mathbf{I}(z)$
 for all closed sets $F\subset \mathbb{S}$.
We say that  $\mathbf{I}$  is a large deviation (LD) limit point of
$\mathbf{P}_\sigma$ for rate $r_\sigma$ 
if there exists a subsequence $\sigma_i\,,$ where $ i\in\N\,,$ such that 
$\mathbf{P}_{\sigma_i}$  satisfies the
LDP with $\mathbf I$ for rate $r_{\sigma_i}$ as $i\to\infty$\,.
We say that the net $\mathbf{P}_\sigma$
is sequentially 
 large deviation (LD) relatively  compact for rate $r_\sigma$ as
 $\sigma\in\Sigma$
 if any subsequence 
$\mathbf{P}_{\sigma_i}$ of $\mathbf{P}_\sigma$
 contains a further subsequence $\mathbf{P}_{\sigma_{i_j}}$ which
satisfies the LDP for rate $r_{\sigma_{i_j}}$ with some
 large deviation  function as
$j\to\infty$\,. We say that the net $\mathbf{P}_\sigma$ is
exponentially (or large deviation) tight for rate $r_\sigma$ as
$\sigma\in\Sigma$ if for arbitrary $\kappa>0$ there exists compact
$K\subset \mathbb{S}$ such that
$\limsup_{\sigma\in\Sigma}\mathbf{P}_\sigma(\mathbb S\setminus
K)^{1/r_\sigma}<\kappa$\,. We say that the net $\mathbf{P}_\sigma$ is
sequentially exponentially  tight for rate $r_\sigma$ as
$\sigma\in\Sigma$ if any subsequence $\mathbf{P}_{\sigma_i}$ 
 is
exponentially tight for rate $r_{\sigma_i}$ as $i\to\infty$\,.
We say that a net $Y_\sigma$ of random elements of $\mathbb S$ 
 obeys the
LDP, respectively, is sequentially  LD relatively  compact, respectively, is
exponentially tight, respectively, is sequentially exponentially tight if the
net of their laws has the indicated property.

The cornerstone of our approach is 
the next result (Puhalskii \cite{Puh91,puhwolf,Puh93,Puh01}, see also
 Feng and Kurtz \cite{FenKur06} and references therein).
\begin{theorem}
  \label{the:rel_comp}
If the net $\mathbf P_\sigma$ is sequentially 
exponentially tight for rate $r_\sigma$ as $\sigma\in\Sigma$\,, then
the net $\mathbf P_\sigma$ is sequentially 
LD relatively compact for rate $r_\sigma$ as $\sigma\in\Sigma$\,.
\end{theorem}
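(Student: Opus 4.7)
The plan is to follow the idempotent-probability construction of Puhalskii's earlier works cited in the statement. Fix an arbitrary subsequence of $\mathbf{P}_\sigma$; the task is to extract a further subsequence that obeys the LDP with some large deviation function. By sequential exponential tightness one may thin the subsequence, relabelling it $\mathbf{P}_{\sigma_i}$, and choose an increasing sequence of compact sets $K_n\subset\mathbb{S}$ with $\limsup_i \mathbf{P}_{\sigma_i}(\mathbb{S}\setminus K_n)^{1/r_{\sigma_i}}\le e^{-n}$. The set $\mathbb{S}_0=\bigcup_n K_n$ is $\sigma$-compact and hence separable in the subspace topology, so I fix a countable collection $\mathcal{B}$ of open balls centred at a countable dense subset of $\mathbb{S}_0$ with rational radii, arranged so that the members of $\mathcal{B}$ containing a given $x\in\mathbb{S}_0$ form a neighbourhood base at $x$. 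A Cantor diagonal extraction then produces a further subsequence, still written $\mathbf{P}_{\sigma_i}$, along which $L(B):=\lim_i \mathbf{P}_{\sigma_i}(B)^{1/r_{\sigma_i}}$ exists in $[0,1]$ for every $B\in\mathcal{B}$.

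The candidate rate function is $\mathbf{I}(x)=-\ln q(x)$, where $q(x)=\inf\{L(B):B\in\mathcal{B},\,x\in B\}$ for $x\in\mathbb{S}_0$ and $\mathbf{I}(x)=+\infty$ otherwise (equivalently $q(x)=0$ off $\mathbb{S}_0$). Compactness of the sublevel sets reduces to $\{q\ge e^{-n}\}\subset K_n$: if $x\notin K_n$, pick $B\in\mathcal{B}$ small enough that $x\in B\subset\mathbb{S}\setminus K_n$, which forces $L(B)\le e^{-n}$ and hence $q(x)\le e^{-n}$; closedness of $\{q\ge c\}$ follows from the neighbourhood-base property of $\mathcal{B}$ combined with the fact that $q$ is the infimum of the upper-semicontinuous proxies $\ind_B(\cdot) L(B)$. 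The lower bound on open sets is immediate: for any open $G$ and any $x\in G\cap\mathbb{S}_0$, choose $B\in\mathcal{B}$ with $x\in B\subset G$, so that $\liminf_i \mathbf{P}_{\sigma_i}(G)^{1/r_{\sigma_i}}\ge L(B)\ge q(x)$; taking the supremum over $x\in G$ gives the desired inequality.

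The main obstacle is the upper bound on closed sets. Given a closed $F\subset\mathbb{S}$ and $\delta>0$, choose $n$ with $e^{-n}<\sup_{x\in F}q(x)+\delta$ (or arbitrarily large if that supremum vanishes). The set $F\cap K_n$ is compact; at each $x\in F\cap K_n$ select $B_x\in\mathcal{B}$ with $x\in B_x$ and $L(B_x)\le q(x)+\delta$, and extract a finite subcover $B_{x_1},\ldots,B_{x_m}$. Writing $F\subset(\mathbb{S}\setminus K_n)\cup\bigcup_{j=1}^m B_{x_j}$ and applying the elementary inequality $(a_0+\cdots+a_m)^{1/r_{\sigma_i}}\le(m+1)^{1/r_{\sigma_i}}\max_{0\le j\le m}a_j^{1/r_{\sigma_i}}$, with $(m+1)^{1/r_{\sigma_i}}\to1$ because $r_{\sigma_i}\to\infty$, yields
$\limsup_i \mathbf{P}_{\sigma_i}(F)^{1/r_{\sigma_i}}\le \max\bl(e^{-n},\max_j L(B_{x_j})\br)\le \sup_{x\in F}q(x)+\delta$. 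Sending $\delta\to0$ closes the argument. The delicate point is coordinating the choice of $n$ (controlling the exponentially small tail $\mathbb{S}\setminus K_n$) with the finite cover of the compact portion $F\cap K_n$, so that the tail contribution is absorbed into the $\delta$-margin and does not inflate the bound; sequential exponential tightness is precisely what makes this coordination possible, and the argument otherwise rests only on the diagonal extraction and elementary properties of the base $\mathcal{B}$.
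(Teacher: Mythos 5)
Your proof is correct; the paper itself states Theorem \ref{the:rel_comp} without proof, referring to Puhalskii \cite{Puh91,puhwolf,Puh93,Puh01} and Feng and Kurtz \cite{FenKur06}, and your argument is essentially the standard one given in those references: a Cantor diagonal extraction of the limits $L(B)$ over a countable base of balls on the $\sigma$-compact set $\bigcup_n K_n$, the rate function defined as $-\ln$ of the infimum of $L(B)$ over neighbourhoods, and the closed-set upper bound obtained by covering $F\cap K_n$ finitely and absorbing the exponentially small tail $\mathbb{S}\setminus K_n$. The only cosmetic point is that your containment argument literally yields $\{q>e^{-n}\}\subset K_n$ rather than $\{q\ge e^{-n}\}\subset K_n$, which is still enough for compactness of the sublevel sets.
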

The proof of the following theorem is standard.
\begin{theorem}
  \label{the:uniq} 
If the net $\mathbf P_\sigma$ is
sequentially  LD relatively  compact for rate $r_\sigma$ as
$\sigma\in\Sigma$ and
$\mathbf I$ is a unique LD limit point of the  $\mathbf P_\sigma$\,,
then the net $\mathbf P_\sigma$ satisfies the LDP with $\mathbf I$ for
rate $r_\sigma$ as $\sigma\in\Sigma$\,.
\end{theorem}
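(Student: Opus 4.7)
The plan is to argue by contradiction using the fact that a net with only one accumulation point in a sequentially compact setting must converge to that point. Concretely, suppose the net $\mathbf{P}_\sigma$ does not satisfy the LDP with the purported rate function $\mathbf{I}$. Then by definition at least one of the two LDP inequalities fails: either there is an open set $G\subset \mathbb{S}$ for which
\begin{equation*}
\liminf_{\sigma\in\Sigma} r_\sigma^{-1}\ln \mathbf{P}_\sigma(G) < -\inf_{z\in G}\mathbf{I}(z),
\end{equation*}
or there is a closed set $F\subset \mathbb{S}$ for which
\begin{equation*}
\limsup_{\sigma\in\Sigma} r_\sigma^{-1}\ln \mathbf{P}_\sigma(F) > -\inf_{z\in F}\mathbf{I}(z).
\end{equation*}

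In the first case, I would pick a subsequence $\sigma_i$ along which $r_{\sigma_i}^{-1}\ln\mathbf{P}_{\sigma_i}(G)$ tends to its liminf. By the assumed sequential LD relative compactness applied to this subsequence, I can pass to a further subsequence $\sigma_{i_j}$ such that $\mathbf{P}_{\sigma_{i_j}}$ satisfies the LDP for rate $r_{\sigma_{i_j}}$ with some large deviation function $\tilde{\mathbf{I}}$. Since $\mathbf{I}$ is assumed to be the unique LD limit point, $\tilde{\mathbf{I}}=\mathbf{I}$, so the lower bound
\begin{equation*}
\liminf_{j\to\infty} r_{\sigma_{i_j}}^{-1}\ln \mathbf{P}_{\sigma_{i_j}}(G)\ge -\inf_{z\in G}\mathbf{I}(z)
\end{equation*}
must hold, contradicting how $\sigma_i$ was chosen. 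The second case is handled symmetrically, using the upper bound along the extracted LD-convergent sub-subsequence to contradict the strict violation of the upper bound on $F$.

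The argument is essentially the topological ``unique accumulation point plus relative compactness implies convergence'' principle transferred to the LDP setting, so there is no substantive obstacle; the only thing to be careful about is that relative compactness is formulated along subsequences rather than subnets, which is why we first replace the liminf/limsup by an actual limit along a subsequence before invoking Theorem~\ref{the:rel_comp}'s output. No additional structure on $\mathbb{S}$ beyond it being a metric space is needed, and the proof uses neither exponential tightness directly nor any property of $\mathbf{I}$ other than being the unique limit point.
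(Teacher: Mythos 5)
Your argument is correct and is precisely the standard subsequence--extraction argument that the paper invokes without proof (it states only that ``the proof of the following theorem is standard''): violate one LDP bound on some open $G$ or closed $F$, pass to a subsequence realising the violation, extract a further LD-convergent subsequence via sequential LD relative compactness, and use uniqueness of the limit point $\mathbf I$ to contradict the violated bound. Your closing remark about working with subsequences rather than subnets is exactly the right point of care, so nothing is missing.
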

The next theorem is essentially Varadhan's lemma, see, e.g., Deuschel
and Stroock \cite{DeuStr01}. It  will be used to
obtain  equations for LD limit points.
\begin{theorem}
  \label{the:ld_limit}
Suppose the net $\mathbf P_\sigma$ is sequentially 
exponentially tight  for
rate $r_\sigma$ as $\sigma\in\Sigma$ and let
$\mathbf I$ represent an LD limit point
of $\mathbf P_\sigma$\,.
Let $U_\sigma$ be a net of uniformly 
bounded real valued functions on $\mathbb S$ such
that $\int_{\mathbb S}\exp(r_\sigma U_\sigma(z))\,\mathbf P_\sigma(dz)=1$\,. 
If $U_\sigma\to U$ uniformly on compact sets 
as $\sigma\in \Sigma$\,, where the function $U$ is continuous,
   then
$\sup_{z\in\mathbb S}(U(z)-\mathbf I(z))=0$\,.
\end{theorem}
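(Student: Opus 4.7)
The plan is to deduce the identity $\sup_{z\in\mathbb S}(U(z)-\mathbf I(z))=0$ by taking $r_\sigma^{-1}\log$ of the defining equality $\int_{\mathbb S}\exp(r_\sigma U_\sigma(z))\,\mathbf P_\sigma(dz)=1$ along the subsequence $\sigma_i$ on which $\mathbf P_{\sigma_i}$ obeys the LDP with $\mathbf I$, and then showing the right-hand side converges to $\sup_{z\in\mathbb S}(U(z)-\mathbf I(z))$. That is, it is the usual proof of Varadhan's lemma, except that we work along the LD convergent subsequence and have to accommodate the fact that $U_{\sigma_i}$ itself varies with $i$.

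For the lower bound, fix $z_0\in\mathbb S$ with $\mathbf I(z_0)<\infty$ and any open neighbourhood $G$ of $z_0$. Then
\begin{equation*}
\int_{\mathbb S}\exp(r_{\sigma_i}U_{\sigma_i})\,d\mathbf P_{\sigma_i}
\ge \exp\bl(r_{\sigma_i}\inf_{z\in G}U_{\sigma_i}(z)\br)\,\mathbf P_{\sigma_i}(G).
\end{equation*}
Since $U_{\sigma_i}\to U$ uniformly on compact sets and $U$ is continuous, letting $G$ shrink to $z_0$ and applying the LDP lower bound for open sets yields $0\ge U(z_0)-\mathbf I(z_0)$, whence $0\ge \sup_{z}(U(z)-\mathbf I(z))$.

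For the upper bound, fix $\kappa>0$ and use sequential exponential tightness to choose a compact $K\subset\mathbb S$ with $\limsup_i \mathbf P_{\sigma_i}(\mathbb S\setminus K)^{1/r_{\sigma_i}}\le e^{-\kappa}$. Letting $M=\sup_\sigma\sup_{z}\abs{U_\sigma(z)}<\infty$, we bound
\begin{equation*}
\int_{\mathbb S\setminus K}\exp(r_{\sigma_i}U_{\sigma_i})\,d\mathbf P_{\sigma_i}\le e^{r_{\sigma_i}M}\mathbf P_{\sigma_i}(\mathbb S\setminus K),
\end{equation*}
whose $r_{\sigma_i}^{-1}\log$-limsup is at most $M-\kappa$; take $\kappa$ as large as we please. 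On $K$, uniform convergence gives $U_{\sigma_i}\le U+\varepsilon$ for large $i$, and compactness lets us cover $K$ by finitely many closed balls $\bar B_1,\dots,\bar B_N$ on each of which $U$ oscillates by at most $\varepsilon$. Then
\begin{equation*}
\int_K e^{r_{\sigma_i}U_{\sigma_i}}\,d\mathbf P_{\sigma_i}\le e^{r_{\sigma_i}\varepsilon}\sum_{j=1}^N\exp\bl(r_{\sigma_i}\sup_{z\in\bar B_j}U(z)\br)\mathbf P_{\sigma_i}(\bar B_j),
\end{equation*}
and the LDP upper bound on each closed $\bar B_j$ together with the oscillation control yields $\limsup_i r_{\sigma_i}^{-1}\ln\int_K e^{r_{\sigma_i}U_{\sigma_i}}\,d\mathbf P_{\sigma_i}\le \sup_z(U(z)-\mathbf I(z))+2\varepsilon$. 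Combining the two regions with $\kappa>M+\abs{\sup_z(U(z)-\mathbf I(z))}+2\varepsilon$ and letting $\varepsilon\downarrow 0$ gives $0\le \sup_z(U(z)-\mathbf I(z))$. The two inequalities together prove the claim.

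The only delicate point is the management of the non-compact tail: one needs to know that the subsequence LDP comes with sequential exponential tightness on the same subsequence (which is part of the hypothesis) so that the unbounded-support contribution can indeed be absorbed by choosing the tightness compact $K$ large. Every other step is essentially bookkeeping.
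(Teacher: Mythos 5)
The paper itself gives no proof of this statement (it is dismissed as ``essentially Varadhan's lemma''), so the comparison is with the standard Varadhan argument, which is indeed what you follow. Your upper-bound half is correct: splitting off $\mathbb S\setminus K$ with the tightness compact $K$, using the uniform bound $M$, covering $K$ by finitely many closed balls of small oscillation of $U$, and applying the LDP upper bound on each ball is exactly the standard bookkeeping and goes through.

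The lower-bound half, however, has a genuine gap. From $1\ge e^{r_{\sigma_i}\inf_{z\in G}U_{\sigma_i}(z)}\mathbf P_{\sigma_i}(G)$ you need $\liminf_i\inf_{z\in G}U_{\sigma_i}(z)$ to be close to $U(z_0)$ for small neighbourhoods $G$ of $z_0$, and you justify this by ``$U_{\sigma_i}\to U$ uniformly on compact sets and $U$ is continuous''. But $G$ is an open neighbourhood in a general metric space and need not be relatively compact — in the application of this theorem $\mathbb S=\mathbb C(\R_+,\R^n)\times\mathbb C_\uparrow(\R_+,\mathbb M(\R^l))$, which is not locally compact — so uniform convergence on compact sets gives no control of $\inf_{z\in G}U_{\sigma_i}(z)$; the $U_{\sigma_i}$ could drop to $-M$ somewhere in every neighbourhood of $z_0$ off the compacts, killing the bound. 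You flagged the ``non-compact tail'' issue yourself but used the tightness compact only in the upper bound; it is needed in the lower bound too. The repair is standard: for $z_0$ with $\mathbf I(z_0)<\infty$ and $\delta>0$, choose by exponential tightness a compact $K$ with $\limsup_i r_{\sigma_i}^{-1}\ln\mathbf P_{\sigma_i}(\mathbb S\setminus K)\le-\kappa$ where $\kappa>\mathbf I(z_0)+2\delta$, and restrict the integral to $G\cap K$. The LDP lower bound gives $\mathbf P_{\sigma_i}(G)\ge e^{-r_{\sigma_i}(\mathbf I(z_0)+\delta)}$ eventually, hence $\mathbf P_{\sigma_i}(G\cap K)\ge\mathbf P_{\sigma_i}(G)-\mathbf P_{\sigma_i}(\mathbb S\setminus K)\ge\frac12\,e^{-r_{\sigma_i}(\mathbf I(z_0)+\delta)}$ eventually, while on $G\cap K\subset K$ the locally uniform convergence and the continuity of $U$ do give $\inf_{z\in G\cap K}U_{\sigma_i}(z)\ge U(z_0)-2\varepsilon$ for $G$ small and $i$ large; taking $r_{\sigma_i}^{-1}\ln$ of $1\ge e^{r_{\sigma_i}\inf_{G\cap K}U_{\sigma_i}}\mathbf P_{\sigma_i}(G\cap K)$ then yields $U(z_0)\le\mathbf I(z_0)$. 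With this modification the proof is complete.
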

Identification of LD limit points will be carried out with the aid of
the next result.
\begin{theorem}
  \label{the:id}
Suppose $\mathbf I$ is a large deviation function on $\mathbb S$ and 
$\mathcal{U}$ is a collection of  functions on
$\mathbb S$ such that 
$\sup_{z\in\mathbb S}(U(z)-\mathbf I(z))=0$ for all
$U\in\mathcal{U}$\,. Let $\mathbf
I^{\ast\ast}(z)=\sup_{U\in\mathcal{U}}U(z)$\,
and $K_\delta=\{z\in \mathbb S:\,\mathbf I(z)\le \delta\}$\,, where
$\delta\in\R_+$\,. 
\begin{enumerate}
\item
Let $\tilde{\mathcal{U}}$ represent a set of 
functions $U$ 
such that  
$\sup_{z\in K_\delta}
(U(z)-\mathbf I(z))=0$ for suitable $\delta\in\R_+$\,.
Suppose  $\hat z\in\mathbb S$ is
such that $\mathbf I^{\ast\ast}(\hat z)= \hat U(\hat z)$
for some function $\hat U\in \tilde{\mathcal{U}}$\,.
Suppose there exists sequence $U_i\in\tilde{\mathcal{U}}$
with the following properties:
$\sup_{z\in K_\delta}
(U_i(z)-\mathbf I(z))=0$
for some common $\delta$\,, the functions $U_i$ are continuous when
restricted to $K_\delta$\, and
if $z_i$ is  a convergent sequence of elements of $K_\delta$ such that
$U_i(z_i)=\mathbf I(z_i)$\,,
 then $U_i(z_i)\to \hat{U}(\hat z)$ and
$z_i\to \hat z$ as $i\to\infty$\,. 
Then $\mathbf I(\hat z)=\mathbf I^{\ast\ast}(\hat z)$\,.
\item If for every $z\in\mathbb S$ such that $\mathbf
I^{\ast\ast}(z)<\infty$ there exists a sequence of points
$z_i$ such that $\mathbf I( z_i)=\mathbf I^{\ast\ast}( z_i)$\,,
$z_i\to z$\,, and 
$\mathbf I^{\ast\ast}(z_i)\to \mathbf I^{\ast\ast}(z)$ as $i\to\infty$\,, then 
$\mathbf I(z)=\mathbf I^{\ast\ast}(z)$ for all $z\in \mathbb S$\,.

\end{enumerate}

\end{theorem}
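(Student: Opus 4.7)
The plan is to prove both parts by reducing each to the lower semicontinuity of $\mathbf{I}$, which is available because the level sets $K_\delta$ are compact, hence closed, by the defining property of a large deviation function. The universal bound $\mathbf{I}^{\ast\ast}(z)\le\mathbf{I}(z)$ is immediate from the hypothesis on $\mathcal U$: every $U\in\mathcal U$ satisfies $U\le \mathbf I$ pointwise, so their pointwise supremum $\mathbf I^{\ast\ast}$ does as well. The content of both statements is therefore only the reverse inequality.

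For part 1, I would proceed as follows. Each $U_i$, being continuous on the compact set $K_\delta$, makes $U_i-\mathbf I$ upper semicontinuous on $K_\delta$ (sum of a continuous function and an upper semicontinuous one, since $-\mathbf I$ is u.s.c.), and by hypothesis its supremum over $K_\delta$ equals zero. Hence that supremum is attained at some $z_i\in K_\delta$, giving $U_i(z_i)=\mathbf{I}(z_i)$. Compactness of $K_\delta$ lets me pass to a convergent subsequence $z_{i_k}\to z_\ast\in K_\delta$. The structural assumption of the theorem, applied along this subsequence (which still satisfies $U_{i_k}(z_{i_k})=\mathbf I(z_{i_k})$), forces $z_\ast=\hat z$ and $\mathbf I(z_{i_k})=U_{i_k}(z_{i_k})\to\hat U(\hat z)=\mathbf I^{\ast\ast}(\hat z)$. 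Lower semicontinuity of $\mathbf I$ at $\hat z$ then gives $\mathbf I(\hat z)\le\liminf_k\mathbf I(z_{i_k})=\mathbf I^{\ast\ast}(\hat z)$, which combined with the reverse inequality closes part 1.

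For part 2, if $\mathbf I^{\ast\ast}(z)=\infty$ the pointwise bound already forces $\mathbf I(z)=\infty$. Otherwise, the hypothesis furnishes points $z_i\to z$ with $\mathbf I(z_i)=\mathbf I^{\ast\ast}(z_i)\to \mathbf I^{\ast\ast}(z)$, and lower semicontinuity of $\mathbf I$ at $z$ gives $\mathbf I(z)\le\liminf_i\mathbf I(z_i)=\mathbf I^{\ast\ast}(z)$, completing the argument.

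The principal difficulty here is not the proof itself, which is essentially a one-line application of lower semicontinuity after the right points have been exhibited; rather, the hypotheses are engineered so that lower semicontinuity absorbs the remaining gap. The substantive work in any later use of this theorem will be producing the sequence $U_i\in\tilde{\mathcal U}$ of continuous test functions whose maximisers on $K_\delta$ are forced to converge to the target $\hat z$, and, for part 2, identifying approximating points $z_i$ with $\mathbf I(z_i)=\mathbf I^{\ast\ast}(z_i)$.
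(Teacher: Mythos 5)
Your proposal is correct and follows essentially the same route as the paper's own proof: the easy inequality $\mathbf I^{\ast\ast}\le\mathbf I$ from the definition of $\mathcal U$, then for part 1 the existence of maximisers $z_i\in K_\delta$ of the upper semicontinuous function $U_i-\mathbf I$ on the compact $K_\delta$, passage to a convergent (sub)sequence, and the lower semicontinuity of $\mathbf I$ to conclude $\mathbf I(\hat z)\le\hat U(\hat z)=\mathbf I^{\ast\ast}(\hat z)$, with part 2 handled by the same semicontinuity argument applied to the given approximating points.
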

\begin{proof}
Let us first note that $\mathbf I(z)\ge \mathbf{I}^{\ast\ast}(z)$ for
all $z$\,, so, one needs to prove that $\mathbf I(z)\le
\mathbf{I}^{\ast\ast}(z)$ if $\mathbf{I}^{\ast\ast}(z)<\infty$\,.
We prove part 1. Since $\sup_{z\in K_\delta}
(U_i(z)-\mathbf I(z))=0$, $K_\delta$ is compact, and $U_i(z)-\mathbf I(z)$ is
upper semicontinuous when restricted to $K_\delta$\,, 
there exist   $z_i\in K_\delta$ such
that $U_i(z_i)=\mathbf I(z_i)$\,. 
One may assume that the sequence converges. Since
 $U_i(z_i)\to
\hat U(\hat z)$\,, $z_i\to\hat z$ and 
 $\mathbf I$ is lower semicontinuous,
 $\hat U(\hat z)\ge \mathbf
I(\hat z)$\,, so $\mathbf{I}^{\ast\ast}(\hat z)\ge \mathbf I(\hat
z)$\,.
The proof of part 2 is similar.
\end{proof}
In the rest of the paper, the above framework is used to prove
Theorem~\ref{the:ldp}. In Section 
\ref{sec:expon-tightn},  LD relative  compactness 
 is established, see Theorem \ref{the:exp_tigh}. 
In Section~\ref{sec:equat-large-devi}, equations along the
lines of Theorem \ref{the:ld_limit} are derived, see 
Theorem \ref{the:equation}.
Section \ref{sec:regul-prop} is concerned with regularity properties
of $(X,\mu)$ for which the function $\mathbf I^{\ast\ast}$ as defined
in  Theorem
\ref{the:id} assumes finite values. It is also shown to be of the form
given in Proposition \ref{cor:ldp},
 see Theorem~\ref{le:vid}. 
In Theorem~\ref{the:iden_reg} of 
Section \ref{sec:ident},  the large deviation function is identified
for a large class of $(X,\mu)$\,, which implements the recipe of part
1 of Theorem \ref{the:id}. In Theorem~\ref{the:appr} of
Section~\ref{sec:appr-large-devi}, it is proved that that class is dense in
the sense of part 2 of Theorem~\ref{the:id}.
In Section \ref{sec:proof-theor-refth}, the proof of
Theorem~\ref{the:ldp} is completed.

\section{LD  relative  compactness}
\label{sec:expon-tightn}
The main result of this section is the following theorem.
\begin{theorem}
  \label{the:exp_tigh}
Suppose that conditions 
\eqref{eq:10} -- \eqref{eq:8} and  \eqref{eq:9}
 hold and that the net 
$(X^\epsilon_0,x^\epsilon_0)$ is
 exponentially tight for rate $1/\epsilon$ as $\epsilon\to0$\,. 
Then the net $(X^\epsilon,\mu^\epsilon)$ is sequentially    LD relatively
compact  in $\bC(\R_+,\R^n)\times \bC_\uparrow(\R_+,\mathbb{M}(\R^l))$ 
for rate $1/\epsilon$ as $\epsilon\to0$\,.
\end{theorem}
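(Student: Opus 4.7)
The plan is to invoke Theorem~\ref{the:rel_comp}, which reduces sequential LD relative compactness to sequential exponential tightness of the net $(X^\epsilon,\mu^\epsilon)$ in the product space. Exponential tightness in a product metric space follows from exponential tightness of each marginal, so the task splits into establishing exponential tightness of $X^\epsilon$ in $\bC(\R_+,\R^n)$ and of $\mu^\epsilon$ in $\bC_\uparrow(\R_+,\mathbb{M}(\R^l))$. Exponential tightness of $X^\epsilon_0$ and $x^\epsilon_0$ comes from the hypothesis on $(X^\epsilon_0,x^\epsilon_0)$.

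For $X^\epsilon$, I would use the Arzel\`a--Ascoli characterisation: initial values in a compact plus a uniform modulus of continuity. To control $\sup_{s\le t}\abs{X^\epsilon_s}$, I apply It\^o's formula to $\ln(1+\abs{X^\epsilon_t}^2)$. The growth conditions \eqref{eq:7} and \eqref{eq:8} give a uniformly bounded drift, while the martingale part $\sqrt{\epsilon}\int B^\epsilon\,dW^\epsilon$ has quadratic variation of order $\epsilon$ (again by \eqref{eq:8}). Exponential martingale inequalities of Bernstein type then yield $\mathbf{P}^\epsilon(\sup_{s\le t}\abs{X^\epsilon_s}\ge M)\le \exp(-M^{\alpha}/\epsilon)$ for $M$ large and some $\alpha>0$. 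For the modulus of continuity on the event $\sup_{s\le t}\abs{X^\epsilon_s}\le N$, the local boundedness \eqref{eq:10a} controls the drift oscillation by $\delta$ and an exponential inequality applied to the small diffusion controls the oscillation of the martingale part. Combining gives exponential tightness of $X^\epsilon$.

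For $\mu^\epsilon$, relative compactness in $\bC_\uparrow(\R_+,\mathbb{M}(\R^l))$ reduces, by the normalisation $\mu^\epsilon_t(\R^l)=t$ and monotonicity, to the tightness of $\{\mu^\epsilon_T\}$ in $\mathbb{M}(\R^l)$ for every $T$; equivalently, to showing that $\int_0^T \ind_{\abs{x^\epsilon_s}>R}\,ds$ is exponentially negligible as $R\to\infty$. This is where the stability hypothesis \eqref{eq:9} plays the central role. The natural tool is a Lyapunov function $V(x)=(1+\abs{x}^2)^{1/2}$ or $V(x)=\abs{x}$: because the generator of $x^\epsilon$ carries the factor $1/\epsilon$, It\^o's formula applied to $\exp\bl(V(x^\epsilon_t)/\epsilon\br)$ produces a drift of the form $(1/\epsilon)\bl(a^\epsilon_t(X^\epsilon_t,x)^T x/\abs{x}+O(1)\br)$, and \eqref{eq:9} makes this very negative for large $\abs{x}$. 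Combining this with localisation on the event $\{\sup_{s\le T}\abs{X^\epsilon_s}\le N\}$ (exponentially close to probability one by the previous step) and with exponential tightness of $x^\epsilon_0$, one gets the required exponential bound on the time spent by $x^\epsilon$ outside a large ball.

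The main obstacle is the joint handling in the third step: since \eqref{eq:9} is only uniform for $\abs{u}\le N$, one must implement a stopping time argument that simultaneously enforces $\sup_{s\le \tau}\abs{X^\epsilon_s}\le N$ and a bound on excursions of $x^\epsilon$, and then verify that the exponential martingales constructed from $V(x^\epsilon_t)/\epsilon$ on the stopped process satisfy a Novikov-type integrability so that their expectations are controlled. Care is also needed because $c^\epsilon$ is only locally bounded uniformly in $\epsilon$; the localisation in $u$ handles this, but the passage to supremum over $s\in[0,T]$ must go through Doob-type inequalities adapted to the rate $1/\epsilon$.
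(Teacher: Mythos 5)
Your route is the one the paper takes: reduce sequential LD relative compactness to sequential exponential tightness of the two marginals via Theorem~\ref{the:rel_comp}; control $X^\epsilon$ through It\^o's formula for $\ln(1+\abs{X^\epsilon_t}^2)$, the supermartingale property of the stochastic exponential, and Doob-type bounds for the modulus of continuity under \eqref{eq:10a}, \eqref{eq:7}, \eqref{eq:8}; and reduce exponential tightness of $\mu^\epsilon$ (part 1 of Lemma~\ref{le:exp_tight}; the time modulus is automatic since $\abs{\mu^\epsilon_t(\Theta)-\mu^\epsilon_s(\Theta)}\le\abs{t-s}$) to showing that the time spent by $x^\epsilon$ outside a large ball is exponentially negligible, using a distance-type Lyapunov function, the stability condition \eqref{eq:9}, and localisation on $\{\sup_{s\le t}\abs{X^\epsilon_s}\le N\}$ through the stopping time $\tau^\epsilon_N$. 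Your worry about Novikov-type conditions is not needed: up to $\tau^\epsilon_N$ all integrands are bounded, and in any case a nonnegative local martingale is a supermartingale, which is all that is used.

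There is, however, one step that fails as written: the scaling in the fast-variable Lyapunov argument. For the generator $\mathcal L^\epsilon g=\epsilon^{-1}a^{\epsilon T}Dg+(2\epsilon)^{-1}\text{tr}(c^\epsilon D^2g)$ one has $e^{-f}\mathcal L^\epsilon e^{f}=\epsilon^{-1}a^{\epsilon T}Df+(2\epsilon)^{-1}\bl(\text{tr}(c^\epsilon D^2 f)+\norm{Df}^2_{c^\epsilon}\br)$, so with your choice $f=V/\epsilon$ the compensator is $\epsilon^{-2}a^{\epsilon T}DV+(2\epsilon^2)^{-1}\text{tr}(c^\epsilon D^2V)+(2\epsilon^3)^{-1}\norm{DV}^2_{c^\epsilon}$: the positive quadratic-variation correction enters at order $\epsilon^{-3}$ and overwhelms the favourable drift at order $\epsilon^{-2}$, so the drift is not of the form $(1/\epsilon)\bl(a^{\epsilon T}x/\abs{x}+O(1)\br)$ that you quote, and the resulting bound on $\int_0^T\ind_{\{\abs{x^\epsilon_s}>R\}}\,ds$, raised to the power $\epsilon$, does not go to zero. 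The form of the drift you quote is exactly what one gets with the exponent \emph{not} divided by $\epsilon$: since the fast generator already carries $1/\epsilon$, the correct functional is $\exp(f(x^\epsilon_t))$ with $f$ fixed; the paper takes $f(x)=g_2((\abs{x}-\breve N)^+)$, a smoothed truncated distance with bounded first and second derivatives, so that all second-order and quadratic terms are $O(1/\epsilon)$ with constants independent of $\breve N$, while \eqref{eq:9} makes the drift contribution at most $-M^\epsilon/\epsilon$ on $\{\abs{x}>\breve N+1\}$ with $\liminf_{\epsilon\to0}M^\epsilon\to\infty$ as $\breve N\to\infty$; Chebyshev then gives $\mathbf{P}^\epsilon\bl(\mu^\epsilon([0,t\wedge\tau^\epsilon_N],\{\abs{x}>\breve N+1\})>\delta\br)^\epsilon\lesssim e^{Ct-M^\epsilon\delta}\to0$, together with the exponential tightness of $x^\epsilon_0$ and the already proved bound on $\tau^\epsilon_N$. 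With this one correction your sketch becomes the paper's proof.
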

We precede the proof with a criterion of  sequential  LD relative
compactness  in 
$\bC(\R_+,\mathbb{M}(\R^l))$\,. 
Let  $d(\cdot,\cdot)$ represent the  Lipschitz 
metric on $\mathbb{M}(\R^l)$\,:
$  d(\tilde\mu,\hat\mu)=\sup\{\abs{\int_{\R^l}f(x)\,\tilde\mu(dx)-
\int_{\R^l}f(x)\,\hat\mu(dx)}\}\,,$
with the supremum being taken over functions $f:\,\R^l\to\R$ such that
$\sup_{x\in\R^l}\abs{f(x)}\le1$ and
$\sup_{x,y\in\R^l,\,x\not=y}\abs{f(x)-f(y)}/\abs{x-y}\le 1$\,, see,
e.g.,  p.395 in Dudley \cite{Dud02}.
The proof of the next lemma is relegated to the appendix.
\begin{lemma}
  \label{le:exp_tight}
  \begin{enumerate}
  \item 
A net $\{\nu_\epsilon,\,\epsilon>0\}$\,, where
$\nu_\epsilon=(\nu_{\epsilon,t},\,t\in\R_+)$\,,  of random elements of
 $\bC(\R_+,\mathbb{M}(\R^l))$ defined on respective probability spaces
 $(\Omega_\epsilon,
\mathcal{F}_\epsilon,\mathbf{P}_\epsilon)$ is  sequentially
exponentially tight  for rate
$1/\epsilon$ as $\epsilon\to0$ if and only if 
 for all $t\in\R_+$ and all $\eta>0$\,,
\begin{align*}
  \lim_{N\to\infty}\limsup_{\epsilon\to0}
\mathbf{P}_\epsilon\bl(\nu_{\epsilon,t}(x\in\R^l:\,\abs{x}>N)>\eta\br)^{\epsilon}&=0
\intertext{and}
  \lim_{\delta\to0}\limsup_{\epsilon\to0}
\sup_{s_1\in[0,t]}
\mathbf{P}_\epsilon\bl(\sup_{s_2\in [s_1,s_1+\delta]}
d(\nu_{\epsilon,s_1},\nu_{\epsilon,s_2})
>\eta\br)^{\epsilon}&=0\,.
\end{align*}
\item A net $\{Y_\epsilon,\,\epsilon>0\}$\,, where
$Y_\epsilon=(Y_{\epsilon,t},\,t\in\R_+)$\,,  of random elements of
 $\bC(\R_+,\R^n)$ defined on respective probability spaces
 $(\Omega_\epsilon,
\mathcal{F}_\epsilon,\mathbf{P}_\epsilon)$ is  sequentially 
exponentially tight  for rate
$1/\epsilon$ as $\epsilon\to0$ if and only if 
  \begin{align*}
      \lim_{N\to\infty}\limsup_{\epsilon\to0}
\mathbf{P}_\epsilon(\abs{Y_{\epsilon,0}}>N)^\epsilon=0\,
\intertext{and,  for all $t\in\R_+$ and all $\eta>0$\,,
}
   \lim_{\delta\to0}\limsup_{\epsilon\to0}
\sup_{s_1\in[0,t]}
  \mathbf{P}_\epsilon(\sup_{ s_2\in[s_1,s_1+\delta]}
\abs{Y_{\epsilon,s_2}-Y_{\epsilon,s_1}}>\eta)^\epsilon=0\,.
 \end{align*}
  \end{enumerate}

\end{lemma}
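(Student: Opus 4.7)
The plan is to recognise each statement as the exponential-tightness analogue of an Arzelà--Ascoli criterion for the corresponding path space, and then to convert the abstract compactness characterisation into the stated probabilistic estimates by a standard countable-intersection construction.

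For part 2, a subset of $\bC(\R_+,\R^n)$ is relatively compact if and only if its image at time $0$ is bounded and, for every $t>0$, its modulus of continuity on $[0,t]$ is controlled. Necessity of the two bounds is immediate: given $\kappa>0$, exponential tightness yields a compact $K_\kappa$ with $\limsup_\epsilon\mathbf{P}_\epsilon(\mathbb{S}\setminus K_\kappa)^\epsilon<\kappa$, and the events appearing in the lemma are contained in $\{Y_\epsilon\notin K_\kappa\}$ for appropriately chosen $N$ and $\delta$; letting $\kappa\to0$ yields the two convergences. For sufficiency, for each $j\in\N$ one uses the two estimates to select $N_j$ and a sequence $(\delta_{j,k})_{k\in\N}$ such that the corresponding exceptional events on $[0,k]$ have $\mathbf{P}_\epsilon$-probability bounded in limsup by $e^{-(j+k)/\epsilon}$; intersecting the complements over $k$ produces a compact set $K_j$ with $\limsup_\epsilon\mathbf{P}_\epsilon(K_j^c)^\epsilon\le e^{-j}$, using the elementary fact that $(a+b)^\epsilon\le 2^\epsilon\max(a,b)^\epsilon$ so that $\limsup_\epsilon(p_\epsilon+q_\epsilon)^\epsilon\le\max(\limsup p_\epsilon^\epsilon,\limsup q_\epsilon^\epsilon)$.

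For part 1, I would first establish an Arzelà--Ascoli criterion for $\bC(\R_+,\mathbb{M}(\R^l))$: since $(\mathbb{M}(\R^l),d)$ is a metric space whose topology coincides with the weak topology (Dudley, p.395), a subset $\mathcal{K}$ is relatively compact if and only if, for every $t\in\R_+$, the set $\{\nu_s:\nu\in\mathcal{K},\,s\in[0,t]\}$ is relatively compact in $\mathbb{M}(\R^l)$ and $\mathcal{K}$ is equicontinuous on $[0,t]$ in $d$. By Prokhorov, the first requirement amounts to uniform boundedness of the total mass together with uniform tightness in $x$. Continuity in $t$ together with the equicontinuity estimate reduces the mass bound on $[0,t]$ to a bound at a single time, which in the setting of the application is automatic since $\nu_{\epsilon,t}(\R^l)=t$; the tightness in $x$ is exactly the first stated bound, and the equicontinuity in $t$ is the second. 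With the compactness criterion in hand, the translation to the probabilistic estimates proceeds exactly as in part 2: necessity via inclusion in the complement of a compact witness, sufficiency via a countable intersection of exceptional-event complements.

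The delicate point will be the Arzelà--Ascoli characterisation for $\bC(\R_+,\mathbb{M}(\R^l))$ and the careful handling of total mass: one must verify that a set of $\mathbb{M}(\R^l)$-valued continuous functions on $\R_+$ is relatively compact exactly when its pointwise ranges are Prokhorov-relatively compact and its restrictions to compact intervals are $d$-equicontinuous, and then ensure that the tail bound at a single time $t$, combined with the equicontinuity bound, indeed suffices to control tightness uniformly on $[0,t]$. Modulo these technicalities, the argument is standard and parallels classical exponential-tightness constructions as in Puhalskii \cite{Puh01}.
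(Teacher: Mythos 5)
Your strategy is the same as the paper's: an Arzel\`a--Ascoli description of the compact subsets of $\bC(\R_+,\R^n)$ and of $\bC(\R_+,\mathbb{M}(\R^l))$ combined with Prohorov's theorem, necessity by including the exceptional events in the complement of a compact witness, and sufficiency by a countable intersection of modulus and tail events. Two steps, however, are not justified as written. The elementary fact $\limsup_\epsilon(p_\epsilon+q_\epsilon)^\epsilon\le\max(\limsup_\epsilon p_\epsilon^\epsilon,\limsup_\epsilon q_\epsilon^\epsilon)$ extends only to finitely many summands; for the countably many events whose complements you intersect, the limsup bounds are useless at a fixed small $\epsilon$, because the threshold below which the $k$-th bound becomes effective depends on $k$, and the tail of the series is then uncontrolled. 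The paper avoids this by first fixing a sequence $\epsilon_i\to0$ (the statement concerns sequential exponential tightness), then using the tightness of each individual law $\nu_{\epsilon_i}$ to choose $\tilde\delta_{j,t}$ and $N_{j,t}$ so that the estimates, with weights $\kappa/(2^j2^t)$, hold for every $i$, and only then summing, via $(\sum_k a_k)^{\epsilon_i}\le\sum_k a_k^{\epsilon_i}$ for $\epsilon_i\le1$. Without some such device your construction does not deliver $\limsup_\epsilon\mathbf{P}_\epsilon(K_j^c)^\epsilon\le e^{-j}$.

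In part 1 you reduce pointwise relative compactness in $\mathbb{M}(\R^l)$ to a uniform tail bound plus a total-mass bound and then call the latter automatic because $\nu_{\epsilon,t}(\R^l)=t$. That identity is a feature of the application ($\mu^\epsilon\in\bC_\uparrow(\R_+,\mathbb{M}(\R^l))$), not a hypothesis of the lemma, so as a proof of the lemma as stated this step is not available. The paper proceeds differently at this point: it imposes the tail bounds only at the grid times $L\tilde\delta_{j,M}$ and transfers them to arbitrary $t\le M$ through the Lipschitz metric, using $\tilde\nu_t(\abs{x}>N_{j,M}+1)\le\max_L\tilde\nu_{L\tilde\delta_{j,M}}(\abs{x}>N_{j,M})+1/j$, before invoking Prohorov's theorem. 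That said, your instinct that the total mass is the delicate point is sound: the two displayed conditions of part 1 do not by themselves bound $\nu_{\epsilon,t}(\R^l)$, and the Prohorov step does require such a bound; in the intended application it comes from $\nu_{\epsilon,t}(\R^l)=t$, but neither your sketch nor, strictly speaking, the paper's own argument extracts it from the hypotheses of the lemma alone.
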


\begin{remark}
The form of the conditions is  due to
Feng and Kurtz \cite{FenKur06}.
\end{remark}

\begin{proof}[Proof of Theorem \ref{the:exp_tigh}]
Since  $\bC(\R_+,\R^n)\times \bC_\uparrow(\R_+,\mathbb{M}(\R^l))$
is a closed subset of  $\bC(\R_+,\R^n)\times
\bC(\R_+,\mathbb{M}(\R^l))$ and
$\mathbf{P}^\epsilon((X^\epsilon,\mu^\epsilon)\in 
\bC(\R_+,\R^n)\linebreak\times \bC_\uparrow(\R_+,\mathbb{M}(\R^l)))=1$\,, it is
sufficient to prove that the net
$((X^\epsilon,\mu^\epsilon),\,\epsilon>0)$ is  sequentially 
LD relatively
 compact in $\bC(\R_+,\R^n)\times
\bC(\R_+,\mathbb{M}(\R^l))$\,.
 By Theorem \ref{the:rel_comp}, the latter property holds if
$(X^\epsilon,\mu^\epsilon)$ is sequentially exponentially tight,
 which is the case if
 the nets
 $X^\epsilon$ and  $\mu^\epsilon$ are each sequentially exponentially tight.

We  show that the net $X^\epsilon$ is  sequentially exponentially tight
  first. 
By \eqref{eq:1} and 
It\^o's lemma,
on denoting $g_1(x)=D ^2\ln(1+\abs{x}^2)$,
\begin{multline*}
  \ln(1+\abs{X^\epsilon_t}^2)=\ln(1+\abs{X^\epsilon_0}^2)+
\int_0^t \frac{2(X^\epsilon_s)^T
  A^\epsilon_s(X^\epsilon_s,x^\epsilon_s)}{1+\abs{X^\epsilon_s}^2} \,ds+
\frac{\epsilon}{2}\int_0^t
\text{tr}\,\bl(C^\epsilon_s(X^\epsilon_s,x^\epsilon_s)g_1(X^\epsilon_s)\br)\,ds\\
+\sqrt{\epsilon}\int_0^t \frac{2(X^\epsilon_s)^T}{1+\abs{X^\epsilon_s}^2}\,
  B^\epsilon_s(X^\epsilon_s,x^\epsilon_s) \,dW^\epsilon_s\,.
\end{multline*}
Given $N>0$, let 
$\tau_N^\epsilon=\inf\{s\in\R_+:\,\abs{X^\epsilon_s}\ge N\}\,.$
Since $\tau_N^\epsilon$ is an  $\mathbf{F}^\epsilon$-stopping time and
\begin{equation*}
  \exp\bl(\frac{1}{\sqrt{\epsilon}}
\int_0^t \frac{2(X^\epsilon_s)^T}{1+\abs{X^\epsilon_s}^2}\,
  B^\epsilon_s(X^\epsilon_s,x^\epsilon_s) \,dW^\epsilon_s
-\frac{1}{2\epsilon}
\int_0^t\norm{\frac{2X^\epsilon_s}{1+\abs{X^\epsilon_s}^2}}^2_{C^\epsilon_s(X^\epsilon_s,x^\epsilon_s)}
 \,ds\br)\,,t\in\R_+\,,
\end{equation*}
 is an $\mathbf{F}^\epsilon$-local martingale,
 \begin{multline}
   \label{eq:99}
     \mathbf{E}^\epsilon\exp\Bl(\frac{1}{\epsilon}\ln(1+\abs{X^\epsilon_{t\wedge\tau_N^\epsilon}}^2)-
\frac{1}{\epsilon}\ln(1+\abs{X^\epsilon_0}^2)-\frac{1}{\epsilon}
\int_0^{t\wedge\tau_N^\epsilon} \frac{2(X^\epsilon_s)^T
  A^\epsilon_s(X^\epsilon_s,x^\epsilon_s)}{1+\abs{X^\epsilon_s}^2} \,ds\\-
\frac{1}{2}\int_0^{t\wedge\tau_N^\epsilon}
\text{tr}\,\bl(C^\epsilon_s(X^\epsilon_s,x^\epsilon_s)g_1(X^\epsilon_s)\br)\,ds-
\frac{1}{2\epsilon}\,
\int_0^{t\wedge\tau_N^\epsilon}
\norm{ \frac{2X^\epsilon_s}{1+\abs{X^\epsilon_s}^2}}^2_{C^\epsilon_s(X^\epsilon_s,x^\epsilon_s)}
 \,ds\Br)\le 1\,.
\end{multline}
Since 
\begin{equation}
  \label{eq:78}
  \text{tr}\,\bl(C^\epsilon_s(X^\epsilon_s,x)g_1(X^\epsilon_s)
\br)\le \sqrt{\text{tr}\,g_1(X^\epsilon_s)^2}
\,\sqrt{\text{tr}\,C^\epsilon_s(X^\epsilon_s,x)^2}
\le 2n\sqrt{n}\,\frac{\norm{C^\epsilon_s(X^\epsilon_s,x^\epsilon_s)}}{
1+\abs{X^\epsilon_s}^2} \,,
\end{equation}
on recalling  (\ref{eq:7}) and (\ref{eq:8}), we have that
there exists  $L>0$\,, which does not depend either on $t$ or
on $N$\,, such that for all $\epsilon>0$ small enough,
\begin{equation*}
  \mathbf{E}^\epsilon\exp\Bl(\frac{1}{\epsilon}\ln(1+\abs{X^\epsilon_{t\wedge\tau_N^\epsilon}}^2)-
\frac{1}{\epsilon}\ln(1+\abs{X^\epsilon_0}^2)-\frac{Lt}{\epsilon}\Br)\le 1\,.
\end{equation*}
For  $\tilde{N}>0$,
\begin{multline*}
\mathbf{P}^\epsilon(\sup_{s\in[0,t]}\abs{X^\epsilon_{s}}\ge N)=
  \mathbf{P}^\epsilon(\abs{X^\epsilon_{t\wedge\tau_N^\epsilon}}\ge N)\le
\mathbf{P}^\epsilon(\abs{X^\epsilon_0}> \tilde{N})\\
 + \mathbf{E}^\epsilon\exp\Bl(\frac{1}{\epsilon}\ln(1+\abs{X^\epsilon_{t\wedge\tau_N^\epsilon}}^2)
-\frac{1}{\epsilon}\ln(1+N^2)\Br)\ind_{\{\abs{X^\epsilon_0}\le 
\tilde{N}\}}\\
\le \mathbf{P}^\epsilon(\abs{X^\epsilon_0}>\tilde{N})+
\exp\bl(\frac{1}{\epsilon}\ln(1+\tilde{N}^2)+\frac{Lt}{\epsilon}
-\frac{1}{\epsilon}\ln(1+N^2)\br)\,,
\end{multline*}
so
\begin{equation*}
  \limsup_{N\to\infty}\limsup_{\epsilon\to0}
\mathbf{P}^\epsilon(\sup_{s\in[0, t]}\abs{X^\epsilon_{s}}>N)^\epsilon\le
\limsup_{\epsilon\to0}
\mathbf{P}^\epsilon(\abs{X^\epsilon_0}>\tilde{N})^\epsilon\,.
\end{equation*}
Since $X^\epsilon_0$ is exponentially tight and $\tilde{N}$ is arbitrary, we
conclude that
\begin{equation}
  \label{eq:17}
  \lim_{N\to\infty}\limsup_{\epsilon\to0}
\mathbf{P}^\epsilon(\sup_{s\in[0, t]}\abs{X^\epsilon_s}>N)^\epsilon=0\,.
\end{equation}
By (\ref{eq:1}), for $s\in[0,t]$\,, $\delta>0$\,,
 and $\eta>0$,
\begin{multline*}
  \mathbf{P}^\epsilon(\sup_{\tilde s\in[s,s+\delta]}
\abs{X^\epsilon_{\tilde{s}}-X^\epsilon_s}>\eta)\le
\mathbf{P}^\epsilon(\tau_N^\epsilon\le t)+
\mathbf{P}^\epsilon(\sup_{\abs{u}\le N}\sup_{x\in\R^l}
 \abs{A^\epsilon_s(u,x)}\delta\\+
\sqrt{\epsilon}\sup_{\tilde s\in[s,s+\delta]}
\abs{\int_{s\wedge \tau_N^\epsilon}^{\tilde{s}\wedge \tau_N^\epsilon}
B^\epsilon_r(X^\epsilon_r,x^\epsilon_r)\,dW^\epsilon_r}> \eta)\,.
\end{multline*}
Let $e_i$, for $i=1,2,\ldots,n$, denote the $i$th unit   vector of $\R^n$.
Thanks to \eqref{eq:10a} and \eqref{eq:8},
 for small enough $\delta$
 and arbitrary $\alpha>0$, provided $\epsilon>0$ is small enough, on
 using Doob's inequality,
 \begin{multline*}
   \mathbf{P}^\epsilon(\sup_{\tilde s\in[s,s+\delta]}
\abs{X^\epsilon_{\tilde{s}}-X^\epsilon_s}>\eta)\le
\mathbf{P}^\epsilon(\tau_N^\epsilon\le t)+
\mathbf{P}^\epsilon\bl(\sqrt{\epsilon}\,
\sup_{\tilde s\in[s,s+\delta]}
\abs{\int_{s\wedge \tau_N^\epsilon}^{\tilde{s}\wedge \tau_N^\epsilon}
B^\epsilon_r(X^\epsilon_r,x^\epsilon_r)\,dW^\epsilon_r}>\frac{ \eta}{2}\br)\\
\le \mathbf{P}^\epsilon(\tau_N^\epsilon\le t)+
\sum_{i=1}^n\mathbf{P}^\epsilon\bl(\sqrt{\epsilon}\,
\sup_{\tilde s\in[s,s+\delta]}\bl(e_i^T\int_{s\wedge \tau_N^\epsilon}^{\tilde{s}\wedge \tau_N^\epsilon}
B^\epsilon_r(X^\epsilon_r,x^\epsilon_r)\,dW^\epsilon_r\br)
>\frac{ \eta}{2n}\br)
\le\mathbf{P}^\epsilon(\tau_N^\epsilon\le
t)\\+\sum_{i=1}^n\mathbf{P}^\epsilon
\bl(\sup_{\tilde s\in[s,s+\delta]}\exp\bl(
\frac{\alpha}{\sqrt{\epsilon}}\,e_i^T
\int_{s\wedge \tau_N^\epsilon}^{\tilde{s}\wedge \tau_N^\epsilon}
B^\epsilon_r(X^\epsilon_r,x^\epsilon_r)\,dW^\epsilon_r
-\frac{\alpha^2}{2\epsilon}
\int_{s\wedge \tau_N^\epsilon}^{\tilde{s}\wedge \tau_N^\epsilon}
e_i^T C^\epsilon_r(X^\epsilon_r,x^\epsilon_r)e_i\,dr\br)\\
> e^{\alpha \eta/(2n\epsilon)}\exp\bl(-\frac{\alpha^2\delta}{2\epsilon}
\sup_{r\in[0,t]}\sup_{\abs{u}\le N}\sup_{x\in\R^l}\norm{C^\epsilon_r(u,x)}\br)\br)
\le 
\mathbf{P}^\epsilon(\sup_{\tilde s\in[0,t]}\abs{X^\epsilon_{\tilde s}}\ge N)\\+
 n\,e^{-\alpha \eta/(2n\epsilon)}\exp\bl(\frac{\alpha^2\delta}{2\epsilon}
\sup_{r\in[0,t]}\sup_{\abs{u}\le N}\sup_{x\in\R^l}\norm{C^\epsilon_r(u,x)}\br)\,.
 \end{multline*}
 By \eqref{eq:8}, (\ref{eq:17}) and the fact that $\alpha$ can be chosen
 arbitrarily great,
 \begin{equation*}
   \limsup_{\delta\to0}\limsup_{\epsilon\to0}
\sup_{s\in[0,t]}
  \mathbf{P}^\epsilon(\sup_{\tilde s\in[s,s+\delta]}
\abs{X^\epsilon_{\tilde{s}}-X^\epsilon_s}>\eta)^\epsilon=0\,.
 \end{equation*}
The sequential exponential tightness of $X^\epsilon$
follows from part 2) of Lemma~\ref{le:exp_tight}.

We prove now that $\mu^\epsilon$ is sequentially exponentially tight. 
Let $f$ represent an $\R$-valued
  twice 
continuously differentiable function on $\R^l$.
By \eqref{eq:5} and  It\^o's lemma,
\begin{multline*}
   f(x^\epsilon_{t})=f(x^\epsilon_0)+\frac{1}{\epsilon}\,
\int_0^{t} D  f(x^\epsilon_{s})^T
a^\epsilon_s(X^\epsilon_s,x^\epsilon_{s})\,ds+
\frac{1}{2\epsilon}\,\int_0^t \text{tr}\,\bl(
c^\epsilon_s(X^\epsilon_s,x^\epsilon_{s})D ^2f(x^\epsilon_{s})\br)\,ds\\+
\frac{1}{\sqrt{\epsilon}}\,
\int_0^t D  f(x^\epsilon_{s})^T
b^\epsilon_s(X^\epsilon_s,x^\epsilon_{s})\,
dW^\epsilon_s
\,.
\end{multline*}
Therefore, 
on identifying
$\mu^\epsilon$ with  measure $\mu^\epsilon(dt,dx)$\,, we have that, in
analogy with \eqref{eq:99}, 
\begin{multline*}
      \mathbf{E}^\epsilon
\exp\Bl(  f(x^\epsilon_{t\wedge \tau_N^\epsilon})-f(x^\epsilon_0)
-\frac{1}{\epsilon}\,
\int_0^{t\wedge \tau_N^\epsilon}\int_{\R^l}
 D  f(x)^T a^\epsilon_s(X^\epsilon_s,x)\,\mu^\epsilon(ds,dx)\\-
\frac{1}{2\epsilon}\,\int_0^{t\wedge \tau_N^\epsilon}\int_{\R^l}
\text{tr}\,\bl(c^\epsilon_s(X^\epsilon_s,x)D^2f(x)
\br)\,\mu^\epsilon(ds,dx)
-\frac{1}{2\epsilon}\,\int_0^{t\wedge \tau_N^\epsilon}\int_{\R^l}
\norm{D   f(x)}_{c^\epsilon_s(X^\epsilon_s,x)}^2\,\mu^\epsilon(ds,dx)\Br)
\le1\,.
\end{multline*}
Let $g_2(u)$, where $u\in\R_+$,
be an $\R_+$-valued  nondecreasing
 $\mathbb{C}^2$-function with a bounded second derivative such
that $Dg_2(0)=D^2g_2(0)=0$ and $g_2(u)=u$ for $u\ge 1$\,.
For  given $\breve{N}>0$\,,
we  let  $f(x)=g_2((\abs{x}-\breve{N})^+)$\,, where   $x\in\R^l$\,.
By (\ref{eq:9}), if  
$\breve{N}$ is great enough, then for all
$\epsilon$ small enough,
$(x/\abs{x})^T a^\epsilon_{s\wedge
  \tau^\epsilon_N}(X^\epsilon_{s\wedge \tau^\epsilon_N},x)\le0$
provided  $\abs{x}\ge\breve{N}$\,.
Since $g_2$ is a nondecreasing  function,
\begin{equation*}
D  f(x)^T a^\epsilon_{s\wedge \tau^\epsilon_N}(X^\epsilon_{s\wedge \tau^\epsilon_N},x)=
Dg_2((\abs{x}-\breve{N})^+)(x/\abs{x})^T a^\epsilon_{s\wedge
  \tau^\epsilon_N}(X^\epsilon_{s\wedge \tau^\epsilon_N},x)\le0\,.   
\end{equation*}
In addition, like in  (\ref{eq:78}), 
$\text{tr}\,\bl(c^\epsilon_s(X^\epsilon_s,x)D^2\abs{x}\,
\br)
\le l\sqrt{l-1}\norm{c^\epsilon_s(X^\epsilon_s,x)}/\abs{x}$\,.
We
obtain that
\begin{multline}
  \label{eq:70}
        \mathbf{E}^\epsilon
\exp\Bl(-f(x^\epsilon_0)
-\frac{1}{\epsilon}\,
\int_0^{t\wedge \tau_N^\epsilon}\int_{\abs{x}>\breve{N}+1}
\frac{x^T}{\abs{x}}\, a^\epsilon_s(X^\epsilon_s,x)\,\mu^\epsilon(ds,dx)
\\-\frac{1}{2\epsilon}
\int_0^{t\wedge \tau_N^\epsilon}\int_{\breve{N}\le\abs{x}\le \breve{N}+1}\bl(
 \text{tr}\,\bl(c^\epsilon_s(X^\epsilon_s,x)D^2f(x)\br)
+\norm{D  f(x)}_{ c^\epsilon_s(X^\epsilon_s,x)}^2
\br)\,\mu^\epsilon(ds,dx)
\\-\frac{1}{2\epsilon}
\int_0^{t\wedge \tau_N^\epsilon}\int_{\abs{x}>\breve{N}+1}\bl(
\,\frac{\sqrt{l-1}}{\abs{x}}\,\,l\norm{c^\epsilon_s(X^\epsilon_s,x)}+
\,\norm{\frac{x}{\abs{x}}}^2_{c^\epsilon_s(X^\epsilon_s,x)}
\br)\,\mu^\epsilon(ds,dx)\Br)
\le1\,.
\end{multline}
Since $\norm{c^\epsilon_s(u,x)}$ is asymptotically bounded locally  in
$(s,u)$ and globally in $x$, see \eqref{eq:10},
 there exists  $\tilde{L}>0$ such that 
$\abs{\text{tr}\,\bl(c^\epsilon_{s\wedge \tau^\epsilon_N}(X^\epsilon_{s\wedge \tau^\epsilon_N},x)D^2f(x)\br)
+\norm{D 
  f(x)}^2_{ c^\epsilon_{s\wedge \tau^\epsilon_N}(X^\epsilon_{s\wedge \tau^\epsilon_N},x)}}\le \tilde{L}$ for all $s\le t$, all $\breve{N}$\,,
 and all $x$ such that $\abs{x}\in[\breve{N},\breve{N}+1]$\,,
provided 
 $\epsilon>0$ is small enough.
We can also assume that $\tilde{L}$ is an upper bound on
$\norm{c^\epsilon_{s\wedge \tau^\epsilon_N}(X^\epsilon_{s\wedge \tau^\epsilon_N},x)}$\,. We thus obtain from \eqref{eq:70}, 
on recalling that $\mu^\epsilon([0,t],\R^l)=t$, that provided
$\epsilon$ is small enough and $\breve{N}$ is
great enough,
\begin{equation*}
  \mathbf{E}^\epsilon
\exp\Bl(-f(x^\epsilon_0)
+\frac{1}{\epsilon}\,M^\epsilon\,
 \mu^\epsilon([0,t\wedge\tau_N^\epsilon],\{x:\,\abs{x}>\breve{N}+1\}
-\frac{3\tilde{L}t}{2\epsilon}
)
\Br)
\le1\,,
\end{equation*}
where 
\begin{equation*}
M^\epsilon=-\sup_{s\in[0,t]}\sup_{u\in\R^n:\,\abs{u}\le
N}\sup_{x\in\R^l:\,\abs{x}>\breve{N}+1}
\frac{x^T}{\abs{x}}\, a^\epsilon_s(u,x)>0\,.  
\end{equation*}
It follows that for arbitrary $\delta>0$,   all $\epsilon$ small
enough, and all $\breve{N}$ 
great enough,
\begin{multline*}
  \mathbf{P}^\epsilon\bl(\mu^\epsilon([0,t\wedge\tau_N^\epsilon],\{x\in\R^l:\,\abs{x}>\breve{N}+1\})>\delta\br)\le
  \mathbf{P}^\epsilon(\abs{x^\epsilon_0}>\breve{N})\\
+  \mathbf{E}^\epsilon
\exp\bl(\frac{M^\epsilon}{\epsilon}
  \mu^\epsilon([0,t\wedge\tau_N^\epsilon],\{x:\,\abs{x}>\breve{N}+1\})\br)
\ind_{\{\abs{x^\epsilon_0}\le \breve{N}\}}
\exp\bl(-\frac{M^\epsilon}{\epsilon}\,\delta\br)\\
\le \mathbf{P}^\epsilon(\abs{x^\epsilon_0}>\breve{N})+\exp\bl(\frac{3\tilde{L}t}{2\epsilon}-\frac{M^\epsilon\delta}{\epsilon}+g_2(0)\br)\,,
\end{multline*}
so by the facts that $\liminf_{\epsilon\to0}
M^\epsilon\to\infty$ and 
$\limsup_{\epsilon\to0}\mathbf{P}^\epsilon(\abs{x^\epsilon_0}\ge
\breve{N})^{\epsilon}\to0
$ 
as $\breve{N}\to\infty$,
and that  (\ref{eq:17}) holds, we obtain that
\begin{equation*}
  \lim_{\breve{N}\to\infty}\limsup_{\epsilon\to0}
\mathbf{P}^\epsilon\bl(\mu^\epsilon([0,t],\{x\in\R^l:\,\abs{x}>\breve{N}+1\})>\delta\br)^\epsilon=0.
\end{equation*}
Since $\abs{\mu^\epsilon_t(\Theta)-\mu^\epsilon_s(\Theta)}\le
\abs{t-s}$, for $\Theta\in \mathcal{B}(\R^l)$\,, 
the sequential exponential tightness of $\mu^\epsilon$ follows 
from part 1 of Lemma~\ref{le:exp_tight}.
\end{proof}
\begin{remark}
  Since
  $(X^\epsilon,\mu^\epsilon)$ is continuous in $\epsilon$ in
  distribution,
one can prove that $(X^\epsilon,\mu^\epsilon)$ is exponentially tight.
\end{remark}

\section{The equation for the large deviation function}
\label{sec:equat-large-devi}
In this section,  we derive an equation for  large deviation limit
points of $(X^\epsilon,\mu^\epsilon)$ that is to be used for
identifying the large deviation function.
For $0=t_0<t_1<\ldots<t_i$, let
\begin{equation}
  \label{eq:15}
\lambda(t,X)=\sum_{j=1}^i
\lambda_j(X_{t_{j-1}})\ind_{[t_{j-1},t_j)}(t),  
\end{equation}
 where $X=(X_s,\,s\in\R_+)\in
\mathbb{C}(\R_+,\R^n)$ and the
functions $\lambda_j(u)$\,, for $u\in\R^n$\,,
 are $\R^n$-valued and  continuous.
 We define
\begin{equation}
  \label{eq:84}
\int_0^t\lambda(s,X)\,dX_s=\sum_{j=1}^i\lambda_i(X_{t_{j-1}\wedge
  t})^T
(X_{t\wedge
  t_{j}}-X_{t\wedge t_{j-1}})\,.
\end{equation}
Let $f(t,u,x)$ represent a $\mathbb{C}^{1,2,2}(\R_+\times  \R^n
\times \R^l)$-function with compact support
 in $x$ locally uniformly in $(t,u)$ and
let, with $(X,\mu)\in \bC(\R_+,\R^n)\times
\mathbb{C}_\uparrow(\R_+,\mathbb{M}(\R^l))$\,,
 \begin{multline}
\label{eq:24}
  U_{t}^{\lambda(\cdot),f}(X,\mu) =
\int_0^t\lambda(s,X)\,dX_s-\int_0^t\int_{\R^l} 
 \lambda(s,X)^T A_s(X_s,x)\,\mu(ds,dx)
\\-\int_0^{t}\int_{\R^l} D _x f(s,X_s,x)^T 
a_s(X_s,x)\,\mu(ds,dx)
-\frac{1}{2}\,\int_0^t\int_{\R^l} \text{tr}\,\bl(
c_s(X_s,x)D^2_{xx}f(s,X_s,x)\br)\,\mu(ds,dx)\\-\frac{1}{2}\,\int_0^t\int_{\R^l} 
 \norm{\lambda(s,X)}_{C_s(X_s,x)}^2\,\mu(ds,dx)
-\frac{1}{2}\,\int_0^t\int_{\R^l}
 \norm{ D _xf(s,X_s,x)}_{c_s(X_s,x)}^2\,\mu(ds,dx)
\\-\int_0^t\int_{\R^l}  
 \lambda(s,X)
^T G_s(X_s,x)D _x
f(s,X_s,x)\,\mu(ds,dx)\,.
\end{multline}
Under condition 2.1,  $U_{t}^{\lambda(\cdot),f}(X,\mu)$
is a continuous function of $(X,\mu)$\,.

Let  $\tau(X,\mu)$ represent a continuous function of $(X,\mu)\in
\bC(\R_+,\R^n)\times 
\mathbb{C}_\uparrow(\R_+,\mathbb{M}(\R^l))$ that  is also
 a  stopping time relative to the flow
$\mathbf{G}=(\mathcal{G}_t,\,t\in\R_+)$ on $\bC(\R_+,\R^n)\times
\mathbb{C}_\uparrow(\R_+,\,\mathbb{M}(\R^l))$, 
where the $\sigma$-algebra $\mathcal{G}_t$ is
generated 
by the mappings  $X\to X_s$ and $\mu\to\mu_s$ for $s\le t$\,. (We note
 that the flow  $\mathbf{G}$ is not right continuous, so $\tau$ is
a strict stopping time, see Jacod
and Shiryaev \cite{jacshir}.) Suppose also
that
$X_{t\wedge\tau(X,\mu)}$ is a bounded function of $(X,\mu)$\,.

\begin{theorem}
  \label{the:equation}
Suppose that
 conditions 2.1, \eqref{eq:10}, \eqref{eq:10a}, \eqref{eq:8}, and
    \eqref{eq:14}
 hold. If $\tilde{\mathbf{I}}$ is 
  a large deviation limit point of $(X^\epsilon,\mu^\epsilon)$
  for rate $1/\epsilon$ as $\epsilon\to0$\,, then
\begin{equation}
  \label{eq:21}
  \sup_{(X,\mu)\in \bC(\R_+,\R^n)\times 
\mathbb{C}_\uparrow(\R_+,\mathbb{M}(\R^l))}\bl(U_{t\wedge\tau(X,\mu)}^{\lambda(\cdot),f}(X,\mu)-\tilde{\mathbf{I}}(X,\mu)\br)=0\,.
\end{equation}
\end{theorem}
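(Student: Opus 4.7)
The plan is to derive \eqref{eq:21} from Theorem \ref{the:ld_limit}, applied to the laws $\mathbf{P}^\epsilon$ of $(X^\epsilon,\mu^\epsilon)$ on $\mathbb S=\bC(\R_+,\R^n)\times\bC_\uparrow(\R_+,\mathbb{M}(\R^l))$, which are sequentially exponentially tight by Theorem \ref{the:exp_tigh}. What is needed is, for each $\epsilon>0$, a uniformly bounded functional $U^\epsilon$ on $\mathbb S$ such that $\int\exp\bl((1/\epsilon)U^\epsilon\br)\,d\mathbf{P}^\epsilon=1$ and $U^\epsilon(X,\mu)\to U^{\lambda(\cdot),f}_{t\wedge\tau(X,\mu)}(X,\mu)$ uniformly on compact subsets of $\mathbb S$; the theorem will then yield \eqref{eq:21}.

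To produce $U^\epsilon$ I would exploit an exponential martingale. Apply It\^o's formula to $f(s,X^\epsilon_s,x^\epsilon_s)$ using \eqref{eq:1}, \eqref{eq:5} and combine the result with the piecewise-constant integral $(1/\epsilon)\int_0^s\lambda(r,X^\epsilon)^T\,dX^\epsilon_r$ of \eqref{eq:15}--\eqref{eq:84}. The martingale part of the resulting semimartingale equals the It\^o integral of $(1/\sqrt{\epsilon})\bl((B^\epsilon_r)^T\lambda+(b^\epsilon_r)^T D_xf\br)+\sqrt{\epsilon}\,(B^\epsilon_r)^T D_uf$ against $W^\epsilon$, and its Dol\'eans--Dade exponential takes the form $Z^\epsilon_s=\exp\bl((1/\epsilon)\hat U^\epsilon_s(X^\epsilon,\mu^\epsilon)+R^\epsilon_s\br)$, where $\hat U^\epsilon_s(X,\mu)$ is obtained from $U^{\lambda(\cdot),f}_s(X,\mu)$ by replacing the limit coefficients $A,a,C,c,G$ with their $\epsilon$-counterparts, while $R^\epsilon_s$ is an order-one remainder collecting the It\^o boundary term $f(s,X^\epsilon_s,x^\epsilon_s)-f(0,X^\epsilon_0,x^\epsilon_0)$, the vanishing stochastic integral $\sqrt{\epsilon}\int D_uf^T B^\epsilon\,dW$, and a bounded $\mu^\epsilon$-integral of $\epsilon$-scaled Itô corrections. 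Stopping at $\rho^\epsilon:=t\wedge\tau(X^\epsilon,\mu^\epsilon)\wedge\sigma^\epsilon_N$ with $\sigma^\epsilon_N(X)=\inf\{s\colon\abs{X_s}>N\}$ and invoking \eqref{eq:10}, \eqref{eq:8}, the compact support in $x$ of $D_xf$ and $D^2_{xx}f$, and the boundedness of $\lambda$ on $\{\abs X\le N\}$ ensures both that $\hat U^\epsilon_{\rho^\epsilon}$ is uniformly bounded on $\mathbb S$ and that $R^\epsilon_{\rho^\epsilon}$ is bounded, so that $Z^\epsilon_{\rho^\epsilon}$ is a true martingale of mean one; setting $U^\epsilon(X,\mu):=\hat U^\epsilon_{\rho^\epsilon(X,\mu)}(X,\mu)-\epsilon\ln\int\exp\bl((1/\epsilon)\hat U^\epsilon_{\rho^\epsilon}\br)d\mathbf{P}^\epsilon$ gives the required expectation-one functional, with the normalising constant being $O(\epsilon)$ because $R^\epsilon_{\rho^\epsilon}$ is bounded uniformly in $\epsilon$.

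Uniform convergence $U^\epsilon\to U^{\lambda,f}_{t\wedge\tau}$ on compacts of $\mathbb S$ then follows from \eqref{eq:14}, together with the uniform tightness $\sup_\mu\mu([0,t]\times\{\abs x>M\})\to0$ as $M\to\infty$ valid on any compact subset of $\bC_\uparrow(\R_+,\mathbb{M}(\R^l))$, and the growth bounds \eqref{eq:10}, \eqref{eq:10a}, which together render the large-$x$ tails in the $\mu$-integrals negligible; on any compact subset of $\mathbb S$ contained in $\{\sup_s\abs{X_s}\le N\}$ the auxiliary stopping $\sigma^\epsilon_N$ is inactive, so $\rho^\epsilon=t\wedge\tau$ there and no mismatch arises. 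The main obstacle is the uniform boundedness requirement of Theorem~\ref{the:ld_limit}, which forces the introduction of the localization $\sigma^\epsilon_N$; removing it at the end requires a diagonal passage $N\to\infty$ justified by the exponential tightness bound $\limsup_\epsilon\mathbf{P}^\epsilon(\sigma^\epsilon_N<t)^\epsilon\to0$ derived as \eqref{eq:17} in the proof of Theorem~\ref{the:exp_tigh}, which shows that the event $\{\sigma^\epsilon_N<t\}$ is large-deviation negligible and hence the supremum in \eqref{eq:21} is unaffected by discarding it.
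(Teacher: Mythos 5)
Your core mechanism is the paper's: apply It\^o's formula to $f(s,X^\epsilon_s,x^\epsilon_s)$, combine with $\int\lambda(s,X^\epsilon)\,dX^\epsilon_s$, exponentiate to get a mean-one martingale of the form $\exp\bl((1/\epsilon)\hat U^\epsilon+R^\epsilon\br)$ with $\hat U^\epsilon$ the $\epsilon$-coefficient version of $U^{\lambda(\cdot),f}$ and $R^\epsilon$ a bounded order-one remainder, and then invoke Theorem~\ref{the:ld_limit} together with \eqref{eq:14}, the compact support of $f$ in $x$, and the uniform tightness of $\mu$ on compacts. Your normalisation trick (subtracting $\epsilon\ln\int\exp((1/\epsilon)\hat U^\epsilon)\,d\mathbf P^\epsilon$, which is $O(\epsilon)$ because $R^\epsilon$ is bounded) is a clean, harmless variant of the paper's bookkeeping of the remainder $V^\epsilon$ in \eqref{eq:104}.

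The genuine problem is the extra localisation $\sigma^\epsilon_N=\inf\{s:\abs{X_s}>N\}$ and its removal. First, it is unnecessary: the standing hypotheses on $\tau$ stated just before the theorem (continuity of $\tau$ and boundedness of $X_{t\wedge\tau(X,\mu)}$ as a function of $(X,\mu)$) are exactly what the paper uses, together with \eqref{eq:10}, \eqref{eq:10a}, \eqref{eq:8}, \eqref{eq:93} and the compact support of $f$, to get the uniform bound \eqref{eq:104} on the stopped functionals; no auxiliary exit time is needed. Second, as written, the localised scheme does not close. The exit time $\sigma_N$ is not a continuous functional of $X$, so the limit functional $U^{\lambda(\cdot),f}_{t\wedge\tau\wedge\sigma_N}$ need not be continuous, which Theorem~\ref{the:ld_limit} requires; and, more seriously, the passage $N\to\infty$ only yields the one-sided bound $U^{\lambda(\cdot),f}_{t\wedge\tau}\le\tilde{\mathbf I}$. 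Your claim that LD-negligibility of $\{\sigma^\epsilon_N<t\}$ (via \eqref{eq:17} and the LDP lower bound, which give $\inf\{\tilde{\mathbf I}(X,\mu):\sup_{s\le t}\abs{X_s}>N\}\to\infty$) restores the full equality in \eqref{eq:21} does not follow: the sup norm of the localised functional grows with $N$ (through the continuous but unbounded $\lambda_j$ and the quadratic growth of $C_s(u,x)$ in \eqref{eq:8}, \eqref{eq:82}), so near-maximisers of $U^{\lambda(\cdot),f}_{t\wedge\tau\wedge\sigma_N}-\tilde{\mathbf I}$ need not stay in $\{\sup_{s\le t}\abs{X_s}\le N\}$, and on paths leaving that set the localised and unlocalised functionals can differ in either direction. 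The full equality, not just the inequality, is what is exploited later through Theorem~\ref{the:id} and the attainment argument in Lemma~\ref{le:sup_ld_function}; note that when the paper really does need to remove a localisation of this kind (the stopping $\tau^{N,i}$ in that lemma), it does so by the doubling device $U^{2\lambda(\cdot),2h}$ to control the supremum on $\{\theta^{N,i}\ge M\}$, not by a negligibility claim. The fix for your write-up is simply to drop $\sigma^\epsilon_N$ and use the assumed properties of $\tau$, which reduces your argument to the paper's proof.
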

\begin{proof}
 The process $(\lambda(t,X^\epsilon),\,t\in\R_+)$ is
$\mathbf{F}^\epsilon$-adapted so that by \eqref{eq:1} and
\eqref{eq:84},
\begin{equation}
     \label{eq:43}
\int_0^t \lambda(s,X^\epsilon)\,dX^\epsilon_s=\int_0^t \lambda(s,X^\epsilon)^T A^\epsilon_s(X^\epsilon_s,x^\epsilon_s)\,ds
+\sqrt{\epsilon}\,\int_0^t
\lambda(s,X^\epsilon)^T B^\epsilon_s(X^\epsilon_s,x^\epsilon_s)\,dW^\epsilon_s\,.
\end{equation}

By \eqref{eq:1}, \eqref{eq:5}, and  It\^o's lemma,
\begin{multline}
  \label{eq:2}
  f(t,X^\epsilon_t,x^\epsilon_{t})=f(0,X^\epsilon_0,x^\epsilon_0)
+\int_0^t\frac{\partial
    f(s,X^\epsilon_s,x^\epsilon_s)}{ \partial s}\,ds+
\int_0^{t} D _u f(s,X^\epsilon_s,x^\epsilon_{s})^T 
A^\epsilon_s(X^\epsilon_s,x^\epsilon_{s})\,ds\\
+\sqrt{\epsilon}\,
\int_0^t D _u f(s,X^\epsilon_s,x^\epsilon_{s})^T
B^\epsilon_s(X^\epsilon_s,x^\epsilon_{s})\,dW^\epsilon_s
+\frac{1}{\epsilon}\,
\int_0^{t} D _x f(s,X^\epsilon_s,x^\epsilon_{s})^T 
a^\epsilon_s(X^\epsilon_s,x^\epsilon_{s})\,ds\\+
\frac{1}{\sqrt{\epsilon}}\,
\int_0^t D _x
 f(s,X^\epsilon_s,x^\epsilon_{s})^T
 b^\epsilon_s(X^\epsilon_s,x^\epsilon_{s})\,dW^\epsilon_s+
\frac{\epsilon}{2}\,\int_0^t \text{tr}\,\bl(
C^\epsilon_s(X^\epsilon_s,x^\epsilon_{s})D^2_{uu}f(s,X^\epsilon_s,x^\epsilon_{s})\br)\,ds\\
+\frac{1}{2\epsilon}\,\int_0^t \text{tr}\,\bl(
c^\epsilon_s(X^\epsilon_s,x^\epsilon_{s})D^2_{xx}f(s,X^\epsilon_s,x^\epsilon_{s})\br)\,ds
+
\int_0^t \text{tr}\,\bl(G^\epsilon_s(X^\epsilon_s,x^\epsilon_{s})D^2_{ux}
f(s,X^\epsilon_s,x^\epsilon_{s})\br)\,ds
\,,
\end{multline}
where
$  G_s^\epsilon(u,x)=B_s^\epsilon(u,x)b_s^\epsilon(u,x)^T$\,.
We denote 
\begin{multline*}
  U_{t}^\epsilon(X,\mu)=
\int_0^t\lambda(s,X)\,dX_s-\int_0^t\int_{\R^l} 
 \lambda(s,X)^T A^\epsilon_s(X_s,x)\,\mu(ds,dx)
\\-\int_0^{t}\int_{\R^l} D _x f(s,X_s,x)^T 
a^\epsilon_s(X_s,x)\,\mu(ds,dx)
-\frac{1}{2}\,\int_0^t\int_{\R^l} \text{tr}\,\bl(
c^\epsilon_s(X_s,x)D^2_{xx}f(s,X_s,x)\br)\,\mu(ds,dx)\\-\frac{1}{2}\,\int_0^t\int_{\R^l} 
 \norm{\lambda(s,X)}_{C_s^\epsilon(X_s,x)}^2\,\mu(ds,dx)
-\frac{1}{2}\,\int_0^t\int_{\R^l} \norm{D _x
 f(s,X_s,x)}_{c_s^\epsilon(X_s,x)}^2\,\mu(ds,dx)
\\-\int_0^t\int_{\R^l}  
 \lambda(s,X)
^T G^\epsilon_s(X_s,x)D _x
f(s,X_s,x)\,\mu(ds,dx)\,
\end{multline*}
and
\begin{multline*}
  V_{t}^\epsilon(X,\mu)=
  f(t,X_t,x_{t})-
f(0,X_0,x_0)
-\int_0^t\int_{\R^l}\frac{\partial
    f(s,X_s,x)}{ \partial s}\,\mu(ds,dx)\\-
\int_0^{t}\int_{\R^l} D _u f(s,X_s,x)^T 
A^\epsilon_s(X_s,x)\,\mu(ds,dx)-
\frac{\epsilon}{2}\,\int_0^t\int_{\R^l} \text{tr}\,\bl(
C^\epsilon_s(X_s,x)D^2_{uu}f(s,X_s,x)\br)\,\mu(ds,dx)
\\-
\int_0^t\int_{\R^l} \text{tr}\,\bl(
G^\epsilon_s(X_s,x)D^2_{ux}f(s,X_s,x)\br)\,\mu(ds,dx)
-\frac{\epsilon}{2}\,\int_0^t\int_{\R^l} \norm{D _u
 f(s,X_s,x)}_{C^\epsilon_s(X_s,x)}^2\,\mu(ds,dx)
\\-\int_0^t \int_{\R^l} 
 \lambda(s,X)
^T C^\epsilon_s(X_s,x)D _u
f(s,X_s,x)\,\mu(ds,dx)
-\int_0^t\int_{\R^l} D _u
 f(s,X_s,x)
^T G^\epsilon_s(X_s,x)D _x
f(s,X_s,x)\,\mu(ds,dx)\,.
\end{multline*} 
Since the function $\lambda(s,u)$ is locally bounded,
the function  $f(s,u,x)$ 
and its derivatives  are locally bounded and are
of compact support in $x$, conditions  \eqref{eq:10}, \eqref{eq:10a},
\eqref{eq:8},
and \eqref{eq:93} hold, and $X_{t\wedge
  \tau(X,\mu)}$ is bounded,
we have that there exists number $R(t)>0$ such that 
for  all $\epsilon$ small enough uniformly over $(X,\mu)$,
\begin{equation}
  \label{eq:104}
       \abs{  U_{t\wedge \tau(X,\mu)}^\epsilon(X,\mu)}+
 \abs{  V_{t\wedge \tau(X,\mu)}^\epsilon(X,\mu)}\le R(t)\,.
\end{equation}
Since $X^\epsilon_s$ and $\mu^\epsilon_s$ are
$\mathcal{F}^\epsilon_s$-measurable,    
$\tau(X^\epsilon,\mu^\epsilon)$ is a stopping
time relative to
$\mathbf{F}^\epsilon$\,.
By (\ref{eq:43}), (\ref{eq:2}), and \eqref{eq:104}, the process
$\bl(\exp\bl((1/\epsilon)
U^\epsilon_{{t\wedge\tau(X^\epsilon,\mu^\epsilon)}}(X^\epsilon,\mu^\epsilon)
+
V^\epsilon_{{t\wedge\tau(X^\epsilon,\mu^\epsilon)}}(X^\epsilon,\mu^\epsilon)
\br),t\in\R_+\br)$
is a bounded $\mathbf{F}^\epsilon$-martingale, so
 \begin{equation*}
 \mathbf{E}^\epsilon\exp\Bl(\frac{1}{\epsilon}\,
U^\epsilon_{{t\wedge\tau(X^\epsilon,\mu^\epsilon)}}(X^\epsilon,\mu^\epsilon)
+ V^\epsilon_{{t\wedge\tau(X^\epsilon,\mu^\epsilon)}}(X^\epsilon,\mu^\epsilon)\Br)=1\,.
 \end{equation*}

Since the function $f(s,u,x)$ is of compact
support in $x$, the convergence hypotheses in \eqref{eq:14} 
and the bound in \eqref{eq:104} imply 
 that $U_{{t\wedge\tau(X,\mu)}}^\epsilon(X,\mu)
\to U_{{t\wedge\tau(X,\mu)}}^{\lambda(\cdot),f}(X,\mu)$ as
$\epsilon\to0$ uniformly over compact sets. By Theorem~\ref{the:ld_limit},
$\sup_{(X,\mu)\in \mathbb{C}(\R_+,\R^n)\times
\mathbb{C}_\uparrow(\R_+,\mathbb{M}(\R^l))}
(U^{\lambda(\cdot),f}_{{t\wedge\tau(X,\mu)}}(X,\mu)-\tilde{\mathbf{I}}(X,\mu))=0$\,.
 \end{proof}
 \begin{remark}
   One can see that  there exists compact 
$K\subset \bC(\R_+,\R^n)\times 
\mathbb{C}_\uparrow(\R_+,\mathbb{M}(\R^l))$ such that 
$  \sup_{(X,\mu)\in K}
\bl(U_{t\wedge\tau(X,\mu)}^{\lambda(\cdot),f}(X,\mu)-\tilde{\mathbf{I}}(X,\mu)\br)=0$\,.

 \end{remark}
\section{Regularity  properties}
\label{sec:regul-prop}
Let $\tilde{\mathbf{I}}$ represent
 a large deviation limit point of
$(X^\epsilon,\mu^\epsilon)$ for rate $1/\epsilon$ as $\epsilon\to0$ such that
$\tilde{\mathbf I}(X,\mu)=\infty$  unless
$X_0=\hat
u$\,, where $\hat u$ is a preselected element of $\R^n$\,.
Let, as in Theorem~\ref{the:id}, for $(X,\mu)\in \bC(\R_+,\R^n)\times
\bC_\uparrow(\R_+,\mathbb{M}(\R^l))$\,, 
\begin{equation}
  \label{eq:23}
  \mathbf{I}^{\ast\ast}(X,\mu)=
\sup_{\lambda(\cdot),f,t,\tau}U_{t\wedge \tau(X,\mu)}^{\lambda(\cdot),f}(X,\mu)\,,
\end{equation}
with  the supremum  being taken over $\lambda(t,X)$, 
 $f(t,u,x)$, and  $\tau(X,\mu)$ satisfying the requirements of
 Theorem~\ref{the:equation}
  and over
 $t\ge0$\,. We note that, under Condition 2.1,
 $\mathbf{I}^{\ast\ast}(X,\mu)$ is a lower
 semicontinuous function of $(X,\mu)$ and that
by Theorem~\ref{the:equation}, 
\begin{equation}
  \label{eq:13}
\mathbf{I}^{\ast\ast}(X,\mu)\le
\tilde{\mathbf{I}}(X,\mu)\,.
\end{equation}
The rest of the paper is concerned mostly with proving that equality
prevails in 
\eqref{eq:13},
 provided
$X_0=\hat u$\,. Since  the case where 
$\mathbf{I}^{\ast\ast}(X,\mu)<\infty$ needs to be considered only,  in this section
we undertake a study of the
properties   of $(X,\mu)$
such that $\mathbf{I}^{\ast\ast}(X,\mu)<\infty$\,. We then  prove that
if $\mathbf{I}^{\ast\ast}(X,\mu)<\infty$ and $X_0=\hat u$\,, then
$\mathbf{I}^{\ast\ast}(X,\mu)
=\mathbf I(X,\mu)$\,, where 
 $\mathbf I(X,\mu)$ is given in the statements
of Theorem~\ref{the:ldp} and Proposition \ref{cor:ldp}
with $\mathbf{I}_0(\hat u)=0$\,, see Theorem \ref{le:vid}.
We assume throughout conditions \ref{con:coeff}, 
 \ref{con:positive},
\eqref{eq:116},  \eqref{eq:122a} and \eqref{eq:82}  to hold.
\begin{lemma}
  \label{le:differ}
If $ \mu\in \bC_\uparrow(\R_+,\mathbb{M}(\R^l))$\,, then $\mu$ is of the form
$  \mu(ds,dx)=\nu_s(dx)\,ds$, 
where $\nu_s(dx)$ is a transition probability kernel from $\R_+$ to
$\R^l$\,.
If $ (X,\mu)\in \bC(\R_+,\R^n)\times
\bC_\uparrow(\R_+,\mathbb{M}(\R^l))$ is such that
$\mathbf{I}^{\ast\ast}(X,\mu)<\infty$, then 
$X$ is absolutely continuous with respect to Lebesgue measure.
\end{lemma}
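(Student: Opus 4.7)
The plan is to dispatch the two assertions independently.

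For the first one I view $\mu\in\bC_\uparrow(\R_+,\mathbb{M}(\R^l))$ as a Borel measure on $\R_+\times\R^l$ by declaring $\mu((s,t]\times A)=\mu_t(A)-\mu_s(A)$ for Borel $A\subset\R^l$ and $s\le t$; the hypothesis $\mu_t(\R^l)=t$ forces the marginal on $\R_+$ to be Lebesgue measure. Since $\R^l$ is Polish, the standard disintegration theorem produces a kernel $\nu_s(dx)$ with $\mu(ds,dx)=\nu_s(dx)\,ds$; because the marginal in $x$ over $[s,t]$ has mass $t-s$, $\nu_s$ is a probability measure for almost every $s$, and one modifies a null set to obtain a bona fide transition probability kernel.

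For the second assertion, fix $T>0$, set $N=1+\sup_{s\in[0,T]}\abs{X_s}$ (finite by continuity of $X$), and let $M$ be a common upper bound for $\abs{A_s(u,x)}$ and $\norm{C_s(u,x)}$ valid for $s\in[0,T]$, $x\in\R^l$, and $\abs{u}\le N$; finiteness is supplied by \eqref{eq:63a} and \eqref{eq:82}. Take a continuous $\mathbf{G}$-adapted stopping time $\tau$ (smoothed from an exit time of $|X|$ from a ball of radius $N$, clipped at $T$) such that $\tau(X,\mu)=T$ on the fixed path and $\abs{X_{t\wedge\tau(\cdot)}}\le N$ uniformly, and let $f\equiv 0$. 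For any partition $0\le t_0<t_1<\dots<t_i\le T$ and vectors $\lambda_1,\dots,\lambda_i\in\R^n$, pick $\lambda(s,X)=\sum_{j=1}^{i}\lambda_j\ind_{[t_{j-1},t_j)}(s)$. Plugging into \eqref{eq:24} and invoking \eqref{eq:23} gives
\begin{equation*}
  \sum_{j=1}^{i}\lambda_j^T(X_{t_j}-X_{t_{j-1}})
  -\sum_{j=1}^{i}\int_{t_{j-1}}^{t_j}\!\!\int_{\R^l}\lambda_j^T A_s(X_s,x)\,\nu_s(dx)\,ds
  -\frac{1}{2}\sum_{j=1}^{i}\int_{t_{j-1}}^{t_j}\!\!\int_{\R^l}\norm{\lambda_j}_{C_s(X_s,x)}^2\,\nu_s(dx)\,ds
  \le \mathbf{I}^{\ast\ast}(X,\mu).
\end{equation*}

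Now specialize the intervals to an arbitrary finite disjoint collection in $[0,T]$ of total Lebesgue measure $\delta$ and choose $\lambda_j=\rho\,(X_{t_j}-X_{t_{j-1}})/\abs{X_{t_j}-X_{t_{j-1}}}$ (or zero) for a parameter $\rho>0$. The bounds on $A$ and $C$ reduce the preceding inequality to
\begin{equation*}
  \rho\sum_{j=1}^{i}\abs{X_{t_j}-X_{t_{j-1}}}-\rho M\delta-\tfrac12\rho^{2}M\delta\le \mathbf{I}^{\ast\ast}(X,\mu).
\end{equation*}
Optimizing over $\rho>0$ yields $\sum_{j}\abs{X_{t_j}-X_{t_{j-1}}}\le M\delta+\sqrt{2M\delta\,\mathbf{I}^{\ast\ast}(X,\mu)}$, which tends to zero with $\delta$. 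This is precisely the Vitali characterization of absolute continuity of $X$ on $[0,T]$, and letting $T\to\infty$ concludes.

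The main obstacle is not analytic but bookkeeping: one has to verify that a continuous, $\mathbf{G}$-adapted stopping time $\tau$ of the required sort exists — equal to $T$ on our particular $(X,\mu)$ yet keeping $X_{t\wedge\tau(\cdot)}$ uniformly bounded in $(X,\mu)$ — which is a standard cutoff construction (smoothing the exit of $\sup_{r\le s}\abs{X_r}$ out of a ball) of the kind already used in the setup of Theorem~\ref{the:equation}. Once this piece of apparatus is in hand, the bound above is an immediate variational consequence of the finiteness of $\mathbf{I}^{\ast\ast}(X,\mu)$.
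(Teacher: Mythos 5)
Your proof is correct and follows essentially the same route as the paper: the first claim is obtained by disintegrating $\mu$ against its Lebesgue marginal on $\R_+$ (the paper cites Ethier--Kurtz for the kernel), and the second by testing the supremum defining $\mathbf{I}^{\ast\ast}$ with $f=0$ and piecewise-constant $\lambda$ and then optimizing the scale parameter, which yields the same square-root bound the paper derives. Your explicit cutoff stopping time and the Vitali criterion merely spell out steps the paper leaves implicit.
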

\begin{proof}
As $\mu\in \bC_\uparrow(\R_+,\mathbb{M}(\R^l))$\,, we have that
$\mu(ds,dx)=\nu_s(dx)\mu(ds,\R^l)$, where
$\nu_s(dx)$ is a transition kernel
from $\R_+$ to $\R^l$,
see, e.g., 
Theorem 8.1 on p.502 of Ethier and Kurtz \cite{EthKur86}.

Since $\mu(ds,\R^l)$ is Lebesgue measure on $\R_+$\,, 
$\mu(ds,dx)=\nu_s(dx)\,ds$\,.

 On taking $f=0$ in \eqref{eq:24} and
assuming 
$\lambda(s,X)$ not to depend on $X$, so the piece of notation 
$\lambda(s)$ can be used instead, we have by
  \eqref{eq:24}, (\ref{eq:23}), and 
the part of the lemma just proved  that if
$\mathbf{I}^{\ast\ast}(X,\mu)<\infty$, then
 \begin{multline*}
\int_0^t\lambda(s)\,dX_s\le\int_0^t\int_{\R^l} 
 \lambda(s)^T A_s(X_s,x)\,\nu_s(dx)\,ds+
\frac{1}{2}\,\int_0^t\int_{\R^l} 
 \norm{\lambda(s)}_{ C_s(X_s,x)}^2\,\nu_s(dx)\,ds+\mathbf{I}^{\ast\ast}(X,\mu)\,.
 \end{multline*}
Replacing $\lambda(s)$ with $\delta\lambda(s)$\,, where $\delta>0$\,,
 dividing through
by $\delta$, and minimising the righthand side over $\delta$
obtains that
 \begin{multline*}
\int_0^t\lambda(s)\,dX_s\le\int_0^t\int_{\R^l} 
 \lambda(s)^T A_s(X_s,x)\,\nu_s(dx)\,ds+
\sqrt{2}\sqrt{\mathbf{I}^{\ast\ast}(X,\mu)}\,\sqrt{\int_0^t\int_{\R^l} 
\norm{ \lambda(s)}^2_{ C_s(X_s,x)}\,\nu_s(dx)\,ds\,
}\,.
 \end{multline*}
It follows that $X$ is absolutely continuous with respect to $ds$\,.

\end{proof}
By Lemma~\ref{le:differ}, if $\mathbf{I}^{\ast\ast}(X,\mu)<\infty$, then 
 (\ref{eq:24})  takes the form
\begin{multline}
    \label{eq:25}
  U_{t}^{\lambda(\cdot),f}(X,\mu)=
\int_0^t  \lambda(s,X)^T\dot{X_s}\,ds
-\int_0^t\int_{\R^l} 
 \lambda(s,X)^T A_s(X_s,x)\,\nu_s(dx)\,ds
\\-\int_0^{t}\int_{\R^l} D _x f(s,X_s,x)^T 
a_s(X_s,x)\,\nu_s(dx)\,ds
-\frac{1}{2}\,\int_0^t\int_{\R^l} \text{tr}\,\bl(
c_s(X_s,x)D^2_{xx}f(s,X_s,x)\br)\,\nu_s(dx)\,ds\\
-\frac{1}{2}\,\int_0^t\int_{\R^l} 
 \norm{\lambda(s,X)}_{C_s(X_s,x)}^2\,\nu_s(dx)\,ds
-\frac{1}{2}\,\int_0^t\int_{\R^l} 
 \norm{D _xf(s,X_s,x)}_{c_s(X_s,x)}^2\,\nu_s(dx)\,ds
\\-\int_0^t\int_{\R^l}  
 \lambda(s,X)
^T G_s(X_s,x)D _x
f(s,X_s,x)\,\nu_s(dx)\,ds\,.
\end{multline}
The next step is to show that $\nu_s(dx)$ has to be absolutely
continuous with
respect to $dx$ and establish its integrability properties\,. We need,
however, to lay the groundwork. The proofs of the following two lemmas
are relegated to the appendix.
The first one is essentially due to R\"ockner and Zhang
\cite[pp.204,205]{RocZha92}, \cite{RocZha94}, see also Bogachev, Krylov, and R\"ockner
\cite{BogKryRoc96}. 
The second one 
addresses  regularity of the invariant measures of diffusions and may
be of interest in its own right.
\begin{lemma}
  \label{le:spaces}
Let $d\in\N$ and let   $O$ represent either $\R^d$ or an open ball in
$\R^d$\,.
If $m(x)$ is an $\R_+$-valued measurable function  on $\R^d$ such that
 $m\in \mathbb{W}^{1,1}_{\text{loc}}(\R^d)$ and 
$\sqrt{m}\in \mathbb{W}^{1,2}(O)$\,, then
 $\mathbb H^{1,2}(O,m(x)\,dx)=\mathbb W^{1,2}(O,m(x)\,dx)$\,.
\end{lemma}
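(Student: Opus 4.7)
The inclusion $\mathbb H^{1,2}(O, m(x)\,dx) \subseteq \mathbb W^{1,2}(O, m(x)\,dx)$ is immediate from the definitions, since $\mathbb W^{1,2}(O, m(x)\,dx)$ is complete in the weighted Sobolev norm and contains $\mathbb C^\infty(O)\cap \mathbb W^{1,2}(O, m(x)\,dx)$. The content of the lemma is therefore the reverse inclusion, that is, the density of $\mathbb C^\infty(O)$ in $\mathbb W^{1,2}(O, m(x)\,dx)$ under the weighted Sobolev norm.

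The plan is to proceed by two reductions followed by a mollification argument. Given $f\in \mathbb W^{1,2}(O, m(x)\,dx)$, the truncations $f_N = (-N) \vee (f \wedge N)$ satisfy $Df_N = Df\cdot \ind_{\{|f|<N\}}$ by the chain rule for Sobolev functions, and $f_N \to f$ in the weighted norm by dominated convergence. When $O=\R^d$, multiplication by smooth cutoffs $\chi_R$ equal to $1$ on $B_R$, vanishing outside $B_{2R}$, with $|D\chi_R|\le 2/R$, yields $\chi_R f \to f$ in the weighted norm: the only nontrivial tail term is $(2\|f\|_\infty/R)^2\int_{R\le|x|\le 2R} m\,dx$, which vanishes as $R \to \infty$ by local integrability of $m$ (a consequence of $\sqrt m \in \mathbb L^2(O)$ via Cauchy--Schwarz). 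Thus it suffices to approximate bounded $f$ with compact support in $O$.

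The linchpin is the weak product-rule identity
\begin{equation*}
D(f\sqrt m) = \sqrt m\, Df + f\, D\sqrt m,
\end{equation*}
which is valid since $f \in \mathbb L^\infty\cap \mathbb W^{1,1}_{\text{loc}}(O)$ and $\sqrt m \in \mathbb W^{1,2}(O)$. Both summands belong to $\mathbb L^2(O, \R^d)$ (the first by $Df \in \mathbb L^2(O, \R^d, m\,dx)$, the second by boundedness of $f$ together with $D\sqrt m \in \mathbb L^2(O, \R^d)$), so $u := f\sqrt m \in \mathbb W^{1,2}(O)$. This converts the weighted Sobolev regularity of $f$ into unweighted Sobolev regularity of $u$.

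Now mollify $f$ directly: $f_\epsilon := f * \rho_\epsilon$ for a standard mollifier $\rho_\epsilon$. Then $f_\epsilon \in \mathbb C^\infty(O)$ for $\epsilon$ small (thanks to compact support), $|f_\epsilon|\le \|f\|_\infty$, and $f_\epsilon \to f$ in $\mathbb L^2(O, m\,dx)$ by dominated convergence. For gradients, applying the product-rule identity to $f_\epsilon \sqrt m$ and subtracting gives
\begin{equation*}
\sqrt m\,(Df_\epsilon - Df) = D(f_\epsilon \sqrt m - u) - (f_\epsilon - f)\,D\sqrt m,
\end{equation*}
whose last term tends to zero in $\mathbb L^2(O, \R^d)$ by dominated convergence. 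The main obstacle is then to show $f_\epsilon\sqrt m \to u$ in $\mathbb W^{1,2}(O)$. Writing $f_\epsilon \sqrt m - u = (f_\epsilon\sqrt m - u*\rho_\epsilon) + (u*\rho_\epsilon - u)$ and using that $u*\rho_\epsilon \to u$ in $\mathbb W^{1,2}(O)$, the problem reduces to a Friedrichs-type commutator estimate for multiplication by $\sqrt m$ against mollification. Since $\sqrt m$ is only in $\mathbb W^{1,2}$ rather than Lipschitz, the classical Friedrichs lemma is unavailable; the requisite estimate is the one developed by R\"ockner and Zhang \cite{RocZha92,RocZha94}, in which integration by parts against test functions reduces the commutator to expressions involving $D\sqrt m \in \mathbb L^2(O)$ paired with $(f_\epsilon - f)\to 0$, each vanishing by dominated convergence thanks to boundedness and compact support of $f$.
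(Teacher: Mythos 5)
Your argument is circular at exactly the point that carries the mathematical content of the lemma. After the product-rule identity, everything hinges on showing $D(f_\epsilon\sqrt{m})\to D(f\sqrt{m})$ in $\mathbb{L}^2(O,\R^d)$; the terms $f_\epsilon D\sqrt{m}$ and $(fD\sqrt{m})*\rho_\epsilon$ both converge to $fD\sqrt{m}$, so what remains is precisely the commutator $\sqrt{m}\,\bl((Df)*\rho_\epsilon\br)-(\sqrt{m}\,Df)*\rho_\epsilon$. The only information available on $Df$ is that $\sqrt{m}\,Df\in\mathbb{L}^2(O,\R^d)$ and $Df\in\mathbb{L}^1_{\text{loc}}$; there is no unweighted dominating function, so ``dominated convergence'' does not apply, and mollification can transport arbitrarily large values of $Df$ from regions where $m$ is nearly zero into adjacent regions where $m$ is of order one, where they are then multiplied by $\sqrt{m}\approx1$. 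A Friedrichs/DiPerna--Lions type commutator bound needs unweighted integrability of the second factor (with $\sqrt m\in\mathbb{W}^{1,2}$ and $Df\in\mathbb{L}^2$ one would only get $\mathbb{L}^1$ convergence anyway), and R\"ockner--Zhang do not prove a commutator estimate of this form: what their papers provide, and what is actually used here, is that truncations of the type $b_j(\ln m)$ belong to the domain of the closure of the Dirichlet form $\mathcal{E}^0(f,g)=\int Df^TDg\,m\,dx$. So the step you delegate to the citation is exactly the hard step, and as described it would fail. The paper's proof avoids mollifying in the weighted space altogether: for bounded $\psi\in\mathbb W^{1,2}(O,m(x)\,dx)$ it multiplies by $\phi_j=b_j(\ln m)$, so that on $\{\phi_j\neq0\}$ one has $e^{-j-1}\le m\le e^{j+1}$ and the weighted and unweighted norms are comparable; $\phi_j\psi$ is then approximated by smooth functions in the unweighted $\mathbb W^{1,2}(O)$, one returns to the weighted space with $\phi_{j+1}\psi_i$, and the closability result of R\"ockner--Zhang is invoked only to show $\phi_{j+1}\in\mathbb H^{1,2}(O,m(x)\,dx)$.

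A secondary gap concerns the case where $O$ is a ball. Your reduction to compactly supported $f$ is argued only for $O=\R^d$ (and there the correct justification is that $m\in\mathbb{L}^1(\R^d)$ because $\sqrt m\in\mathbb{L}^2(\R^d)$, not ``local integrability''). For $O$ a ball the reduction is neither available --- cutting off near $\partial O$ produces the term $\int_O|f|^2|D\chi|^2m\,dx$, which need not be small since $m$ need not decay at the boundary --- nor needed, since $\mathbb H^{1,2}(O,m(x)\,dx)$ is the closure of $\mathbb C^\infty(O)$, not of $\mathbb C_0^\infty(O)$. But without compact support in $O$ the convolution $f*\rho_\epsilon$ is not even defined on all of $O$, so the ball case is left unproved. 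The paper handles it by extending the weight to $\tilde m=m\eta+(1-\eta)/(1+\abs{x}^{d+1})$ on $\R^d$, which agrees with $m$ on $O$ and keeps $\sqrt{\tilde m}\in\mathbb W^{1,2}(\R^d)$, and by running the Dirichlet-form argument on $\R^d$.
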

\begin{lemma}
  \label{le:lok_kwadr}
For $d\in\N$ and $x\in\R^d$, let $c(x)$ represent a 
 locally Lipschitz continuous function
with values in the set of symmetric positive definite  $d\times
d$-matrices and let $b(x)$ represent an $\R^d$-valued measurable function. 
 Suppose $m(x)$ is a 
probability density on $\R^d$ such that $m(\ln m)^2\in
\mathbb{L}^1_{\text{loc}}(\R^d)$\,, $b\in
\mathbb{L}^2_{\text{loc}}(\R^d,\R^d,m(x)\,dx)$\,, and
\begin{equation*}
  \int_{\R^d}\text{tr}\,(c(x)D^2p(x))m(x)\,dx+
\int_{\R^d} D p(x)^Tb(x)m(x)\,dx=0
\end{equation*}
for all $p\in \mathbb{C}_0^\infty (\R^d)$\,, where we assume that
 $0(\ln0)^2=0$\,.

Then $m\in \mathbb{W}^{1,1}_{\text{loc}}(\R^d)$ and $\sqrt{m}\in
\mathbb{W}^{1,2}_{\text{loc}}(\R^d)$\,. Furthermore, given open ball $S$ from
$\R^d$\,, there exists constant $M$ which depends on $S$, on the Lipschitz
constant of $c(x)$ on $S$, and on $\inf_{x\in S}x^Tc(x)x/\abs{x}^2$
only, such that
\begin{equation}
  \label{eq:108}
       \int_{S}     \frac{\abs{D m(x)}^2}{m(x)}\,dx\le
M\bl(1+\int_{S}(\ln m(x))^2m(x)\,dx
+\int_{S}\abs{ b(x)}^2m(x)\,dx\br)\,.
\end{equation}

\end{lemma}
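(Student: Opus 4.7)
The plan is to derive the Fisher-type estimate \eqref{eq:108} for smooth regularizations of $m$ and then pass to the limit. Let $\rho_\delta$ be a standard nonnegative mollifier on $\R^d$ and set $u_\delta=m*\rho_\delta$. Convolving the hypothesized identity with $\rho_\delta$ and applying Fubini shows that $(c_{ij}m)*\rho_\delta$ and $(b_im)*\rho_\delta$ satisfy the smooth pointwise equation
\begin{equation*}
  \partial_i\partial_j\bl((c_{ij}m)*\rho_\delta\br)(x)-\partial_i\bl((b_im)*\rho_\delta\br)(x)=0.
\end{equation*}
Writing $(c_{ij}m)*\rho_\delta=c_{ij}u_\delta+R^{(c)}_{\delta,ij}$ and $(b_im)*\rho_\delta=b_iu_\delta+R^{(b)}_{\delta,i}$ with Friedrichs-type commutator errors, integrating by parts against a test function $p\in\mathbb C_0^\infty(S')$ for $S\subset\subset S'$, and moving the commutators to the right-hand side, one obtains the weak identity
\begin{equation*}
  \int_{S'}Dp^T c\,Du_\delta\,dx=\int_{S'}Dp^T\bl(b-\text{div}\,c\br)u_\delta\,dx+E_\delta(p),
\end{equation*}
where $E_\delta(p)\to 0$ as $\delta\to 0$ for each fixed smooth $p$, via the Friedrichs commutator lemma together with the Lipschitz continuity of $c$ and the hypothesis $b\in\mathbb L^2_{\text{loc}}(\R^d,\R^d,m\,dx)$.

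Fix a cutoff $\eta\in\mathbb C_0^\infty(S')$ with $\eta\equiv 1$ on $S$ and $0\le\eta\le 1$, and insert the test function $p_{\delta,\epsilon}(x)=\eta(x)^2\ln(u_\delta(x)+\epsilon)$, with $\epsilon>0$, into the identity just derived. Since $Dp_{\delta,\epsilon}=2\eta D\eta\ln(u_\delta+\epsilon)+\eta^2Du_\delta/(u_\delta+\epsilon)$, ellipticity on $S'$ gives
\begin{equation*}
  \lambda_0\int_{S'}\eta^2\,\frac{|Du_\delta|^2}{u_\delta+\epsilon}\,dx\le\int_{S'}\eta^2\,\frac{Du_\delta^T c\,Du_\delta}{u_\delta+\epsilon}\,dx,
\end{equation*}
with $\lambda_0$ the ellipticity constant of $c$ on $S'$ (in the sense of the lemma). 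The remaining contributions split into three pieces: a cross term $\int\eta D\eta^T c\,Du_\delta\ln(u_\delta+\epsilon)\,dx$, handled by Cauchy--Schwarz with a small parameter to absorb a quarter of the main integral while leaving a residual bounded by $\int_{S'}|D\eta|^2(\ln(u_\delta+\epsilon))^2(u_\delta+\epsilon)\,dx$; a drift term $\int\eta^2Du_\delta^T(b-\text{div}\,c)u_\delta/(u_\delta+\epsilon)\,dx$, handled similarly via splitting against $c^{1/2}Du_\delta/\sqrt{u_\delta+\epsilon}$ and leaving $\int_{S'}\eta^2|b-\text{div}\,c|^2u_\delta\,dx$; and a cross-drift term $\int\eta D\eta^T(b-\text{div}\,c)u_\delta\ln(u_\delta+\epsilon)\,dx$, bounded by Young's inequality. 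Since $|\text{div}\,c|$ is pointwise bounded on $S'$ by the Lipschitz constant of $c$ and $\int u_\delta\le 1+o(1)$, one deduces
\begin{equation*}
  \int_S\frac{|Du_\delta|^2}{u_\delta+\epsilon}\,dx\le M\Bl(1+\int_{S'}\bl(\ln(u_\delta+\epsilon)\br)^2 u_\delta\,dx+\int_{S'}|b|^2u_\delta\,dx\Br)+o_\delta(1),
\end{equation*}
with $M$ depending only on $S'$, $\lambda_0$, and the Lipschitz constant of $c$ on $S'$.

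Because $|Du_\delta|^2/(u_\delta+\epsilon)=4|D\sqrt{u_\delta+\epsilon}|^2$, the previous bound says that $\sqrt{u_\delta+\epsilon}$ is bounded in $\mathbb W^{1,2}(S)$ uniformly in $\delta$. Since $u_\delta\to m$ in $\mathbb L^1_{\text{loc}}(\R^d)$, along a subsequence $\sqrt{u_\delta+\epsilon}$ converges weakly in $\mathbb W^{1,2}(S)$ to $\sqrt{m+\epsilon}$. Weak lower semicontinuity of the energy in $\delta$, followed by monotone convergence as $\epsilon\to 0$---with the hypothesis $m(\ln m)^2\in\mathbb L^1_{\text{loc}}$ used to control $\int_{S'}(\ln(u_\delta+\epsilon))^2u_\delta\,dx$ uniformly in $\delta,\epsilon$---yields $\sqrt m\in\mathbb W^{1,2}_{\text{loc}}(\R^d)$, hence $m\in\mathbb W^{1,1}_{\text{loc}}(\R^d)$ with $Dm=2\sqrt m\,D\sqrt m$ a.e., and the estimate \eqref{eq:108}.

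The main obstacle lies in managing the interaction between the commutator error $E_\delta(p_{\delta,\epsilon})$ and the test function $\ln(u_\delta+\epsilon)$, whose gradient is not uniformly controlled as $\delta,\epsilon\to 0$. The Friedrichs commutator lemma must be applied in a form that exploits the local $\mathbb L^2(m\,dx)$-integrability of $b$ rather than the unavailable $\mathbb L^2(dx)$-integrability, and the uniform control of $\int_{S'}(\ln(u_\delta+\epsilon))^2u_\delta\,dx$ in terms of $\int_{S'}(\ln m)^2 m\,dx$ rests on a Jensen-type inequality for the convex-at-infinity function $t\mapsto t(\ln t)^2$ combined with the local integrability hypothesis. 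Careful bookkeeping of constants through the three Cauchy--Schwarz splittings is needed to ensure that $M$ depends only on the quantities stated in the lemma and not on $m$ itself.
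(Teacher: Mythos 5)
Your overall strategy coincides with the paper's (mollify, test the resulting identity with $\eta^2\ln(\,\cdot\,+\epsilon)$, absorb via ellipticity and Cauchy--Schwarz, control the logarithmic term by Jensen, then get $\sqrt m\in\mathbb W^{1,2}_{\text{loc}}$ by weak compactness of the square roots), but the step where you deviate is precisely where your argument has a genuine gap. You propose to rewrite $(c_{ij}m)*\rho_\delta=c_{ij}u_\delta+R^{(c)}_{\delta,ij}$ and $(b_im)*\rho_\delta=b_iu_\delta+R^{(b)}_{\delta,i}$ and to invoke a Friedrichs commutator lemma to conclude $E_\delta(p)\to0$. Two problems: first, the decomposition of the drift term is ill-posed under the stated hypotheses, since $b$ is only in $\mathbb L^2_{\text{loc}}(\R^d,\R^d,m(x)\,dx)$ and $b\,u_\delta$ need not be locally Lebesgue-integrable (on sets where $m$ is tiny but $u_\delta$ is not, $b$ is uncontrolled), so the ``clean'' weak identity $\int Dp^Tc\,Du_\delta=\int Dp^T(b-\text{div}\,c)u_\delta+E_\delta(p)$ does not even make sense in general; second, and more importantly, the vanishing $E_\delta(p)\to0$ is claimed only for fixed smooth $p$, whereas you must insert the $\delta$- and $\epsilon$-dependent test function $\eta^2\ln(u_\delta+\epsilon)$, whose gradient contains $Du_\delta/(u_\delta+\epsilon)$ --- exactly the quantity being estimated. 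You name this interaction as ``the main obstacle'' but do not resolve it, and no standard commutator lemma resolves it with $b$ controlled only in $\mathbb L^2(m\,dx)$.

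The paper's proof avoids commutator convergence altogether: it keeps the mollified expressions $\int(c(y)-c(x))D\rho_\epsilon(x-y)m(y)\,dy$ and $\int b(y)\rho_\epsilon(x-y)m(y)\,dy$ intact in the identity \eqref{eq:57}, and after the Cauchy--Schwarz splittings it bounds their weighted squares, with weight $\eta^2/(m_\epsilon+\kappa)$, pointwise in $x$ by Cauchy--Schwarz with respect to the probability kernel $\rho_\epsilon(x-y)m(y)\,dy/m_\epsilon(x)$; the Lipschitz continuity of $c$ then makes the commutator contribution merely \emph{bounded uniformly in $\epsilon$} (see \eqref{eq:102}), not vanishing, and the $b$-contribution is bounded by $\int_S|b|^2m\,dx$ as in \eqref{eq:103}. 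These bounded error terms are simply absorbed into the constant $M$, which is all the lemma claims. To repair your argument you would have to replace the ``$E_\delta(p)\to0$ for fixed $p$'' step by exactly this kind of weighted, kernel-level estimate, i.e., essentially revert to the paper's treatment of the error terms; as written, the proposal's central step would fail.
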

The latter lemma  is a local version of the result by Bogachev, Krylov, and
R\"ockner \cite{BogKryRoc96}  that 
if $b\in
\mathbb{L}^2(\R^d,m(x)\,dx)$
then $\sqrt{m}\in
\mathbb{W}^{1,2}(\R^d)$\,,  see also 
 Metafune, Pallara, and Rhandi
 \cite{MetPalRha05}\,. 


For the next lemma, we recall that,
according to our conventions,   $q'=q/(q-1)$\,, provided  $q>1$\,.
\begin{lemma}
  \label{le:density}
Suppose that $\mathbf{I}^{\ast\ast}(X,\mu)<\infty$\,, where
 $\mu(ds,dx)=\nu_s(dx)\,ds$\,. Then, for almost all $s$\,,
the
transition kernel
 $\nu_s(dx)$ is absolutely continuous with respect to
Lebesgue measure,  the density $m_s(x)=\nu_s(dx)/dx$  
 is an element
 of   $\mathbb L_{\text{loc}}^\beta(\R^l)$ for all
 $\beta\in[1,l/(l-1))$ and is an element of $\mathbb{W}^{1,\alpha}
 _{\text{loc}}(\R^l)$ for all $\alpha\in[1,2l/(2l-1))$\,, and
 $\sqrt{m_s(\cdot)}\in
\mathbb{W}^{1,2}_{\text{loc}}(\R^l)$\,.
Furthermore, for arbitrary $t>0$ and
 open ball $S\subset \R^l$\,,
 \begin{equation}
   \label{eq:106}
      \int_0^t\int_S\frac{\abs{Dm_s(x)}^2}{m_s(x)}\,dx\,ds<\infty\,.
 \end{equation}
If, in addition, $\sqrt{m_s(\cdot)}\in
\mathbb W^{1,2}(\R^l)$, then 
$Dm_s(\cdot)/m_s(\cdot)\in \mathbb{L}_0^{1,2}(\R^l,\R^l,m_s(x)\,dx)$\,.

If   $\kappa>0$, $q\ge2$\,, and $q> l$\,, then
\begin{equation}
  \label{eq:121}
     \sup_{( X, \mu):\,\mathbf{    I}^{\ast\ast}( X,\mu)\le \kappa}
\, \int_0^t\int_S m_s(x)^{q'}\,dx\,ds<\infty\,.
\end{equation}
\end{lemma}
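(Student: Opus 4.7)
My plan is to extract from the equation of Theorem \ref{the:equation} a Dirichlet-type inequality for $\nu_s$, convert it by Riesz representation to exact stationarity for a modified diffusion, apply the elliptic regularity theory for invariant densities, and feed the outcome into Lemma \ref{le:lok_kwadr}.

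First, I specialise \eqref{eq:25} to $\lambda\equiv 0$ and $f(s,u,x)=\phi(s)g(x)$ with $\phi\in\mathbb{C}_0^\infty(\R_+)$ and $g\in\mathbb{C}_0^\infty(\R^l)$, replace $f$ by $\delta f$, and optimise the inequality $U_t^{0,\delta\phi g}(X,\mu)\le \mathbf{I}^{\ast\ast}(X,\mu)$ in $\delta\in\R$. This yields
\begin{equation*}
\Bl(\int_0^t \phi(s)\int_{\R^l} L_s^{X_s}g(x)\,\nu_s(dx)\,ds\Br)^{\!2}\le 2\mathbf{I}^{\ast\ast}(X,\mu)\int_0^t \phi(s)^2\int_{\R^l}\norm{Dg}_{c_s(X_s,x)}^2\,\nu_s(dx)\,ds,
\end{equation*}
where $L_s^u g=\tfrac{1}{2}\text{tr}(c_s(u,\cdot)D^2g)+Dg^T a_s(u,\cdot)$. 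Choosing $\phi$ as the optimal ratio of the inner integrals and then applying Riesz representation on the space of gradients of $\mathbb{C}_0^\infty(\R^l)$-functions inside $\mathbb{L}^2(\R^l,\R^l,c_s(X_s,x),\nu_s(dx))$ produces, for a.e.\ $s$, a vector field $\theta_s$ with $\int_0^t\int_{\R^l}\norm{\theta_s(x)}^2_{c_s(X_s,x)}\,\nu_s(dx)\,ds\le 2\mathbf{I}^{\ast\ast}(X,\mu)$ such that
\begin{equation*}
\int_{\R^l}\Bl(\tfrac{1}{2}\text{tr}(c_s(X_s,x)D^2g)+Dg(x)^T\bl(a_s(X_s,x)+c_s(X_s,x)\theta_s(x)\br)\Br)\,\nu_s(dx)=0
\end{equation*}
for every $g\in \mathbb{C}_0^\infty(\R^l)$; that is, $\nu_s$ is invariant for the diffusion with drift $a_s(X_s,\cdot)+c_s(X_s,\cdot)\theta_s$ and diffusion matrix $c_s(X_s,\cdot)$.

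Second, I invoke the elliptic regularity theory for invariant measures of diffusions (Bogachev, Krylov, and R\"ockner \cite{BogKryRoc96} and related work): since $c_s(X_s,\cdot)$ is uniformly positive definite and locally Lipschitz by Conditions \ref{con:coeff} and \ref{con:positive}, and the effective drift is locally in $\mathbb{L}^2(m_s\,dx)$ once $m_s$ is the density, one obtains $\nu_s(dx)=m_s(x)\,dx$ with $m_s\in\mathbb{W}^{1,\alpha}_{\text{loc}}(\R^l)$ for every $\alpha\in[1,2l/(2l-1))$, whence by Sobolev embedding $m_s\in\mathbb{L}^\beta_{\text{loc}}(\R^l)$ for every $\beta\in[1,l/(l-1))$. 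The hypotheses of Lemma \ref{le:lok_kwadr} are then in force (local integrability of $m_s(\ln m_s)^2$ follows from $m_s\in\mathbb{L}^\beta_{\text{loc}}$ with $\beta>1$), and the lemma supplies $\sqrt{m_s}\in\mathbb{W}^{1,2}_{\text{loc}}$ and the local estimate \eqref{eq:108}. Integrating \eqref{eq:108} over $[0,t]$, using that $X$ is bounded on $[0,t]$ and that $\int_0^t\int_{\R^l}\norm{\theta_s(x)}^2_{c_s(X_s,x)}\,\nu_s(dx)\,ds\le 2\mathbf{I}^{\ast\ast}(X,\mu)$, produces \eqref{eq:106}.

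For the $\mathbb{L}_0^{1,2}$ claim, $\sqrt{m_s}\in\mathbb{W}^{1,2}(\R^l)$ together with $\norm{Dm_s/m_s}^2 m_s=4\norm{D\sqrt{m_s}}^2$ gives $Dm_s/m_s\in \mathbb{L}^2(\R^l,\R^l,m_s(x)\,dx)$; approximating $\sqrt{m_s}$ in $\mathbb{W}^{1,2}(\R^l)$ by $\mathbb{C}_0^\infty$-functions and passing to the corresponding logarithms (with truncation and mollification) yields $h_n\in \mathbb{C}_0^\infty(\R^l)$ whose gradients converge to $Dm_s/m_s$ in $\mathbb{L}^2(m_s\,dx)$, placing $Dm_s/m_s$ in $\mathbb{L}_0^{1,2}(\R^l,\R^l,m_s(x)\,dx)$. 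The uniform bound \eqref{eq:121} follows because the constants in the regularity estimate and in the Sobolev embedding depend only on the ball $S$, on local bounds for the coefficients, and on $\kappa$, while the level set $\{(X,\mu):\mathbf{I}^{\ast\ast}\le\kappa\}$ is compact by lower semicontinuity, forcing $X$ to remain in a fixed compact set on $[0,t]$. The main obstacle is extracting a measurable-in-$s$ field $\theta_s$ from the integrated Dirichlet bound and then tracking quantitatively the Bogachev--Krylov--R\"ockner estimates when the drift is only locally $\mathbb{L}^2$ against the density; a bootstrap from $\mathbb{L}^\beta_{\text{loc}}$ with $\beta>1$ is likely required to reach the claimed Sobolev exponents.
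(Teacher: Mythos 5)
Your first half tracks the paper's own argument: testing with $\lambda=0$ and spatial test functions, optimising over a scalar multiple, and using the Riesz representation to produce a square-integrable field (your $\theta_s$, the paper's $\psi$) so that, for almost every $s$, $\nu_s$ is stationary for the diffusion with drift $a_s(X_s,\cdot)+c_s(X_s,\cdot)\theta_s$; the measurability-in-$s$ obstacle you flag is resolved in the paper simply by carrying out the Riesz representation in the space--time space $\mathbb{L}^2([0,t]\times\R^l,\R^l,c_s(X_s,x),\nu_s(dx)\,ds)$ rather than for each $s$ separately. Two of your intermediate steps are shaky, though. First, the regularity chain is circular as written: you claim $m_s\in\mathbb{W}^{1,\alpha}_{\text{loc}}$ directly from the invariant-measure literature ``once $m_s$ is the density'' and then deduce $\mathbb{L}^\beta_{\text{loc}}$ by Sobolev embedding, whereas the non-circular order is the reverse: existence of the density and $m_s\in\mathbb{L}^\beta_{\text{loc}}$, $\beta<l/(l-1)$, come first (Theorem 2.1 of Bogachev, Krylov and R\"ockner \cite{BogKryRoc01}, which needs only local $\nu_s$-integrability of the drift); this yields $m_s(\ln m_s)^2\in\mathbb{L}^1_{\text{loc}}$, Lemma \ref{le:lok_kwadr} then gives $\sqrt{m_s}\in\mathbb{W}^{1,2}_{\text{loc}}$ together with \eqref{eq:108}, and the $\mathbb{W}^{1,\alpha}_{\text{loc}}$ statement is obtained afterwards (the paper uses Agmon \cite{Agm59}). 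Second, for $Dm_s/m_s\in\mathbb{L}_0^{1,2}(\R^l,\R^l,m_s(x)\,dx)$ your recipe of approximating $\sqrt{m_s}$ by $\mathbb{C}_0^\infty$-functions and ``passing to the corresponding logarithms'' does not work as stated (the logarithm is unbounded where the approximants vanish); the correct argument truncates $\ln m_s$ and rests on the equality $\mathbb{H}^{1,2}=\mathbb{W}^{1,2}$ for the weighted space, which is exactly Lemma \ref{le:spaces} and could simply have been invoked.

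The genuine gap is \eqref{eq:121}. Your justification --- uniform constants plus ``the level set $\{\mathbf{I}^{\ast\ast}\le\kappa\}$ is compact by lower semicontinuity, forcing $X$ to remain in a fixed compact set'' --- does not stand: lower semicontinuity gives closed, not compact, sublevel sets (compactness is only known for the rate function $\tilde{\mathbf{I}}$, and $\mathbf{I}^{\ast\ast}\le\tilde{\mathbf{I}}$ goes the wrong way), and in fact, by Lemma \ref{le:zero}, $\{\mathbf{I}^{\ast\ast}\le\kappa\}$ contains zero-cost trajectories issued from arbitrary initial points, so no fixed compact confines $X$. More importantly, even granting uniform control of the coefficients, per-$s$ estimates of Bogachev--Krylov--R\"ockner type depend on local norms of the perturbed drift against $\nu_s$, and for the perturbation $\theta_s$ only the time integral of its square is controlled, by \eqref{eq:34}; such pointwise-in-$s$ bounds are therefore neither uniform over the sublevel set nor do they integrate in time to give $\int_0^t\int_S m_s^{q'}\,dx\,ds$. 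What the paper actually does is a space--time duality argument: from \eqref{eq:114} and \eqref{eq:34} it derives the uniform functional bound \eqref{eq:115} over the sublevel set, inserts a cutoff, solves $\mathrm{tr}\,(c_s(X_s,x)D^2_{xx}\varphi(s,x))=f(s,x)$ on a ball with zero boundary data, applies the a priori estimates of Gilbarg and Trudinger \cite{GilTru83} together with the imbedding $\mathbb{W}^{2,q}\subset\mathbb{W}^{1,\infty}$ for $q>l$ to obtain $\|\varphi(s,\cdot)\|_{\mathbb{W}^{2,q}}\le L\|f(s,\cdot)\|_{\mathbb{L}^q}$ uniformly, and concludes by $\mathbb{L}^q/\mathbb{L}^{q'}$ duality on $[0,t]\times S_1$. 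That argument, not a bookkeeping of constants, is what is missing from your proposal.
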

\begin{proof}
By \eqref{eq:23} and \eqref{eq:25} with  $\lambda(s,X)=0$ and
$f(s,u,x)=\phi(s,x)$, where $\phi\in \mathbb{C}^{1,2}(\R_+\times\R^l)$
and the support of $\phi$ in $x$ is  bounded locally uniformly in $s$\,,
\begin{multline*}
  -\frac{1}{2}\,\int_0^t\int_{\R^l} \text{tr}\,\bl(
c_s(X_s,x)D ^2_{xx}
\phi(s,x)\br)\,\nu_s(dx)\,ds-
\int_0^{t}\int_{\R^l} D _x \phi(s,x)^T 
a_s(X_s,x)\,\nu_s(dx)\,ds\\\le \mathbf{    I}^{\ast\ast}(X,\mu)+\frac{1}{2}\,\int_0^t\int_{\R^l}
\norm{D _x \phi(s,x)}_{c_s(X_s,x)}^2\,\nu_s(dx)\,ds\,.
\end{multline*}
Replacing $\phi(s,x)$ with $\delta\phi(s,x)$, where $\delta>0$,
dividing through by $\delta$\,,
and minimising the righthand side over $\delta$ yields

\begin{multline}
\label{eq:31}
  -\frac{1}{2}\,\int_0^t\int_{\R^l} \text{tr}\,\bl(
c_s(X_s,x)D ^2_{xx}
\phi(s,x)\br)\,\nu_s(dx)\,ds-\int_0^{t}\int_{\R^l} D _x \phi(s,x)^T 
a_s(X_s,x)\,\nu_s(dx)\,ds\\\le\sqrt{2}\,
\mathbf{    I}^{\ast\ast}(X,\mu)^{1/2}
\bl(\int_0^t\int_{\R^l}
\norm{D _x \phi(s,x)}_{c_s(X_s,x)}^2\,\nu_s(dx)\,ds\br)^{1/2}\,.
\end{multline}
Let $\mathbb{L}_0^{1,2}([0,t]\times \R^l,\R^l,
c_s(X_s,x),\nu_s(dx)\,ds)$ denote the closure in 
$\mathbb{L}^2([0,t]\times \R^l,\R^l,c_s(X_s,x),\nu_s(dx)\,ds)$ of the
space of  functions $D_x\phi$\,.
By \eqref{eq:31},
the lefthand side  extends to a continuous functional
$T_t(g)$ on \linebreak $\mathbb{L}_0^{1,2}([0,t]\times \R^l,\R^l,
c_s(X_s,x),\nu_s(dx)\,ds)$\,. By the Riesz representation theorem,
there exists a unique $\psi\in \mathbb{L}_0^{1,2}([0,t]\times \R^l,
\R^l,c_s(X_s,x),\nu_s(dx)\,ds)$ such that 
\begin{equation*}
T_t(g)=\int_0^t\int_{{\R^l}}g(s,x)^T c_s(X_s,x)\psi(s,x)\,
\nu_s(dx)\,ds\,,
\end{equation*}
for all $g\in \mathbb{L}_0^{1,2}([0,t]\times \R^l,\R^l,
c_s(X_s,x),\nu_s(dx)\,ds)$\,,
 and 
\begin{equation}
  \label{eq:34}
  \bl(\int_0^t\int_{{\R^l}}\norm{\psi(s,x)}_{c_s(X_s,x)}^2
\nu_s(dx)\,ds\br)^{1/2}\le\sqrt{2} \mathbf{    I}^{\ast\ast}(X,\mu)^{1/2}\,.
\end{equation}
By uniqueness, $\psi$ can be extended to 
a function on $\R_+\times {\R^l}$ so that for all $t>0$,
\begin{multline}\label{eq:114}
    -\frac{1}{2}\,\int_0^t\int_{\R^l} \text{tr}\,\bl(
c_s(X_s,x)D ^2_{xx}
\phi(s,x)\br)\,\nu_s(dx)\,ds-\int_0^{t}\int_{\R^l} D _x \phi(s,x)^T 
a_s(X_s,x)\,\nu_s(dx)\,ds\\=
\int_0^t\int_{{\R^l}}D _x\phi(s,x)^T c_s(X_s,x)\psi(s,x)\,
\nu_s(dx)\,ds\,.
\end{multline}
It follows that for almost all $s$ and for  all 
 $h\in  \mathbb{C}^2_0({\R^l})$\,,
\begin{multline}
  \label{eq:28}
-\frac{1}{2}\,\int_{\R^l} \text{tr}\,\bl(c_s(X_s,x)D^2
h(x)\br)\,\nu_s(dx)=\int_{\R^l} Dh(x)^T 
a_s(X_s,x)\,\nu_s(dx)\\+\int_{{\R^l}}
 D h(x)^T c_s(X_s,x)\psi(s,x)\,\nu_s(dx)\,.
\end{multline}
Since $\psi\in \mathbb{L}_0^{1,2}([0,t]\times \R^l,
\R^l,c_s(X_s,x),\nu_s(dx)\,ds)$\,,  we have that, for almost all $s$\,,
 $\psi(s,\cdot)$ belongs to the closure
 of the set of the $D _xh$ in
 $\mathbb{L}^2(\R^l,\R^l,c_s(X_s,x),\nu_s(dx))$\,.
In particular, $\int_{\R^l}\abs{\psi(s,x)}^2\,\nu_s(dx)<\infty$\,.
Since $a_s(X_s,\cdot)$ and $\psi(s,\cdot)$ are locally integrable with respect
to $\nu_s(dx)$ and $c_s(X_s,\cdot)$ is uniformly positive definite and
is of class $\mathbb C^1$\,, \eqref{eq:28} and
 Theorem 2.1 in Bogachev, Krylov, and R\"ockner \cite{BogKryRoc01}
 imply that the
measure $\nu_s(dx)$ has  density $m_s(x)$ with respect to  Lebesgue measure
which belongs to  $L_{\text{loc}}^\beta(\R^l)$ 
for all  $\beta<l'$\,. 
It follows , since $a_s(X_s,\cdot)$ and $c_s(X_s,\cdot)$ are
 locally bounded and 
$\int_{\R^l}\abs{\psi(s,x)}^2\,\nu_s(dx)<\infty$, that, for 
arbitrary open ball $S$ in $\R^l$\,,
  there exists  $M>0$ such that
for all  
$h\in \mathbb{C}_0^2(S)$
\begin{equation*}
  \abs{\int_{S} \text{tr}\,\bl(c_s(X_s,x)D^2
h(x)\br)\,m_s(x)\,dx}\le M\norm{D h}_{\mathbb{L}^{2\beta'}(S,\R^l)}\,.
\end{equation*}

\noindent 
Since $c_s(u,\cdot)$ is uniformly positive definite and is of class 
$\mathbb C^1$\,,
by Theorem 6.1 in Agmon \cite{Agm59}, 
the  density $m_s(\cdot)$  belongs to
$\mathbb{W}_{\text{loc}}^{1,\alpha}(S)$ for all $\alpha< 2l/(2l-1)$.
The inclusion $\sqrt{m_s(\cdot)}\in \mathbb{W}^{1,2}_{\text{loc}}(\R^l)$ follows from
 Lemma~\ref{le:lok_kwadr} and  \eqref{eq:28}. 
For the inequality \eqref{eq:106}, we also recall \eqref{eq:108} and
\eqref{eq:34}.

Let $\mathbb
L^{1,2}(\R^l,\R^l,c_s(X_s,x),m_s(x)\,dx)$ 
 represent the closure  in $ \mathbb
L^{2}(\R^l,\R^l,c_s(X_s,x),m_s(x)\,dx)$ of the set of gradients of $\mathbb
C^1(\R^l)$--functions  such that those gradients belong to
$ \mathbb L^2(\R^l,\R^l,c_s(X_s,x),m_s(x)\,dx)$\,.
Let $\varphi_M(y)$ be a smooth function such that $\varphi_M(y)=y$
when $\abs{y}\le M$ and $\varphi_M(y)=(y/\abs{y})(M+1)$ when
$\abs{y}\ge M+1$\,.
Let $\eta(x)$ represent a $[0,1]$-valued  continuously differentiable 
nonincreasing function defined for $x\ge 0$ 
such that $\eta(x)=1$ for $x\in[0,1]$ and $\eta(x)=0$ for $x\ge 2$\,.
Let $\eta_r(x)=\eta(\abs{x}/r)$ where $x\in\R^l $ and $r>0$\,.
 By associating
 with $h\in \mathbb
L^{1,2}(\R^l,\R^l,c_s(X_s,x),m_s(x)\,dx)$ the function 
$\varphi_M(h(x))\eta_r(x)$ and taking limits  as $r\to\infty$ and $M\to\infty$\,,  one can show
that $
\mathbb
L^{1,2}(\R^l,\R^l,c_s(X_s,x),m_s(x)\,dx)=\mathbb
L^{1,2}_0(\R^l,\R^l,c_s(X_s,x),m_s(x)\,dx)$\,.   
The property that 
$Dm_s(\cdot)/m_s(\cdot)\in \mathbb{L}_0^{1,2}(\R^l,\R^l,m_s(x)\,dx)$
when $\sqrt{m_s(\cdot)}\in
\mathbb W^{1,2}(\R^l)$ now follows from Lemma \ref{le:spaces}.

We now adapt the proof of Theorem 2.1 in Bogachev, Krylov, and
R\"ockner \cite{BogKryRoc01} in order to obtain the bound in
\eqref{eq:121}.  Let  $S_1$ represent an open ball
which contains $S$\,.
By \eqref{eq:34}, \eqref{eq:114}, and local boundedness of 
$a_s(u,x)$ and $c_s(u,x)$\,,
 assuming that  $\phi(s,x)$ in \eqref{eq:114} is
supported by  $S_1$ in $x$ for all $s\in[0,t]$, there exists $L_1>0$
such that for all $(X,\mu)$ with $\mathbf{
  I}^{\ast\ast}(X,\mu)\le \delta$,
\begin{equation}
  \label{eq:115}
  \abs{\int_0^t\int_{S_1} \text{tr}\,\bl(
c_s(X_s,x)D ^2_{xx}
\phi(s,x)\br)\,m_s(x)\,dx\,ds}\le 
L_1\Bl(\int_0^{t}\sup_{x\in S_1}\abs{D _x \phi(s,x)}^2\,ds\Br)^{1/2}\,.
\end{equation}
An approximation argument shows that one may assume that $\phi(s,x)$
is measurable in $(s,x)$ and is of class $\mathbb C^2$ in $x$\,.
Let $\zeta(x)$ represent
 a $\mathbb{C}_0^\infty$-function on $\R^l$ with support in
$S_1$ that equals 1 on $S$ and
 let $\varphi(s,x)$ be a measurable function that is of class 
 $\mathbb{C}^\infty$ in $x$\,. On letting
 $\phi(s,x)=\zeta(x)\varphi(s,x)$
in  \eqref{eq:115}, 
we have  that there exists $L_2>0$ such that for all
$\varphi(s,x)$ and all  $(X,\mu)$ that satisfy the inequality $\mathbf{
  I}^{\ast\ast}(X,\mu)\le \delta$\,,
\begin{multline*}
      \abs{\int_0^t\int_{S_1} \text{tr}\,\bl(
c_s(X_s,x)D ^2_{xx}
\varphi(s,x)\br)\,\zeta(x)m_s(x)\,dx\,ds}\le 
L_2\Bl(\int_0^{t}\bl(\sup_{x\in S_1}\abs{\varphi(s,x)}^2+
\sup_{x\in S_1}\abs{D _x \varphi(s,x)}^2\br)\,ds\Br)^{1/2}\,.
\end{multline*}
By  Sobolev's imbedding, $\mathbb{W}^{2,q}(S_1)$ is continuously imbedded into
$\mathbb{W}^{1,\infty}(S_1)$ provided $q>l$ (see, e.g., Theorem 4.12
on p.85 in Adams and Fournier \cite{MR56:9247}), hence,
\begin{equation}
  \label{eq:117}
      \abs{\int_0^t\int_{S_1} \text{tr}\,\bl(
c_s(X_s,x)D ^2_{xx}
\varphi(s,x)\br)\,\zeta(x)m_s(x)\,dx\,ds}\le 
L_3\Bl(\int_0^{t}\norm{\varphi(s,\cdot)}^2_{\mathbb{W}^{2,q}(S_1)}\,ds\Br)^{1/2}\,,
\end{equation}
where $L_3>0$\,.
The latter inequality extends to $\varphi(s,\cdot)\in \mathbb{C}^2(\overline{S_1})$\,. 
Given a bounded continuous function $f(s,x)$ such that
 $f(s,\cdot)\in \mathbb{C}^\infty_0(S_1)$\,, let $\varphi(s,\cdot)
\in \mathbb{C}^2(\overline{S_1})$ be such
that $\text{tr}\,\bl(
c_s(X_s,x)D ^2_{xx}
\varphi(s,x)\br)=f(s,x)$ and $\varphi(s,x)=0$ on the boundary of
$S_1$\,, see Theorem 6.14 on p.107 of Gilbarg and Trudinger \cite{GilTru83}.
By Theorem 9.13 on p.239 in  Gilbarg and Trudinger
\cite{GilTru83}, where we take $\Omega'=\Omega=S_1$, and on recalling
that the
norms $\norm{c_s(u,\cdot)}_{\mathbb{W}^{2,q}(S_1)}$ are bounded
locally in $(s,u)$\,, we have that 
\begin{equation*}
  \norm{\varphi(s,\cdot)}_{\mathbb{W}^{2,q}(S_1)}\le L_4(
\norm{\varphi(s,\cdot)}_{\mathbb{L}^q(S_1)}+
\norm{f(s,\cdot)}_{\mathbb{L}^q(S_1)})
\end{equation*}
   locally uniformly in $s$\,.
By Theorem 9.1 on p.220 in  Gilbarg and Trudinger
\cite{GilTru83}, $\sup_{x\in S_1}\abs{\varphi(s,x)}\le
L_5\norm{f(s,\cdot)}_{\mathbb{L}^{q}(S_1)}$ locally uniformly in
$s$\,. 
We obtain that there exists $L_6>0$ such that 
$  \norm{\varphi(s,\cdot)}_{\mathbb{W}^{2,q}(S_1)}\le L_6
\norm{f(s,\cdot)}_{\mathbb{L}^q(S_1)}\,.$
By \eqref{eq:117}, if $q\ge2$\,, then, for
some $L_7>0$\,,
\begin{equation*}
        \abs{\int_0^t\int_{S_1}f(s,x)\zeta(x)m_s(x)\,dx\,ds}\le 
L_7\bl(\int_0^{t}\int_{S_1}\abs{f(s,x)}^q\,dx\,ds\br)^{1/q}\,.  
\end{equation*}
Since the functions $f(s,x)$ are dense 
in $\mathbb{L}^q([0,t]\times S_1)$\,, 
\begin{equation*}
  \bl(\int_0^{t}\int_{S_1}\abs{\zeta(x)m_s(x)}^{q'}\,dx\,ds\br)^{1/q'}\le L_7\,,
\end{equation*}
   which yields the required bound \eqref{eq:121} if one recalls that $\zeta(x)=1$ on
$S$\,.

\end{proof}
\begin{remark}
\label{re:psi}
  As a byproduct of the proof, the function $\psi(s,\cdot)$ is an element
  of $\mathbb{L}_0^{1,2}(\R^l,\R^l,c_s(X_s,x),m_s(x)\,dx)$ for almost all
$s$\,.
\end{remark}

We now work toward proving that $\mathbf{I}^{\ast\ast}$ is 
 the same as
$\mathbf I$ in Theorem~\ref{the:ldp} and Proposition~\ref{cor:ldp}.
The following  lemma will be useful for calculating $\mathbf
I^{\ast\ast}$\,, cf.
 Lemma A.2 on p.460 in Puhalskii \cite{Puh01}.
 \begin{lemma}
   \label{le:sup}
Let 
$V$ represent a complete separable metric space,
 let $U$ represent a dense subspace,
and let $\R$-valued function 
$f(s,y)$ be defined on $\R_+\times V$\,,
be measurable in $s$ 
 and  continuous in 
 $y$\,. 
Suppose also that
 $f(s,\lambda(s))$ is locally integrable  with respect to Lebesgue measure
for all measurable functions $\lambda(s)$ that assume values in 
$U$\,.
Then, for all $t\in\R_+$\,,
\begin{equation*}
  \sup_{\lambda(\cdot)\in \Lambda}\int_0^t f(s,\lambda(s))\,ds=
\int_0^t \sup_{y\in U}f(s,y)\,ds\,,
\end{equation*}
where  
$\Lambda$ represents the set of measurable   functions
assuming values in $U$\,.  
 \end{lemma}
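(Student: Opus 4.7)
The inequality $\sup_{\lambda(\cdot)\in\Lambda}\int_0^t f(s,\lambda(s))\,ds\le \int_0^t\sup_{y\in U}f(s,y)\,ds$ is a trivial pointwise bound, so the content of the lemma is the reverse inequality, which is a measurable selection statement. The plan is to extract a countable dense family in $U$ by separability, use it to write the essential supremum as a countable supremum of measurable functions, and then construct a measurable $\lambda(\cdot)$ whose integrand comes within $\varepsilon$ of that supremum.

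More concretely, since $V$ is separable and $U$ is dense, I choose a countable dense subset $\{y_n,\,n\in\N\}\subset U$. By continuity of $f(s,\cdot)$ and density, $g(s):=\sup_{y\in U}f(s,y)=\sup_{n\in\N}f(s,y_n)$, which is measurable as the countable supremum of the measurable maps $s\mapsto f(s,y_n)$. For a fixed $\varepsilon>0$ and a truncation level $M\in\N$, put $g_M(s)=g(s)\wedge M$ and
\begin{equation*}
B_n=\{s\in[0,t]:\,f(s,y_n)\ge g_M(s)-\varepsilon\},\quad n\in\N.
\end{equation*}
Each $B_n$ is measurable and $\bigcup_n B_n=[0,t]$. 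Setting $n(s)=\min\{n:\,s\in B_n\}$ yields a measurable $\N$-valued function, so $\lambda_{M,\varepsilon}(s):=y_{n(s)}$ is a measurable function into $U$, hence an element of $\Lambda$, and by construction $f(s,\lambda_{M,\varepsilon}(s))\ge g_M(s)-\varepsilon$ on $[0,t]$.

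Invoking the hypothesis that $f(s,\lambda(s))$ is locally Lebesgue integrable for every $\lambda\in\Lambda$, I integrate to obtain
\begin{equation*}
\int_0^t f(s,\lambda_{M,\varepsilon}(s))\,ds\ge \int_0^t g_M(s)\,ds-\varepsilon t\,.
\end{equation*}
Taking the supremum over $\lambda\in\Lambda$ on the left and then letting $\varepsilon\downarrow 0$ followed by $M\uparrow\infty$ via monotone convergence gives
\begin{equation*}
\sup_{\lambda(\cdot)\in\Lambda}\int_0^t f(s,\lambda(s))\,ds\ge\int_0^t g(s)\,ds\,,
\end{equation*}
which completes the proof. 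The only subtlety is handling the possibility that $g$ is not integrable: the truncation by $M$ sidesteps this, since one ultimately only needs the conclusion in either the finite or the $+\infty$ case, and in the latter the inequality just shown forces the left-hand side to be $+\infty$ as well. The main potential obstacle is verifying measurability of the selection, but the ``first-hitting index'' construction above delivers this essentially for free thanks to separability and continuity of $f$ in $y$.
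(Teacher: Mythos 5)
Your argument is correct: the countable dense set $\{y_n\}\subset U$ (which exists since $U$, as a subspace of the separable space $V$, is separable) together with continuity of $f(s,\cdot)$ makes $g(s)=\sup_{y\in U}f(s,y)=\sup_n f(s,y_n)$ measurable, the ``first-hitting index'' selection is measurable and countably valued, hence lies in $\Lambda$, and the truncation by $M$ plus the integrable minorant $s\mapsto f(s,y_1)$ (a constant selection, hence locally integrable) justifies both the well-definedness of $\int_0^t g$ and the monotone-convergence passage $M\uparrow\infty$. The paper itself gives no proof of this lemma, referring instead to Lemma A.2 of Puhalskii \cite{Puh01}; your proof is the standard measurable-selection argument behind that reference, so there is nothing materially different to compare.
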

In the rest of the paper we denote $D _x$ by $D $\,, divergencies are
understood with respect to $x$\,.
The next lemma is the key to proving that $\sqrt{m_s(\cdot)}\in
\mathbb{W}^{1,2}(\R^l)$ in the statement of Theorem \ref{the:ldp}.

\begin{lemma}
  \label{le:kwadr_int}
Let $m_s(x)$\,, where $x\in\R^l$ and
$s\in\R_+$\,, represent an $\R_+$-valued 
 measurable function which is a probability density
on $\R^l$ and an element of $\mathbb{W}^{1,1}_{\text{loc}}(\R^l)$
for almost all $s$\,.
If, for some $t>0$ and $L_1>0$\,, we have that
$\int_0^t\int_S\abs{Dm_s(x)}^2/m_s(x)\,dx\,ds<\infty$\,, for  all open
balls $S$, and
\begin{multline}
  \label{eq:156}
  \int_0^t  \sup_{h\in \mathbb{C}_0^1(\R^l)}\int_{\R^l} \Bl(D  h(x)^T \bl(
\frac{1}{2}\,\text{div}\,(c_s(X_s,x)m_s(x))-a_s(X_s,x)m_s(x) \br)
\\-\frac{1}{2}\,
\norm{D  h(x)}_{c_s(X_s,x)}^2\,m_s(x)\Br)\,dx\,ds\le L_1\,,
\end{multline}
then there exists  $L_2>0$\,, which depends on $L_1$ and $t$ only,
such that
\begin{equation*}
  \int_0^t\int_{\R^l}\frac{\abs{Dm_s(x)}^2}{m_s(x)}\,dx\,ds\le L_2\,.
\end{equation*}
\end{lemma}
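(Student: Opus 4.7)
The strategy is to insert well-chosen log-density test functions into the inner supremum of \eqref{eq:156} so that the Fisher information appears on the left-hand side, and then pass to a limit. I would test with
$$
h_{R,\delta}(s,x)=\chi_R(x)\,\tfrac{1}{2}\log\bigl(m_s(x)+\delta\bigr),
$$
where $\chi_R\in\mathbb{C}_0^\infty(\R^l)$ is a smooth cutoff with $\chi_R\equiv 1$ on $\{|x|\le R\}$, $\chi_R\equiv 0$ on $\{|x|\ge 2R\}$, $|D\chi_R|\le C/R$, and $\delta>0$. Since $m_s$ lies only in $\mathbb{W}^{1,\alpha}_{\mathrm{loc}}(\R^l)$ (Lemma~\ref{le:density}), to produce an admissible test function in $\mathbb{C}_0^1(\R^l)$ I would first replace $m_s$ by a mollification $m_s^\eta=m_s\ast\rho_\eta$ and pass $\eta\to 0$ at the end, which is justified by $\mathbb{W}^{1,1}_{\mathrm{loc}}$-convergence together with dominated convergence on compact regions where $m_s^\eta+\delta$ is bounded away from zero.

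The key algebraic step combines the identity
$$
\tfrac{1}{4(m_s+\delta)}-\tfrac{m_s}{8(m_s+\delta)^2}=\tfrac{m_s+2\delta}{8(m_s+\delta)^2}
$$
with the pointwise inequality $\chi_R^2\le\chi_R$ to show, with $\tilde a_s=a_s-\tfrac{1}{2}\operatorname{div}_x c_s$ and $F_s(\cdot)$ denoting the inner integrand of \eqref{eq:156}, that
$$
F_s(Dh_{R,\delta})\ge \int_{\R^l}\chi_R\,\|Dm_s\|_{c_s}^2\,\tfrac{m_s+2\delta}{8(m_s+\delta)^2}\,dx-\tfrac{1}{2}\int_{\R^l}\chi_R\,\tfrac{m_s\,Dm_s^T\tilde a_s}{m_s+\delta}\,dx-\mathrm{Bdry}_{R,\delta}(s),
$$
where $\mathrm{Bdry}_{R,\delta}(s)$ gathers the terms carrying a factor of $D\chi_R$. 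By Condition~\ref{con:positive} the first summand dominates a constant times $\int\chi_R|Dm_s|^2/(m_s+\delta)\,dx$; the linear $\tilde a_s$-term is handled by Young's inequality $|Dm_s^T\tilde a_s|\le\varepsilon|Dm_s|^2/(m_s+\delta)+C(\varepsilon)(m_s+\delta)|\tilde a_s|^2$ with sufficiently small $\varepsilon$, the resulting residual $\int\chi_R(m_s+\delta)|\tilde a_s|^2\,dx$ being finite by local boundedness of $\tilde a_s$ (Conditions~\ref{con:coeff}--\ref{con:positive}) and $\int m_s\,dx=1$. Integrating over $s\in[0,t]$ and combining with \eqref{eq:156} then produces a preliminary bound of the form
$$
c_1\int_0^t\!\int_{\R^l}\chi_R\,\tfrac{|Dm_s|^2}{m_s+\delta}\,dx\,ds\le L_1+C_{R,\delta}+\int_0^t|\mathrm{Bdry}_{R,\delta}(s)|\,ds.
$$

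The principal obstacle is to control the boundary remainder uniformly as the cutoffs are removed. Its dominant piece is of order $R^{-2}\int_0^t\!\int_{\{R\le|x|\le 2R\}}(\log(m_s+\delta))^2\,m_s\,dx\,ds$, and the divergence of $(\log\delta)^2$ as $\delta\to 0$ together with the possible growth of $(\log m_s)^2 m_s$ at infinity forces me to modify the test function by truncating the logarithm smoothly at a level $M$, i.e.\ to replace $\tfrac{1}{2}\log(m+\delta)$ by a smooth $\phi_{M,\delta}(m)$ that is bounded by $M$ and coincides with $\tfrac{1}{2}\log(m+\delta)$ for $m\le e^{2M}-\delta$. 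This truncation cuts the Fisher integral down to $\{m_s\le e^{2M}\}$ on the left and renders the boundary remainder $O(M^2/R^2)$, which vanishes if for instance $M=R^{1/2}$. Sending first $\delta\to 0$ and then $R\to\infty$ (with $M$ tied to $R$) and invoking monotone convergence on the left-hand side produces the claimed bound
$$
\int_0^t\!\int_{\R^l}\frac{|Dm_s|^2}{m_s}\,dx\,ds\le L_2,
$$
with $L_2$ depending only on $L_1$, $t$, and the structural constants attached to Conditions~\ref{con:coeff}--\ref{con:positive}.
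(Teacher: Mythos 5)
Your overall architecture (test with a truncated, cut-off log-density, absorb errors, remove truncations) is the same as the paper's, but there is a genuine gap at the one step that carries the real difficulty of the lemma: the treatment of the drift cross term. You propose to control $\int \chi_R\, Dm_s^T\bigl(a_s-\tfrac12\operatorname{div}_x c_s\bigr)\,\tfrac{m_s}{m_s+\delta}\,dx$ by Young's inequality, leaving a residual $C(\varepsilon)\int\chi_R (m_s+\delta)\,|\tilde a_s|^2dx$ which you declare finite ``by local boundedness of $\tilde a_s$ and $\int m_s\,dx=1$''. But $a_s(X_s,\cdot)$ is only Lipschitz in $x$, hence of linear growth, so on the support of $\chi_R$ this residual is of order $R^2$ (plus a $\delta R^{l+2}$ contribution from the $+\delta$ shift); it is finite for each fixed $R$ but does not stay bounded as $R\to\infty$. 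Global square integrability of $a_s$ against $m_s\,dx$ is precisely \emph{not} among the hypotheses of the lemma — it is the conclusion \eqref{eq:19}, which Lemma~\ref{le:gradient} only obtains under additional assumptions (a strengthened stability condition or a gradient-type drift). So your preliminary estimate $c_1\int_0^t\!\int\chi_R|Dm_s|^2/(m_s+\delta)\le L_1+C_{R,\delta}+\mathrm{Bdry}$ has a constant that blows up with $R$, and the limit $R\to\infty$ yields nothing; the constant $L_2$ would not depend on $L_1$ and $t$ only.

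The paper avoids this by never pairing $Dm_s$ against $\tilde a_s$ via Cauchy--Schwarz. Instead it integrates the cross term by parts: since $Dm_s\,\ind_{\{\delta^{-1}\le m_s\le\delta\}}=D(m_s\wedge\delta\vee\delta^{-1})$, the term becomes $-\int (m_s\wedge\delta\vee\delta^{-1})\,\operatorname{div}\bigl(\eta_r^2(\tfrac12\operatorname{div}c_s-a_s)\bigr)dx$, which is bounded uniformly in $r$ and $\delta$ because Condition~\ref{con:coeff} makes $\operatorname{div}\bigl(\tfrac12\operatorname{div}c_s-a_s\bigr)$ bounded, while the $D\eta_r$ contribution carries a factor $1/r$ against the linear growth of the coefficient and is controlled by the unit mass of $m_s$ on the annulus $\{r\le|x|\le 2r\}$. (A secondary difference: the paper legitimizes the non-smooth test function $\tfrac14\ln(m_s\wedge\delta\vee\delta^{-1})\eta_r^2$ directly, by extending \eqref{eq:156} to $\mathbb H^{1,2}_0(S_{2r+1},m_s(x)\,dx)$ via Lemma~\ref{le:spaces}, rather than mollifying $m_s$; your mollification route could be made to work for that step, but it does not repair the main issue above.) To fix your argument you would have to replace the Young-inequality step by such an integration by parts, exploiting the Lipschitz regularity of $a_s$ and $D_xc_s$ rather than any $\mathbb L^2(m_s\,dx)$ bound on $a_s$.
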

\begin{proof}
Let $\eta(x)$ represent a $[0,1]$-valued twice continuously differentiable 
nonincreasing function defined for $x\ge 0$ 
such that $\eta(x)=1$ for $x\in[0,1]$ and $\eta(x)=0$ for $x\ge 2$\,.
Let $\eta_r(x)=\eta(\abs{x}/r)$ where $x\in\R^l $ and $r>0$\,.
 We note that the bound in \eqref{eq:156}
   extends to functions $h(x)$ from the closure $\mathbb
   H^{1,2}_0(S_{2r+1},\,m_s(x)\,dx)$ of $\mathbb C_0^\infty(S_{2r+1})$ in 
$\mathbb W^{1,2}(S_{2r+1},\,m_s(x)\,dx)$\,, where $S_{2r+1}$ represents the open ball of
radius $2r+1$ centred at the origin in $\R^l$\,.  
Let $\delta>1$\,.
Since (the restriction of)
$\ln (m_s(\cdot)\wedge \delta\vee\delta^{-1})
$ to $S_{2r+1}$ is an element of 
$\mathbb W^{1,2}(S_{2r+1},\,m_s(x)\,dx)$
a.e. and
since 
by Lemma~\ref{le:spaces}, 
$\mathbb W^{1,2}(S_{2r+1},\,m_s(x)\,dx)=
\mathbb H^{1,2}(S_{2r+1},\,m_s(x)\,dx)$\,,  
we have that $\ln (m_s(\cdot)\wedge \delta\vee\delta^{-1})\in
\mathbb H^{1,2}(S_{2r+1},\,m_s(x)\,dx)$\,, so,
$\ln (m_s(\cdot)\wedge \delta\vee\delta^{-1})
\eta_r(\cdot)^2$ is an element of $ \mathbb
H^{1,2}_0(S_{2r+1},\,m_s(x)\,dx)$\,.
Hence, one can take
 $h(x)=(1/4)\ln (m_s(x)\wedge \delta\vee\delta^{-1})
\eta_r(x)^2$ in \eqref{eq:156} to obtain
\begin{multline*}
  \int_0^t
\int_{\R^l} \Bl(\bl(
\frac{1}{4}\,\frac{Dm_s(x)}{m_s(x)}\ind_{\{\delta^{-1}\le
  m_s(x)\le \delta\}}(x)\eta_r(x)^2+\frac{1}{2}\,
\ln (m_s(x)\wedge \delta\vee\delta^{-1})\,\eta_r(x)D\eta_r(x)\br)^T\\
 \bl(\frac{1}{2}\,c_s(X_s,x)\,\frac{Dm_s(x)}{m_s(x)}+
\frac{1}{2}\,\text{div}\,c_s(X_s,x)-a_s(X_s,x) \br)m_s(x)\\
-\frac{1}{16}\,
\norm{\frac{Dm_s(x)}{m_s(x)}\ind_{\{\delta^{-1}\le
  m_s(x)\le \delta\}}(x)\eta_r(x)^2}^2_{c_s(X_s,x)}m_s(x)\\
-\frac{1}{4}\,
\norm{\ln (m_s(x)\wedge \delta\vee\delta^{-1})\eta_r(x)\,D\eta_r(x)
}_{c_s(X_s,x)}^2m_s(x)
\Br)\,dx\,ds\le L_1\,.
  \end{multline*}
Therefore,
\begin{multline}
  \label{eq:159}
  \int_0^t
\int_{\R^l} \Bl(\bl(
\frac{1}{8}\,\frac{\norm{Dm_s(x)}^2_{c_s(X_s,x)}}{m_s(x)}\ind_{\{\delta^{-1}\le
  m_s(x)\le \delta\}}(x)\eta_r(x)^2\\
-\frac{1}{16}\,
\frac{\norm{Dm_s(x)}^2_{c_s(X_s,x)}}{m_s(x)}\ind_{\{\delta^{-1}\le
  m_s(x)\le \delta\}}(x)\eta_r(x)^4\br)\,dx\,ds\\-\int_0^t
\int_{\R^l} \Bl(
\frac{1}{4}\,Dm_s(x)^T\,\ind_{\{\delta^{-1}\le
  m_s(x)\le
  \delta\}}(x)\eta_r(x)^2
\bl(\frac{1}{2}\,\text{div}\,c_s(X_s,x)-a_s(X_s,x)
\br)\\+
 \frac{1}{4}\,\ln (m_s(x)\wedge \delta\vee\delta^{-1})
\eta_r(x)\,D\eta_r(x)^T
c_s(X_s,x)\,Dm_s(x)
 \\+\frac{1}{2}\,\ln (m_s(x)\wedge \delta\vee\delta^{-1})\eta_r(x)\,D\eta_r(x)^T
 \bl(
\frac{1}{2}\,\text{div}\,c_s(X_s,x)-a_s(X_s,x) \br)m_s(x)\\
-\frac{1}{4}\,
(\ln (m_s(x)\wedge \delta\vee\delta^{-1}))^2\eta_r(x)^2\,\norm{D\eta_r(x)
}_{c_s(X_s,x)}^2m_s(x)
\Br)\,dx\,ds\le
L_1\,.
\end{multline}
We bound the terms on the righthand side. Integration by parts yields
\begin{multline*}
  \int_{\R^l} 
Dm_s(x)^T\,\ind_{\{\delta^{-1}\le
  m_s(x)\le
  \delta\}}(x)\eta_r(x)^2
\bl(\frac{1}{2}\,\text{div}\,c_s(X_s,x)-a_s(X_s,x)
\br)\,dx
\\=-
  \int_{\R^l} 
(m_s(x)\wedge \delta\vee\delta^{-1})\,
\text{div}\,\bl(
\eta_r(x)^2\bl(\frac{1}{2}\,\text{div}\,c_s(X_s,x)-a_s(X_s,x)
\br)\br)\,dx\\=-
  \int_{\R^l} 
(m_s(x)\wedge \delta\vee\delta^{-1})\,\bl(
\eta_r(x)^2\,
\text{div}\,\bl(\frac{1}{2}\,\text{div}\,c_s(X_s,x)-a_s(X_s,x)\br)\\
+2\eta_r(x)\,D\eta_r(x)^T\,\bl(\frac{1}{2}\,
\text{div}\,c_s(X_s,x)-a_s(X_s,x)\br)\br)
\,dx\,.
\end{multline*}
By Condition \ref{con:coeff}, there exists $M_1>0$ such that
$\abs{\text{div}\,\bl((1/2)\,\text{div}\,c_s(X_s,x)-a_s(X_s,x)\br)}\le
M_1$ and
$\abs{(1/2)\,\text{div}\,c_s(X_s,x)-a_s(X_s,x)}\le M_1(1+\abs{x})$\,, 
for all $0\le s\le t$ and $x\in\R^l$\,, so 
\begin{align*}
  \abs{\int_{\R^l} 
(m_s(x)\wedge \delta\vee\delta^{-1})\,
\eta_r(x)^2\,
\text{div}\,\bl(\frac{1}{2}\,\text{div}\,c_s(X_s,x)-a_s(X_s,x)\br)\,dx}\le
M_1&
\intertext{and, letting $M_2$ represent an upper bound on the
  absolute values of the first  derivative of $\eta(x)$\,,}
\abs{\int_{\R^l} 
(m_s(x)\wedge \delta\vee\delta^{-1})\,2\eta_r(x)
  D\eta_r(x)^T\,\bl(\frac{1}{2}\,
\text{div}\,c_s(X_s,x)-a_s(X_s,x)\br)
\br)\,dx}&
\\=
\abs{\int_{\R^l} 
(m_s(x)\wedge \delta\vee\delta^{-1})\,2\eta_r(x)\,
\frac{1}{r}\,\frac{x^T}{\abs{x}}  D\eta\bl(\frac{\abs x}{r}\br)\,
\bl(\frac{1}{2}\,
\text{div}\,c_s(X_s,x)-a_s(X_s,x)\br)
\br)\,dx}&\\
\le 2M_1M_2\,
\frac{1}{r}\,\int_{x\in\R^l:\,r\le \abs{x}\le 2r} 
m_s(x)(1+\abs{x})\,dx
\le 2M_1M_2
\frac{2r+1}{r}&
\end{align*}
Therefore,
\begin{multline}
  \label{eq:158}
\abs{  \int_{\R^l} 
Dm_s(x)^T\,\ind_{\{\delta^{-1}\le
  m_s(x)\le
  \delta\}}(x)\eta_r(x)^2
\bl(\frac{1}{2}\,\text{div}\,c_s(X_s,x)-a_s(X_s,x)
\br)\,dx}\le
 M_1+  2M_1M_2
\frac{2r+1}{r}\,.
\end{multline}
Similarly,
\begin{multline*}
    \int_{\R^l}\ln (m_s(x)\wedge \delta\vee\delta^{-1})
\eta_r(x)\,D\eta_r(x)^T
c_s(X_s,x)\,Dm_s(x)\,dx\\=
-  \int_{\R^l}\text{div}\bl(
\ln (m_s(x)\wedge \delta\vee\delta^{-1})
\eta_r(x)\,
c_s(X_s,x)D\eta_r(x)\br)\,m_s(x)\,dx\\
=-  \int_{\R^l}
\eta_r(x)\,\,D\eta_r(x)^T
c_s(X_s,x)Dm_s(x)\,\ind_{\{\delta^{-1}\le m_s(x)
\le \delta\}}(x)\,dx\\
-\int_{\R^l}\ln (m_s(x)\wedge \delta\vee\delta^{-1})\,
\text{div}\bl(\eta_r(x)c_s(X_s,x)\,D\eta_r(x)
\br)\,m_s(x)\,dx\,.
\end{multline*}
For the terms on the righthand side, we have that,
for suitable $M_3>0$\,,
\begin{equation*}
    \abs{\int_{\R^l}\ln (m_s(x)\wedge \delta\vee\delta^{-1})
\text{div}\bl(\eta_r(x) c_s(X_s,x)\,D\eta_r(x)
\br)\,m_s(x)\,dx}\le M_3\,\frac{\abs{\ln\delta}}{r}
\end{equation*}
and, for arbitrary $\kappa>0$\,, 
\begin{multline*}
    \abs{
\int_{\R^l} 
\eta_r(x)\,D\eta_r(x)^T
c_s(X_s,x)\,
Dm_s(x)\ind_{\{\delta^{-1}\le m_s(x)
\le \delta\}}(x)\,dx}\\\le
\frac{1}{2\kappa r^2}\,
\int_{\R^l} 
\norm{D\eta\bl(\frac{\abs{x}}{r}\br)}_{c_s(X_s,x)}^2m_s(x)\,dx
+\frac{\kappa}{2}\,\int_{\R^l} 
\,\frac{\norm{Dm_s(x)}_{c_s(X_s,x)}^2}{m_s(x)}
\,\ind_{\{\delta^{-1}\le m_s(x)
\le \delta\}}(x)\eta_r(x)^2
\,dx\,.
\end{multline*}
Hence,
\begin{multline}
  \label{eq:168}
      \abs{\int_{\R^l}\ln (m_s(x)\wedge \delta\vee\delta^{-1})
\eta_r(x)\,D\eta_r(x)^T
c_s(X_s,x)\,Dm_s(x)\,dx}\\\le M_3\,\frac{\abs{\ln\delta}}{r}
+\frac{M_2^2}{2\kappa r^2}\,
\int_{\R^l} \norm{c_s(X_s,x)}
m_s(x)\,dx
+\frac{\kappa}{2}\,\int_{\R^l} 
\,\frac{\norm{Dm_s(x)}^2_{c_s(X_s,x)}}{m_s(x)}
\,\ind_{\{\delta^{-1}\le m_s(x)
\le \delta\}}(x)
\,dx\,.
\end{multline}
The remaining two terms on the righthand side of \eqref{eq:159} are
bounded as follows:
  \begin{align}
  \notag  \label{eq:170}
\abs{\int_{\R^l}  \ln (m_s(x)\wedge \delta\vee\delta^{-1})
\eta_r(x)\,D\eta_r(x)^T
 \bl(
\frac{1}{2}\,\text{div}\,c_s(X_s,x)-a_s(X_s,x) \br)m_s(x)\,dx}
\\\le M_1M_2\,\abs{\ln\delta}\,\frac{2r+1}{r}\,
\int_{x\in\R^l:\, \abs{x}\ge r}m_s(x)\,dx
\intertext{and}\int_{\R^l}
(\ln (m_s(x)\wedge \delta\vee\delta^{-1}))^2\eta_r(x)^2\,\norm{D\eta_r(x)
}_{c_s(X_s,x)}^2m_s(x)\,dx\le \frac{M_2^2(\ln\delta)^2}{r^2}
\int_{\R^l}\norm{c_s(X_s,x)}m_s(x)\,dx\,.\label{eq:170a}
\end{align}
 We  obtain by \eqref{eq:159} --
\eqref{eq:170a} that
there exists $M_4>0$ such that, given arbitrary $\delta>1$\,, 
 for all $r$ great enough (depending on $\delta$),
\begin{equation*}
\frac{1}{16}    \int_0^t
\int_{\R^l} 
\,\frac{\norm{Dm_s(x)}^2_{c_s(X_s,x)}}{m_s(x)}\,\eta_r(x)^2\ind_{\{\delta^{-1}\le
  m_s(x)\le \delta\}}(x)
\bl(2\,-\eta_r(x)^2-2\kappa\br)\,dx\,ds
\le L_1+M_4t
\end{equation*}
so that assuming $\kappa<1/2$\,,
\begin{equation*}
  \int_0^t
\int_{\R^l} 
\,\frac{\norm{Dm_s(x)}^2_{c_s(X_s,x)}}{m_s(x)}\,\eta_r(x)^2\ind_{\{\delta^{-1}\le
  m_s(x)\le \delta\}}(x)\,dx\,ds
\le\frac{16}{1-2\kappa}\,( L_1+M_4t)\,,
\end{equation*} which implies the assertion of the lemma by letting
$r\to\infty$ and $\delta\to\infty$\,. 
\end{proof}
The next theorem establishes the equality $\mathbf
I^{\ast\ast}(X,\mu)=\mathbf I(X,\mu)$ provided $\mathbf
I^{\ast\ast}(X,\mu)<\infty$\,, $X_0=\hat u$\,, and $\mathbf I_0(\hat u)=0$.
\begin{theorem}
  \label{le:vid}
Suppose that conditions \ref{con:coeff}, 
 \ref{con:positive},
\eqref{eq:116},  \eqref{eq:122a} and \eqref{eq:82} hold and that
 $\mathbf{I}^{\ast\ast}(X,\mu)<\infty$\,. Then 
$\mu(ds,dx)=m_s(x)\,dx\,ds$\,, where $m_s(\cdot)\in\mathbb{P}(\R^l)$
a.e. and 
$\int_0^t  \int_{\R^l}\abs{Dm_s(x)}^2/m_s(x)\,dx\,ds<\infty$ for all
$t\in\R_+$\,.
The projection
$\Phi_{s,m_s(\cdot),X_s}(x)$ belongs to 
$\mathbb{L}^2(\R^l,\R^l,c_s(X_s,x),m_s(x)\,dx)$ 
as a function of $x$ for almost every $s$\,,
  $\Phi_{s,m_s(\cdot),X_s}(x)$
and $\Psi_{s,m_s(\cdot),X_s}(x)$ are  measurable in $(s,x)$\,, and
$\int_0^t
\int_{\R^l}\norm{\Phi_{s,m_s(\cdot),X_s}(x)}^2m_s(x)\,dx\,ds<\infty$
for all $t\in\R_+$\,.
We also have that
\begin{align}
   \label{eq:85} \mathbf{ I}^{\ast\ast}(X,\mu)=
\int_0^\infty
\sup_{\lambda\in\R^n}\Bl(
\lambda^T\bl(\dot{X}_s-
\int_{\R^l} A_s(X_s,x)\,m_s(x)\,dx\br)&&\\\notag
-\frac{1}{2}\, \norm{\lambda}_{\int_{\R^l}C_s(X_s,x)\,m_s(x)\,dx}^2
+\sup_{h\in \mathbb{C}_0^1(\R^l)}\int_{\R^l}\Bl( D  h(x)^T \bl(
\frac{1}{2}\,\text{div}\,(c_s(X_s,x)m_s(x))
\notag&&\\-(a_s(X_s,x)+G_s(X_s,x)^T\lambda)
 m_s(x)\br)
-\frac{1}{2}\,\notag
\norm{D  h(x)}_{c_s(X_s,x)}^2m_s(x)\Br)\,dx\Br)\,ds     
&&\\ \label{eq:150}
=\int_0^\infty
\sup_{\lambda\in\R^n}\Bl(
\lambda^T\bl(\dot{X}_s-
\int_{\R^l} A_s(X_s,x)\,m_s(x)\,dx\br)
-\frac{1}{2}\, \norm{\lambda}_{\int_{\R^l}C_s(X_s,x)\,m_s(x)\,dx}^2
&&\\+
    \sup_{g\in \mathbb{L}_0^{1,2}(\R^l,\R^l,c_s(X_s,x),m_s(x)\,dx)}
\int_{\R^l}\Bl(  g(x)^T\notag
\,c_s(X_s,x)\bl(\frac{Dm_s(x)}{2m_s(x)}&&\\
-\Phi_{s,m_s(\cdot),X_s}(x)-
\Psi_{s,m_s(\cdot),X_s}(x)
\lambda\br)
-\frac{1}{2}\,\notag
\norm{g(x)}_{c_s(X_s,x)}^2\Br)m_s(x)\,dx\Bl)\,ds\,.&&
 \end{align}
The vector    $\dot{X}_s-
\int_{\R^l} A_s(X_s,x)\,m_s(x)\,dx-
\int_{\R^l}G_s(X_s,x)
\bl(Dm_s(x)/(2m_s(x))-\Phi_{s,m_s(\cdot),X_s}(x)\br)
m_s(x)\,dx$ is in the range of 
$\int_{\R^l}Q_{s,m_s(\cdot)}(X_s,x) m_s(x)\,dx$ a.e. and the supremum
in \eqref{eq:150}
  is attained at
\begin{multline}
  \label{eq:89}
  \hat\lambda_s=\bl(\int_{\R^l}Q_{s,m_s(\cdot)}(X_s,x) m_s(x)\,dx\br)^\oplus
\bl(\dot{X}_s-\int_{\R^l}A_s(X_s,x)m_s(x)\,dx\\-
\int_{\R^l}G_s(X_s,x)
\bl(\frac{Dm_s(x)}{2m_s(x)}-\Phi_{s,m_s(\cdot),X_s}(x)\br)m_s(x)\,dx\,\br)
\end{multline}
and
\begin{equation}
  \label{eq:111}
  \hat{g}_s(x)=
\frac{Dm_s(x)}{2m_s(x)}-
\Phi_{s,m_s(\cdot),X_s}(x)-\Psi_{s,m_s(\cdot),X_s}(x)\hat\lambda_s
\end{equation}
so  that

\begin{multline}
     \label{eq:55}
 \mathbf{I}^{\ast\ast}(X,\mu)=
\int_0^\infty
\Bl(\frac{1}{2}\,\int_{\R^l}\norm{
\frac{Dm_s(x)}{2m_s(x)}-\Phi_{s,m_s(\cdot),X_s}(x)}^2_{
c_s(X_s,x)}m_s(x)\,dx
\\+\frac{1}{2}\,\norm{\dot{X}_s-\int_{\R^l}A_s(X_s,x)m_s(x)\,dx-
\int_{\R^l}
 G_s(X_s,x)\bl(\frac{Dm_s(x)}{2m_s(x)}\\-\Phi_{s,m_s(\cdot),X_s}(x)\br)
m_s(x)\,dx}^2_{(\int_{\R^l}
Q_{s,m_s(\cdot)}(X_s,x)m_s(x)\,dx)^\oplus}\Br)\,ds\,.
\end{multline}
\end{theorem}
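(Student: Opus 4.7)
The plan is to establish the conclusions of the theorem in four stages, leveraging the variational definition \eqref{eq:23} together with the structural lemmas already in place.

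First, Lemma \ref{le:differ} gives $\mu(ds,dx)=\nu_s(dx)\,ds$ for a transition kernel $\nu_s$ and absolute continuity of $X$; Lemma \ref{le:density} upgrades this to $\nu_s(dx)=m_s(x)\,dx$ with $\sqrt{m_s(\cdot)}\in\mathbb{W}^{1,2}_{\text{loc}}(\R^l)$ a.e.\ and the stated local integrability properties of $m_s$. Expression \eqref{eq:24} thus collapses to \eqref{eq:25}, and since $f$ enters \eqref{eq:25} only through $D_xf(s,X_s,x)$, it suffices to consider $f(s,u,x)=\phi(s,x)$. Choosing $\lambda(s,X)$ piecewise constant and $\phi(s,x)=\eta(s)h(x)$ with $\eta\in \mathbb{C}_0(\R_+)$ and $h\in \mathbb{C}_0^1(\R^l)$, I apply Lemma \ref{le:sup} to interchange the sup with the time integral, delivering \eqref{eq:85}. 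The $\le$ direction uses direct test functions; the $\ge$ direction requires a measurable selection of near-maximizers in $s$ together with a mollification/truncation to return to admissible $(\lambda,h)$.

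Second, once \eqref{eq:85} is in hand, taking $\lambda=0$ inside the integrand supplies the hypothesis \eqref{eq:156} of Lemma \ref{le:kwadr_int}, whence $\int_0^t\int_{\R^l}\abs{Dm_s(x)}^2/m_s(x)\,dx\,ds<\infty$; equivalently, $\sqrt{m_s(\cdot)}\in \mathbb{W}^{1,2}(\R^l)$ a.e.\ and $m_s(\cdot)\in \mathbb{P}(\R^l)$. The final assertion of Lemma \ref{le:density} then places $Dm_s/m_s$ in $\mathbb{L}_0^{1,2}(\R^l,\R^l,c_s(X_s,x),m_s(x)\,dx)$ for a.e.\ $s$. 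Joint $(s,x)$-measurability of $\Phi_{s,m_s(\cdot),X_s}$ and $\Psi_{s,m_s(\cdot),X_s}$ is obtained by approximating each projection with its finite-ball version $\pi_{ji}$, which solves a Dirichlet problem whose data depend measurably on $s$ through $X_s$ and $m_s$.

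Third, to convert \eqref{eq:85} into \eqref{eq:150} I combine the identity $\int Dh^T\,\frac{1}{2}\,\text{div}(c_sm_s)\,dx=\int Dh^T\bl(\frac{1}{2}\,c_sDm_s+\frac{1}{2}\,(\text{div}\,c_s)m_s\br)\,dx$ with the defining relations \eqref{eq:79} and \eqref{eq:151}, extended by density to $\mathbb{L}_0^{1,2}$, to collapse the linear-in-$Dh$ part of the integrand to $\int Dh(x)^Tc_s(X_s,x)\bl(Dm_s(x)/(2m_s(x))-\Phi_{s,m_s(\cdot),X_s}(x)-\Psi_{s,m_s(\cdot),X_s}(x)\lambda\br)m_s(x)\,dx$. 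Density of the gradients $Dh$, $h\in \mathbb{C}_0^1(\R^l)$, in $\mathbb{L}_0^{1,2}(\R^l,\R^l,c_s(X_s,x),m_s(x)\,dx)$ yields \eqref{eq:150}. The square-integrability $\Phi_{s,m_s(\cdot),X_s}\in \mathbb{L}^2(\R^l,\R^l,c_s(X_s,x),m_s(x)\,dx)$ a.e.\ is then forced by finiteness of the inner sup, since otherwise testing with $g$ approximating $\Phi$ would drive it to $+\infty$; completing the square in $g$ then produces the optimizer \eqref{eq:111} and evaluates the inner sup as $\frac12\int\norm{Dm_s/(2m_s)-\Phi-\Psi\lambda}^2_{c_s}m_s\,dx$.

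Finally, using $\int \Psi^Tc_s\Psi\,m_s\,dx=\int \Psi^TG_s^T\,m_s\,dx$ and $\int g^Tc_s\Psi\lambda\,m_s\,dx=\int g^TG_s^T\lambda\,m_s\,dx$ for $g\in \mathbb{L}_0^{1,2}$ (both consequences of \eqref{eq:151}), the quadratic in $\lambda$ reduces to $\lambda^Tv_s-\frac{1}{2}\,\norm{\lambda}^2_{\int Q_{s,m_s(\cdot)}(X_s,x)m_s(x)\,dx}$ with $v_s=\dot X_s-\int A_sm_s\,dx-\int G_s(Dm_s/(2m_s)-\Phi)m_s\,dx$. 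Finiteness of $\mathbf{I}^{\ast\ast}$ forces $v_s$ to lie in the range of $\int Q_{s,m_s(\cdot)}(X_s,x)m_s(x)\,dx$ a.e., and the supremum is attained at $\hat\lambda_s$ of \eqref{eq:89}, yielding \eqref{eq:55}. The principal obstacle is the first stage: justifying that the sup over admissible $(\lambda(\cdot),f)$ in \eqref{eq:23} equals the integrated pointwise sup \eqref{eq:85}. One must show the pointwise-in-$s$ maximizers $h\in \mathbb{L}_0^{1,2}$ can be approximated by $\mathbb{C}_0^1(\R^l)$ functions measurably in $s$ and patched into a single admissible $f(s,u,x)$ respecting the continuity requirements of Theorem \ref{the:equation}; this is where Lemma \ref{le:sup}, the lower semicontinuity of $\mathbf{I}^{\ast\ast}$, and the regularity of $m_s$ from Lemma \ref{le:density} combine.
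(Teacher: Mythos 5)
Your proposal is correct and follows essentially the same route as the paper's proof: Lemmas \ref{le:differ} and \ref{le:density} plus integration by parts, mollification together with Lemma \ref{le:sup} to pass to the integrated pointwise supremum \eqref{eq:85}, the choice $\lambda=0$ with Lemma \ref{le:kwadr_int} for the global Dirichlet-form bound, and then the Hilbert-space projection identities \eqref{eq:79}, \eqref{eq:151} with completion of squares to obtain \eqref{eq:150}, \eqref{eq:89}, \eqref{eq:111}, and \eqref{eq:55}. The only deviations are cosmetic: you obtain $\Phi_{s,m_s(\cdot),X_s}\in\mathbb{L}^2$ from boundedness of the linear functional forced by finiteness of the inner supremum (equivalent to the paper's Riesz representative $\psi$ from Lemma \ref{le:density} and the identification $-\psi+Dm_s/(2m_s)=\Phi$), and you argue joint measurability via the finite-ball projections rather than the measurable set-valued selection theorem used in the paper.
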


\begin{proof}
We recall  the expression (\ref{eq:23}) for
$\mathbf{I}^{\ast\ast}(X,\mu)$\,, where
 the supremum is taken over $t\in\R_+$, 
 functions
 $\lambda(s,X)$ given by \eqref{eq:15}, 
and $\mathbb{C}^{1,2,2}(\R_+\times  \R^n
\times \R^l)$-functions
 $f(s,u,x)$ that are  compactly supported in $x$ locally uniformly in $(t,u)$\,.
According to 
 Lemma~\ref{le:density},
 if $\mathbf{I}^{\ast\ast}(X,\mu)<\infty$,
then $\nu_s(dx)=m_s(x)\,dx$\,, where $m_s(\cdot)\in \mathbb
W^{1,1}_{\text{loc}}(\R^l)$\,, so
one can integrate by parts in (\ref{eq:25}) to obtain
\begin{multline}
  \label{eq:46}
    U_{t}^{\lambda(\cdot),f}(X,\mu)=
\int_0^t\Bl(  \lambda(s,X)^T\bl(\dot{X_s}
-\int_{\R^l} A_s(X_s,x)\,m_s(x)\,dx\,\br)
-\frac{1}{2}\,\int_{\R^l} 
 \norm{\lambda(s,X)}_{C_s(X_s,x)}^2\,m_s(x)\,dx
\\+\int_{\R^l} D f(s,X_s,x)^T \bl(
\frac{1}{2}\,
\,\text{div}\,(c_s(X_s,x)\,m_s(x))-a_s(X_s,x)\,m_s(x)\br)\,dx
\\
-\frac{1}{2}\,\int_{\R^l} 
\norm{ Df(s,X_s,x)}_{
 c_s(X_s,x)}^2\,m_s(x)\,dx
-\int_{\R^l}  
 \lambda(s,X)
^T G_s(X_s,x)D
f(s,X_s,x)\,m_s(x)\,dx\Br)\,ds\,.
\end{multline}

An 
approximation argument using mollifiers 
 implies that the supremum will not change if $\lambda(s,X)$
is assumed bounded and measurable in $s$
 and if $f(s,u,x)$ is assumed  measurable,
continuously  differentiable in $x$ with bounded first partial
derivatives
 and
 compactly supported in $x$
 locally uniformly in $(s,u)$\,.
 Therefore,
on noting that $X$ is kept fixed, 
 \begin{multline*}
    \mathbf{ I}^{\ast\ast}(X,\mu)=\sup
\int_0^t\Bl(
\lambda(s)^T\bl(\dot{X}_s-
\int_{\R^l} A_s(X_s,x)\,m_s(x)\,dx\br)
-\frac{1}{2}\,
 \norm{\lambda(s)}_{\int_{\R^l} 
 C_s(X_s,x)\,m_s(x)\,dx}^2
\\+\int_{\R^l} D \phi(s,x)^T \bl(
\frac{1}{2}\,
\,\text{div}_x\,(c_s(X_s,x)\,m_s(x))-
a_s(X_s,x)\,m_s(x)\br)\,dx
-\frac{1}{2}\,\int_{\R^l} 
\norm{D \phi(s,x)}_{c_s(X_s,x)}^2
\,m_s(x)\,dx
\\-\int_{\R^l}  
 \lambda(s)
^T G_s(X_s,x)D
\phi(s,x)\,m_s(x)\,dx\,\Br)\,ds\,,
 \end{multline*}
where the supremum is taken over $t\in\R_+$, bounded measurable
 functions $\lambda(s)$, and measurable
 functions $\phi(s,x)$ that are
continuously
 differentiable in $x$ with bounded first partial derivatives
and are compactly supported in $x$ locally uniformly in $s$\,.
By Lemma~\ref{le:sup}, one can optimise with respect to $\lambda(s)$ and 
$D\phi(s,x)$ inside the $ds$-integral which yields   (\ref{eq:85}).
In some more detail, we apply Lemma~\ref{le:sup}  with
$U$ being the Cartesian product of the closed ball of radius $i$ in $\R^n$
 and  of the set $U_i=
\{Dh:\, h\in\mathbb{C}_0^1(\R^l)\,,
\sup_{x\in\R^l}\abs{Dh(x)}\le i\text{ and }h(x)=0\text{ if }
\abs{x}\ge i\}$ and with $V$  being the Cartesian product of the
closed ball of radius $i$ and of the closure of $U_i$ in 
the space of continuous functions
with support in  the open ball  of radius $i$ centred at the
origin in $\R^l$ that are bounded above by $i$ in absolute value, 
the latter space being endowed with the $\sup$-norm topology, 
 where $i\in\N$\,,
and let $i\to\infty$\,.

Integration by parts in 
 \eqref{eq:28}, with $\nu_s(dx)=m_s(x)\,dx$\,,
 yields
\begin{multline*}
\int_{\R^l} Dh(x)^T  \bl(
\frac{1}{2}\,
\text{div}\,(c_s(X_s,x)m_s(x))-a_s(X_s,x)m_s(x)\br)
\,dx=\int_{{\R^l}}
 D h(x)^T c_s(X_s,x) \psi(s,x)\,m_s(x)\,dx\,.
\end{multline*}
On recalling that $\psi(s,\cdot)\in 
\mathbb{L}_0^{1,2}(\R^l,\R^l,c_s(X_s,x),m_s(x)\,dx)$ for almost all
$s$ by Remark \ref{re:psi}, we have that
the function $-\psi(s,x)$  represents the
orthogonal projection of 
$c_s(X_s,x)^{-1}\bl(
a_s(X_s,x)-(1/2)\,\text{div}\,(c_s(X_s,x)m_s(x))/m_s(x)\br)$ onto 
 $\mathbb{L}_0^{1,2}(\R^l,\R^l,c_s(X_s,x),m_s(x)\,dx)$\,.
Since  by \eqref{eq:85}, Lemma \ref{le:density}, and
Lemma \ref{le:kwadr_int}, $D m_s(x)/m_s(x)$ is a member of $
\mathbb{L}_0^{1,2}(\R^l,\R^l,c_s(X_s,x),m_s(x)\,dx)$ for almost all
$s$, we have that the function
$-\psi(s,x)+(1/2)
D m_s(x)/m_s(x)$ belongs to
$\mathbb{L}_0^{1,2}(\R^l,\R^l,c_s(X_s,x),m_s(x)\,dx)$ for almost all
$s$\,, so, by \eqref{eq:79},
it equals $\Phi_{s,m_s(\cdot),X_s}(x)$\,.

We show that $\Phi_{s,m_s(\cdot),X_s}(x)$ and
$\Psi_{s,m_s(\cdot),X_s}(x)$ are properly measurable.
Let $\mathcal{U}_s$  represent the closure 
of the set $\{c_s(X_s,\cdot)^{1/2}\sqrt{m_s(\cdot)}Dp(\cdot):\,
p\in\mathbb{C}^\infty_0(\R^l,\R^n)\}$ in
$\mathbb{L}^2(\R^l, \R^{l\times n})$\,. 
Introducing  $\varphi_s(x)=c_s(X_s,x)^{-1/2}
G_s(X_s,x)^T\sqrt{m_s(x)}$ 
and $\hat
\varphi_s(x)=c_s(X_s,x)^{1/2}\Psi_{s,m_s(\cdot),X_s}(x)\sqrt{m_s(x)}$\,,
 we have
that $\hat
\varphi_s$
is the orthogonal projection of $\varphi_s$ onto $\mathcal{U}_s$
 (see \eqref{eq:151} and \eqref{eq:40}).
By Corollary 8.2.13 on p.317 in Aubin and Frankowska \cite{AubFra09},
$\hat\varphi_s$
 is a  measurable function from $\R_+$ to
$\mathbb{L}^2(\R^l,\R^{l\times n})$\,. (We note that $s\to
\mathcal{U}_s$ is a measurable set-valued map by part vi) of Theorem
8.1.4 on p.310 in Aubin and Frankowska \cite{AubFra09}.)
This implies that the mapping $(s,x)\to \Psi_{s,m_s(\cdot),X_s}(x)$ is measurable.
The reasoning for $\Phi_{s,m_s(\cdot),X_s}$ is similar.


The representation in
  \eqref{eq:150} follows from \eqref{eq:151}, \eqref{eq:79},
  \eqref{eq:40}, and \eqref{eq:85}.
Since the function 
\begin{equation*}
\tilde    g_s(x)=\frac{Dm_s(x)}{2m_s(x)}-\Phi_{s,m_s(\cdot),X_s}(x)-
\Psi_{s,m_s(\cdot),X_s}(x)\lambda
\end{equation*}
 is a member of
$\mathbb{L}_0^{1,2}(\R^l,\R^l,c_s(X_s,x),m_s(x)\,dx)$\,,
it attains the supremum  in \eqref{eq:150}, 
which yields
\begin{multline}
     \label{eq:39}
      \mathbf{ I}^{\ast\ast}(X,\mu)=
\int_0^\infty
\sup_{\lambda\in\R^n}\Bl(
\lambda^T\bl(\dot{X}_s-
\int_{\R^l} A_s(X_s,x)\,m_s(x)\,dx\br)
-\frac{1}{2}\, \norm{\lambda}_{\int_{\R^l}C_s(X_s,x)\,m_s(x)\,dx}^2
\\+\frac{1}{2}\int_{\R^l}\norm{
\Phi_{s,m_s(\cdot),X_s}(x)-\frac{Dm_s(x)}{2m_s(x)}-\Psi_{s,m_s(\cdot),X_s}(x)
\lambda}^2_{c_s(X_s,x)}m_s(x)\,dx
\Br)\,ds\,.
\end{multline}

Since the matrix $Q_{s,m_s(\cdot)}(u,x)=C_s(u,x)-\norm{
\Psi_{s,m_s(\cdot),u}(x)}^2_{ c_s(u,x)}$ (see
\eqref{eq:88})  is positive semidefinite, 
the supremum over $\lambda$ in \eqref{eq:39} is attained at 
\begin{multline*}
  \tilde\lambda=\bl(\int_{\R^l}Q_{s,m_s(\cdot)}(X_s,x) m_s(x)\,dx\br)^\oplus
\bl(\dot{X}_s-\int_{\R^l}A_s(X_s,x)m_s(x)\,dx\\-
\int_{\R^l}\Psi_{s,m_s(\cdot),X_s}(x)^T
 c_s(X_s,x)
\br(\Phi_{s,m_s(\cdot),X_s}(x)-\frac{Dm_s(x)}{2m_s(x)}\br)m_s(x)\,dx\,\br)
\end{multline*}
and equals
\begin{equation*}
 \frac{1}{2}\,\int_{\R^l}\norm{\frac{Dm_s(x)}{2m_s(x)}
-\Phi_{s,m_s(\cdot),X_s}}^2_{c_s(X_s,x)}
m_s(x)\,dx
+\frac{1}{2}\,\norm{\tilde\lambda}^2_{
\bl(\int_{\R^l}Q_{s,m_s(\cdot)}(X_s,x) m_s(x)\,dx\br)^\oplus}\,,
\end{equation*}
provided 
\begin{multline*}
  \dot{X}_s-\int_{\R^l}A_s(X_s,x)m_s(x)\,dx-
\int_{\R^l}\Psi_{s,m_s(\cdot),X_s}(x)^T
 c_s(X_s,x)\bl(\frac{Dm_s(x)}{2m_s(x)}-
\Phi_{s,m_s(\cdot),X_s}(x)
\br)\,m_s(x)\,dx
\end{multline*}
 is in the range of 
$\int_{\R^l}Q_{s,m_s(\cdot)}(X_s,x) m_s(x)\,dx$ a.e. Otherwise, the
supremum equals infinity. The
fact that $\tilde\lambda=\hat\lambda_s$ and
the expression in \eqref{eq:55}  follow from \eqref{eq:151}
 and
\eqref{eq:39}.
The properties that
$\int_0^t  \int_{\R^l}\abs{Dm_s(x)}^2/m_s(x)\,dx\,ds$ 
and $\int_0^t
\int_{\R^l}\norm{\Phi_{s,m_s(\cdot),X_s}(x)}^2\,dx\,ds$ are finite
follow from
Lemma \ref{le:kwadr_int}, \eqref{eq:85}, and \eqref{eq:55}.
\end{proof}
Motivated by  \eqref{eq:55} in Theorem \ref{le:vid}, 
let us introduce, provided $\mathbf{I}^{\ast\ast}(X,\mu)<\infty$ so
that
$\mu(ds,dx)=m_s(x)\,dx\,ds$ and 
$\dot{X}_s-\int_{\R^l}A_s(X_s,x)m_s(x)\,dx-
\int_{\R^l}
 G_s(X_s,x)
\bl(Dm_s(x)/(2m_s(x))-\Phi_{s,m_s(\cdot),X_s}\br)m_s(x)\,dx$ is in the range of
$\int_{\R^l}
Q_{s,m_s(\cdot)}(X_s,x)m_s(x)\,dx$ a.e.,
\begin{multline}
  \label{eq:37}
   \mathbf{I}^{\ast\ast}_t(X,\mu)=
\int_0^t
\Bl(\frac{1}{2}\,
\int_{\R^l}\norm{\frac{Dm_s(x)}{2m_s(x)}-\Phi_{s,m_s(\cdot),X_s}(x)}^2_{
c_s(X_s,x)}m_s(x)\,dx\\
+\frac{1}{2}\,\norm{\dot{X}_s-\int_{\R^l}A_s(X_s,x)m_s(x)\,dx-
\int_{\R^l}
 G_s(X_s,x)
\bl(\frac{Dm_s(x)}{2m_s(x)}\\-\Phi_{s,m_s(\cdot),X_s}(x)
\br)m_s(x)\,dx}^2_{(\int_{\R^l}
Q_{s,m_s(\cdot)}(X_s,x)m_s(x)\,dx)^\oplus}\Br)\,ds\,.
\end{multline}
Similarly to the proof of Theorem~\ref{le:vid}, we also have that
\begin{multline}
  \label{eq:133}
      \mathbf{ I}^{\ast\ast}_t(X,\mu)=
\int_0^t
\sup_{\lambda\in\R^n}\Bl(
\lambda^T\bl(\dot{X}_s-
\int_{\R^l} A_s(X_s,x)\,m_s(x)\,dx\br)
-\frac{1}{2}\, \norm{\lambda}_{\int_{\R^l}C_s(X_s,x)\,m_s(x)\,dx}^2
\\+\sup_{h\in \mathbb{C}_0^1(\R^l)}\int_{\R^l}\Bl( D  h(x)^T \bl(
\frac{1}{2}\,\text{div}\,(c_s(X_s,x)m_s(x))
-(a_s(X_s,x)+G_s(X_s,x)^T\lambda)
 m_s(x)\br)\\
-\frac{1}{2}\,
\norm{D  h(x)}_{c_s(X_s,x)}^2m_s(x)\Br)\,dx\Br)\,ds\,.
\end{multline}
For the proof of Theorem \ref{the:appr},
 it will be  needed to extend $(X,\mu)$ defined on $[0,t]$ past
$t$ in such a way  that $\mathbf I^{\ast\ast}_t(X,\mu)=\mathbf
I^{\ast\ast}(X,\mu)$\,. That is done in the following lemma which also
concerns the zeros of $\mathbf I^{\ast\ast}(X,\mu)$\,.
\begin{lemma}
  \label{le:zero}
For $t\in\R_+$ and $z\in\R^n$\,, the system of equations
\begin{align}
    \label{eq:146a}
    \dot{X}_s=\int_{\R^l}A_{s+t}(X_s,x)\,m_s(x)\,dx,\;X_0=z,\\
    \label{eq:146}
  \int_{\R^l}\bl(\frac{1}{2}\,
\text{tr}\,(c_{s+t}(X_s,x)\,  D^2 p(x))
+a_{s+t}(X_s,x)^TD p(x)\br)\, m_s(x)\,dx  =0
\end{align}
where $p\in \mathbb{C}_0^\infty(\R^l)$ is otherwise arbitrary,
has a solution $(X^\dag,(m^\dag_s(x)))$ such that
$X^\dag$ is locally Lipschitz continuous,
 $m^\dag_s(x)$ is measurable, and
$m^\dag_s(\cdot)\in \mathbb{P}(\R^l)$\,. 
If,  given $(X,\mu)$ such that $\mathbf{I}^{\ast\ast}(X,\mu)<\infty$\,, 
one defines $(\hat X,\hat \mu)$ by the
relations
$\hat X_s=X_s$ and $\hat \mu_s=\mu_s$ for $s\le t$\,, and
$\hat X_s=X^\dag_{s-t}$ and $\hat \mu_s(dx)=\mu_t(dx)+
\int_0^{s-t}m^\dag_{r}(dx)\,dr$ for $s> t$\,, where
$z=X_t$\,, then $\mathbf{I}^{\ast\ast}(\hat X,\hat \mu)=
\mathbf{I}_t^{\ast\ast}(X,\mu)$\,.
 In particular, if $t=0$\,, then
 $\mathbf{I}^{\ast\ast}(X^\dag,\mu^\dag)=0$\,.
\end{lemma}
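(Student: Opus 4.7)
The plan is to split the lemma into two separable tasks: constructing the ergodic limit solution $(X^\dag, m^\dag)$ of \eqref{eq:146a}--\eqref{eq:146}, and then verifying, via the explicit formula \eqref{eq:55} for $\mathbf I^{\ast\ast}$ from Theorem~\ref{le:vid}, that attaching this solution to $(X,\mu)$ after time $t$ adds nothing to the rate.

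For the existence of $(X^\dag, m^\dag)$, I would first fix $s$ and $u$ and treat \eqref{eq:146} as the adjoint Fokker--Planck equation for the invariant density of the time-homogeneous diffusion with drift $a_{s+t}(u,\cdot)$ and diffusion matrix $c_{s+t}(u,\cdot)$. Under Conditions~\ref{con:coeff}, \ref{con:positive} and the stability \eqref{eq:116}, this diffusion is ergodic and admits a unique invariant probability density $m^{u}_s(\cdot)$; regularity of $c_{s+t}(u,\cdot)$ combined with Lemma~\ref{le:lok_kwadr} (and the usual bootstrap estimates for elliptic invariant measures) yields $\sqrt{m^{u}_s}\in \mathbb{W}^{1,2}(\R^l)$, so $m^u_s\in\mathbb{P}(\R^l)$. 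I would then show that $u\mapsto m^u_s$ is continuous in (say) total variation, using Lipschitz continuity of $a_{s+t}$ in $x$, continuity in $u$, and uniform tightness coming from \eqref{eq:116}. This makes $\bar A_s(u):=\int_{\R^l}A_{s+t}(u,x)\,m^u_s(x)\,dx$ continuous in $u$ and bounded locally uniformly in $s$, via \eqref{eq:63a}. Peano's theorem then furnishes a solution $X^\dag$ of $\dot X_s=\bar A_s(X_s)$, $X_0=z$, and its derivative is locally bounded, hence $X^\dag$ is locally Lipschitz. Setting $m^\dag_s(x)=m^{X^\dag_s}_s(x)$ yields a jointly measurable density and completes the construction.

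For the rate-function claim, I would use \eqref{eq:55}: $\mathbf I^{\ast\ast}$ is an integral of a pointwise-in-$s$ nonnegative integrand. Define $(\hat X,\hat\mu)$ as in the statement; continuity at $s=t$ holds because $X^\dag_0=X_t$ and the densities are absorbed into a jointly absolutely continuous measure. Since the integrand depends on $s$ only through $(X_s,m_s(\cdot))$ evaluated at the corresponding coefficients, the integral over $[0,\infty)$ splits, contributing $\mathbf I^{\ast\ast}_t(X,\mu)$ from $[0,t]$ by \eqref{eq:37}. It then remains to prove the integrand vanishes on $[t,\infty)$. Integration by parts shows that \eqref{eq:146} is equivalent to
\begin{equation*}
\int_{\R^l} Dp(x)^T c_{s+t}(X^\dag_s,x)\,\frac{Dm^\dag_s(x)}{2m^\dag_s(x)}\,m^\dag_s(x)\,dx=\int_{\R^l}Dp(x)^T\bl(a_{s+t}(X^\dag_s,x)-\tfrac12\text{div}_x\,c_{s+t}(X^\dag_s,x)\br)m^\dag_s(x)\,dx
\end{equation*}
for all $p\in\mathbb{C}_0^\infty(\R^l)$. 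Since, by Lemma~\ref{le:density} and Lemma~\ref{le:kwadr_int}, $Dm^\dag_s/m^\dag_s\in\mathbb L_0^{1,2}(\R^l,\R^l,c_{s+t}(X^\dag_s,x),m^\dag_s(x)\,dx)$, this identifies $Dm^\dag_s/(2m^\dag_s)$ with $\Phi_{s+t,m^\dag_s(\cdot),X^\dag_s}$ via the defining relation \eqref{eq:79}. Thus the first term in \eqref{eq:55} is zero, the $G$-correction inside the second term is zero, and \eqref{eq:146a} annihilates the remaining $\dot X^\dag_s-\int A\,m^\dag$ part, whence the second term also vanishes. Specialising to $t=0$ yields $\mathbf I^{\ast\ast}(X^\dag,\mu^\dag)=0$.

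The main obstacle is the existence/continuous dependence of $m^u_s$ on $u$ with the $\mathbb{P}(\R^l)$-regularity needed to apply \eqref{eq:55}. Uniqueness of the invariant density and its $\mathbb{W}^{1,2}$ regularity (in the square root) are by now standard under the running hypotheses, but the continuous dependence step, needed to apply Peano's theorem and to secure joint measurability of $m^\dag_s(x)$, has to lean on uniform tightness from \eqref{eq:116} and Lipschitz continuity of $a$ in $x$. The subsequent identification of the rate integrand with zero is then a short computation using the orthogonality/projection characterisations \eqref{eq:79}, \eqref{eq:151} already established.
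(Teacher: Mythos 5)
Your second half coincides with the paper's: integration by parts in \eqref{eq:146} identifies $Dm^\dag_s/(2m^\dag_s)$ with $\Phi_{s+t,m^\dag_s(\cdot),X^\dag_s}$ via \eqref{eq:79}, so both terms of \eqref{eq:55} vanish on $[t,\infty)$ and \eqref{eq:37} gives $\mathbf{I}^{\ast\ast}(\hat X,\hat\mu)=\mathbf{I}^{\ast\ast}_t(X,\mu)$. The first half differs in mechanism: you propose continuity of $u\mapsto m^u_s$ in variation plus Peano's theorem, whereas the paper runs successive approximations ($m^i$ from $X^i$, then $X^{i+1}$ from $m^i$), gets compactness of $X^i$ from \eqref{eq:122a}--\eqref{eq:63a} and Gronwall, and passes to the limit using a uniform exponential moment bound \eqref{eq:123} (a Lyapunov computation with $e^{\delta|x|}$), variation convergence of the $m^i_s$, and uniqueness of the invariant density. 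These are genuinely interchangeable routes, and the continuity-in-$u$ statement you need is proved by exactly the same ingredients (tightness from \eqref{eq:116} plus uniqueness of the invariant measure) that the paper uses along its convergent subsequence, so the overall burden is comparable; the paper's iteration has the advantage of producing the joint measurability of $m^\dag_s(x)$ and the bound \eqref{eq:123} as byproducts that are reused immediately afterwards.

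One point where your citations fall short of what is needed: membership $m^u_s\in\mathbb{P}(\R^l)$ and the identification $\Phi_{s+t,m^\dag_s(\cdot),X^\dag_s}=Dm^\dag_s/(2m^\dag_s)$ require the \emph{global} property $\sqrt{m^\dag_s}\in\mathbb{W}^{1,2}(\R^l)$ (so that $Dm^\dag_s/m^\dag_s$ lies in $\mathbb{L}_0^{1,2}(\R^l,\R^l,m^\dag_s(x)\,dx)$, via Lemma~\ref{le:spaces}), and this does not follow from Lemma~\ref{le:lok_kwadr}, which is local, nor from Lemma~\ref{le:kwadr_int}, whose hypothesis is the finiteness of the variational functional for the given pair rather than stationarity. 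The paper closes this by the exponential moment estimate \eqref{eq:123}, which yields $a_{s+t}(X^\dag_s,\cdot)\in\mathbb{L}^2(\R^l,\R^l,m^\dag_s(x)\,dx)$ and then invokes the global result of Bogachev, Krylov, and R\"ockner (their Theorem 1.1). Your plan should incorporate such a global moment or decay estimate explicitly; similarly, global (non-explosive) solvability of $\dot X=\bar A_s(X)$ needs the growth condition \eqref{eq:122a} and Gronwall, not just the local boundedness \eqref{eq:63a}. With these additions your argument goes through.
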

\begin{proof}
  Since $a_s(u,x)$ is locally  bounded, since $c_s(u,x)$ is bounded, is positive
definite,  and is of class $\mathbb{C}^1$ in $x$,
 and since 
$a_s(u,x)^T x/\abs{x}\to-\infty$ as
$\abs{x}\to\infty$ by \eqref{eq:116},  applications of
 Theorem 1.4.1 in
Bogachev, Krylov, and R\"ockner \cite{BogKryRoc} (with $V(x)=
\sqrt{1+\abs{x}^2}$) and of Theorem 2.2 and
Proposition 2.4 in  Metafune, Pallara, and Rhandi \cite{MetPalRha05},
show that for every $s,t\in \R_+$ and $u\in\R^n$
there exists  a unique probability density
$m_s(x)$
satisfying the equation
\begin{equation}
  \label{eq:81}
      \int_{\R^l}\bl(\frac{1}{2}\,
\text{tr}\,(c_{s+t}(u,x)\,  D^2 p(x))+
D p(x)^T a_{s+t}(u,x)\br)\, m_s(x)\,dx  =0\,.
\end{equation}

We apply the method of successive approximations: let
$X^0_s=z$ and, for $i\in\N$\,,
\begin{equation}
  \label{eq:152}
      \int_{\R^l}\bl(\frac{1}{2}\,
\text{tr}\,(c_{s+t}(X_s^i,x)\,  D^2 p(x))+
D p(x)^T a_{s+t}(X_s^i,x)\br)\, m^i_s(x)\,dx  =0\,,
\end{equation}
\begin{equation}
  \label{eq:6}
    \dot{X}^{i+1}_s=\int_{\R^l}A_{s+t}(X^{i+1}_s,x)\,m^i_s(x)\,dx\,,\;X^{i+1}_0=z.
\end{equation}
We note that $m^i_s(x)$ is a measurable function of $(s,x)$ (one can
use,  e.g., Theorem 8.2.9 on p.315 in Aubin and Frankowska \cite{AubFra09}).
By (\ref{eq:122a}), we have that given $L>0$, there exists 
 $M>0$ such that a.e. in $s\in[0,L]$\,,
$  d\abs{X^{i+1}_s}^2/ds\le M(1+\abs{X^{i+1}_s}^2)$\,.
Gronwall's inequality implies that
$\sup_{i\in\N}\sup_{s\in[0,L]}\abs{X^i_s}<\infty$\,. By \eqref{eq:6}
and \eqref{eq:63a},
the derivatives $\dot{X}^{i+1}_s$ are 
bounded uniformly in $i\in\N$ and $s\in[0,L]$\,,
so the sequence $(X^i_s,\,s\in[0,L])$ is relatively compact for the
 uniform norm on $[0,L]$\,. Let $ X^\dag_s$ represent a limit point.
It is a locally Lipschitz continuous function.

As in  Metafune, Pallara, and Rhandi
\cite[Proposition 2.4]{MetPalRha05},
we have that, for arbitrary  $\delta>0$ and
 $L>0$\,,
\begin{equation}
  \label{eq:123}
\sup_{s\in[0,L]}\sup_{i\in \N}\int_{\R^l}
e^{\delta\abs{x}}\,m_s^i(x)\,dx
<\infty\,.
\end{equation}
In some more detail, let
for a function $p$ which is twice differentiable at $x$\,,
\begin{equation*}
\mathcal{L}^i_sp(x)=\frac{1}{2}\,\text{tr}\,(c_{s+t}(X^i_s,x)
 D^2p(x))+ D p(x)^Ta_{s+t}(X_s^i,x)\,.
  \end{equation*}
Since, for  $\abs{x}>0$\,, 
\begin{equation*}
  \mathcal{L}^i_se^{\delta\abs{x}}=
\Bl(\frac{1}{2}\,\text{tr}\,\bl(c_{s+t}(X^i_s,x)
(\frac{\delta}{\abs{x}}(I-\frac{xx^T}{\abs{x}^2}) 
+\delta^2\,\frac{xx^T}{\abs{x}^2})\br)+\delta\, a_{s+t}^i(X^i_s,x)^T 
\frac{x}{\abs{x}}\Br)e^{\delta \abs{x}}\,,
\end{equation*}
where $I$ represents the $l\times l$ identity matrix,
and $\sup_{i\in\N}a_{s+t}(X^i_s,x)^T x/\abs{x}\to-\infty$ as
$\abs{x}\to\infty$\,,  there exists  $R>1$ such that 
$\mathcal{L}^i_se^{\delta\abs{x}}\le0$ and
$e^{\delta \abs{x}}\le 
\abs{\mathcal{L}^i_se^{\delta\abs{x}}}$ for all $s\in[0,t]$ and all 
$i\in \N$ provided $\abs{x}>R$\,.
 Let $F$ be a $\mathbb{C}^\infty(\R^l)$-function
such that $F(x)=e^{\delta\abs{x}}$ if $\abs{ x}\ge 1$\,. 
Arguing as in  the proof of 
Proposition 2.3 in Metafune, Pallara, and Rhandi \cite{MetPalRha05},
one can see that 
\begin{equation*}
  \int_{x\in\R^l:\,\abs{x}>
  R}\abs{\mathcal{L}^i_se^{\delta\abs{x}}}\,m^i_s(x)\,dx\le 
\int_{x\in\R^l:\,\abs{x}\le
  R}\mathcal{L}^i_sF(x)\,m^i_s(x)\,dx\,
\end{equation*}
so that
\begin{equation}
  \label{eq:95}
    \int_{x\in\R^l:\,\abs{x}>
  R}e^{\delta\abs{x}}\,m^i_s(x)\,dx\le 
\int_{x\in\R^l:\,\abs{x}\le
  R}\mathcal{L}^i_sF(x)\,m^i_s(x)\,dx\,,
\end{equation}
which implies \eqref{eq:123}.

Hence, given $s\in[0,L]$\,, 
the sequence of probability measures 
$m_s^i(x)\,dx$ is tight. 
 Proposition 2.16 in Bogachev,
Krylov, and R\"ockner 
\cite{BogKryRoc01} implies that 
the   $m_s^i(x)$  converge in the variation norm along a
subsequence to  density
 $ m_s^\dag(x)$\,. Since the local 
$\mathbb L^q$-norms of the $m_s^i(x)$  are uniformly bounded for all $q>1$
(see (2.26) in Bogachev,
Krylov, and R\"ockner 
\cite{BogKryRoc01}),
$\sup_{i\in\N}\abs{a_{s+t}(X^i_s,x)}$ grows at most
linearly  with $\abs{x}$ by Lipschitz continuity 
and the fact that $\sup_{x\in\R^l}\sup_{i\in\N}\norm{c_{s+t}^i(X^i_s,x)}<\infty$ 
(see Condition 2.1), and
 $\sup_{i\in
  \N}\int_{\R^l}e^{\delta\abs{x}}\,m_s^i(x)\,dx<\infty$\,, 
   on taking a
limit in \eqref{eq:152}, we have by dominated convergence that
\eqref{eq:146} holds. Since density $m_s^\dag(x)$ is specified uniquely by 
\eqref{eq:146}  $m_s^i(x)\to  m_s^\dag(x)$ as $i\to\infty$
along a subsequence such that the $X^i$ converge to $X^\dag$\,.
Since $\sup_{i\in\N}\sup_{x\in\R^l}\abs{A_{s+t}(X^i_s,x)}<\infty$ 
by \eqref{eq:63a}, a similar reasoning shows that
 taking the above subsequential
limit in \eqref{eq:6} obtains  \eqref{eq:146a}.
Since \eqref{eq:123} implies that
$\int_{\R^l}\abs{a_s(X_s,x)}^2\,m_s^\dag(x)\,dx <\infty$\,,
by Theorem 1.1 in Bogachev, Krylov, and R\"ockner \cite{BogKryRoc96},
$\sqrt{m_s^\dag(\cdot)}\in \mathbb{W}^{1,2}(\R^l)$\,.

On noting that  \eqref{eq:81} can be written as
\begin{equation*}
      \int_{\R^l}D p(x)^T \bl(a_{s+t}(u,x)-\frac{1}{2}\,
\text{div}\,c_{s+t}(u,x)\br) m_s(x)\,dx  =
\frac{1}{2}\int_{\R^l}D p(x)^T c_{s+t}(u,x)Dm_s(x)\,dx\,,
\end{equation*}
we have that $\Phi_{s+t,m_s^\dag(\cdot),X^\dag_s}(x)=Dm_s^\dag(x)/(2m_s^\dag(x))$ which
implies, by \eqref{eq:55} and \eqref{eq:37},
that $\mathbf{I}^{\ast\ast}(\hat X,\hat \mu)=
\mathbf{I}_t^{\ast\ast}(X,\mu)$\,.
\end{proof}
\begin{remark}
  By Proposition 2.4 and Theorem 6.1  (with $\beta=1$)
 in Metafune,
Pallari, and Randi \cite{MetPalRha05}, $m^\dag_s(\cdot)$
decays exponentially at infinity. It is also positive and H\"older
continuous, see
Bogachev, Krylov, and R\"ockner \cite[Theorem 2.8, Corollary 2.10,
Corollary 2.11]{BogKryRoc01} and
Bogachev, Krylov, and R\"ockner \cite{BogKryRoc}.
\end{remark}
\section{Identifying the large deviation function}
\label{sec:ident}
The purpose of this section is to show that 
$\tilde{\mathbf{I}}=\mathbf I^{\ast\ast}$ for sufficiently regular
functions $(X,\mu)$\,.
More specifically, we will prove the following theorem.

\begin{theorem}
  \label{the:iden_reg}
Suppose that
 conditions \ref{con:coeff}, 
 \ref{con:positive},
\eqref{eq:116}, 
and \eqref{eq:82} hold.
Suppose that $\tilde{ \mathbf I}$ is a large deviation function that
satisfies the assertion of Theorem \ref{the:equation} and is such that
$\tilde{\mathbf{I}}(X,\mu)=\infty$ unless $X_0=\hat{u}$\,.
Suppose that $(\hat{X},\hat{\mu})$ is such that 
$\hat{X}_0=\hat{u}$\,,
$\mathbf{I}^{\ast\ast}(\hat{X},\hat{\mu})<\infty$\,,
 $\hat X$ is locally Lipschitz continuous and
that  
$\hat{m}_s(x)=\hat\mu(ds,dx)/(ds\,dx)$  
is of the form
\begin{equation*}
  \hat m_s(x)=M_s\bl(\tilde m_s(x)\hat\eta^2\bl(\frac{\abs{x}}{r}\br)+
e^{-\alpha\abs{x}}\br(1-\hat\eta^2\bl(\frac{\abs{x}}{r}\br)\br)\br)
\end{equation*}
where $\tilde m_s(x)$ is a probability density in $x$ which is
 locally bounded away from
zero and  belongs to $\mathbb{C}^1(\R^l)$ as a
function of $x$\,, with $\abs{Dm_s(x)}$ being locally bounded in $(s,x)$\,,
$\hat\eta(y)$ is
a nonincreasing 
$[0,1]$-valued $\mathbb C^1_0(\R_+)$-function,  where 
 $y\in\R_+$\,, that equals $1$ for $y\in[0,1]$ and equals $0$ for 
$y\ge 2$\,,
$r>0$\,,
 $\alpha>0$\,, 
and $M_s$ is the normalising constant.
Then, for  given  $\tilde m_s(x)$, $\hat\eta(y)$, and $r$,\,
 there exists $\alpha_0>0$ such that $\tilde{\mathbf I}(\hat X,\hat
 \mu)= \mathbf{I}^{\ast\ast}(\hat X,\hat\mu)$ for all $\alpha>\alpha_0$\,.
\end{theorem}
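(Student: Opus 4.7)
The plan is to establish $\tilde{\mathbf I}(\hat X,\hat\mu)=\mathbf I^{\ast\ast}(\hat X,\hat\mu)$; since $\mathbf I^{\ast\ast}(\hat X,\hat\mu)\le\tilde{\mathbf I}(\hat X,\hat\mu)$ already by \eqref{eq:13}, only the reverse inequality is at issue. I shall apply part~1 of Theorem \ref{the:id} with $\tilde{\mathcal U}=\{U^{\lambda(\cdot),f}_{t\wedge\tau}\}$, the family of functions from Theorem \ref{the:equation}; each element of this family satisfies $\sup_{(X,\mu)}(U-\tilde{\mathbf I})=0$, and the pointwise supremum over the family equals $\mathbf I^{\ast\ast}$ by \eqref{eq:23}. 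It therefore suffices to construct a sequence of admissible data $(\lambda^{(i)},f^{(i)},t_i,\tau_i)$ whose associated maximisers $(X^i_*,\mu^i_*)$ of $U^{(i)}-\tilde{\mathbf I}$, where $U^{(i)}=U^{\lambda^{(i)},f^{(i)}}_{t_i\wedge\tau_i}$, satisfy $(X^i_*,\mu^i_*)\to(\hat X,\hat\mu)$ and $U^{(i)}(\hat X,\hat\mu)\to\mathbf I^{\ast\ast}(\hat X,\hat\mu)$.

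The candidates are built from the explicit variational maximisers identified in Theorem \ref{le:vid}. The special form of $\hat m_s(x)$ makes it smooth in $x$, strictly positive, and exponentially decaying, so the projections $\Phi_{s,\hat m_s(\cdot),\hat X_s}$ and $\Psi_{s,\hat m_s(\cdot),\hat X_s}$ are classical smooth solutions of the Dirichlet-type problems implicit in \eqref{eq:79} and \eqref{eq:151}; consequently the maximisers $\hat\lambda_s$ and $\hat g_s$ given by \eqref{eq:89} and \eqref{eq:111} are regular in $(s,x)$. Pick $\lambda^{(i)}(\cdot)$ of the form \eqref{eq:15} that approximates $\hat\lambda_s$ on a refining partition of $[0,t_i]$ in a manner continuous in $X$, pick a smooth compactly supported $f^{(i)}(s,u,x)$ whose gradient $Df^{(i)}(s,\hat X_s,\cdot)$ approximates $\hat g_s(\cdot)$ in $\mathbb L^2(\R^l,\R^l,c_s(\hat X_s,x),\hat m_s(x)\,dx)$, and let $\tau_i$ localise $(X,\mu)$ to a fixed compact neighbourhood of $(\hat X,\hat\mu)$. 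At a maximiser $(X^i_*,\mu^i_*)$ provided by Theorem \ref{the:equation}, the chain
\begin{equation*}
U^{(i)}(X^i_*,\mu^i_*)\le\mathbf I^{\ast\ast}(X^i_*,\mu^i_*)\le\tilde{\mathbf I}(X^i_*,\mu^i_*)=U^{(i)}(X^i_*,\mu^i_*)
\end{equation*}
collapses to equality, which forces $(\lambda^{(i)},Df^{(i)})$ to realise the maximum in the variational formula \eqref{eq:85} at $(X^i_*,\mu^i_*)$. Matching with \eqref{eq:89}--\eqref{eq:111} at $(X^i_*,\mu^i_*)$ yields a coupled Euler--Lagrange system in which the density $m^{(i)}_s$ of $\mu^i_*$ satisfies a stationary second-order elliptic Kolmogorov equation whose drift is expressed in terms of $(\lambda^{(i)},Df^{(i)},X^i_*)$.

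The main obstacle is the passage to the limit $i\to\infty$: any subsequential limit of $(X^i_*,\mu^i_*)$ must equal $(\hat X,\hat\mu)$, which amounts to uniqueness for the limiting Euler--Lagrange system obtained when $(\lambda^{(i)},Df^{(i)})\to(\hat\lambda,\hat g)$. This is where the hypothesis $\alpha>\alpha_0$ is used: for $\alpha$ large enough the exponential tail $e^{-\alpha|x|}$ of $\hat m_s$ dominates the at-most-linear growth of $a_s(u,\cdot)$ and the boundedness of the remaining coefficients from Conditions \ref{con:coeff}--\ref{con:positive}, so the integrals in \eqref{eq:55} are absolutely convergent with small tails uniformly in $i$, the smooth compactly supported truncations of $\hat g$ produce $\mathbb L^2(\R^l,\R^l,c_s(\hat X_s,x),\hat m_s(x)\,dx)$-convergent $Df^{(i)}$, and the Lyapunov function $e^{\delta|x|}$ with $0<\delta<\alpha$ is integrable against any invariant density arising as a limit of $m^{(i)}_s$. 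The last property, combined with the uniqueness theorem for invariant probability densities used in the proof of Lemma \ref{le:zero} (Theorem 1.4.1 of Bogachev, Krylov, and R\"ockner \cite{BogKryRoc}), identifies that limit uniquely as $\hat m_s$; uniqueness for the coupled ODE for $X^i_*$ then forces the limit of $X^i_*$ to be $\hat X$. Part~1 of Theorem \ref{the:id} now delivers $\tilde{\mathbf I}(\hat X,\hat\mu)=\mathbf I^{\ast\ast}(\hat X,\hat\mu)$.
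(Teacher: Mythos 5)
Your overall skeleton is the one the paper uses (feed the variational optimisers of \eqref{eq:89}--\eqref{eq:111} back into the exponential-martingale functionals, take the maximisers of $U-\tilde{\mathbf I}$ supplied by compactness, show they converge to $(\hat X,\hat\mu)$, and invoke part~1 of Theorem \ref{the:id}), but the decisive step is not carried out correctly, and the gap is exactly where the real work lies. You freeze the controls along $\hat X$: your $\lambda^{(i)}$ approximates $\hat\lambda_s$ and $Df^{(i)}(s,\cdot)$ approximates $\hat g_s(\cdot)$, both computed from $(\hat X,\hat\mu)$. Then at a maximiser $(X^i_*,\mu^i_*)$ the Euler--Lagrange conditions you extract say that $m^{(i)}_s$ is the invariant density of a generator whose diffusion and untilted drift are evaluated at $X^i_{*,s}$ while the tilt $\hat g_s$ still refers to $\hat X$; the limit object is a genuinely \emph{coupled} path--density system, and nothing you cite gives its uniqueness. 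The Bogachev--Krylov--R\"ockner theorem used in Lemma \ref{le:zero} gives uniqueness of the invariant density \emph{for a given drift}, i.e.\ given the path, not uniqueness of the coupled pair; so the claim that any subsequential limit of $(X^i_*,\mu^i_*)$ must be $(\hat X,\hat\mu)$ is unsupported. The paper avoids this precisely by constructing \emph{feedback} controls $\hat\lambda_s(u)$ and $\hat h_s(u,x)$ defined for all $u$ (equations \eqref{eq:33}, \eqref{eq:74}, resting on the regularity and growth estimates of Lemma \ref{le:reg}), so that, by \eqref{eq:96}, \eqref{eq:97a} and \eqref{eq:87}, $\hat m_s$ is the invariant density of the tilted generator \emph{for every} $u$; the density equation \eqref{eq:92} for the maximiser then decouples from the path, Lemma \ref{le:convergence} identifies its limit as $\hat m_s$ regardless of what the limit path is, and only afterwards does the maximisation over $\lambda$ pin the path down to $\hat X$ by ODE uniqueness. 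Note also that the threshold $\alpha_0$ is not only a tail-integrability device for the limit passage: it is already needed in Lemma \ref{le:reg} to make the auxiliary diffusion ergodic and to obtain the sign and growth relations \eqref{eq:91}--\eqref{eq:91b} that later yield the uniform confinement \eqref{eq:137}.

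A second, smaller but real, gap: Theorem \ref{the:equation} only gives $\sup(U-\tilde{\mathbf I})=0$; it does not provide maximisers, and your device of a stopping time ``localising to a fixed compact neighbourhood of $(\hat X,\hat\mu)$'' neither forces attainment nor prevents trivial maximisers (a zero-cost pair stopped at time $0$ satisfies the equality chain and carries no information about $(\hat X,\hat\mu)$). The paper handles this with Lemma \ref{le:sup_ld_function}: the class of admissible functionals is first enlarged to state-dependent $\lambda_s(X_s)$, $h_s(X_s,x)$ with the stopping times $\tau^N$ of \eqref{eq:62}, and the doubling trick ($2\lambda$, $2h$) shows the supremum is attained on a \emph{fixed} sublevel set $K_{2N+2}$ uniformly in the approximation index, which is what makes the compactness-and-limit argument of part~1 of Theorem \ref{the:id} applicable. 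Without an analogue of both ingredients (feedback controls and uniform containment of maximisers), your limit identification does not go through.
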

We assume throughout the section the hypotheses of Theorem
\ref{the:iden_reg} to hold.
We start by  extending the assertion of Theorem~\ref{the:equation} to
a larger set of functions $(\lambda(\cdot),f)$\,. 
For economy of notation, we denote  $\gamma=(X,\mu)$ and
recall that $\Gamma$ represents the  set of $\gamma$ such that
$X$ is absolutely continuous and
 $\mu$ admits 
density $m_s(x)$  that is an element of 
$\mathbb P(\R^l)$ in $x$\,, for almost all $s$\,.
Let 
 $\lambda(s,X)$, where $s\in\R_+$ and $X\in\mathbb{C}(\R_+,\R^n),$ 
represent an $\R^n$-valued  measurable
 function  and let
 $ h_s(u,x)$, where $s\in\R_+,\,u\in\R^n$ and $x\in\R^l,$ represent an 
$\R$-valued measurable function, which
 is an element of $\mathbb{W}^{1,1}_{\text{loc}}(\R^l)$
  in $x$ and is of bounded support in $x$ locally uniformly over
  $(s,u)$\,.
If,  
for all $t\in\R_+$ and all  $\gamma\in\mathbb C(\R_+,\R^n)\times
\mathbb C_\uparrow(\R_+,\mathbb M(\R^l))$\,,
$\;\int_0^t\int_{\R^l}\bl(
\abs{\lambda(s,X)}^2+
\abs{D h_s(X_s,x)}^2\br)\mu(dx,ds)<\infty$\,, 
we define, given  $N\in\N$,
 \begin{subequations}
   \begin{align}
  \label{eq:62}
    \tau^N(\gamma)&=\inf\{t\in\R_+:\;\int_0^t\int_{\R^l}\Bl(
\norm{\lambda(s,X)}^2_{C_s(X_s,x)}+
\norm{D h_s(X_s,x)}^2_{c_s(X_s,x)}\Br)\mu(dx,ds)+
X_t^\ast+t\ge N\}
\intertext{and, provided $\gamma\in\Gamma$\,,
     }
\label{eq:27}
  \theta^N(\gamma)&=
 \int_0^{\tau^N(\gamma)}\Bl(
 \lambda(s,X)^T\,\bl(\dot{X}_s-
\int_{\R^l} A_s(X_s,x)m_s(x)\,dx\,\br)
-\frac{1}{2}\,
 \norm{\lambda(s,X)}_{\int_{\R^l}  C_s(X_s,x)\,m_s(x)\,dx}^2\notag
\\&+\int_{\R^l}\bl(D  h_s(X_s,x)^T\bl( \notag
\frac{1}{2}\,  \,\text{div}\,(c_s(X_s,x)\,m_s(x))
-a_s(X_s,x)m_s(x)\br)
\notag\\&
-\frac{1}{2}\, \norm{D h_s(X_s,x)}_{c_s(X_s,x)}^2
\,m_s(x) \br)
\,dx-\int_{\R^l}  
 \lambda(s,X)
^T G_s(X_s,x)
Dh_s(X_s,x)\,m_s(x)\,dx\Br)\,ds\,.
\end{align}
 \end{subequations}
For the latter definition, we assume that, in addition,
\begin{equation}
  \label{eq:41}
    \int_0^t (\abs{\dot
  X_s}^2+\int_{\R^l}\frac{\abs{Dm_s(x)}^2}{m_s(x)}\,dx)\,ds <\infty\,,    
\end{equation}
 for
all $t\in\R_+$\,, and use the piece of notation 
 $X_t^\ast=\sup_{s\in[0,t]}\abs{X_s}$\,.
(The definition of $\theta^N(\gamma)$ is modelled on the expression
for $U^{\lambda(\cdot),f}_t(X,\mu)$ in \eqref{eq:46}.)
We note that $\tau^N(\gamma)\le N$\,.
Furthermore, we have the following lemma, for which we reuse the
piece of notation of Theorem \ref{the:id} that, for $\delta\in\R_+$,
\begin{equation*}
K_\delta=\{\gamma:\,\tilde{\mathbf{I}}(\gamma)\le \delta\}
\end{equation*}
and recall that $K_\delta$ is a compact in $\mathbb C(\R_+,\R^n)\times
\mathbb C_\uparrow(\R_+,\mathbb M(\R^l))$ and that $K_\delta\subset
\Gamma$\,. Theorem \ref{le:vid} implies that \eqref{eq:41} holds
 on $K_\delta$\,.
For the definition of the essential supremum of a family of measurable
functions used in the next lemma, see, e.g., Proposition II.4.1 on p.44
of Neveu \cite{neveng}.
\begin{lemma}
  \label{le:conv_stop}
Let $\lambda^i(s,X)$ and $h^i_s(u,x)$ be sequences of functions
satisfying the same hypotheses as $\lambda(s,X)$ and $h_s(u,x)$\,,
respectively, and
let $\tau^{N,i}(\gamma)$ and $\theta^{N,i}(\gamma)$
be defined by the respective equations  \eqref{eq:62} and
\eqref{eq:27}, with $\lambda^i(s,X)$ and $h^i_s(u,x)$ being
substituted for $\lambda(s,X)$ and $h_s(u,x)$\,, respectively. 
If, in addition, the functions 
$h^i_s(u,x)$ are of bounded support in $x$ uniformly over $i$ and
locally uniformly over $(s,u)$\,, then
\begin{equation}
\label{eq:153d}
\int_0^N\esssup_{\gamma\in K_\delta}
\abs{\lambda(s,X)}^2\,ds
+\int_0^N\esssup_{\gamma\in K_\delta}\int_{\R^l}
\abs{Dh_s(X_s,x)}^2\,m_s(x)\,dx\,ds
<\infty\,,
\end{equation}
\begin{subequations}
  \begin{align}
\label{eq:153a}
    \lim_{i\to\infty}\sup_{\gamma\in K_\delta}
\int_0^N\abs{\lambda(s,X)-\lambda^i(s,X)}^2\,ds=0\,,
\intertext{ and }
     \label{eq:153}
        \lim_{i\to\infty}\sup_{\gamma\in K_\delta}
\int_0^N\int_{\R^l}\abs{Dh_s(X_s,x)-Dh^i_s(X_s,x)}^2m_s(x)\,dx\,ds=0\,,
\end{align}  
\end{subequations}
then 
\begin{subequations}
  \begin{align}
    \label{eq:165}
    \lim_{i\to\infty}\sup_{\gamma\in
      K_\delta}\abs{\tau^N(\gamma)-\tau^{N,i}(\gamma)}=0
\intertext{ and }    \label{eq:165a}
    \lim_{i\to\infty}\sup_{\gamma\in
      K_\delta}\abs{\theta^N(\gamma)-\theta^{N,i}(\gamma)}=0\,.
  \end{align}
\end{subequations}

\end{lemma}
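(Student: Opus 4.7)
The plan is to reduce everything to uniform convergence on $[0,N]\times K_\delta$, exploiting compactness of $K_\delta$, the coefficient bounds \eqref{eq:63}--\eqref{eq:82}, and the regularity of elements of $K_\delta$ supplied by Theorem \ref{le:vid}. Using the identity $|\|a\|_Q^2-\|b\|_Q^2|\le\|a-b\|_Q(\|a\|_Q+\|b\|_Q)$ valid for positive semidefinite $Q$, Cauchy--Schwarz, and the uniform boundedness of $\|C_s(X_s,\cdot)\|$ and $\|c_s(X_s,\cdot)\|$ for trajectories $X$ of bounded sup-norm (which is automatic on the compact $K_\delta$), the hypotheses \eqref{eq:153a}--\eqref{eq:153} upgrade to analogous convergences in the $C_s$- and $c_s$-weighted $L^2$-norms, while \eqref{eq:153d} bounds the ``sum'' factors $\|\lambda\|+\|\lambda^i\|$ and $\|Dh\|+\|Dh^i\|$ in $L^2([0,N]\times\R^l,m_s(x)\,dx\,ds)$ uniformly in $i$ and in $\gamma\in K_\delta$.

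First I would prove uniform convergence, on $[0,N]\times K_\delta$, of the nondecreasing functional $\Theta^i_t(\gamma)$ obtained by substituting $(\lambda^i,h^i)$ for $(\lambda,h)$ in the quantity compared to $N$ in \eqref{eq:62}, to its $(\lambda,h)$-analogue $\Theta_t(\gamma)$, by the argument just outlined. Because the $+t$ summand in \eqref{eq:62} forces $t\mapsto\Theta_t(\gamma)$ to be continuous and strictly increasing with slope at least one, both $\Theta$ and $\Theta^i$ are Lipschitz-$1$-invertible in $t$, and
\begin{equation*}
|\tau^N(\gamma)-\tau^{N,i}(\gamma)|\le\sup_{t\in[0,N]}|\Theta_t(\gamma)-\Theta^i_t(\gamma)|,
\end{equation*}
yielding \eqref{eq:165}.

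For \eqref{eq:165a} I would decompose
\begin{equation*}
\theta^N(\gamma)-\theta^{N,i}(\gamma)=\int_0^{\tau^N\wedge\tau^{N,i}}\bl(F_s(\gamma)-F_s^i(\gamma)\br)\,ds+\Delta_i(\gamma),
\end{equation*}
where $F_s(\gamma)$ is the bracketed integrand of \eqref{eq:27} and $F_s^i(\gamma)$ is its $(\lambda^i,h^i)$-counterpart, with $|\Delta_i(\gamma)|\le\int_{\tau^N\wedge\tau^{N,i}}^{\tau^N\vee\tau^{N,i}}(|F_s|+|F_s^i|)\,ds$. The difference of integrands is handled termwise: the quadratic pieces $\|\lambda\|^2_C$ and $\|Dh\|^2_c$ are estimated exactly as in the first step, while each linear cross-term---$\lambda^T\!\int\!A_s m\,dx$, $\int Dh^T a_s m\,dx$, $\int\lambda^T G_s Dh\,m\,dx$---is bounded by Cauchy--Schwarz together with \eqref{eq:153d}, the boundedness of $A_s,a_s,G_s$ on the (locally uniform) supports of the $h^i$, and \eqref{eq:153a}--\eqref{eq:153}. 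The boundary residue $\Delta_i$ is then controlled by combining the previous step with the uniform $L^1([0,N])$-integrability over $\gamma\in K_\delta$ of $|F_s|+|F_s^i|$, which follows from \eqref{eq:153d}, the above coefficient bounds, and Theorem \ref{le:vid} (supplying $\int_0^N(|\dot X_s|^2+\int|Dm_s|^2/m_s\,dx)\,ds$ bounded uniformly on $K_\delta$).

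The main obstacle is the divergence term $\int Dh^T\,\mathrm{div}_x(c_sm_s)\,dx$ inside $F_s$: after writing $\mathrm{div}_x(c_sm_s)=(\mathrm{div}_x c_s)m_s+c_s Dm_s$, the piece $\int Dh^T c_s(Dm_s/m_s)\,m_s\,dx$ is a pairing in $L^2(c_s,m_s\,dx)$ whose second factor is only controllable through the $L^2$-bound on $Dm_s/\sqrt{m_s}$ on $K_\delta$ provided by Theorem \ref{le:vid}; this is where compactness of $K_\delta$ and the regularity theory developed in Section \ref{sec:regul-prop} really enter. Once that bound is in hand, Cauchy--Schwarz with \eqref{eq:153} delivers the desired uniform convergence and the proof is complete.
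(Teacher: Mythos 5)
Your proposal is correct and follows essentially the same route as the paper: uniform (over $K_\delta$) convergence of the clock functional in \eqref{eq:62} combined with its slope being at least one gives \eqref{eq:165}, and \eqref{eq:165a} is obtained by termwise Cauchy--Schwarz estimates of the integrand difference (using \eqref{eq:153d}, \eqref{eq:82}, \eqref{eq:63}, the uniform bound on $\dot X_s-\int A_s m_s\,dx$ from \eqref{eq:55}, and the $\int|Dm_s|^2/m_s$ control from the regularity results) plus control of the contribution from the mismatch of stopping times, which the paper handles via continuity in $t$ of the relevant sup-integrals and you handle via uniform integrability of the integrands --- the same mechanism. No essential gap.
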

\begin{proof} 
Let us note that under the hypotheses,
\begin{subequations}
  \begin{align}
    \label{eq:164}
    \lim_{i\to\infty}\sup_{\gamma\in
    K_\delta}\int_0^N
\int_{\R^l}\abs{  \norm{\lambda^i(s,X)}_{ C_s(X_s,x)}^2-
 \norm{\lambda(s,X)}_{ C_s(X_s,x)}^2}\,m_s(x)\,dx\,ds=0\,,\\
    \label{eq:164a}
  \lim_{i\to\infty}\sup_{\gamma\in
    K_\delta}\int_0^N\int_{\R^l}\abs{
 \norm{D h^i_s(X_s,x)}_{c_s(X_s,x)}^2
- \norm{D h_s(X_s,x)}_{c_s(X_s,x)}^2}
\,m_s(x) \,dx\,ds=0\,,\\    \label{eq:164b}
  \lim_{i\to\infty}\sup_{\gamma\in K_\delta}\int_0^N\abs{
\bl( \lambda^{i}(s,X)-\lambda(s,X)\br)^T\,\bl(\dot{X}_s-
\int_{\R^l} A_s(X_s,x)m_s(x)\br)\,dx}\,ds=0,
\intertext{and}\label{eq:164'}
  \lim_{i\to\infty}\sup_{\gamma\in K_\delta}
\int_0^N\abs{
\int_{\R^l} \bl(D h^{i}_s(X_s,x)-D h_s(X_s,x)\br)^T 
\bl( \frac{1}{2}\,  \,\text{div}\,(c_s(X_s,x)\,m_s(x))
-a_s(X_s,x)m_s(x)\br)\,dx}\,ds
=0\,.
  \end{align}  
\end{subequations}
The first two convergences are implied by \eqref{eq:153a}, \eqref{eq:82},
 and
 \eqref{eq:153}, \eqref{eq:63},
respectively, and \eqref{eq:153d}.
The convergence in \eqref{eq:164b} follows 
via Cauchy's inequality
from
\eqref{eq:153a} and
the fact that, according to \eqref{eq:55}
in Theorem~\ref{le:vid},
\begin{equation}
  \label{eq:124}
  \sup_{(X,\mu):\,\mathbf I^{\ast\ast}(X,\mu)\le\delta}\int_0^N
\abs{\dot{X}_s-
\int_{\R^l} A_s(X_s,x)m_s(x)\,dx}^2\,ds<\infty\,.
\end{equation}
Similarly, 
  \eqref{eq:164'} is a consequence of (\ref{eq:153}),
if one recalls that the functions involved are of uniformly 
bounded support in $x$ and takes into account part \eqref{eq:106} 
of Lemma \ref{le:density}.

The convergence in \eqref{eq:165}  follows from \eqref{eq:164},
\eqref{eq:164a} and the
observation  that by \eqref{eq:62}
\begin{multline*}
    \abs{\tau^N(\gamma)-\tau^{N,i}(\gamma)}\le
\int_0^N\abs{\int_{\R^l}
\bl(\norm{\lambda(s,X)}^2_{C_s(X_s,x)}-\norm{\lambda^i(s,X)}^2_{C_s(X_s,x)}\\+
\norm{D h_s(X_s,x)}^2_{c_s(X_s,x)}-
\norm{D h^i_s(X_s,x)}^2_{c_s(X_s,x)}\br)\,m_s(x)\,dx}\,ds\,.
\end{multline*}
The convergence in \eqref{eq:165a} follows by \eqref{eq:27}, 
  \eqref{eq:165}, 
\eqref{eq:164}--\eqref{eq:164'}, and \eqref{eq:124}, if one notes that,
thanks to  \eqref{eq:153d},
\begin{align*}
\sup_{\gamma\in K_\delta}\int_0^t
\abs{\lambda(s,X)}^2\,ds\,,\quad
\sup_{\gamma\in K_\delta}\int_0^t\int_{\R^l}
\abs{Dh_s(X_s,x)}^2\,m_s(x)\,dx\,ds\,,
\intertext{
and }
  \sup_{\gamma\in K_\delta}\int_0^t
\int_{\R^l} D h_s(X_s,x)^T 
\bl(\frac{1}{2}\,\text{div}\,(c_s(X_s,x)\,m_s(x))
- a_s(X_s,x)m_s(x)\br)\,dx\,ds
\end{align*}
are continuous functions of
$t\in[0,N]$\,.
\end{proof}
\begin{lemma}
  \label{le:sup_ld_function}
Let   $\lambda_s(u)$ represent an $\R^n$-valued function of
$(s,u)\in\R_+\times\R^n$\,, which is measurable in $s$\,, is
continuous in $u$ for almost all $s$ and is such that
$ \int_0^N\sup_{\abs{u}\le L}
\abs{\lambda_s(u)}^2\,ds<\infty$ for all $L>0$\,.
Suppose that 
the function $h_s(u,x)$\,, in addition to being measurable and being
of class $\mathbb W^{1,1}_{\text{loc}}$ in $x$\,, vanishes when $x$ is outside of some
open ball in $\R^l$ locally uniformly in $(s,u)$\,,
that   the function 
$D h_s(u,x)$ is continuous in $(u,x)$ for almost all $s\in\R_+$\,, 
  and that $\int_0^N\sup_{u\in\R^n:\,\abs{u}\le L}
\int_{\R^l}\abs{Dh_s(u,x)}^q\,dx\,ds<\infty$ for all $q>1$ and $L>0$\,.
   Then, under the hypotheses of Theorem
\ref{the:iden_reg},
the function $\theta^N(\gamma)$\,, where
$\lambda(s,X)=\lambda_s(X_s)$\,,
 is continuous in $\gamma$ when restricted to
 $ K_\delta$\,,
  \begin{equation*}
    \sup_{\gamma\in \Gamma}\bl(\theta^N(\gamma)-\tilde{\mathbf{I}}(\gamma)\br)=0
  \end{equation*}
and the latter supremum is attained\,.
 Furthermore,
  \begin{equation*}
    \sup_{\gamma\in K_{2N+2}}\bl(\theta^N(\gamma)-\tilde{\mathbf{I}}(\gamma)\br)=0\,.
  \end{equation*}
\end{lemma}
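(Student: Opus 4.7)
The plan addresses the three assertions in turn.

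For continuity of $\theta^N$ on $K_\delta$, I invoke Theorem~\ref{le:vid}: every $\gamma=(X,\mu)\in K_\delta$ has $\mu(ds,dx)=m_s(x)\,dx\,ds$ with $m_s(\cdot)\in\mathbb{P}(\R^l)$, square-integrability of $\dot X_s$, and the estimate \eqref{eq:106} on $\int\abs{Dm_s}^2/m_s$; compactness of $K_\delta$ gives a uniform bound on $X^\ast_N$. These facts, together with the assumed continuity of $\lambda_s(u)$ and $Dh_s(u,x)$ in $u$, the regularity hypotheses in Condition~\ref{con:coeff}, and the local $\mathbb{L}^{q'}$-estimate \eqref{eq:121} of Lemma~\ref{le:density} (applied with $q>l$ so that the dual exponent dominates $\abs{Dh_s}$), yield continuity in $\gamma$ of every integrand in \eqref{eq:27}. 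The stopping time $\tau^N(\gamma)$ is continuous on $K_\delta$ as the hitting time of level $N$ by a continuous strictly increasing functional of $\gamma$, hence $\theta^N$ is continuous on $K_\delta$.

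For the identity $\sup_{\gamma\in\Gamma}(\theta^N(\gamma)-\tilde{\mathbf{I}}(\gamma))=0$ with attained supremum, I approximate $(\lambda_s(u), h_s(u,x))$ by sequences $(\lambda^i, f^i)$, with $\lambda^i$ of the step-function form \eqref{eq:15} and $f^i\in\mathbb{C}^{1,2,2}$ of compact support in $x$, via simple-function approximation in $s$ and standard mollification in $(u,x)$. The hypotheses on $\lambda_s$ and $Dh_s$ permit these approximations to satisfy the convergences \eqref{eq:153a}--\eqref{eq:153} and the uniform bound \eqref{eq:153d} on every $K_\delta$. Applying Theorem~\ref{the:equation} to $(\lambda^i, f^i)$ with the associated stopping time $\tau^{N,i}$ built from the analogue of \eqref{eq:62}, and converting through the integration-by-parts identity \eqref{eq:46}, gives $\sup_{\gamma\in\Gamma}(\theta^{N,i}(\gamma)-\tilde{\mathbf{I}}(\gamma))=0$. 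Lemma~\ref{le:conv_stop} then delivers the uniform convergences $\theta^{N,i}\to\theta^N$ and $\tau^{N,i}\to\tau^N$ on each $K_\delta$, which, combined with the remark following Theorem~\ref{the:equation} locating each approximate maximizer in a common compact carrier and with the continuity from the previous step, passes to the limit to yield both the identity and the attainment of the supremum.

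For the reduction to $K_{2N+2}$, the key is the a priori bound $\theta^N(\gamma)\le 2N+2$ for every $\gamma\in\Gamma$; once this is known, any maximizer $\gamma^\ast$ satisfies $\tilde{\mathbf{I}}(\gamma^\ast)=\theta^N(\gamma^\ast)\le 2N+2$, placing it in $K_{2N+2}$, and continuity from the first step completes the argument. I obtain the bound by applying the completing-the-square inequality $\xi^T v - \tfrac12\norm{\xi}_M^2 \le \tfrac12\norm{v}_{M^\oplus}^2$ to the $\lambda$- and $Dh$-linear pieces of the integrand in~\eqref{eq:27}, then absorbing the remaining drift contributions into the stopping budget
\begin{equation*}
\int_0^{\tau^N}\!\!\int_{\R^l}\bigl(\norm{\lambda(s,X)}_{C_s(X_s,x)}^2+\norm{Dh_s(X_s,x)}_{c_s(X_s,x)}^2\bigr)\,m_s(x)\,dx\,ds+X_{\tau^N}^\ast+\tau^N\le N\,,
\end{equation*}
while using the local boundedness of $A, a, \text{div}\,c, G$ on $\{\abs{u}\le N\}$ afforded by \eqref{eq:63a}, \eqref{eq:82} and Condition~\ref{con:coeff}. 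The main technical obstacle I foresee is organizing these Young-type estimates---especially the cross term $-\lambda^T\!\int G\,Dh\,m\,dx$ and the drift-integral term involving $\tfrac12\text{div}(cm)-am$---so that all residuals are indeed absorbed into the budget with the clean constant $2N+2$; here the uniform positive definiteness of $C_s-G_sc_s^{-1}G_s^T$ from Condition~\ref{con:positive} is the structural ingredient that makes this absorption possible.
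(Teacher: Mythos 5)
Your first two steps (direct continuity considerations, and the approximation of $(\lambda,h)$ by $(\lambda^i,f^i)$ feeding into Theorem~\ref{the:equation} and Lemma~\ref{le:conv_stop}) follow the paper's route. The genuine gap is in the third step: the claimed a priori bound $\theta^N(\gamma)\le 2N+2$ for \emph{all} $\gamma\in\Gamma$ is false, and no Young/completing-the-square absorption into the stopping budget can produce it. The budget in \eqref{eq:62} controls only the quadratic terms $\int_0^{\tau^N}\!\int(\norm{\lambda}^2_{C}+\norm{Dh}^2_{c})\,m\,dx\,ds$, the running supremum $X^\ast$ and the time $t$; it does \emph{not} control the linear terms $\int_0^{\tau^N}\lambda_s(X_s)^T\dot X_s\,ds$ and $\int_0^{\tau^N}\!\int Dh_s(X_s,x)^T\bl(\tfrac12\,\text{div}(c_sm_s)-a_sm_s\br)dx\,ds$ appearing in \eqref{eq:27}. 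For instance, a path $X$ that oscillates rapidly with small amplitude keeps $X^\ast$, the quadratic terms and $t$ below $N$ while $\int_0^{\tau^N}\lambda_s(X_s)^T\dot X_s\,ds$ is arbitrarily large (for suitable fixed $\lambda_s(u)$ depending on $s$); likewise a density with huge $\int\abs{Dm_s}^2/m_s$ makes the second linear term blow up. Completing the square only bounds $\theta^N(\gamma)$ by the truncated rate functional $\mathbf{I}^{\ast\ast}_{\tau^N(\gamma)}(\gamma)$, which is unbounded on $\Gamma$. (Also, the positive definiteness of $C-Gc^{-1}G^T$ is not the relevant structural ingredient here: the lemma must also cover the case $C_t(u,x)=0$, and the paper's localization uses no such nondegeneracy.)

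What actually localizes the supremum is the paper's doubling argument, which bounds the \emph{gap}, not $\theta^N$ itself. One applies Theorem~\ref{the:equation} a second time with $(2\lambda^i,2h^i)$ but with the \emph{same} stopping time $\tau^{N,i}$, obtaining $\tilde\theta^{N,i}=U^{2\lambda^i(\cdot),2h^i}_{N\wedge\tau^{N,i}}$ with $\sup_{\gamma\in\Gamma}(\tilde\theta^{N,i}(\gamma)-\tilde{\mathbf{I}}(\gamma))=0$; since the quadratic budget up to $\tau^{N,i}$ is at most $N$, one has $\tilde\theta^{N,i}\ge 2\theta^{N,i}-2N$, whence on $\{\theta^{N,i}\ge M\}$ the gap satisfies $\theta^{N,i}-\tilde{\mathbf{I}}\le 2N-M$. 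Choosing $M=2N+1$ and splitting $\Gamma$ into $K_\delta$, $\{\theta^{N,i}\ge M\}$ and $\{\tilde{\mathbf{I}}>\delta,\ \theta^{N,i}<M\}$ forces $\sup_{\gamma\in K_\delta}(\theta^{N,i}(\gamma)-\tilde{\mathbf{I}}(\gamma))=0$ for $\delta>2N+1$ (this is \eqref{eq:71}); only then do Lemma~\ref{le:conv_stop} (uniform convergence $\theta^{N,i}\to\theta^N$ on the compact $K_\delta$, which also yields the continuity of $\theta^N$ on $K_\delta$ as a uniform limit of the continuous $\theta^{N,i}$), the lower semicontinuity of $\tilde{\mathbf{I}}$ and compactness of $K_\delta$ give the attainment and the statement on $K_{2N+2}$. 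Your plan, as written, has no substitute for this step: the ``common compact carrier'' from the remark after Theorem~\ref{the:equation} depends on the test functions and is not identified with a sub-level set of $\tilde{\mathbf{I}}$, so both the attainment over $\Gamma$ and the restriction to $K_{2N+2}$ remain unproved without the doubling device or an equivalent localization of near-maximizers.
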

\begin{proof}
The functions $\abs{\lambda_s(u)}\ind_{\{\abs{\lambda_s(u)}\ge r\}}(s,u)$
are upper semicontinuous in $u$ and monotonically decreasing in $r$\,, so by Dini's theorem 
$\abs{\lambda_s(u)}^2\ind_{\{\abs{\lambda_s(u)}\ge r\}}(s,u)\to0$ as
$r\to\infty$ uniformly on $\{u\in\R^n:\,\abs{u}\le
  L\}$\,. Let $r_i$ be such that $\int_0^N
\sup_{u\in\R^n:\,\abs{u}\le
  L}\abs{\lambda_s(u)}^2\ind_{\{\abs{\lambda_s(u)}\ge
  r_i\}}(s,u)\,ds<1/i$\,, where $L=\sup_{\gamma\in K_\delta}
\sup_{s\in[0,t]}\abs{X_s}$ and $i\in\N$\,.
Since $\lambda_s(u)$ is a Carath\'eodory function,
as a consequence of the Scorza-Dragoni theorem,
see, e.g., p.235 in Ekeland and Temam \cite{EkeTem76},
 there exists a measurable function
$\breve{\lambda}^i_s(u)$ that is continuous in $(s,u)$\,,
 is bounded above in absolute value by $r_i$\,, and is such that
$\int_0^N
\ind_{\{\lambda_s(\cdot)\not=\breve{\lambda}^i_s(\cdot)\}}(s)\,ds<2/(ir_i^2)$\,. 
Letting $\lambda^i(s,X)=\breve \lambda^i_{\lfloor
  j(i)s\rfloor/j(i)}(X_{\lfloor j(i)s\rfloor/j(i)})$\,, where $j(i)$
is great enough and $j(i)\to\infty$ as $i\to\infty$\,, we
have that \eqref{eq:153a} holds.

Similarly,
let 
\begin{align*}
  h^{i}_s(u,x)&
=\int_{\R\times \R^l} 
\rho_{1/i}(\tilde s,y)
h_{s-\tilde s}(u,x-y)\,d\tilde s\,dy,
  \end{align*}
where 
$\rho_\kappa(\tilde s, y)=
(\tilde\rho_1(\tilde s/\kappa)/\kappa)
(\tilde \rho_2(y/\kappa)/\kappa^{l})$,
$\tilde\rho_1(\tilde s)$ is a mollifier on $\R$
such
that $\tilde\rho_1(\tilde s)=0$ if $\abs{\tilde s}>1$\,,
$\tilde\rho_2(y)$ is a mollifier on $\R^l$
such
that $\tilde\rho_2(y)=0$ if $\abs{y}>1$\,,
 and 
$h_s(u,x)=0$ if $s\le0$\,.
 The function $h^i_s(u,x)$ is
an element of $\mathbb{C}^\infty(\R_+\times\R^l)$  in $(s,x)$ for 
all $u$ and $  Dh^{i}_s(u,x)
=\int_{\R\times \R^l}
 \rho_{1/i}(\tilde s,y)
Dh_{s-\tilde s}(u,x-y)\,d\tilde s\,dy$\,,
cf. Theorem 2.29 on p.36 in Adams and Fournier \cite{MR56:9247}.
In addition, $Dh^i_s(u,x)$  is a
continuous function for every $i$\,.
We also have that, for  all open balls $S$\,, all $L>0$ and all $q>1$\,,
 \begin{equation}
  \label{eq:124a}
    \lim_{i\to\infty}
\int_0^N\sup_{u\in\R^n:\,\abs{u}\le L}
\int_S\abs{Dh_s(u,x)-Dh^i_s(u,x)}^q\,dx\,ds=0\,,
 \end{equation}
which can be shown as follows.
If, in addition, $Dh_s(u,x)$ is continuous in all variables, then
$Dh^i_s(u,x)$ converges to $Dh_s(u,x)$ locally uniformly in
$(s,u,x)$\,, cf. Theorem 2.29 on p.36 in Adams and Fournier
\cite{MR56:9247},  so,
\eqref{eq:124a} holds. In the general case,  in analogy with the above
reasoning, 
there exist $r_j$  such that $\int_0^{N+1}
\sup_{u\in\R^n:\,\abs{u}\le
  L}\int_{\tilde S}\abs{Dh_s(u,x)}^{q}\ind_{\{\abs{Dh_s(u,x)}\ge
  r_j\}}(s,u,x)\,dx\,ds<1/j$\, where $\tilde S$ represents the open
ball in $\R^l$ centred at the origin
of radius one greater than that of $S$\,,
 and there exists a  continuous function 
$\breve h^j_s(u,x)$\,, which is bounded above in absolute value by 
$r_j$\,, such that $\int_0^{N+1}
\ind_{\{Dh_s(\cdot,\cdot)\not=\breve
  h^j_s(\cdot,\cdot)\}}(s)\,ds<2/(jr_j^{q})$\,. Calculations show that
\begin{align*}
  \int_0^N \sup_{u\in\R^n:\,\abs{u}\le L} \int_S\abs{Dh_s(u,x)-\breve
    h^j_s(u,x)}^q\,dx\,ds\le
\frac{2^{q-1}}{j}+ \frac{2^{q+1} }{j}
\intertext{ and }
  \int_0^N \sup_{u\in\R^n:\,\abs{u}\le L}
\int_S\abs{Dh^i_{s}(u,x)-
\int_{\R\times \R^l} 
\rho_{1/i}(\tilde  s,y)
\breve h^j_{s-\tilde s}(u,x-y)\,d\tilde  s\,dy}^q\,dx\,ds
\le 
\frac{2^{q-1}}{j}+\frac{2^{q+1}V(S)}{j}\,,
  \end{align*}
where $V(S)$ represents the volume of the ball $S$\,.
Hence, \eqref{eq:124a} holds.

By an application of H\"older's inequality,
it  follows from \eqref{eq:124a}, 
\eqref{eq:121} in Lemma \ref{le:density}
and $h_s(u,x)$ having
compact support in $x$ locally uniformly over $(s,u)$ 
that  \eqref{eq:153} holds.
 Also,  \eqref{eq:153d}  holds.

Let  $\tau^{N,i}$ and $\theta^{N,i}$ be defined as in Lemma \ref{le:conv_stop}.
The functions $h^{i}_s(u,x)$, $\lambda^{i}(s,X)$, and
$\tau^{N,i}(X,\mu)$ 
satisfy the requirements imposed on the respective functions
$f(s,u,x)$, $\lambda(s,X)$, and $\tau(X,\mu)$ when deriving
(\ref{eq:21}). Furthermore, integration by parts on the righthand side
of \eqref{eq:24} with $\mu(ds,dx)=m_s(x)\,dx\,ds$\,,
 implies
that $\theta^{N,i}(\gamma)=
U_{N\wedge \tau^{N,i}(\gamma)}^{\lambda^{i}(\cdot),h^{i}}(\gamma)$
provided $\gamma\in\Gamma$\,. In addition, by \eqref{eq:62},
$\abs{X_{\tau^{N,i}(\gamma)}}\le N$ and
 $\tau^{N,i}(\gamma)$ is a continuous function of $\gamma\in
 \mathbb{C}(\R_+,\R^n)\times \mathbb{C}(\R_+,\mathbb{M}(\R^l))$\,,
cf. Theorem 2 on p.510 and Theorem 3 on p.511 in
Liptser and Shiryayev \cite{lipshir}.
We obtain by equation
\eqref{eq:21} of Theorem \ref{the:equation} and the fact that 
$\tilde{\mathbf{I}}(\gamma)=\infty$ unless $\gamma\in\Gamma$
(see 
Theorem \ref{le:vid})
 that 
\begin{equation}
  \label{eq:49}
  \sup_{\gamma\in \Gamma}(\theta^{N,i}(\gamma)-\tilde{\mathbf{I}}(\gamma))=0\,.
\end{equation}
Let us show that, for all $\delta>2N+1$\,,
\begin{equation}
  \label{eq:71}
  \sup_{\gamma\in K_\delta}(\theta^{N,i}(\gamma)-\tilde{\mathbf{I}}(\gamma))=0\,.
\end{equation}

Let, for $\gamma\in \Gamma$\,,
\begin{multline*}
   \tilde\theta^{N,i}(\gamma)=
 \int_0^{\tau^{N,i}(\gamma)}\Bl(
2 \lambda^{i,j}(s,X)^T\,\bl(\dot{X}_s-
\int_{\R^l} A_s(X_s,x)m_s(x)\,dx\,\br)
-\frac{1}{2}\,\int_{\R^l} 
 \norm{2\lambda^{i,j}(s,X)}_{ C_s(X_s,x)}^2\,m_s(x)\,dx
\\+\int_{\R^l}\bl(2D  h^i_s(X_s,x)^T\bl( 
\frac{1}{2}\,  \,\text{div}\,(c_s(X_s,x)\,m_s(x))
-a_s(X_s,x)m_s(x)\br)\\
-\frac{1}{2}\, \norm{2D h^i_s(X_s,x)}_{c_s(X_s,x)}^2
\,m_s(x) \br)
\,dx-\int_{\R^l}  4
 \lambda^{i,j}(s,X)
^T G_s(X_s,x)
Dh^i_s(X_s,x)\,m_s(x)\,dx\Br)\,ds\,.
\end{multline*}
By \eqref{eq:24}, $\tilde\theta^{N,i}(\gamma)=
U_{N\wedge \tau^{N,i}(\gamma)}^{2\lambda^{i}(\cdot),2h^{i}}(\gamma)$\,,
provided $\gamma\in\Gamma$\,, so in analogy with \eqref{eq:49},
\begin{equation*}
  \sup_{\gamma\in \Gamma}(\tilde{\theta}^{N,i}(\gamma)
-\tilde{\mathbf{I}}(\gamma))=0\,.
\end{equation*}
On noting that
$  \tilde{\theta}^{N,i}(\gamma)
\ge 2\theta^{N,i}(\gamma)-2N\,$,  we have that, for $M>0$, 
\begin{multline*}
  \sup_{\gamma:\,
\theta^{N,i}(\gamma)\ge M}(\theta^{N,i}(\gamma)
-\tilde{\mathbf{I}}(\gamma))\le
  \sup_{\gamma:\,\theta^{N,i}(\gamma)\ge
    M}(2\theta^{N,i}(\gamma)
-\tilde{\mathbf{I}}(\gamma))-M\\\le
\sup_{\gamma:\,\theta^{N,i}(\gamma)\ge
  M}(\tilde{\theta}^{N,i}(\gamma)-\tilde{\mathbf{I}}(\gamma))+2N-M
\le 2N-M\,.
\end{multline*}
Since,  by (\ref{eq:49}),
\begin{equation*}
0=    \sup_{\gamma\in
      \Gamma}(\theta^{N,i}(\gamma)
-\tilde{\mathbf{I}}(\gamma))\le 
  \sup_{\gamma\in K_\delta}(\theta^{N,i}(\gamma)-\tilde{\mathbf{I}}(\gamma))\vee
\sup_{\gamma:\,\theta^{N,i}(\gamma)\ge
  M}(\theta^{N,i}(\gamma)-\tilde{\mathbf{I}}(\gamma))\vee (M-\delta),
\end{equation*}
we conclude, on choosing $M=2N+1$, that \eqref{eq:71} holds for
$\delta> 2N+1$\,.

Since by Lemma \ref{le:conv_stop},  for arbitrary $\delta\in\R_+$,
\begin{equation}
  \label{eq:113}
    \lim_{i\to\infty}\sup_{\gamma\in K_\delta}
\abs{\theta^{N}(\gamma)-
\theta^{N,i}(\gamma)}=0\,,
\end{equation}
we obtain by \eqref{eq:71} that
\begin{equation}
  \label{eq:139}
    \sup_{\gamma\in K_\delta
  }(\theta^{N}(\gamma)-\tilde{\mathbf{I}}(\gamma))=0\,.  
\end{equation}
Since $\theta^{N,i}(\gamma)=
U_{ \tau^{N,i}(\gamma)}^{\lambda^{i}(\cdot),h^{i}}(\gamma)$\,, the
latter function is continuous in $\gamma$\,, and
 $K_\delta$ is compact,  (\ref{eq:113}) 
implies that
$\theta^{N}(\gamma)$ is continuous on $K_\delta$\,.
Since  $\tilde{\mathbf{I}}(\gamma)$ is a lower semicontinuous function 
of $\gamma$\,,  the  supremum in \eqref{eq:139} is attained.
On the other hand, if $\tilde{\mathbf{I}}(\gamma)<\infty$\,, then by
 \eqref{eq:49} and (\ref{eq:113}),
$  \sup_{\gamma\in \Gamma}(\theta^{N}(\gamma)-\tilde{\mathbf{I}}(\gamma))\le0\,.$

\end{proof}
In order to prove that $\tilde{\mathbf I}=\mathbf{I}^{\ast\ast}$\,,
we will  use $\hat \lambda$ and $\hat g$ defined in
\eqref{eq:89} and \eqref{eq:111}, respectively,
as $\lambda_s(u)$ and $Dh_s(u,x)$ in the preceding lemma. We therefore need
$\Phi_{s,m_s(\cdot),u}$ and $\Psi_{s,m_s(\cdot),u}$ to be sufficiently regular.
The next lemma addresses both regularity and growth-rate 
properties.
\begin{lemma}
  \label{le:reg}
Suppose that
 conditions \ref{con:coeff}, 
 \ref{con:positive},
\eqref{eq:116}, 
 and \eqref{eq:82} hold.
Let $m_s(x)$ represent an $\R_+$-valued measurable function 
that is a probability density in $x$ for
 almost every $s$\,. Suppose
$m_s(x)$ is bounded away from zero on bounded sets of $(s,x)$\,,
$m_s(\cdot)\in \mathbb{C}^1(\R^l)$\,, with $\abs{Dm_s(x)}$ being
locally bounded in $(s,x)$\,,
and
$m_s(x)=M_s e^{-\alpha\abs{x}}$ for all $\abs{x}$ great enough
locally uniformly in $s$\,, where  $\alpha>0$\,.
 Then
there exist $\R$-valued measurable
function $w_s(u,x)$ and  $\R^n$-valued measurable
function $v_s(u,x)$ 
  such that $w_s(u,\cdot)\in
\mathbb{W}_{\text{loc}}^{2,q}(\R^l)$ and
  $v_s(u,\cdot)\in \mathbb{W}_{\text{loc}}^{2,q}(\R^l,\R^n)$\,,
where $q>1$ is otherwise arbitrary,
 $\Phi_{s,m_s(\cdot),u}(\cdot)=D w_s(u,\cdot)$ and
$\Psi_{s,m_s(\cdot),u}(\cdot)=D v_s(u,\cdot)$ for almost all $s\in\R_+$ and
all $u\in\R^n$\,, i.e.,
\begin{subequations}
  \begin{multline}
  \label{eq:96}
    \int_{\R^l}D p(x)^T
\bl(
a_s(u,x)-\frac{1}{2}\,
\text{div}\,c_s(u,x)\br)
{m}_s(x)
\,dx=\int_{\R^l}D p(x)^T c_s(u,x)
\,D w_s(u,x)\,{m}_s(x)
\,dx
 \end{multline}
and \begin{equation}
  \label{eq:97a}
    \int_{\R^l}    D p(x)^TG_s(u,x)^T\,{m}_s(x)\,dx=
\int_{\R^l}D p(x)^Tc_s(u,x)D {v}_s(u,x)^T  {m}_s(x)
\,dx\,
\end{equation}
\end{subequations}
for all $p\in\mathcal{\mathbb{C}}^\infty_0(\R^l)$\,.
Furthermore,  $w_s(u,x)$\,,
 $D w_s(u,x)$\,, $v_s(u,x)$\,, and $D v_s(u,x)$ are
 continuous in $(u,x)$ for  almost all $s\in \R_+$\,, and,
for all open balls $S\subset \R^l$\,, all $L>0$ and all $t>0$\,, 
\begin{multline*}
  \sup_{s\in[0,t]}\sup_{u:\,\abs{u}\le
  L}\bl(\norm{w_s(u,\cdot)}_{\mathbb{W}^{2,q}(S)}+
\norm{v_s(u,\cdot)}_{\mathbb{W}^{2,q}(S,\R^n)}+
\norm{Dw_s(u,\cdot)}_{\mathbb{L}^{2}(\R^l,\R^l,m_s(x)\,dx)}\\+
\norm{Dv_s(u,\cdot)}_{\mathbb{L}^{2}(\R^l,\R^{l\times
    n},m_s(x)\,dx)}\br)<\infty\,.
\end{multline*}


Also, 
there exists $\alpha_0$ which depends on the functions $a_t(u,x)$ and
$c_t(u,x)$ only such that, if $\alpha>\alpha_0$, then for all $L>0$ and 
all $t>0$\,,
\begin{subequations}
  \begin{align}
    \label{eq:91a}
&\sup_{ s\in[0, t]}\sup_{x\in\R^l}\sup_{u\in\R^n:\,\abs{u}\le L}
 w_s(u,x) <\infty\\
\intertext{ and}
  \label{eq:91c}
&\sup_{ s\in[0, t]}\sup_{x\in\R^l}\sup_{u\in\R^n:\,\abs{u}\le L}
\bl(\frac{\abs{ w_s(u,x)}+\abs{D w_s(u,x)}}{1+\abs{x}^2}+
\frac{\abs{ v_s(u,x)}+\norm{D
    v_s(u,x)}}{1+\abs{x}}\br)<\infty\,,
\end{align}
\end{subequations}
and, for all $\abs{x}$ great enough locally uniformly in $s$\,,
\begin{subequations}
  \begin{align}
  \label{eq:91}
&  x^T \bl(a_s(u,x)-\frac{1}{2}\,
\text{div}\,c_s(u,x)-c_s(u,x)D
w_s(u,x)\br)=0\intertext{and}
  \label{eq:91b}
 & \bl(G_s(u,x)-D {v}_s(u,x)c_s(u,x)\br)x=0\,.
\end{align}
\end{subequations}


\end{lemma}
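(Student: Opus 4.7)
The plan is to view \eqref{eq:96} and \eqref{eq:97a} as weak formulations of divergence–form elliptic equations for $w_s$ and (columnwise) for $v_s$,
\begin{equation*}
\text{div}\bl(c_s(u,x)m_s(x)Dw_s(u,x)\br)=\text{div}\bl(\bl(a_s(u,x)-\tfrac12\text{div}\,c_s(u,x)\br)m_s(x)\br),
\end{equation*}
and similarly for $v_s$ with right-hand side $\text{div}(G_s(u,\cdot)^T m_s(\cdot))$. Under Conditions \ref{con:coeff}, \ref{con:positive} and the hypotheses on $m_s$, the coefficient $c_s m_s$ is locally uniformly positive definite and locally Lipschitz in $x$, and because $m_s(x)=M_se^{-\alpha\abs{x}}$ at infinity while $a_s-\tfrac12\text{div}\,c_s$ has at most linear growth, the functional $p\mapsto\int Dp^T(a_s-\tfrac12\text{div}\,c_s)m_s\,dx$ is bounded on the weighted Hilbert space $\mathbb H^{1,2}(\R^l,c_s(u,\cdot),m_s(\cdot)\,dx)$ modulo additive constants. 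Lax--Milgram then yields a unique (up to constants) $w_s$ with $Dw_s\in\mathbb L^2(\R^l,\R^l,c_s,m_s(x)\,dx)$, and the characterisation \eqref{eq:186}, together with the remark that $\Pi_{c_s(u,\cdot),m_s(\cdot)}\phi$ is a gradient when $m_s$ is locally bounded away from zero, identifies $Dw_s=\Phi_{s,m_s(\cdot),u}$; the same argument column-by-column supplies $v_s$ with $Dv_s=\Psi_{s,m_s(\cdot),u}$.

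For the interior regularity, I would rewrite the equation nondivergence form (dividing through by $m_s$, which is allowed locally since $m_s$ is $C^1$ and positive):
\begin{equation*}
\text{tr}\bl(c_s D^2w_s\br)+\bl(\text{div}\,c_s+c_s\tfrac{Dm_s}{m_s}\br)^T Dw_s=\text{div}\,\hat a_s+\hat a_s^T\tfrac{Dm_s}{m_s},
\end{equation*}
with $\hat a_s=a_s-\tfrac12\text{div}\,c_s$. The leading coefficient is uniformly elliptic and continuous, the drift and source are in $L^q_{\text{loc}}$ for every $q$, so the Calder\'on--Zygmund estimate (Gilbarg--Trudinger, Theorem 9.11) gives $w_s(u,\cdot)\in\mathbb W^{2,q}_{\text{loc}}(\R^l)$ for all $q>1$, uniformly on bounded $(s,u)$-sets thanks to the uniformity of the structure constants. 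Sobolev imbedding $\mathbb W^{2,q}\hookrightarrow\mathbb C^1$ (for $q>l$) plus the continuous dependence of elliptic solutions on coefficients, combined with the joint continuity of $a_s,c_s,\text{div}_x c_s$ in $(u,x)$, yields joint continuity of $w_s,Dw_s,v_s,Dv_s$ in $(u,x)$. The same arguments with the parameter norms tracked give the asserted uniform $\mathbb W^{2,q}(S)$ and weighted $\mathbb L^2$ bounds.

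The hardest part is the far-field identities \eqref{eq:91}, \eqref{eq:91b} together with the growth bounds \eqref{eq:91a}, \eqref{eq:91c}, which force the choice of a \emph{particular} representative of the solution class. The key algebraic observation is: in the region $\{|x|\ge R\}$ where $Dm_s/m_s=-\alpha x/\abs{x}$, enforcing $x^T(c_s Dw_s-\hat a_s)=0$ kills exactly the $\alpha$-dependent zero-order coefficients in the nondivergence equation displayed above, reducing it to $\text{div}(c_s Dw_s)=\text{div}\,\hat a_s$. So I would construct $w_s$ by first solving this reduced exterior problem on $\{|x|\ge R\}$ subject to the tangential condition $x^T(c_s Dw_s-\hat a_s)=0$ (which, dimension-count-wise, replaces the radial degree of freedom and leaves an overdetermined-but-consistent system, solvable by a Helmholtz-type decomposition of $\hat a_s$ with respect to the radial direction); then solving a Dirichlet problem on $S_R$ matching this exterior solution on $\partial S_R$, and finally verifying that the patched $w_s$ still satisfies \eqref{eq:96} globally. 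The polynomial growth \eqref{eq:91a}, \eqref{eq:91c} then comes by integrating the exterior PDE against $|x|^2$, using \eqref{eq:116} and the lower bound on $c_s$ to dominate the source terms once $\alpha>\alpha_0$ is chosen larger than the relevant ratio of growth rates of $\hat a_s$ (and $G_s$) to the coercivity constant of $c_s$; the analogous construction column-by-column for $v_s$ yields \eqref{eq:91b}.

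The main obstacle I expect is the patched construction in the exterior region: one must check that the required tangential-flux condition, which replaces the usual Dirichlet boundary condition, is compatible with the elliptic PDE and delivers a solution with the claimed regularity and growth. Barrier/maximum-principle arguments adapted to this mixed condition, together with careful dependence of constants on $\alpha$, are the technical core and will likely occupy most of the proof.
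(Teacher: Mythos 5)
Your first two stages (the weighted weak formulation identifying $Dw_s(u,\cdot)$ and $Dv_s(u,\cdot)$ with $\Phi_{s,m_s(\cdot),u}$ and $\Psi_{s,m_s(\cdot),u}$, followed by interior $\mathbb{W}^{2,q}$ regularity and continuity in the parameters via the nondivergence form and Calder\'on--Zygmund estimates) are essentially the paper's route, up to replacing Lax--Milgram by the orthogonal projection in $\mathbb{L}^2(\R^l,\R^l,c_s(u,x),m_s(x)\,dx)$ together with an argument (testing against divergence-free fields) that the projection is a gradient. The genuine gap is in your plan for \eqref{eq:91}, \eqref{eq:91b} and the bounds \eqref{eq:91a}, \eqref{eq:91c}. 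The function $w_s(u,\cdot)$ is not at your disposal: up to an additive constant it is already uniquely determined by your own first step, because $Dw_s(u,\cdot)$ must be the element of $\mathbb{L}_0^{1,2}(\R^l,\R^l,c_s(u,x),m_s(x)\,dx)$ satisfying \eqref{eq:96}. You therefore cannot ``construct'' $w_s$ by solving an exterior problem on $\{\abs{x}\ge R\}$ with the radial condition $x^T\bl(a_s(u,x)-\frac12\,\text{div}\,c_s(u,x)-c_s(u,x)Dw_s(u,x)\br)=0$ imposed as a side constraint and then patch with an interior Dirichlet problem: either the patched object coincides with the projection, in which case the radial condition has to be \emph{proved} for the projection --- which is precisely assertion \eqref{eq:91}, so the argument is circular --- or it does not satisfy \eqref{eq:96} and is not $w_s$. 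In addition, matching only boundary values on $\partial S_R$ does not yield a global weak solution of $\text{div}\bl(c_sm_sDw_s\br)=\text{div}\bl((a_s-\frac12\,\text{div}\,c_s)m_s\br)$; the conormal fluxes $\nu^Tc_sm_sDw_s$ would also have to agree across $\partial S_R$, and imposing simultaneously the PDE, flux matching and the radial condition is exactly the overdetermined system whose consistency is the point at issue.

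What the paper actually uses at this stage, and for which your outline offers no substitute, is the Poisson-equation/probabilistic machinery of Pardoux and Veretennikov: rewriting \eqref{eq:96} in nondivergence form as $\mathcal{L}_{s,u}w_s=f_s$, where the drift of $\mathcal{L}_{s,u}$ contains $c_sDm_s/m_s=-\alpha\,c_s\,x/\abs{x}$ far out, so that for $\alpha>\alpha_0$ the associated diffusion is ergodic with invariant density $m_s$, $f_s$ integrates to zero against $m_s\,dx$ and is positive for large $\abs{x}$; the representation $w_s(u,x)=-\int_0^\infty\mathbf{E}f_s(u,Y^x_{s,u}(t))\,dt$ then delivers the $\mathbb{W}^{2,q}_{\text{loc}}$ regularity, the quadratic (respectively linear) growth in \eqref{eq:91c}, and --- via the strong Markov property and the sign of $f_s$ at infinity --- the one-sided bound \eqref{eq:91a}. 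Note that \eqref{eq:91a} is an upper bound on $w_s$, not on $\abs{w_s}$; ``integrating the exterior PDE against $\abs{x}^2$'' does not produce it, and this is also where the threshold $\alpha_0$ genuinely enters. Finally, \eqref{eq:91} and \eqref{eq:91b} are obtained in the paper not by building a solution with these properties but by deriving them for the already-given projection: since $a_s(u,\cdot)$, $Dw_s(u,\cdot)$, $G_s$, $Dv_s$ lie in $\mathbb{L}^2(m_s(x)\,dx)$, the identities \eqref{eq:96} and \eqref{eq:97a} extend to test functions of quadratic growth, and a localization with $p$ close to $\abs{x}^2$ times a bump extracts the radial components. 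Any correct proof must likewise establish the radial identities and the growth/one-sided bounds for the projection itself, rather than postulate them through a patched construction.
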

\begin{proof}
Since
$a_s(u,\cdot)\in \mathbb{L}^2(\R^l,\R^l,m_s(x)\,dx)$ 
by the fact that $a_s(u,x)$ grows at most linearly in $x$
 and $ m_s(x)$
decays exponentially, and
$\text{div}\,c_s(u,\cdot)\in \mathbb{L}^2(\R^l,\R^l,m_s(x)\,dx)$ for a
similar reason,
 $\Phi_{s,m_s(\cdot),u}$ as defined by \eqref{eq:79},
is an element
of $\mathbb{L}_0^{1,2}(\R^l,\R^l,c_s(u,x),m_s(x)\,dx)$\,,
being  a projection in the
 Hilbert space $\mathbb{L}^2(\R^l,\R^l,c_s(X_s,x),m_s(x)\,dx)$\,.
In addition,
\begin{multline}
  \label{eq:120}
 \int_{{\R^l}}  \norm{\Phi_{s,m_s(\cdot),u}(x)}_{c_s(u,x) }^2
m_s(x)\,dx\le
\int_{{\R^l}}  \norm{c_s(u,x)^{-1}\bl(
a_s(u,x)-\frac{1}{2}\,\text{div}\,c_s(u,x)\br)}_{c_s(u,x)}^2
m_s(x)\,dx\,.
\end{multline}
By Conditions 2.1 and 2.2 and $m_s(\cdot)$ decaying exponentially,  
\begin{equation}
  \label{eq:29}
\sup_{s\in[0,t]}\sup_{u\in\R^n:\,\abs{u}\le L}
\int_{\R^l}\abs{\Phi_{s,m_s(\cdot),u}(x)}^2\,m_s(x)\,dx<\infty\,.
\end{equation}

We prove that $\Phi_{s,m_s(\cdot),u}(\cdot)$  is a gradient.
    Let $D w_i\to \Phi_{s,m_s(\cdot),u}$ in
$\mathbb{L}^{2}(\R^l,\R^l,c_s(u,x),m_s(x)\,dx)$ as $i\to\infty$\,,
where
$w_i\in\mathbb C^\infty_0(\R^l)$\,. Then for every
$f\in \mathbb{C}_0^\infty(\R^l,\R^l)$ 
such that $\text{div}\, f(x)=0$, we have that
$\int_{\R^l}D w_i(x)^T f(x)\,dx=0$\,. Since $m_s(\cdot)$ is  bounded
away from zero locally and $c_s(u,x)$ is positive definite, convergence in
$\mathbb{L}^{2}(\R^l,\R^l,c_s(u,x),m_s(x)\,dx)$ implies convergence in
$\mathbb{L}^{2}_{\text{loc}}(\R^l,\R^l)$\,, 
so $D w_i\to \Phi_{s,m_s(\cdot),u}$ in $\mathbb{L}^{2}_{\text{loc}}(\R^l,\R^l)$\,. 
Therefore, $\int_{\R^l}\Phi_{s,m_s(\cdot),u}(x)^T\linebreak f(x)\,dx=0$\,.
It follows that $\Phi_{s,m_s(\cdot),u}(x)=D \tilde 
w_s(u,x)$ in the sense of distributions, where $\tilde w_s(u,\cdot)\in
\mathbb{W}^{1,2}_{\text{loc}}(\R^l)$\,, see, e.g., Lemma 2.2.1 on 
p.73 in Sohr \cite{Soh01}. (One could also invoke the Helmholtz
decomposition, see, e.g.,
 Farwig, Kozono, and Sohr \cite{FarKozSoh07}.) Consequently,
$\int_{\R^l}\chi(x)^T\Phi_{s,m_s(\cdot),u}(x)\,dx=-\int_{\R^l}
\text{div}\,\chi(x)\,\tilde w_s(u,x)\,dx$\,, for all $\chi\in \mathbb
C_0^1(\R^l,\R^l)$\,. 

By \eqref{eq:79} and condition 2.1,  for $p\in \mathbb{C}_0^\infty(\R^l)$\,,
\begin{multline*}
-\int_{\R^l}\,\text{div}\,\bl( c_s(u,x)\,m_s(x)D p(x)^T\br)
 \tilde w_s(u,x)\,dx
=        \int_{\R^l}D p(x)^T\bl(
a_s(u,x) -\frac{1}{2}\,
\text{div}\,c_s(u,x)\br) m_s(x)
\,dx\,.
\end{multline*}
Thus, $\tilde w_s(u,\cdot)$ is a weak solution to the
equation
\begin{equation}
  \label{eq:98}
  \text{div}\,\bl( c_s(u,x)
\,D \tilde w_s(u,x)\,m_s(x)\br)=
      \text{div}\,\bl(
\bl(
a_s(u,x)-
\frac{1}{2}\,\text{div}\,c_s(u,x)\br) m_s(x)\br)
\end{equation}
in that
\begin{multline}\label{eq:66}
\int_{\R^l}\,D p(x)^T c_s(u,x)
 D\tilde w_s(u,x)\,m_s(x)\,dx
=        \int_{\R^l}D p(x)^T\bl(
a_s(u,x) -\frac{1}{2}\,
\text{div}\,c_s(u,x)\br) m_s(x)
\,dx\,.
\end{multline}
We note that \eqref{eq:66} uniquely specifies $D\tilde{w}_s(u,\cdot)$ as
an element of\linebreak
 $\mathbb{L}_0^{1,2}( \R^l,c_s(u,x),m_s(x)\,dx)$\,.

Let $S$ and $\tilde{S}$  represent  open balls in
$\R^l$
such that 
$S\subset\subset \tilde S$\,,
let 
$\zeta(x)$ represent a $\mathbb{C}_0^\infty$-function 
with support in $\tilde S$
such that  $\zeta(x)=1$ for $x\in S$\,, and
let $\varphi(x)$ represent a $\mathbb{C}^\infty_0(\tilde S)$ function.
On letting $p(x)=\varphi(x)\zeta(x)$ in \eqref{eq:66}
and integrating by parts, we obtain that
$\zeta(x) \tilde w_s(u,x)$ is a weak solution $f$ to the Dirichlet
problem 
\begin{multline}
  \label{eq:154}
    \text{div}\,(c_s(u,x)m_s(x)\,Df(x))=
\text{div}\,( c_s(u,x)
\,D\zeta(x) \tilde w_s(u,x)\,m_s(x))
+D\zeta(x)^T c_s(u,x)
\,D \tilde w_s(u,x)\,m_s(x)\\
+\text{div}\,\bl(\bl(a_s(u,x)-\frac{1}{2}\,
\text{div}\,c_s(u,x) 
\br)\zeta(x)m_s(x)\br)
-D\zeta(x)^T\bl(
a_s(u,x)-\frac{1}{2}\,
\text{div}\,c_s(u,x) 
\br)m_s(x)
\end{multline}
on $\tilde S$ with a zero boundary condition. 
By Theorem 8.3 on p.181 
and Theorem 8.8 on p.183 in Gilbarg and Trudinger \cite{GilTru83},
$\zeta(x) \tilde w_s(u,x)$ is an element of $\mathbb{W}^{2,2}(S)$ and 
is a strong solution of \eqref{eq:154}.
Therefore,
 $\tilde w_s(u,\cdot)\in \mathbb{W}^{2,2}_{\text{loc}}(\R^l)$ and
 \eqref{eq:98} holds
a.e. in $x$\,.

Differentiation in \eqref{eq:98} 
and division by $ m_s(x)$ yield
\begin{multline}
  \label{eq:105}
  \text{tr}\,\bl( c_s(u,x)
\,D^2 \tilde w_s(u,x)\br)\,
+\bl(c_s(u,x)
\,\frac{Dm_s(x)}{m_s(x)}
+\text{div}\,c_s(u,x)\br)^T \,D \tilde w_s(u,x)\\=
      \text{div}\,
\bl(
a_s(u,x)-\frac{1}{2}\,
\text{div}\,c_s(u,x)\br)
+\bl(
a_s(u,x)-\frac{1}{2}\,
\text{div}\,c_s(u,x)\br)
\frac{Dm_s(x)}{m_s(x)}
\,.
\end{multline}

On writing the lefthand side as 
$\mathcal{L}_{s,u}(x)\tilde w_s(u,x)$  and letting
$f_s(u,x)$ represent the righthand side, we have that 
$\mathcal{L}_{s,u}(x)\tilde w_s(u,x)=f_s(u,x)$\,. 
Let  $Y_{s,u}^{y}(t)$ represent the diffusion process in $t$ with the
infinitesimal generator $\mathcal{L}_{s,u}(\cdot)$ and initial condition
$y$\,,
defined on a probability space $(\Omega,\mathcal{F},\mathbf{P})$ with
expectation denoted by $\mathbf{E}$\,. It  is a
strong Markov process by Conditions 2.1 and 2.2 and  the
hypotheses of the lemma.
 One can also choose $Y_{s,u}^{y}(t)$ to
be measurable in all variables. (A possible line of reasoning invokes
 continuous dependence of solutions of stochastic
differential equations on parameters, see, e.g., Gikhman and Skorokhod
\cite{GikSko82}, or Krylov \cite{Kry77}, and the Scorza-Dragoni theorem.)
If  $\abs{x}$ is great enough so that
$m_s(x)=M_s e^{-\alpha\abs{x}}$\,, then
\begin{equation}
\label{eq:95a}  \frac{Dm_s(x)}{m_s(x)}=
-
\alpha \frac{x}{\abs{x}}\,.
\end{equation}
Hence, on recalling Condition 2.1, in particular that
$\abs{\text{div}\,c_s(u,x)}$ is bounded in  $x$ locally uniformly in
$(s,u)$\,,
 and \eqref{eq:63}, we have that
 there exists $\alpha_0$ which depends on $a_t(u,x)$ and $c_t(u,x)$
only such that if $\alpha>\alpha_0$\,, then
 $\limsup_{\abs{x}\to\infty}
(x/\abs{x})^T\bl(c_s(u,x)
\,Dm_s(x)/m_s(x)
+\text{div}\,c_s(u,x)\br)<0$\,, so
 $Y_{s,u}^{y}(t)$ is an ergodic process, see, e.g.,
Has'minskii \cite{Has80},
Veretennikov \cite{Ver97}, and Malyshkin \cite{Mal00}. Since, by the
divergence theorem,
\begin{equation*}
\int_{\R^l}\mathcal{L}_{s,u}(x) p(x)\,m_s(x)\,dx=
    \int_{\R^l}
  \text{div}\,\bl( c_s(u,x)
\,D p(x)\,m_s(x)\br)\,dx=0  
\end{equation*}
 for all
$p\in\mathbb{C}_0^\infty(\R^l)$\,, $m_s(x)\,dx$ is the unique
invariant measure. Similarly,
\begin{equation*}
  \int_{\R^l}f_s(u,x)\,m_s(x)\,dx=  \int_{\R^l}\text{div}\,\bl(
\bl(
a_s(u,x)-
\frac{1}{2}\,\text{div}\,c_s(u,x)\br) m_s(x)\br)\,dx=0\,,
\end{equation*}
the latter equality being a consequence of $m_s(x)$
decaying exponentially as $\abs{x}\to\infty$\,.
  By  \eqref{eq:105}, \eqref{eq:95a}, Lipschitz continuity of 
 $a_s(u,\cdot)$ and of $\text{div}\,c_s(u,\cdot)$\,, the
 boundedness property of  $\text{div}\,c_s(u,\cdot)$\,, 
 and  by \eqref{eq:116}, we may assume that
$\alpha_0$ is such that if
$\alpha>\alpha_0$\,, then
$f_s(u,x)>0$ for all $\abs{x}$ great enough locally uniformly in
$(s,u)$\,.
Also,  
\begin{equation*}
\sup_{s\in[0,t]}\sup_{x\in\R^l}\sup_{u\in\R^n:\,\abs{u}\le L}
\frac{\abs{f_s(u,x)}}{1+\abs{x}}<\infty\,.  
\end{equation*}
By Theorem 1 in  Pardoux and
Veretennikov \cite{ParVer01}, the function 
\begin{equation}
  \label{eq:187}
     \breve w_s(u,x)=-\int_0^\infty \mathbf{E}f_s(u,Y^x_{s,u}(t))\,dt
\end{equation}
is well defined, belongs to $\mathbb{W}^{2,q}_{\text{loc}}(\R^l)$\,,
for all $q>1$, 
as a
function of $x$,   $D \breve w_s(u,x)$ is of polynomial growth in
$x$\,, in particular,
$D \breve w_s(u,\cdot)\in\linebreak \mathbb{L}^2(\R^l,\R^l,m_s(x)\,dx)$\,,
and 
$\mathcal{L}_{s,u}(x)  \breve w_s(u,x)=f_s(u,x)$\,. 
Since $D \breve w_s(u,x)$ also satisfies \eqref{eq:66},
we have that $D\breve w_s(u,x)=D\tilde w_s(u,x)$\,. In addition, $\breve w_s(u,x)$
is measurable in $(s,u,x)$\,.

As in Pardoux and
Veretennikov \cite{ParVer01}, by \eqref{eq:187} and the strong Markov
property, for $R>0$\,, 
\begin{equation}
  \label{eq:130}
   \breve w_s(u,x)=\mathbf{E} \breve w_s(u,Y^x_{s,u}(\tau^R))
-\mathbf{E}\int_0^{\tau^R} f_s(u,Y^x_{s,u}(t))\,dt\,,
\end{equation}
where $\tau^R=\inf\{t\in\R_+:\,\abs{Y^x_{s,u}(t)}\le R\}<\infty$\,.
Since
$\abs{Y^x_{s,u}(\tau^R)}=R$ if $\abs{x}>R$\,, by $f_s(u,x)$ being
positive for all $\abs{x}$ great enough, we have that if $R$ is great
enough then
$   \breve w_s(u,x)\le  \breve w_s(u,R)$\,, provided $\abs{x}>R$\,.
One can see that the bounds in the calculation of part (a) of the proof of 
Theorem 1 in  Pardoux and
Veretennikov \cite{ParVer01} hold uniformly over 
$u\in[0,L]$ and $s\in[0,t]$\,, which
  shows that $\sup_{x:\,\abs{x}\le R}\sup_{\substack{s\in[0,t],\\u\in\R^n:\,\abs{u}\le L}}\abs{\breve w_s(u,x)}<\infty$\,.
Since 
the righthand side of \eqref{eq:105} grows at most linearly in
$\abs{x}$ locally
uniformly in $(s,u)$,  the arguments of part (b)
of
 the proof of
Theorem 2 (with $\beta=2$ and $\alpha=0$)
and of part (e) of the proof of Theorem 1 in 
Pardoux and Veretennikov \cite{ParVer01}, along with \eqref{eq:130}, show that
 the functions $\abs{
   \breve w_s(u,x)}$ and $\abs{D
   \breve w_s(u,x)}$ grow at most quadratically in $\abs{x}$ locally uniformly in
$(s,u)$\,.

 We define 
$ w_s(u,x)=\breve w_s(u,x)-V_1^{-1}\int_{S_1}\breve w_s(u,y)\,dy$\,,
where
$S_1$ represents the unit open ball centred at the origin
in $\R^l$ and $V_1$
represents the volume of that ball.
 Obviously, the bounds on
$\breve w_s(u,x)$ we have found are also valid for  $w_s(u,x)$\,.
It also satisfies \eqref{eq:96}.
We prove that, for all $q>1$\,,
\begin{equation}
  \label{eq:76}
\sup_{s\in[0,t]}\sup_{u\in\R^n:\,\abs{u}\le
  L}\norm{w_s(u,\cdot)}_{\mathbb{W}^{2,q}(S)}<\infty\,.
\end{equation}
 Since $Dw_s(u,x)=\Phi_{s,m_s(\cdot),u}$ and \eqref{eq:29} holds,
$\int_{S_1}w_s(u,x)\,dx=0$\,,
and $m_s(x)$ is locally bounded away from zero, an application of 
Poincar\'e's inequality yields 
$\sup_{s\in[0,t]}\sup_{u\in\R^n:\,\abs{u}\le L}
\norm{ w_s(u,\cdot)}_{\mathbb{L}^2(S_1)}<\infty$
\,.
If $S_2$ is a ball containing $S_1$, then, for some $L_{S_1,S_2}>0$\,,
\begin{equation*}
\norm{ w_s(u,\cdot)}^2_{\mathbb{W}^{1,2}(S_2)}\le
L_{S_1,S_2}(\norm{D
w_s(u,\cdot)}^2_{\mathbb{L}^2(S_2)}+
\norm{ w_s(u,\cdot)}^2_{\mathbb{L}^2(S_1)})\,,  
\end{equation*}
 see p.299 in Kufner, John, and Fu\^cik 
\cite{KufJohFuc77}, also Theorem 7.4 on p.109 in
Ne{\v{c}}as \cite{Nec12}.  Thus, on recalling that 
$S\subset\subset\tilde S$ and
 letting  $\breve{S}$ represent an open ball in
$\R^l$
such that 
$\tilde S\subset\subset \breve S$\,, we have that
\begin{equation}
  \label{eq:18}
\sup_{s\in[0,t]}\sup_{u\in\R^n:\,\abs{u}\le L}
\norm{ w_s(u,\cdot)}_{\mathbb{W}^{1,2}(\breve S)}<\infty\,.
\end{equation}
By \eqref{eq:96},
Theorem 5.5.5'(a) on p.156 in Morrey \cite{Mor2008}, the discussion on
p.12 of Bogachev, Krylov, and R\"ockner \cite{BogKryRoc},
Shaposhnikov \cite{Sha08}, 
and the fact that
 $\sup_{x\in\breve S}\abs{a_s(u,x)}$ and
$\norm{c_s(u,\cdot)}_{\mathbb{W}^{1,\infty}(\breve S,\R^{l\times l})}$ are bounded
locally uniformly in $(s,u)$\,, we have that
$\norm{ w_s(u,\cdot)}_{\mathbb{W}^{1,q}(\tilde S)}\le 
M_{\tilde S,\breve{S},q}
(1+\norm{ w_s(u,\cdot)}_{\mathbb{L}^{1}(\breve  S)})$\, 
locally uniformly
in $(s,u)$\,. By \eqref{eq:18},
$\sup_{s\in[0,t]}\sup_{\abs{u}\le L}\norm{
  w_s(u,\cdot)}_{\mathbb{W}^{1,q}(\tilde S)}<\infty$\,.
By  \eqref{eq:96}, via a similar argument to
the one used for $\zeta(\cdot) \tilde w_s(u,\cdot)$
above,
$\zeta(\cdot) w_s(u,\cdot)$ is a strong solution to \eqref{eq:154}.
By Theorem 9.15 on p.241 in Gilbarg and Trudinger \cite{GilTru83},
$\zeta(\cdot) w_s(u,\cdot)\in \mathbb{W}^{2,q}(\tilde{S})$\,. 
By Theorem 9.11 on p.235
in Gilbarg and Trudinger \cite{GilTru83}, locally uniformly in
$(s,u)$\,, for some $\tilde M_{S,\tilde S,q}>0$\,,
\begin{equation*}
    \norm{ w_s(u,\cdot)}_{\mathbb{W}^{2,q}(S)}\le
\tilde M_{S,\tilde S,q}(1+\norm{ w_s(u,\cdot)}_{\mathbb{L}^{q}(\tilde
  S)})
\end{equation*}
which implies \eqref{eq:76}.

We now address the continuity of $w_s(u,x)$\,.
Let $u_i\to u$\,. By \eqref{eq:76} and Sobolev's imbedding, 
the sequences $ w_s(u_i,\cdot)$ and
$D w_s(u_i,\cdot)$ are  equicontinuous in $x\in S$\,,
so they are   relatively compact in $\mathbb{C}(S,\R^l)$\,. 
A similar property holds for
$\bl(a_s(u_i,\cdot)-(1/2)
\text{div}\,c_s(u_i,\cdot)\br) m_s(\cdot)$\,.
Taking a subsequential  limit in
 \eqref{eq:96}  implies that $D w_s(u_i,\cdot)\to D w_s(u,\cdot)$ 
 in $\mathbb{C}(S,\R^l)$\,. By Poincar\'e's inequality for $S=S_1$
and the fact that $\int_{S_1}w_s(u,x)\,dx=0$\,,
$ w_s(u_i,\cdot)\to  w_s(u,\cdot)$ 
 in $\mathbb{L}^2(S_1)$\,. The bound 
 \begin{multline*}
\norm{ w_s(u_i,\cdot)- w_s(u,\cdot)}^2_{\mathbb{W}^{1,2}(S_2)}\le
L_{S_1,S_2}(\norm{D w_s(u_i,\cdot)-D w_s(u,\cdot)}^2_{\mathbb{L}^2(S_2)}+
\norm{ w_s(u_i,\cdot)-
  w_s(u,\cdot)}^2_{\mathbb{L}^2(S_1)})   
 \end{multline*}
 shows 
that $ w_s(u_i,\cdot)\to  w_s(u,\cdot)$ 
in $\mathbb{L}^2(S_2)$\,. Since $S_2$ is an arbitrary ball that
contains $S_1$\,,
$ w_s(u_i,\cdot)\to  w_s(u,\cdot)$ 
 in $\mathbb{C}(S,\R^l)$\,.
  Hence, $ w_s(u,x)$ and $D w_s(u,x)$ are continuous in $(u,x)$
 for almost all $s$\,. 

We prove \eqref{eq:91}.
Since $a_s(u,\cdot)\in \mathbb{L}^2(\R^l,\R^l,m_s(x)\,dx)$
and $Dw_s(u,\cdot)\in \mathbb{L}^2(\R^l,\R^l,m_s(x)\,dx)$\,,
(\ref{eq:96}) extends   to    $\mathbb 
C^1(\R^l)$-functions 
$p(x)$ such that
$\int_{\R^l}\bl(p(x)^2+
\abs{Dp(x)}^2\br)\, m_s(x)\,dx<\infty$\,.
For given $\kappa>0$\,, $\delta>0$\,, and $x_0\in \R^l$\,, we let
$  p(x)=\abs{x}^2e^{-\delta[(\abs{x-x_0}^2/\kappa-1)^+]^2}$\,.
By dominated convergence,
\begin{align*}
  \lim_{\delta\to\infty}
\int_{\R^l}D p(x)^T c_s(u,x)\,D  w_s(u,x)m_s(x)\,dx&=
2\int_{x\in\R^l:\,\abs{x-x_0}<\kappa}x
^T c_s(u,x)\,D  w_s(u,x)m_s(x)\,dx\intertext{and}
\lim_{\delta\to\infty}    \int_{\R^l}D p(x)^T
\bl(
a_s(u,x)-\frac{1}{2}\,
\text{div}\,c_s(u,x)\br)m_s(x)
\,dx&=
2\int_{x\in\R^l:\,\abs{x-x_0}<\kappa}x^T
\bl(
a_s(u,x)-\frac{1}{2}\,
\text{div}\,c_s(u,x)\br)m_s(x)
\,dx\,.
\end{align*}
Dividing the righthand sides by the volume of the ball of radius 
$\kappa$ centred at $x_0$\,, letting $\kappa\to0$\, and
accounting for  \eqref{eq:96} and for 
$D w_s(u,x)$\,, $ m_s(x)$\,, $c_s(u,x)$ and
$a_s(u,x)$ being continuous in $x$\,,
yields \eqref{eq:91}.

The part that concerns $v_s(u,x)$ is dealt with similarly, except that
 one uses
 Theorem 2 of Pardoux and Veretennikov \cite{ParVer01}  with
 $\beta=1$
to bound the growth rate of the second term  of the sum in \eqref{eq:91c}.
\end{proof}

We now take on  the proof of Theorem~\ref{the:iden_reg}.
Let  $\hat w_s(u,x)$ and   $\hat v_s(u,x)$ represent 
$w_s(u,x)$ and $v_s(u,x)$\,, respectively, in the statement of
 Lemma \ref{le:reg} for $m_s(x)=\hat m_s(x)$\,.
We define, guided by \eqref{eq:89} and \eqref{eq:111},
on recalling  \eqref{eq:151} and \eqref{eq:40},
\begin{multline}
  \label{eq:33}
     \hat{\lambda}_s(u)=
\bl(\int_{\R^l}Q_{s,\hat m_s(\cdot)}(u,x) \hat{m}_s(x)\,dx\br)^{-1}
\bl(\dot{\hat{X}}_s-\int_{\R^l}A_s(u,x)\hat{m}_s(x)\,dx\\-
\int_{\R^l}G_s(u,x)
\bl(\frac{D\hat m_s(x)}{2\hat m_s(x)}
-D \hat{w}_s(u,x)\br)\hat{m}_s(x)\,dx\br)
\end{multline}
if $C_t(u,x)-G_t(u,x)c_t(u,x)^{-1}G_t(u,x)^T$
 is positive definite uniformly in $x$ and
locally uniformly in $(t,u)$ and
$   \hat{\lambda}_s(u)=0$ if $C_t(u,x)=0$ for all $(t,u,x)$\,,
and 
\begin{equation}
  \label{eq:74}
\hat{h}_s(u,x)=\frac{1}{2}\,\ln
\,\hat m_s(x)
-\hat{w}_s(u,x)-\hat{v}_s(u,x)^T\hat{\lambda}_s(u)
 \end{equation}
so that
\begin{equation}
  \label{eq:87}
 D\hat{h}_s(u,x)=
\frac{D\hat m_s(x)}{2\hat m_s(x)}-D\hat{w}_s(u,x)
-D\hat{v}_s(u,x)^T\hat{\lambda}_s(u)\,.
\end{equation}
We note that by \eqref{eq:88},
\begin{equation}
  \label{eq:157}
Q_{s,\hat m_s(\cdot)}(u,x)=C_s(u,x)-\norm{D\hat v_s(u,x)}^2_{c_s(u,x)}\,.
\end{equation}
The continuity properties of $w_s(u,x)$ and $v_s(u,x)$ established in
Lemma \ref{le:reg} imply that   
  $\hat\lambda_s(u)$ is continuous in $u$ and that
 $\hat h_s(u,x)$
 and $D\hat h_s(u,x)$ are continuous in
 $(u,x)$\,,
 for almost all $s\in\R_+$\,. 

If $C_t(u,x)-G_t(u,x)
c_t(u,x)^{-1}G_t(u,x)^T$
 is positive definite uniformly in $x$ and
locally uniformly in $(t,u)$, then,  the analogue of \eqref{eq:120}
for $D\hat w_s(u,x)$\,,
\eqref{eq:33} and Condition \ref{con:coeff} imply that, for some $\vartheta_1>0$\,,
\begin{multline}
  \label{eq:86}
     \abs{\hat{\lambda}_s(u)}\le
\vartheta_1
\bl(\abs{\dot{\hat{X}}_s}+\sup_{x\in \R^l}\abs{A_s(u,x)}+
\bl(\int_{\R^l}\abs{a_s(u,x)}^2\hat{m}_s(x)\,dx\br)^{1/2}
+\sup_{x\in\R^l}\frac{\abs {D\hat m_s(x)}}{\hat m_s(x)}\br)\,.
\end{multline}
Since  $\hat m_s(x)=M_s e^{-\alpha\abs{x}}$ for
$\abs{x}>2r$ and $\abs{a_s(u,x)}$ grows at most linearly in $x$\,,
we conclude that $\abs{\hat{\lambda}_s(u)}$ is locally bounded in
$(s,u)$\,. 

Therefore, 
by Lemma \ref{le:reg},
for all $L>0$\,, all open balls $S$ in $\R^l$\,, and all $q>1$\,,
\begin{equation}
  \label{eq:64}
  \sup_{s\in[0,t]}\sup_{u\in\R^n:\,\abs{u}\le L}\abs{\hat\lambda_s(u)}
+  
\sup_{s\in[0,t]}\sup_{u\in\R^n:\,\abs{u}\le L}
\int_{S}\abs{D\hat h_s(u,x)}^q\,dx<\infty\,.
\end{equation}

By Theorem \ref{le:vid},
 the supremum in  \eqref{eq:150} is attained at
$\lambda=\hat{\lambda}_s(u)$ and $ g=D\hat{h}_s(u,x)$\,,
however, the function $\hat h_s(u,x)$ might not be of compact support in $x$\,
so in order to use it in Lemma \ref{le:sup_ld_function}, we need to
restrict it to a compact set.
Let $\eta(y)$ represent an $\R_+$-valued
nonincreasing $\mathbb{C}^\infty_0(\R_+)$-function such that $\eta(y)=1$ for $0\le y\le 1$ and
$\eta(y)=0$ for $y\ge 2$\,.
Let
$\hat{w}^{i}_s(u,x)=\hat w_s(u,x)\eta(\abs{x}/i)$
and $\hat{v}^{i}_s(u,x)=\hat v_s(u,x)\eta(\abs{x}/i)$\,.
We note that
\begin{subequations}
  \begin{align}
    \label{eq:169}
    D   \hat{w}^{i}_s(u,x)&=
\eta(\frac{\abs{x}}{i})D   \hat{w}_s(u,x)
+\frac{x}{i\abs{x}}\,\,D\eta(\frac{\abs{x}}{i})\,\hat w_s(u,x)
\intertext{and}
    \label{eq:169a}
    D   \hat{v}^{i}_s(u,x)&=D   \hat{v}_s(u,x)\,
\eta(\frac{\abs{x}}{i})
+\frac{x}{i\abs{x}}\,\hat v_s(u,x)\,D\eta(\frac{\abs{x}}{i})^T\,\,\,.
  \end{align}
\end{subequations}
 We define, in analogy with \eqref{eq:33},
  \begin{multline}
      \label{eq:36}
           \hat{\lambda}^i_s(u) =
\bl(\int_{\R^l}Q_{s,\hat m_s(\cdot)}(u,x) \hat{m}_s(x)\,dx\br)^{-1}
\bl(\dot{\hat{X}}_s-\int_{\R^l}A_s(u,x)\hat{m}_s(x)\,dx\\-
\int_{\R^l}G_s(u,x)\bl(
\frac{1}{2}\,D(\eta(\frac{\abs{x}}{i})\ln \hat m_s(x))-
D \hat{w}^i_s(u,x)\br)
\hat{m}_s(x)\,dx\br)
\end{multline}
 if $C_t(u,x)-G_t(u,x)
c_t(u,x)^{-1}G_t(u,x)^T$
 is positive definite uniformly in $x$ and
locally uniformly in $(t,u)$\,, and
$           \hat{\lambda}^i_s(u) =0$ if $C_t(u,x)=0$\,.
We let, in analogy with \eqref{eq:74},
\begin{equation}
  \label{eq:44}
   \hat{h}^i_s(u,x)=\frac{1}{2}\,\eta(\frac{\abs{x}}{i})\ln \hat m_s(x)
-\hat{w}^i_s(u,x)
-\hat{v}^i_s(u,x)^T\hat{\lambda}^i_s(u)\,.
\end{equation}
In analogy with \eqref{eq:86} and in view of \eqref{eq:169} and
\eqref{eq:91c} in Lemma
\ref{le:reg}, one can see that
the $\abs{\hat\lambda^i_s(u)}$ are bounded uniformly in $i$ and
locally uniformly in $(s,u)$\,,
where the bound may depend on $\alpha$\,. Also, $\hat\lambda^i_s(u)$
is continuous in $u$\,, so it satisfies the hypotheses of Lemma
\ref{le:sup_ld_function}. 

If $C_t(u,x)-G_t(u,x)
c_t(u,x)^{-1}G_t(u,x)^T$
 is positive definite uniformly in $x$ and
locally uniformly in $(t,u)$\,, then by
 \eqref{eq:169}, Lemma \ref{le:reg},
(\ref{eq:33}), and (\ref{eq:36}),
\begin{equation}
  \label{eq:136}
  \lim_{i\to\infty} \sup_{s\in[0,t]}\sup_{u\in\R^n:\,\abs{u}\le L}
\abs{\hat \lambda^i_s(u)-\hat \lambda_s(u)}=0\,.
\end{equation}
The latter convergence also holds if $C_t(u,x)=0$ in that
$\hat \lambda^i_s(u)=\hat \lambda_s(u)=0$\,.

Similarly, since  by \eqref{eq:169}, \eqref{eq:169a}, and \eqref{eq:44},
\begin{multline}
  \label{eq:138}
  D   \hat{h}^{i}_s(u,x)=
\eta\bl(\frac{\abs{x}}{i}\br)
  D   \hat{h}_s(u,x)
+\frac{1}{i}\,\frac{x}{\abs{x}}\,D\eta\bl(\frac{\abs{x}}{i}\br)\bl(
\frac{1}{2}\,\ln \hat m_s(x)-
\hat
w_s(u,x)-\hat v_s(u,x)^T\hat{\lambda}^{i}_s(u)
\br),
\end{multline}
we have that
\begin{equation}
  \label{eq:35}
\lim_{i\to\infty} 
\sup_{s\in[0,t]}\sup_{u\in\R^n:\,\abs{u}\le L}
\int_{S}\abs{D\hat h^i_s(u,x)-D\hat h_s(u,x)}^q\,\,dx=0\,,
\end{equation}
for all $L>0$, all open balls $S$ in $\R^l$\,, and all $q>1$\,.
The functions $\hat h^i_s(u,x)$ also satisfy the hypotheses of Lemma 
\ref{le:sup_ld_function}.

Another auxiliary lemma is in order.
\begin{lemma}
  \label{le:convergence}
Suppose, for $i\in\N$\,,
 $X^i\in \mathbb{C}(\R_+,\R^n)$, $X\in \mathbb{C}(\R_+,\R^n)$\,,
 and  $m^i_s(x)$ and $m_s(x)$ are measurable functions which are 
probability densities in $x$ on $\R^l$ for almost all $s$
such that
\begin{multline*}
    \int_{\R^l}\bl(\frac{1}{2}\,\text{tr}\,(c_s(X^i_s,x)
 D^2p(x))+D p(x)^T
\bl(a_s(X^i_s,x)+G_s(X^i_s,x)^T\hat{\lambda}_s^{i}(X^i_s)
\\-\frac{1}{2}\,\text{div}\,c_s(X^i_s,x)
+c_s(X^i_s,x)D \hat{h}^{i}_s(X^i_s,x)\br) \br) 
m^{i}_s(x) \,dx=0
\end{multline*}
and 
\begin{multline}
  \label{eq:131a}
    \int_{\R^l}\bl(\frac{1}{2}\,\text{tr}\,(c_s(X_s,x)
 D^2p(x))+ D p(x)^T
\bl(a_s(X_s,x)+G_s(X_s,x)^T\hat{\lambda}_s(X_s)\\-
\frac{1}{2}\,\text{div}\,c_s(X_s,x)
+c_s(X_s,x)D \hat{h}_s(X_s,x)\br)\br) 
m_s(x) \,dx=0\,,
\end{multline}
for all $p\in\mathbb{C}^\infty_0(\R^l)$\,.
If $X^i\to X$ as $i\to\infty$\,, then,
for all $\alpha$ great enough and for all $t>0$\,,
\begin{subequations}
  \begin{align}
    \label{eq:90}
\lim_{i\to\infty} \int_0^t\int_{\R^l}\abs{m^i_s(x)-m_s(x)}\,dx\,ds=0,\\
  \label{eq:128}
  \lim_{i\to\infty}\int_0^t\int_{\R^l}
\norm{D \hat h^i_s(X^i_s,x)}^2_{c_s(X^i_s,x)}\,m^i_s(x)\,dx\,ds
=\int_0^t\int_{\R^l}
\norm{D \hat h_s(X_s,x)}^2_{c_s(X_s,x)}\, m_s(x)\,dx\,ds\,,
\intertext{and}
  \label{eq:128a}
\lim_{i\to\infty}\int_0^t\norm{\hat\lambda^i_s(X^i_s)}^2_{\int_{\R^l}C_s(X^i_s,x)\,m^i_s(x)\,dx}\,ds=
\int_0^t\norm{\hat\lambda_s(X_s)}^2_{\int_{\R^l}C_s(X_s,x)\, m_s(x)\,dx}\,ds\,.
\end{align}
\end{subequations}
\end{lemma}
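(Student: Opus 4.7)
The plan is to first establish a uniform-in-$i$ exponential tail bound on the densities $m^i_s$, then to pass to the limit in the stationary Fokker--Planck equation for $m^i$ to conclude $m^i_s \to m_s$ in variation norm (and hence in $L^1(ds\,dx)$), and finally to derive the integral convergences \eqref{eq:128} and \eqref{eq:128a} by dominated convergence, using the uniform exponential tail to control the large-$|x|$ contributions.

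For the exponential tail bound, I will adapt the Lyapunov argument employed in the proof of Lemma~\ref{le:zero} (see \eqref{eq:95}). The effective drift in the stationary Fokker--Planck equation for $m^i$ is $a_s(X^i_s,x)+G_s(X^i_s,x)^T\hat\lambda^i_s(X^i_s)+c_s(X^i_s,x)D\hat h^i_s(X^i_s,x)$. For $|x|\ge 2i$, formulae \eqref{eq:169}--\eqref{eq:44} give $D\hat h^i_s(u,x)=0$ and the drift reduces to $a_s+G_s^T\hat\lambda^i$, so \eqref{eq:116} together with the $i$-uniform local boundedness of $\hat\lambda^i_s(u)$ (obtainable from \eqref{eq:36} and the polynomial growth bounds \eqref{eq:91c}) makes the radial component $(x/|x|)^T(\cdot)$ diverge to $-\infty$. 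For $i\le|x|\le 2i$, the polynomial bounds \eqref{eq:91c} on $\hat w, D\hat w, \hat v, D\hat v$ together with the explicit contribution from $\frac{1}{2}D(\eta(|x|/i)\ln\hat m_s(x))$, which is essentially $-\frac{\alpha}{2}\,x/|x|$ by \eqref{eq:95a} (since $\hat m_s(x)\propto e^{-\alpha|x|}$ for $|x|>2r$), guarantee a uniformly negative radial drift once $\alpha$ exceeds some $\alpha_0$ depending only on $a$, $c$, $\tilde m$, $\hat\eta$, and $r$. Following the template \eqref{eq:95} with $F(x)=e^{\delta|x|}$ for small $\delta>0$ then yields
\[
\sup_{i\in\N}\sup_{s\in[0,t]}\int_{\R^l} e^{\delta|x|}\,m^i_s(x)\,dx<\infty.
\]

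Given this uniform exponential tail and the local $\mathbb{L}^q$-bounds derivable by the method of Bogachev, Krylov, and R\"ockner (cf.\ the derivations of \eqref{eq:121} and \eqref{eq:123}), the family $\{m^i_s\}$ is tight for every $s$. Any subsequential variation-norm limit $\bar m_s$, extracted via Proposition~2.16 of \cite{BogKryRoc01}, satisfies \eqref{eq:131a}: passing $i\to\infty$ in the equation for $m^i$ uses \eqref{eq:35} and \eqref{eq:136} to replace coefficients at $X^i_s$ by those at $X_s$ and to remove the cutoff, the exponential tail providing domination. Uniqueness for \eqref{eq:131a}, analogous to the corresponding step in the proof of Lemma~\ref{le:zero} (Theorem~1.4.1 of \cite{BogKryRoc} applied with $V(x)=\sqrt{1+|x|^2}$, whose hypotheses follow from \eqref{eq:91} and \eqref{eq:91b} and the $-\frac{\alpha}{2}\,x/|x|$ contribution from $D\hat h_s$), then forces $\bar m_s=m_s$, so the whole sequence converges. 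Combining pointwise-in-$s$ variation convergence with the uniform-in-$s$ tightness and Fubini yields \eqref{eq:90}.

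For the integral convergences \eqref{eq:128} and \eqref{eq:128a}, the integrands factor into bounded continuous quantities (the $c$- and $C$-coefficients, via \eqref{eq:63} and \eqref{eq:82}), factors converging by \eqref{eq:136} and \eqref{eq:35}, and the densities $m^i_s$ converging by \eqref{eq:90}. The main obstacle is controlling the tail in $x$: on $\{|x|>R\}$ the polynomial growth \eqref{eq:91c} of $\hat w, D\hat w, \hat v, D\hat v$ ensures the integrands grow at most polynomially, and this is dominated uniformly in $i$ by the exponential tail from the first step, yielding tail contributions that vanish as $R\to\infty$ uniformly in $i$; on $\{|x|\le R\}$, dominated convergence with \eqref{eq:90} and the uniform local convergence of the remaining factors concludes. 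Throughout we keep $\alpha$ strictly above $\alpha_0$ so that the hypotheses of Lemma~\ref{le:reg} and the Lyapunov bound of the first step remain in force simultaneously.
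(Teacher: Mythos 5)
Your overall architecture is the same as the paper's: a uniform-in-$i$ exponential moment bound for $m^i_s$ obtained by the Lyapunov device of Lemma~\ref{le:zero} (cf.\ \eqref{eq:95}, \eqref{eq:144}), then variation-norm convergence of $m^i_s$ via Proposition~2.16 of Bogachev--Krylov--R\"ockner plus uniqueness for the limiting equation \eqref{eq:131a}, and finally dominated convergence for \eqref{eq:128} and \eqref{eq:128a} (the paper secures the uniform integrability for \eqref{eq:128} through the third-moment bound \eqref{eq:155}, which is derived from exactly the ingredients you invoke, so that part is fine).

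However, your justification of the uniform radial-drift bound in the cutoff transition zone $i\le\abs{x}\le 2i$ --- which is precisely the delicate point, occupying \eqref{eq:140}--\eqref{eq:61} and \eqref{eq:142}--\eqref{eq:110} in the paper and culminating in \eqref{eq:137} --- does not hold as written. First, $\tfrac12 D\bl(\eta(\abs{x}/i)\ln\hat m_s(x)\br)$ is \emph{not} essentially $-\tfrac{\alpha}{2}x/\abs{x}$ there: it also contains $\tfrac12\ln\hat m_s(x)\,D\eta(\abs{x}/i)\,x/(i\abs{x})$, and since $\ln\hat m_s(x)\approx-\alpha\abs{x}$ with $\abs{x}\asymp i$ and $D\eta\le0$, this is an \emph{outward} radial term of order $\alpha$. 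The net $\alpha$-contribution is $-\tfrac{\alpha}{2}\norm{x/\abs{x}}^2_{c}\bl(\eta(y)+y\,D\eta(y)\br)$ with $y=\abs{x}/i$, and $\eta+y\,D\eta$ can be negative, so taking $\alpha$ large makes matters worse on that sub-region; the paper rescues it not with $\alpha$ but with the factor $1-\eta\ge\kappa$ multiplying $a_s^Tx/\abs{x}\to-\infty$ (the split at $\eta\ge1-\kappa$ versus $\eta<1-\kappa$ in \eqref{eq:59} and the choice of $\kappa$ with $\eta+y\,D\eta>0$ on $[1-\kappa,1]$). Second, the term $\tfrac1i D\eta(\abs{x}/i)\,\hat w_s(u,x)$ cannot be controlled by the quadratic growth \eqref{eq:91c} you cite: that only gives a bound of order $\abs{x}^2/i\asymp i$ of uncontrolled sign. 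What is needed is the uniform \emph{upper} bound \eqref{eq:91a} on $\hat w_s$ (itself valid only for $\alpha>\alpha_0$), since $(-\norm{x/\abs{x}}_c^2)D\eta\ge0$ and only the positive part of $\hat w_s$ matters, as in \eqref{eq:142}. With these two repairs your Lyapunov step becomes the paper's argument; without them the claimed ``uniformly negative radial drift'' in the transition zone does not follow.
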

\begin{proof}
Let us first address  existence and uniqueness of $m_s^i(x)$ and
$m_s(x)$\,.
Since $\sup_{\abs{u}\le L}(x/\abs{x})^T a_s(u,x)\to-\infty$ as
$\abs{x}\to\infty$\,, the function $G_s(u,x)$ is bounded, 
the function $\hat{\lambda}_s^{i}(u)$ is bounded locally  in $(s,u)$\,
and the function $\hat h^i_s(u,x)$ is of compact support in $x$ locally
uniformly in $(s,u)$\,, we have that
\begin{multline}
  \label{eq:135}
    \lim_{\abs{x}\to\infty}\frac{x^T}{\abs{x}}
\bl(a_s(X^i_s,x)+G_s(X^i_s,x)^T\hat{\lambda}_s^{i}(X^i_s)
-\frac{1}{2}\,\text{div}\,c_s(X^i_s,x)
+c_s(X^i_s,x)D \hat{h}^{i}_s(X^i_s,x)\br)=-\infty\,,
\end{multline}
which implies that $m^i_s(x)$ is well defined and is specified
uniquely, see, e.g., Metafune, Pallara, and Rhandi \cite[Theorem 2.2,
Proposition 2.4]{MetPalRha05}.

By \eqref{eq:87}, relations
\eqref{eq:91} and \eqref{eq:91b} of Lemma \ref{le:reg} imply that
\begin{multline*}
  x^T\,
\bl(a_s(X_s,x)+G_s(X_s,x)^T\hat{\lambda}_s(X_s)
-\frac{1}{2}\,\text{div}\,c_s(X_s,x)
+c_s(X_s,x)D\hat{h}_s(X_s,x)\br)\\=\frac{x^T}{2}
 c_s(X_s,x)\frac{D\hat m_s(x)}{\hat m_s(x)}\,.
\end{multline*}
If $\abs{x}>2r$, then $D\hat m_s(x)/\hat m_s(x)=-\alpha x/\abs{x}$\,,
so, 
locally uniformly in $s$,
\begin{multline*}
  \limsup_{\abs{x}\to\infty}\frac{x^T}{\abs{x}}\,
\bl(a_s(X_s,x)+G_s(X_s,x)^T\hat{\lambda}_s(X_s)
-\frac{1}{2}\,\text{div}\,c_s(X_s,x)
+c_s(X_s,x)D \hat{h}_s(X_s,x)\br)<0\,,
\end{multline*}
which ensures the existence and uniqueness of $m_s(x)$\,.

As in the proof of Lemma \ref{le:zero},
we will show  that, for arbitrary  $\delta>0$, there exists $\alpha>0$
such that for all $t>0$ 
\begin{equation}
  \label{eq:144}
\sup_{s\in[0,t]}\sup_{i\in \N}\int_{\R^l}e^{\delta\abs{x}}\,m_s^i(x)\,dx<\infty\,.
\end{equation}
We begin by establishing a
uniform version of \eqref{eq:135}:  
\begin{multline}
  \label{eq:137}
\lim_{\alpha\to\infty}
      \limsup_{\abs{x}\to\infty}\sup_{s\in[0,t]}
\sup_{i\in \N}\frac{x^T}{\abs{x}}\,
\bl(a_s(X^i_s,x)+G_s(X^i_s,x)^T\hat{\lambda}_s^{i}(X^i_s)
-\frac{1}{2}\,\text{div}\,c_s(X^i_s,x)\\
+c_s(X^i_s,x)D \hat{h}^{i}_s(X^i_s,x)\br)=-\infty\,.
\end{multline}
By \eqref{eq:138}, \eqref{eq:87}, \eqref{eq:91}, and \eqref{eq:91a},
 for $\abs{x}>2r$\,, on recalling that
$\hat m_s(x)=M_se^{-\alpha\abs{x}}$\,, 
\begin{multline}\label{eq:140}
  \frac{x^T}{\abs{x}}\,
\bl(a_s(X^i_s,x)+G_s(X^i_s,x)^T\hat{\lambda}_s^{i}(X^i_s)
-\frac{1}{2}\,\text{div}\,c_s(X^i_s,x)
+c_s(X^i_s,x)D \hat{h}^{i}_s(X^i_s,x)\br)\\=
\frac{x^T}{\abs{x}}\,
\bl(a_s(X^i_s,x)+G_s(X^i_s,x)^T\hat{\lambda}_s^{i}(X^i_s)
-\frac{1}{2}\,\text{div}\,c_s(X^i_s,x)\br)
\bl(1-\eta\bl(\frac{\abs{x}}{i}\br)\br)
\\-\frac{\alpha}{2}\,\norm{\frac{x}{\abs{x}}}_{c_s(X^i_s,x)}^2\bl(
\eta\bl(\frac{\abs{x}}{i}\br)+\frac{\abs{x}}{i}\,
D\eta\bl(\frac{\abs{x}}{i}\br)\br)\br)
\\-\frac{1}{i}\,\norm{\frac{x}{\abs{x}}}^2_{c_s(X^i_s,x)}
\,D\eta\bl(\frac{\abs{x}}{i}\br)\bl(\frac{1}{2}\,\ln M_s
-\hat
w_s(X^i_s,x)-\hat v_s(X^i_s,x)^T
\hat{\lambda}^{i}_s(X^i_s)\br)\,.
\end{multline}
Let $\kappa\in(0,1)$ be such that $\eta(y)+y\,D\eta(y)>0$ if 
$\eta(y)\in
[1-\kappa,1]$\,. 
If
$\eta(\abs{x}/i)\ge 1-\kappa$\,, then $\eta(\abs{x}/i)\in[
1-\kappa,1]$\,, so substituting $1-\kappa$ and $1$ for $\eta(\abs{x}/i)$\,,
we have that
\begin{multline}\label{eq:59}
  \frac{x^T}{\abs{x}}\,
\bl(a_s(X^i_s,x)+G_s(X^i_s,x)^T\hat{\lambda}_s^{i}(X^i_s)
-\frac{1}{2}\,\text{div}\,c_s(X^i_s,x)\br)\bl(1-
\eta\bl(\frac{\abs{x}}{i}\br)\br)\\\le 
\Bl(\kappa\,\frac{x^T}{\abs{x}}\,
\bl(a_s(X^i_s,x)+G_s(X^i_s,x)^T\hat{\lambda}_s^{i}(X^i_s)
-\frac{1}{2}\,\text{div}\,c_s(X^i_s,x)\br)\Br)^+\,.
\end{multline}
In addition, for some $\vartheta>0$\,,
\begin{equation*}
  \norm{\frac{x}{\abs{x}}}_{c_s(X^i_s,x)}^2\bl(
\eta\bl(\frac{\abs{x}}{i}\br)+\frac{\abs{x}}{i}\,
D\eta\bl(\frac{\abs{x}}{i}\br)\br)\ge  \vartheta
\inf_{y:\,\eta(y)\ge 1-\kappa}(\eta(y)+y\,D\eta(y))>0\,.
\end{equation*}
The righthand side of \eqref{eq:59} being 
 equal to $0$
if $\abs{x}$ is great enough, uniformly in $i$
and locally uniformly in $s$\,,
it follows that there exists $\hat M_1>0$ such that for all $\abs{x}$ great enough,
depending on  $\alpha>0$ and $t>0$\,,
uniformly in $i\in\N$ and $s\in[0,t]$\,,
\begin{multline*}
  \Bl(\frac{x^T}{\abs{x}}\,
\bl(a_s(X^i_s,x)+G_s(X^i_s,x)^T\hat{\lambda}_s^{i}(X^i_s)
-\frac{1}{2}\,\text{div}\,c_s(X^i_s,x)\br)\bl(1-
\eta\bl(\frac{\abs{x}}{i}\br)\br)
\\-\frac{\alpha}{2}\,\norm{\frac{x}{\abs{x}}}_{c_s(X^i_s,x)}^2\bl(
\eta\bl(\frac{\abs{x}}{i}\br)+\frac{\abs{x}}{i}\,
D\eta\bl(\frac{\abs{x}}{i}\br)\br)\Br)\ind_{\{\eta(\abs{x}/i)\ge
  1-\kappa\}}(x)
\le 
-\hat M_1\alpha\ind_{\{\eta(\abs{x}/i)\ge
  1-\kappa\}}(x)\,.
\end{multline*}
If
$\eta(\abs{x}/i)<
  1-\kappa$, then, analogously,
given arbitrary $\hat M_2>0$, we have that, for all $\abs{x}$
great enough, depending on $\alpha$ and $t>0$,   all $i\in\N$\,, and
all $s\in[0,t]$\,,
\begin{equation*}
    \frac{x^T}{\abs{x}}\,
\bl(a_s(X^i_s,x)+G_s(X^i_s,x)^T\hat{\lambda}_s^{i}(X^i_s)
-\frac{1}{2}\,\text{div}\,c_s(X^i_s,x)\br)\bl(1-
\eta\bl(\frac{\abs{x}}{i}\br)\br)\le- \hat M_2.
\end{equation*}
Also, 
\begin{equation*}
  \norm{\frac{x}{\abs{x}}}_{c_s(X^i_s,x)}^2\bl(
\eta\bl(\frac{\abs{x}}{i}\br)+\frac{\abs{x}}{i}\,
D\eta\bl(\frac{\abs{x}}{i}\br)\br)\ge \vartheta
\inf_{y :\,\eta(y)\le 1-\kappa}(\eta(y)+y\,D\eta(y))\,.
\end{equation*}
Hence, given arbitrary 
$\hat M_3>0$,  for all
$\abs{x}$ great enough, depending on $\alpha$ and $t$\,, 
uniformly in $i\in\N$ and $s\in[0,t]$\,,
  \begin{multline*}
  \Bl(\frac{x^T}{\abs{x}}\,
\bl(a_s(X^i_s,x)+G_s(X^i_s,x)^T\hat{\lambda}_s^{i}(X^i_s)
-\frac{1}{2}\,\text{div}\,c_s(X^i_s,x)\br)\bl(1-\eta\bl(\frac{\abs{x}}{i}\br)\br)
\\-\frac{\alpha}{2}\,\norm{\frac{x}{\abs{x}}}_{c_s(X^i_s,x)}^2\bl(
\eta\bl(\frac{\abs{x}}{i}\br)+\frac{\abs{x}}{i}\,
D\eta\bl(\frac{\abs{x}}{i}\br)\br)\Br)\ind_{\{\eta(\abs{x}/i)\le
  1-\kappa\}}(x)
\le -\hat M_3\ind_{\{\eta(\abs{x}/i)\le
  1-\kappa\}}(x)\,.
\end{multline*}
We conclude that 
\begin{multline}
  \label{eq:61}
\lim_{\alpha\to\infty}    \limsup_{\abs{x}\to\infty}\sup_{s\in[0,t]}\sup_{i\in\N}
\Bl(\frac{x^T}{\abs{x}}\,
\bl(a_s(X^i_s,x)+G_s(X^i_s,x)^T\hat{\lambda}_s^{i}(X^i_s)\\
-\frac{1}{2}\,\text{div}\,c_s(X^i_s,x)\br)\bl(1-\eta\bl(\frac{\abs{x}}{i}\br)\br)
-\frac{\alpha}{2}\,\norm{\frac{x}{\abs{x}}}_{c_s(X^i_s,x)}^2\bl(
\eta\bl(\frac{\abs{x}}{i}\br)+\frac{\abs{x}}{i}\,
D\eta\bl(\frac{\abs{x}}{i}\br)\br)\Br)=-\infty.
\end{multline}
We now work with   line 3 of \eqref{eq:140}. Since $D\eta(y)\le 0$\,, 
 and since, by \eqref{eq:91a},
 the $\hat w_s(X^i_s,x)$ are bounded from
above in $x$ and $i$ locally uniformly in $s$\,, 
\begin{equation}
  \label{eq:142}
\limsup_{\alpha\to\infty}
\limsup_{\abs{x}\to\infty}\sup_{s\in[0,t]}\sup_{i\in \N}\frac{1}{i}\,
\bl(-\norm{  \frac{x}{\abs{x}}}_{c_s(X^i_s,x)}^2\br)\,
\,D \eta
\bl(\frac{\abs{x}}{i}\br)\hat w_s(X^i_s,x)<\infty\,.
\end{equation}
Since $D\eta(y)=0$ unless 
$y\in[1,2]$\,, we have that
$  (1/i)\abs{ D\eta(\abs{x}/i)\hat v_s(X^i_s,x)}\le 
4\abs{ D\eta(\abs{x}/i)}\abs{\hat v_s(X^i_s,x)}/(1+\abs{x})$\,, which
is bounded in $i$ and $x$ locally uniformly in $s$  for
$\alpha$ great enough by \eqref{eq:91c}
of Lemma \ref{le:reg}.
It follows that
\begin{equation}
  \label{eq:143}
\limsup_{\alpha\to\infty}
\limsup_{\abs{x}\to\infty}\sup_{s\in[0,t]}\sup_{i\in \N}\frac{1}{i}\,
\bl(-\norm{  \frac{x}{\abs{x}}}_{c_s(X^i_s,x)}^2\br)\,
\, D\eta\bl(\frac{\abs{x}}{i}\br)\hat v_s(X^i_s,x)^T
\hat{\lambda}^{i}_s(X^i_s)<\infty\,.
\end{equation}
In addition,
\begin{equation}
  \label{eq:110}
    \lim_{\abs{x}\to\infty}\sup_{s\in[0,t]}\sup_{i\in \N}
\frac{1}{i}\,
\norm{  \frac{x}{\abs{x}}}_{c_s(X^i_s,x)}^2\,
 D\eta\bl(\frac{\abs{x}}{i}\br)
\frac{1}{2}\,\ln M_s=0\,.
\end{equation}

Combining  (\ref{eq:140}), \eqref{eq:61},
 (\ref{eq:142}), 
(\ref{eq:143}), and \eqref{eq:110} proves (\ref{eq:137}).

The rest of the proof is carried out similarly to the proof of Lemma
\ref{le:zero},
with \eqref{eq:137} assuming the role of the condition that
$\sup_{i\in\N}a_{s+t}(X^i_s,x)^T x/\abs{x}\to-\infty$\,.
Firstly,
letting  
$f_s^i(x)=
a_s(X^i_s,x)+G_s(X^i_s,x)^T\hat{\lambda}_s^{i}(X^i_s)
-\text{div}\,c_s(X^i_s,x)/2
+c_s(X^i_s,x)D \hat{h}^{i}_s(X^i_s,x)  
$ and $\mathcal{L}^i_sp(x)=(1/2)\,\text{tr}\,(c_s(X^i_s,x)
 D^2p(x))+f_s^i(x)^T D p(x)$ one derives the inequality in \eqref{eq:95},
 where  $F$ represents a $\mathbb{C}^\infty(\R^l)$-function
such that $F(x)=e^{\delta\abs{x}}$ if $\abs{ x}\ge 1$\,.
 The inequality in  \eqref{eq:144} follows if one recalls 
 \eqref{eq:138}, that the $\hat\lambda^i_s(u)$ are bounded uniformly in
 $i$ and locally uniformly in $(s,u)$\,, and 
that according to
Lemma~\ref{le:reg}, 
\begin{equation*}
\sup_{s\in[0,t]}\sup_{u\in\R^n:\,\abs{u}\le
  L}\norm{\hat w_s(u,\cdot)}_{\mathbb{W}^{2,q}(S)}+
\sup_{s\in[0,t]}\sup_{u\in\R^n:\,\abs{u}\le
  L}\norm{\hat v_s(u,\cdot)}_{\mathbb{W}^{2,q}(S,\R^n)}<\infty
\end{equation*}
 for all
$q>1$, all $L>0$\,,
and all open balls $S\subset \R^l$ so that by  Sobolev's imbedding
\begin{equation*}
\sup_{s\in[0,t]}\sup_{u\in\R^n:\,\abs{u}\le
  L}\sup_{x\in\R^l:\abs{x}\le R}
(\abs{\hat w_s(u,x)}+\abs{D\hat w_s(u,x)}+
\abs{\hat v_s(u,\cdot)}+\norm{D\hat v_s(u,x)})<\infty\,.
\end{equation*}
Since the $\lambda^i_s(u)$ are bounded uniformly in $i$ and locally
uniformly in $(s,u)$ and \eqref{eq:64} and \eqref{eq:144}  hold,
 by  Proposition 2.16 in Bogachev,
Krylov, and R\"ockner 
\cite{BogKryRoc01}, for almost all $s$
 the functions $m_s^i(\cdot)$ converge 
in the variation norm along a subsequence 
to  probability density $\tilde m_s(\cdot)$\,.
By \eqref{eq:87}, \eqref{eq:138}, the bounds \eqref{eq:91c}, and
by \eqref{eq:144}, we have that
\begin{equation}
  \label{eq:155}
\sup_{s\in[0,t]}\sup_{i\in\N}\int_{\R^l}
\norm{D \hat h^i_s(X^{i}_s,x)}^3_{c_s(X^i_s,x)}\,m^i_s(x)\,dx<\infty
\,.
\end{equation}
Since  $\sup_{i\in\N}\abs{\hat
\lambda^i_s}<\infty$\,, the convergences in \eqref{eq:136} and
\eqref{eq:35} imply that $\tilde m_s(x)$ must satisfy
\eqref{eq:131a}, so $\tilde m_s(x)=m_s(x)$ and $m_s^i(\cdot)\to
m_s(\cdot)$ in the variation norm. 
The limit in \eqref{eq:90} follows by dominated convergence.
The convergence in \eqref{eq:128a} follows from 
\eqref{eq:136}, \eqref{eq:64}, and \eqref{eq:82}.
For \eqref{eq:128}, we also take into account \eqref{eq:155}.
\end{proof}

We finish  the proof of Theorem \ref{the:iden_reg}.
Let, given $N\in\N$\,,  $\hat\tau^{N,i}$ and
$\hat\theta^{N,i}$ be defined by the respective equations 
\eqref{eq:62} and (\ref{eq:27}) 
with $\hat{\lambda}^{i}_s(u)$ and
$\hat{h}^{i}_s(u,x)$ as $\hat{\lambda}_s(u)$ and
$\hat{h}_s(u,x)$\,, respectively. Since 
 the functions $\hat\lambda^i_s(u)$ and $\hat
h^i_s(u,x)$ 
satisfy the hypotheses of Lemma \ref{le:sup_ld_function},
 there exist
$\gamma^{N,i}=
(X^{N,i},\mu^{N,i})\in \Gamma$  such that
$\hat\theta^{N,i}(\gamma^{N,i})
=\tilde{\mathbf{I}}(\gamma^{N,i})$
and $\gamma^{N,i}\in K_{2N+2}$ for all $i$\,.
In particular, $X^{N,i}_0=\hat{u}$\,,
   $\mu^{N,i}(ds,dx)=m^{N,i}_s(x)\,dx\,ds$\,, where 
$m^{N,i}_s(\cdot)\in \mathbb{P}(\R^l)$ (see Theorem \ref{le:vid}), and the set
$\{\gamma^{N,i},\,i=1,2,\ldots\}$ is relatively compact.
Since $\tilde{\mathbf{I}}(\gamma^{N,i})\ge 
\mathbf{I}^{\ast\ast}(\gamma^{N,i})$, on the one hand, and
$\hat\theta^{N,i}(\gamma^{N,i})
\le \mathbf{I}^{\ast\ast}(\gamma^{N,i})$ by
(\ref{eq:85}) and (\ref{eq:27}), on the other hand, we have that
\begin{equation}
  \label{eq:11}
  \hat\theta^{N,i}(\gamma^{N,i})= \mathbf{I}^{\ast\ast}(\gamma^{N,i})=
\tilde{\mathbf{I}}(\gamma^{N,i})\,.
\end{equation}
Let $\mu^{N,i}\to \mu^N$ in $\bC(\R_+,\mathbb{M}(\R^l))$
and $X^{N,i}\to
X^{N}$ in $\mathbb{C}(\R_+,\R^n)$ along a
subsequence of $i$\,, which we still denote by $i$\,.

By  (\ref{eq:27}) and \eqref{eq:11}, the suprema in
 (\ref{eq:85}) for $(X,\mu)=(X^{N,i},\mu^{N,i})$
are attained   at
$(\hat{\lambda}_s^{i}(X^{N,i}_s),\hat{h}^{i}_s(X^{N,i}_s,x))$ when
$s\le
{\hat\tau^{N,i}(\gamma^{N,i})}$\,.
In particular, since the supremum  over $h$ for
$\lambda=\hat{\lambda}_s^{i}(X^{N,i}_s)$ is attained at
$h(x)=\hat{h}^{i}_s(X^{N,i}_s,x)$\,, we have that
\begin{multline}\label{eq:42}
D\hat{h}^{i}_s(X^{N,i}_s,x)=
\Pi_{c_s(X^{N,i}_s,\cdot),m_s^{N,i}(\cdot)}
\bl(\frac{Dm_s^{N,i}(x)}{2m^{N,i}_s(x)}\\+c_s(X^{N,i}_s,\cdot)^{-1}\bl(
\frac{1}{2}\,\text{div}\,c_s(X^{N,i}_s,\cdot)-
a_s(X^{N,i}_s,\cdot)-G_s(X^{N,i}_s,\cdot)^T\hat\lambda^i_s(X^{N,i}_s)\br)\br)(x)\,.
\end{multline}
Hence, for $p\in \mathbb{C}^\infty_0(\R^l)$\,,
\begin{multline*}
  \int_{\R^l}D p(x)^T
\bl(c_s(X^{N,i}_s,x)\,
\frac{Dm_s^{N,i}(x)}{2m^{N,i}_s(x)}
+\frac{1}{2}\,\text{div}\,c_s(X^{N,i}_s,x)
-a_s(X^{N,i}_s,x)\\-G_s(X^{N,i}_s,x)^T\hat{\lambda}_s^{i}(X^{N,i}_s)
-c_s(X^{N,i}_s,x)D \hat{h}^{i}_s(X^{N,i}_s,x)
\br) m^{N,i}_s(x) \,dx=0\,.
\end{multline*}
Integration by parts yields
\begin{multline}
  \label{eq:92}
  \int_{\R^l}\bl(\frac{1}{2}\,\text{tr}\,(c_s(X^{N,i}_s,x)
 D^2p(x))+D p(x)^T
\bl(a_s(X^{N,i}_s,x)-\frac{1}{2}\,\text{div}\,c_s(X^{N,i}_s,x)
\\+G_s(X^{N,i}_s,x)^T\hat{\lambda}_s^{i}(X^{N,i}_s)
+c_s(X^{N,i}_s,x)D \hat{h}^{i}_s(X^{N,i}_s,x)\br)\br) m^{N,i}_s(x) \,dx=0\,.
\end{multline}
Thus, $m^{N,i}_s(x)\,dx$ is an invariant probability for a diffusion. 
By  \eqref{eq:96},  \eqref{eq:97a} (for $\hat w_s(u,x)$ and 
$\hat v_s(u,x)$), and \eqref{eq:87}, via a similar
manipulation, 
\begin{multline*}
    \int_{\R^l}\bl(\frac{1}{2}\,\text{tr}\,(c_s(X^{N}_s,x)D^2p(x))+
D p(x)^T\bl(a_s(X^{N}_s,x)-\frac{1}{2}\,\text{div}\,c_s(X^{N}_s,x)
\\+G_s(X^{N}_s,x)^T\hat{\lambda}_s(X^{N}_s)
+c_s(X^{N}_s,x)D \hat{h}_s(X^{N}_s,x)\br)\br) \hat{m}_s(x) \,dx=0\,.
\end{multline*}
Let $\tilde{m}^{N,i}_s(x)$ represent a probability density that solves
\eqref{eq:92} for all $s\in\R_+$ rather than for $s\le
\hat\tau^{N,i}(\gamma^{N,i})$\,.  The existence of
$\tilde{m}^{N,i}_s(x)$ is established as in the proof of Lemma
\ref{le:convergence}, more specifically, see \eqref{eq:135}.
 Lemma \ref{le:convergence} implies that $\tilde{m}^{N,i}_s(x)\to
\hat m_s(x)$ in $\mathbb{L}^1([0,t]\times \R^l)$ as $i\to\infty$\,, that
\begin{subequations}
      \begin{align}
\label{eq:52}
  \lim_{i\to\infty}\int_0^t\int_{\R^l}
\norm{D \hat h^i_s(X^{N,i}_s,x)}^2_{c_s(X^{N,i}_s,x)}\,
\tilde m^{N,i}_s(x)\,dx\,ds
=\int_0^t\int_{\R^l}
\norm{D \hat h_s(X^N_s,x)}^2_{c_s(X^N_s,x)}\, \hat m_s(x)\,dx\,ds
\intertext{and that}
\label{eq:52a}
\lim_{i\to\infty}\int_0^t\norm{\hat\lambda^i_s(X^{N,i}_s)}^2_{\int_{\R^l}C_s(X^{N,i}_s,x)\,\tilde
  m^{N,i}_s(x)\,dx}\,ds=
\int_0^t\norm{\hat\lambda_s(X^N_s)}^2_{\int_{\R^l}C_s(X^N_s,x)\, \hat m_s(x)\,dx}\,ds\,.
\end{align}
\end{subequations}
By Lemma \ref{le:conv_stop},
$\hat\tau^{N,i}(\tilde\gamma^{N,i})\to
\hat\tau^{N}(\gamma^{N})$ as $i\to\infty$\,, where 
$\tilde\gamma^{N,i}=(X^{N,i},\tilde\mu^{N,i})$ and
$\tilde\mu^{N,i}(dx,ds)=\tilde m^{N,i}_s(x)\,dx\,ds$\,.
Since $\hat\tau^{N,i}(\tilde\gamma^{N,i})=\hat\tau^{N,i}(\gamma^{N,i})$, we
obtain that $\hat\tau^{N,i}(\gamma^{N,i})\to 
\tau^{N}(\gamma^{N})$ and that $m^{N,i}_s(x)\to
\hat m_s(x)$ in 
$\mathbb{L}^1([0,\tau^{N}(\gamma^{N})]\times \R^l)$\,,
so $\mu^N_s(dx)=\hat m_s(x)\,dx$ for almost 
all $s\le\tau^{N}(\gamma^{N})$\,.

We now use the fact that  the supremum 
in \eqref{eq:85} over $\lambda$ for $h(x)=\hat{h}^{i}_s(X^{N,i}_s,x)$
is attained at 
$\lambda=\hat{\lambda}_s^{i}(X^{N,i}_s)$\,.
If $C_t(u,x)=0$ 
and $A_t(u,x)$ is locally Lipschitz continuous in  $u$ locally uniformly in
$t$ and uniformly in $x$\,, then $\hat{\lambda}_s^{i}(X^{N,i}_s)=0$\,, so
$\dot{X}^{N,i}_s=\int_{\R^l}A_s(X^{N,i}_s,x)m^{N,i}_s(x)\,dx$\,,
which, as in the proof of Lemma \ref{le:zero},
implies since $X^{N,i}\to X^N$ in $\mathbb{C}(\R_+,\R^n)$ and 
$(m^{N,i}_s(x))\to (\hat m_s(x))$ in $\mathbb{L}^1([0,\tau^N(\gamma^N)]\times \R^l)$
as $i\to\infty$\, that
$\dot{X}^{N}_s
=\int_{\R^l}A_s(X^{N}_s,x)\hat m_s(x)\,dx$ a.e. for
$s\le \tau^N(\gamma^N)$\,. By uniqueness, $X^N_s=\hat X_s$
for
$s\le \tau^N(\gamma^N)$\,. As a byproduct, $\dot X^{N,i}_s\to \dot{ \hat
  X}_s$ as $i\to\infty$ a.e. on $[0,\tau^N(\gamma^N)]$\,.

Suppose that  $C_t(u,x)-G_t(u,x)c_t(u,x)^{-1}G_t(u,x)^T$
 is positive definite locally uniformly in $(t,u)$ and uniformly in
 $x$\,.
Then the maximisation condition is 
\begin{multline*}
    \dot{X}^{N,i}_s=\int_{\R^l}A_s(X^{N,i}_s,x)m^{N,i}_s(x)\,dx
+\int_{\R^l}
 G_s(X^{N,i}_s,x)D \hat{h}^{i}_s(X^{N,i}_s,x)\,m^{N,i}_s(x)\,dx
\\+\int_{\R^l}
 C_s(X^{N,i}_s,x)\hat{\lambda}_s^{i}(X^{N,i}_s) m^{N,i}_s(x)\,dx\,.
\end{multline*}
On integrating both sides from $0$ to $t$ and 
letting $i\to\infty$\,,
we have by the facts that $\gamma^{N,i}\to\gamma^N$\,, 
that $X^{N,i}\to X^N$\,,
 that
 $m^{N,i}_s(x)\to \hat m_s(x)$ 
 in $\mathbb{L}^1([0,\tau^N(\gamma^N)]\times \R^l)$\,, and that 
$\hat\lambda^i_s(u)\to
\hat\lambda_s(u)$  locally uniformly in $(s,u)$
as $i\to\infty$\,(see \eqref{eq:136}),
by \eqref{eq:35},  by \eqref{eq:155}, and by \eqref{eq:87}
  that,
 for almost all $s\le \tau^N(\gamma^N)$\,,
   \begin{multline}
\label{eq:163}
      \dot{X}^N_s=
\int_{\R^l} A_s(X^N_s,x)\hat{m}_s(x)\,dx
+\int_{\R^l}G_s(X^N_s,x)\,\bl(\frac{D\hat m_s(x)}{2\hat m_s(x)}
-D \hat{w}_s(X^N_s,x)\br)
\hat{m}_s(x)\,dx\\
+\int_{\R^l}
\bl( C_s(X^N_s,x)-G_s(X^N_s,x)\,
 D \hat{v}_s(X^{N}_s,x)\br)
  \hat{m}_s(x)\,dx\,\hat{\lambda}_s(X^N_s)\,.  
\end{multline}
Since $D\hat v_s(u,\cdot)\in
\mathbb{L}_0^{1,2}(\R^l,\R^{n\times l},c_s(x),\hat m_s(x)\,dx)$
and $G_s(u,\cdot)$ is bounded, \eqref{eq:97a}
extends to $Dp$ representing an arbitrary element of
$\mathbb{L}_0^{1,2}(\R^l,\R^l,c_t(x),\hat m_s(x)\,dx)$, so by \eqref{eq:157},
\begin{equation*}
\int_{\R^l}Q_{s,\hat m_s(\cdot)}(u,x)\hat m_s(x)\,dx=
\int_{\R^l}\bl( C_s(u,x)- G_s(u,x)
D \hat{v}_s(u,x)
\br)\,\hat m_s(x)\,dx\,.
\end{equation*}
Substitution of the latter expression in \eqref{eq:33} and
of \eqref{eq:33} into \eqref{eq:163}
obtains that   $\dot{X}^N_s=\dot{\hat{X}}_s$ a.e. on 
$[0,\tau^{N}(\gamma^{N})]$, so on recalling
that $X^N_0=\hat{X}_0=\hat{u}$ we  conclude that
$X^N_s=\hat{X}_s$ for $s\le \tau^N(\gamma^N)$\,.
In addition, $\dot X^{N,i}_s\to \dot{\hat X}_s$ as $i\to\infty$ 
a.e. on $[0,\tau^N(\gamma^N)]$\,.

Hence, in either case,
 $\tau^N(\gamma^N)=\tau^N(\hat{\gamma})$ and 
$\gamma^N_s=\hat \gamma_s$ for $s\le \tau^N(\hat\gamma)$
so that
$\theta^N(\gamma^N)=\theta^N(\hat{\gamma})$\,, where
$\hat\gamma=(\hat X,\hat \mu)$\,.
We show that
\begin{equation}
  \label{eq:51}
 \theta^N(\gamma^N)=\lim_{i\to\infty} 
\hat\theta^{N,i}(\gamma^{N,i})\,.  
\end{equation}
By \eqref{eq:27} and \eqref{eq:42},
\begin{multline*}
    \hat\theta^{N,i}(\gamma^{N,i})=
 \int_0^{\hat\tau^{N,i}(\gamma^{N,i})}\Bl(
 \hat\lambda^i_s(X^{N,i}_s)^T\,\bl(\dot{X}^{N,i}_s-
\int_{\R^l} A_s(X^{N,i}_s,x)m^{N,i}_s(x)\,dx\,\br)\\
-\frac{1}{2}\,
 \norm{\hat\lambda^i_s(X^{N,i}_s)}_{\int_{\R^l}  C_s(X^{N,i}_s,x)\,m^{N,i}_s(x)\,dx}^2
+\frac{1}{2}\,\int_{\R^l} \norm{D\hat h^i_s(X^{N,i}_s,x)}_{c_s(X^{N,i}_s,x)}^2
\,m^{N,i}_s(x) 
\,dx\br)\,ds\,.
\end{multline*}
Similarly,
\begin{multline*}
      \theta^N(\gamma^N)= \int_0^{\tau^N(\gamma^N)}\Bl(
\hat \lambda_s(X^N_s)^T\,\bl(\dot{X}^N_s-
\int_{\R^l} A_s(X^N_s,x)\hat m_s(x)\,dx\,\br)\\
-\frac{1}{2}\,
 \norm{\hat\lambda_s(X^N_s)}_{\int_{\R^l}  C_s(X^N_s,x)\,\hat m_s(x)\,dx}^2
+\frac{1}{2}\,\int_{\R^l} \norm{D \hat h_s(X^N_s,x)}_{c_s(X^N_s,x)}^2
\,\hat m_s(x) 
\,dx\br)\,ds\,.
\end{multline*}
On recalling convergences
\eqref{eq:52} and \eqref{eq:52a} which are locally uniform in $t$\,, 
the fact that $\tilde m_s^{N,i}(x)=m^{N,i}_s(x)$ for $s\le
\tau^{N,i}(\gamma^{N,i})$\,, and the convergences
$\hat\tau^{N,i}(\gamma^{N,i})\to
\tau^{N}(\gamma^N)$\,, $\gamma^{N,i}\to \gamma^N$\,,
  for \eqref{eq:51}, it remains to check
 that
  \begin{align}
    \label{eq:53}
\lim_{i\to\infty}     \int_0^{\hat\tau^N(\gamma^{N,i})}
 \hat\lambda^i_s(X^{N,i}_s)^T\,\bl(\dot{X}^{N,i}_s-\notag
\int_{\R^l} A_s(X^{N,i}_s,x)m^{N,i}_s(x)\,dx\br)\,ds\\=
\int_0^{\tau^N(\gamma^N)}
 \hat\lambda_s(X^N_s)^T\,\bl(\dot{X}^N_s-
\int_{\R^l} A_s(X^N_s,x)\hat m_s(x)\,dx\br)\,ds\,.
  \end{align}
The convergences $\hat\tau^{N,i}(\gamma^{N,i})\to
\tau^{N}(\gamma^N)$\,, $\gamma^{N,i}\to \gamma^N$\,, and
$\dot X^{N,i}_s\to \dot{ 
  X}_s^N$ for almost all $s< \tau^N(\gamma^N)$\,,
 imply that the
$\ind_{\{[0, \hat\tau^{N,i}(\gamma^{N,i})]}(s) \hat
\lambda^i_s(X^{N,i}_s)^T
\,\bl(\dot{X}^{N,i}_s-
\int_{\R^l} A_s(X_s^{N,i},x)m^{N,i}_s(x)\,dx\,\br)$ converge
to
$\ind_{\{[0, \tau^{N}(\gamma^N)]}(s) \hat
\lambda_s(X^N_s)^T
\,\bl(\dot{X}^N_s-\linebreak
\int_{\R^l} A_s(X^N_s,x)\hat m_s(x)\,dx\,\br)$ as $i\to\infty$
for almost all $s$\,. Since the $\hat\lambda^i_s(u)$ are bounded uniformly in
$i$ and locally uniformly in $(s,u)$\,,
the  uniform integrability needed to derive \eqref{eq:53} follows by the bound
$\sup_{\gamma\in K_\delta}\int_0^N\abs{\dot{ X}_s-
\int_{\R^l} A_s( X_s,x)m_s(x)\,dx}^2\,ds<\infty$\,, which is a consequence
of \eqref{eq:55}.

By \eqref{eq:11}, \eqref{eq:51}, and part 1 of Theorem \ref{the:id}, 
$\mathbf{I}^{\ast\ast}(\gamma^N)=
\theta^{N}(\gamma^N)
 = \tilde{\mathbf{I}}(\gamma^{N})$\,. (Alternatively, one can follow
 the proof of part 1 of Theorem \ref{the:id} by letting $i\to\infty$
 in  \eqref{eq:11} to obtain that
$\mathbf{I}^{\ast\ast}(\gamma^N)\ge
\theta^{N}(\gamma^N)\ge
\tilde{\mathbf{I}}(\gamma^{N})$\,.)
Therefore, $\mathbf{I}^{\ast\ast}(\hat{\gamma})\ge\theta^N(\hat{\gamma})=
 \theta^{N}(\gamma^{N})= \tilde{\mathbf{I}}(\gamma^{N})$\,.
Let $\pi_t(\gamma)$\,, where $\gamma=(X,\mu)$\,,
denote  the projection $((X_{s\wedge t},\mu_{s\wedge t}(\cdot)),s\in\R_+) $\,.
We have that
\begin{equation*}
    \tilde{\mathbf{I}}(\gamma^{N})
\ge \inf_{\gamma:\,\pi_{\tau^N(\gamma^N)}( \gamma)
=\pi_{\tau^N(\gamma^N)}(\gamma^N)}
\tilde{\mathbf{I}}(\gamma)\\=\inf_{\gamma:\,\pi_{\tau^N(\hat{\gamma})}
 (\gamma )
=\pi_{\tau^N(\hat\gamma)}(\hat\gamma)}
\tilde{\mathbf{I}}(\gamma)\,.
\end{equation*}
The sets $\pi_{\tau^N(\hat\gamma)}^{-1}
\bl(\pi_{\tau^N(\hat \gamma)}(\hat \gamma)\br)$ are closed and decrease
to $\hat{\gamma}$ 
as $N\to\infty$\,, so the rightmost side converges to $\tilde{\mathbf
I}(\hat \gamma)$\,, by $\tilde{\mathbf{I}}$ being lower compact.
We conclude that
$\mathbf{I}^{\ast\ast}(\hat{\gamma})\ge
\tilde{\mathbf{I}}(\hat{\gamma})$\,,
so $\mathbf{I}^{\ast\ast}(\hat{\gamma})=
\tilde{\mathbf{I}}(\hat{\gamma})$\,.

\section{Approximating  the large deviation function}
\label{sec:appr-large-devi}
By Theorem \ref{the:id}, in order to complete the proof of Theorem
\ref{the:ldp}, it remains to  establish  an approximation theorem for
$\mathbf{I}^{\ast\ast}$ along the lines of
 part 2
of Theorem \ref{the:id}. We   state it next. 
\begin{theorem}
  \label{the:appr}
Suppose that
 conditions \ref{con:coeff} --
  \ref{con:int_conv},
\eqref{eq:116}, 
 and \eqref{eq:82} hold.
If $ \mathbf{I}^{\ast\ast}(X,\mu)<\infty\,$\,, then
there exists  sequence $(X^{(j)},\mu^{(j)})$ 
whose members  satisfy the
requirements on $(\hat X,\hat \mu)$ in the statement of 
Theorem \ref{the:iden_reg} such
that $(X^{(j)},\mu^{(j)})\to (X,\mu)$ and
  $\mathbf{I}^{\ast\ast}(X^{(j)},\mu^{(j)})\to
\mathbf{I}^{\ast\ast}(X,\mu)$   as $j\to\infty$\,.
\end{theorem}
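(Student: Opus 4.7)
The plan is to construct $(X^{(j)},\mu^{(j)})$ in three successive approximation stages. By Theorem~\ref{le:vid}, $\mathbf{I}^{\ast\ast}(X,\mu)<\infty$ gives $\mu(ds,dx)=m_s(x)\,dx\,ds$ with $m_s\in\mathbb{P}(\R^l)$ and $\sqrt{m_s(\cdot)}\in\mathbb W^{1,2}_{\text{loc}}(\R^l)$ almost everywhere, together with $\int_0^t\int_{\R^l}|Dm_s(x)|^2/m_s(x)\,dx\,ds<\infty$ and $\int_0^t\int_{\R^l}|\Phi_{s,m_s(\cdot),X_s}(x)|^2\,m_s(x)\,dx\,ds<\infty$ for every $t$. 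These bounds, together with \eqref{eq:55}, imply also $\int_0^t|\dot X_s|^2\,ds<\infty$ locally. I will keep $X_0=\hat u$ throughout.

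First, I would introduce the exponential tail. For each $r\in\N$, $\alpha>0$ large (to be fixed later, depending only on the coefficients as in Theorem~\ref{the:iden_reg}) and $\hat\eta$ as in Condition~\ref{con:int_conv}, put
\begin{equation*}
m^{r}_s(x)=M^{r}_s\bl(m_s(x)\hat\eta^2(|x|/r)+e^{-\alpha|x|}(1-\hat\eta^2(|x|/r))\br),
\end{equation*}
with $M^{r}_s$ the normalising constant. One checks that $M^{r}_s\to1$ in $\mathbb L^1_{\text{loc}}(\R_+)$ and that $\mu^{r}(ds,dx)=m^{r}_s(x)\,dx\,ds\to\mu$ in $\mathbb C_\uparrow(\R_+,\mathbb M(\R^l))$. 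To keep the corresponding $\mathbf I^{\ast\ast}$ finite and absolutely continuous to $\mathbf I^{\ast\ast}(X,\mu)$, I would use Lemma~\ref{le:zero} on any time slice where the truncation breaks down (e.g.\ past a stopping time where the tail contribution begins to dominate), appealing to the fact that extending by the invariant‑measure solution leaves $\mathbf I^{\ast\ast}$ unchanged. The crucial convergence $\mathbf I^{\ast\ast}(X,\mu^{r})\to\mathbf I^{\ast\ast}(X,\mu)$ as $r\to\infty$ is obtained from the representation~\eqref{eq:133} by (a) writing the inner $h$-supremum as the integral against the $\eta_r$-cut test function, (b) applying Condition~\ref{con:int_conv} termwise in $\lambda$ to pass to the limit in the $\eta_r$-cutoff, and (c) using monotone/dominated convergence in the outer $\lambda$-supremum — the integrability provided by \eqref{eq:106} and Theorem~\ref{le:vid} (Remark~\ref{re:square}) makes this legitimate.

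Second, I would smooth the interior. Let $\rho_\kappa$ be a standard mollifier on $\R_+\times\R^l$ and set
\begin{equation*}
\tilde m^{r,\kappa}_s(x)=\tilde M^{r,\kappa}_s\bl((m^{r}\ast\rho_\kappa)(s,x)+\kappa\,e^{-|x|^2}\br),
\end{equation*}
then replace the interior factor of $m^{r}_s$ by $\tilde m^{r,\kappa}_s$ after re‑inserting the $\hat\eta^2(|x|/r)$ partition. The additive Gaussian perturbation $\kappa e^{-|x|^2}$ ensures local boundedness away from zero, and mollification gives $\mathbb C^1$ regularity and local $\mathbb L^\infty$‑bounds on $|D\tilde m^{r,\kappa}_s|$. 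Simultaneously I would mollify $X$ in $s$ to produce locally Lipschitz $X^{(j)}$ with $X^{(j)}\to X$ uniformly on compacta and $\dot X^{(j)}\to\dot X$ in $\mathbb L^2_{\text{loc}}(\R_+,\R^n)$. Indexing by $j=(r,\kappa)$ along a diagonal sequence yields $(X^{(j)},\mu^{(j)})$ meeting all requirements of Theorem~\ref{the:iden_reg}.

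The main obstacle will be step three: verifying $\mathbf I^{\ast\ast}(X^{(j)},\mu^{(j)})\to\mathbf I^{\ast\ast}(X,\mu)$. Lower semicontinuity of $\mathbf I^{\ast\ast}$ (immediate from its sup‑of‑continuous‑functionals definition \eqref{eq:23}) gives $\liminf_j\mathbf I^{\ast\ast}(X^{(j)},\mu^{(j)})\ge\mathbf I^{\ast\ast}(X,\mu)$. For the reverse inequality, I would feed the near‑optimisers $\hat\lambda_s$, $\hat g_s$ from \eqref{eq:89}–\eqref{eq:111} for $(X,\mu)$, truncated by $\hat\eta(|x|/r)$ and suitably mollified, as competitors in the variational representation \eqref{eq:133} for $(X^{(j)},\mu^{(j)})$. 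Convexity of $|Dm|^2/m$ in $(m,Dm)$ gives that mollification does not increase the Fisher‑type integral, while continuity of the projection maps $m\mapsto\Phi_{s,m(\cdot),u}$ and $m\mapsto\Psi_{s,m(\cdot),u}$ in the relevant weighted $\mathbb L^2$‑spaces — when $m$ converges in variation on compacta with exponential tails — controls the remaining terms. The unboundedness of $a_s(X_s,\cdot)$ and the mere local square integrability of $\Phi_{s,m_s(\cdot),X_s}$ prevent naïve dominated convergence, which is precisely why Condition~\ref{con:int_conv} is invoked to justify bringing the $r\to\infty$ limit inside the $\lambda$ and $h$ suprema.
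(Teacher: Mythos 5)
Your construction of the densities (exponential tail outside a cutoff, smoothing and a positive perturbation inside) is close in spirit to the paper's, but the two decisive steps of your plan do not work as stated. First, the direction of the key inequality is wrong. Lower semicontinuity of $\mathbf I^{\ast\ast}$ already gives $\liminf_j\mathbf I^{\ast\ast}(X^{(j)},\mu^{(j)})\ge\mathbf I^{\ast\ast}(X,\mu)$; what has to be proved is the upper bound $\limsup_j\mathbf I^{\ast\ast}(X^{(j)},\mu^{(j)})\le\mathbf I^{\ast\ast}(X,\mu)$, i.e.\ a bound on a \emph{supremum} over all competitors for the approximants. Feeding the (truncated, mollified) optimisers $\hat\lambda_s,\hat g_s$ of the limit problem into the representation \eqref{eq:133} written for $(X^{(j)},\mu^{(j)})$ only produces \emph{lower} bounds on $\mathbf I^{\ast\ast}(X^{(j)},\mu^{(j)})$, so your step three never yields the needed inequality. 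The paper instead controls the maximisers of the approximants' own variational problems: the inner supremum is attained at a unique $g^{i,j,j'}_s$ characterised by a weak elliptic identity, its weighted norm is estimated by comparing that identity with the one for the limit (the $\sqrt{Q_2}+\sqrt{Q_3}$ bound), the tail contribution $I_2^j$ is killed directly using \eqref{eq:118}, the limits are taken in a specific order ($i$, then $j'$, then $j$) with Condition \ref{con:int_conv} entering only at the last cutoff limit, and in the nondegenerate case the $\Psi$-, $G$- and $Q$-terms of \eqref{eq:55} are recovered by a polarisation in $\lambda$ plus dominated convergence --- the ``continuity of $m\mapsto\Phi_{s,m(\cdot),u},\Psi_{s,m(\cdot),u}$'' you invoke is exactly what has to be proved there, not a known fact.

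Second, your treatment of the slow component fails in the degenerate case. If $C_t(u,x)=0$, finiteness of $\mathbf I^{\ast\ast}(X^{(j)},\mu^{(j)})$ forces $\dot X^{(j)}_s=\int_{\R^l}A_s(X^{(j)}_s,x)m^{(j)}_s(x)\,dx$ exactly a.e.; mollifying $X$ in $s$ while approximating the density separately will in general violate this constraint and make the rate of every approximant infinite. The paper avoids this by re-solving the averaged ODE driven by $m^{(j)}$ (via the successive-approximation argument of Lemma \ref{le:zero}) in the case $C=0$, and by merely truncating $\dot X$ in the nondegenerate case. Two further points you would need: the truncation $m_s\wedge j'\vee 1/j'$ \emph{before} mollification is what keeps the ratio of the truncated to the mollified density bounded and lets the convexity argument dominate the Fisher-type term --- an additive Gaussian alone does not give this; and the approximants must be modified past a finite horizon (again by Lemma \ref{le:zero}) so that convergence of $\mathbf I^{\ast\ast}_t$ upgrades to convergence of $\mathbf I^{\ast\ast}$, a step you mention only vaguely and at the wrong stage.
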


\begin{proof}
Let $\mu(ds,dx)=m_s(x)\,dx\,ds$ and 
\begin{equation}
  \label{eq:166}
  k_s(x)=\frac{1}{2m_s(x)}\,
\,\text{div}\,\bl(c_s(X_s,x)m_s(x)\br)-a_s(X_s,x)\,.
\end{equation}
Since, by Theorem~\ref{le:vid},
$\int_0^t\int_{\R^l}\abs{Dm_s(x)}^2/m_s(x)\,dx\,ds<\infty$\,, for all
$t\in\R_+$\,, we have that $k_s(\cdot)\in \mathbb L^{2}_{\text{loc}}
(\R^l,\R^l,m_s(x)\,dx)$ a.e.

Let function $\eta$ be as in
Condition \ref{con:int_conv}.
 We introduce
 $\eta_r(x)=\eta(\abs{x}/r)$ and 
\begin{equation}
  \label{eq:167}
    k^{r}_s(x)=\frac{1}{2\,\eta_r^2(x)m_s(x)}\,
\,\text{div}\,\bl(c_s(X_s,x)\eta_r^2(x)m_s(x)\br)-a_s(X_s,x)\,,
\end{equation}
 where $x\in \R^l$ and $r>0$\,. 
We also let $S_r$ represent the open ball in $\R^l$ of radius $r$ centred at
the origin.

We first prove that one can 
choose $(X^{(j)},\mu^{(j)})$ of the required form that converge to
$(X,\mu)$ as $j\to\infty$ and are
such that
  $\mathbf{I}^{\ast\ast}_t(X^{(j)},\mu^{(j)})\to
\mathbf{I}^{\ast\ast}_t(X,\mu)$
 for all  $t$\,, where $\mathbf I^{\ast\ast}_t$ is
defined by \eqref{eq:37}.

Let us begin with  the case where $C_t(u,x)=0$ for all $(t,u,x)$ and
$A_t(u,x)$ is Lipschitz continuous in $u$ locally uniformly in $t$ and
uniformly in $x$\,.
By Theorem~\ref{le:vid}, 
$\dot{X}_s=\int_{\R^l}A_s(X_s,x)\,m_s(x)\,dx$ a.e., the latter
equation having a unique solution.
Let  $\rho_\kappa(x)=(1/\kappa^{l})\rho(x/\kappa)$\, for $\kappa>0$\,,  where
$ \rho(x)$ is a mollifier on $\R^l$\,. 
 We define, for $i,j,j'\in\N$ and
 $\alpha>0$\,,
 \begin{subequations}
   \begin{align}
          \label{eq:126}
    m^{i,j,j'}_s(x)= M_s^{i,j,j'}\bl(
\hat m^{i,j'}_s(x)
\eta_j^2(x)+e^{-\alpha\abs{x}}(1-\eta_j^2(x))\br)
\intertext{and}
   M_s^{i,j,j'}=\Bl(\int_{\R^l}\Bl(
\hat m^{i,j'}_{s}(x)
\eta_j^2(x)+e^{-\alpha\abs{x}}(1-\eta_j^2(x))\Br)\,dx\Br)^{-1}\,,
\label{eq:161}
   \end{align}
 \end{subequations}
where
  \begin{align}
  \label{eq:188}
  \hat m^{i,j'}_s(x)=\int_{\R^l} \rho_{1/i}(\tilde x)
\,\hat m^{j'}_{s}(x-\tilde x)\,d\tilde x\,,\qquad
\hat m^{j'}_s(x)=
 m_s(x)\wedge j'\vee\frac{1}{j'}\,.
\end{align}
We note that, thanks to Theorem \ref{le:vid}, 
$\hat m^{j'}_s\in\mathbb W_{\text{loc}}^{1,2}(\R^l)$\,.

We use Lemma \ref{le:zero} to 
define $X^{i,j,j'}$ as the solution of the equation
\begin{equation*}
\dot{X}^{i,j,j'}_s=\int_{\R^l}A_s(X^{i,j,j'}_s,x)\,m^{i,j,j'}_s(x)\,dx\,,
\end{equation*}
with $X^{i,j,j'}_0=X_0$\,. 
The densities $m_s^{i,j,j'}(x)$ are of class $\mathbb C^1$ in $x$\,,
with  bounded derivatives,
and are locally bounded away from
zero, and the $X^{i,j,j'}$ are locally Lipschitz continuous 
by Lemma \ref{le:zero}.

We introduce further
\begin{subequations}
    \begin{align}
    \label{eq:47}
   M_s^{j,j'}&=\Bl(\int_{\R^l}\bl(\hat m^{j'}_s(x)
\eta_j^2(x)+e^{-\alpha\abs{x}}(1-\eta_j^2(x))\br)\,dx\Br)^{-1},\\
    \label{eq:47a}
  m^{j,j'}_s(x)&= M_s^{j,j'}\bl(\hat m^{j'}_s(x)
\eta_j^2(x)+e^{-\alpha\abs{x}}(1-\eta_j^2(x))\br)\,,
\intertext{ and }
    \label{eq:47b}
\dot{X}^{j,j'}_s&=\int_{\R^l}A_s(X^{j,j'}_s,x)\,m^{j,j'}_s(x)\,dx\,,\;X^{j,j'}_0
=X_0\,.
\end{align}
\end{subequations}

 Let also
 \begin{subequations}
   \begin{align}
     \label{eq:175}
        M_s^j&=\Bl(\int_{\R^l}\bl(
m_s(x)
\eta_j^2(x)+e^{-\alpha\abs{x}}(1-\eta_j^2(x))\br)\,dx\Br)^{-1},\\
\label{eq:175a}
  m^{j}_s(x)&= M_s^{j}\bl(
m_s(x)
\eta_j^2(x)+e^{-\alpha\abs{x}}(1-\eta_j^2(x))\br)\,,
\intertext{ and }\notag
\dot{X}^{j}_s&=\int_{\R^l}A_s(X^{j}_s,x)\,m^{j}_s(x)\,dx\,,\;X^j_0=X_0\,.
\end{align}
 \end{subequations}

We have that
\begin{subequations}
\begin{align}
        \label{eq:72}
  \lim_{i\to\infty} M_s^{i,j,j'}= M_s^{j,j'}\,,\;
  \lim_{i\to\infty}\int_{\R^l}\abs{ m_s^{i,j,j'}(x)- m^{j,j'}_s(x)}\,dx=0\,,\;
\lim_{i\to\infty} X^{i,j,j'}_s= X^{j,j'}_s\,,\\
  \label{eq:184}
\lim_{j'\to\infty}   M_s^{j,j'}= M_s^j\,,\;
\lim_{j'\to\infty}
\int_{\R^l}\abs{ m_s^{j,j'}(x)- m^j_s(x)}\,dx=0\,,\;
\lim_{j'\to\infty} X^{j,j'}_s= X^j_s\,,
\intertext{and}
  \label{eq:184a}
  \lim_{j\to\infty} M_s^{j}= 1\,,\;
  \lim_{j\to\infty}\int_{\R^l}\abs{ m_s^{j}(x)- m_s(x)}\,dx=0\,,
\lim_{j\to\infty} X^{j}_s= X_s\,.
\end{align}
\end{subequations}
The third convergence on each line is proved by a similar compactness
argument to the one used in the proof of Lemma \ref{le:zero}.

By \eqref{eq:133}, 
  \begin{align}
  \label{eq:65}
&    \mathbf{ I}_t^{\ast\ast}(X,\mu)=
\int_0^t\sup_{h\in \mathbb{C}_0^1(\R^l)}\Bl(\int_{\R^l}\bl( D  h(x)^T \bl(
\frac{1}{2}\,\text{div}\,(c_s(X_s,x)m_s(x))\notag\\&
-a_s(X_s,x)m_s(x) \br)
-\frac{1}{2}\,
\norm{D  h(x)}_{c_s(X_s,x)}^2\,m_s(x)\br)\,dx\Br)\,ds      \intertext{and}
 &   \mathbf{ I}_t^{\ast\ast}(X^{i,j,j'},\mu^{i,j,j'})=
\int_0^t\sup_{h\in \mathbb{C}_0^1(\R^l)}\Bl(\int_{\R^l}\bl( D  h(x)^T \bl(
\frac{1}{2}\,\text{div}\,(c_s(X^{i,j,j'}_s,x)m^{i,j,j'}_s(x))\notag
\\&-a_s(X^{i,j,j'}_s,x)m^{i,j,j'}_s(x)
 \br)\notag
-\frac{1}{2}\,
\norm{D  h(x)}_{c_s(X^{i,j,j'}_s,x)}^2\,m^{i,j,j'}_s(x)\br)\,dx\Br)\,ds\,.
\end{align}

By \eqref{eq:126},
\begin{equation}
  \label{eq:162}
    \mathbf{ I}_t^{\ast\ast}(X^{i,j,j'},\mu^{i,j,j'})\le 
 M_s^{i,j,j'}\bl(
\int_0^tI_1^j(X^{i,j,j'}_s,\hat m^{i,j'}_s,s)\,ds+
\int_0^tI_2^j(X^{i,j,j'}_s,s)\,ds\br)\,,
\end{equation}
where, for generic $\tilde X_s$ and $\tilde m_s$\,,
\begin{subequations}
  \begin{align}
    \label{eq:75}
    I_1^j(\tilde X_s,\tilde m_s,s)=\sup_{h\in \mathbb{C}_0^1(\R^l)}
\int_{\R^l}\bl( D  h(x)^T \bl(
\frac{1}{2}\,\text{div}\,(c_s(\tilde X_s,x)\eta_j^2(x)\tilde m_s(x))\\
-a_s(\tilde X_s,x)\eta_j^2(x)\tilde m_s(x)
 \br)-\frac{1}{2}\,\notag
\norm{D  h(x)}_{c_s(\tilde X_s,x)}^2\,\eta_j^2(x)\tilde m_s(x)\br)\,dx
\intertext{ and}
 I_2^j(\tilde X_s,s)=
\sup_{h\in \mathbb{C}_0^1(\R^l)}\int_{\R^l}\bl( D  h(x)^T \bl(
\frac{1}{2}\,\text{div}\,(c_s(\tilde X_s,x)
e^{-\alpha\abs{x}}(1-\eta_j^2(x))\\
-a_s(\tilde X_s,x)e^{-\alpha\abs{x}}(1-\eta_j^2(x))
 \br)
-\frac{1}{2}\,
\norm{D  h(x)}_{c_s(\tilde X_s,x)}^2\,
e^{-\alpha\abs{x}}(1-\eta_j^2(x))
\br)\notag
\,dx\,. \end{align}
\end{subequations}
We prove that
\begin{equation}
  \label{eq:134}
  \lim_{i\to\infty}
  I_1^{j}(X_s^{i,j,j'},\hat m^{i,j'}_s,s)
=  I_1^{j}(X^{j,j'}_s,\hat m^{j'}_s,s)
\,.
\end{equation}

Let, in analogy with \eqref{eq:167},
\begin{equation}
  \label{eq:192}
        k^{i,j,j'}_s(x)=\frac{1}{2\,\eta_j^2(x)\hat m^{i,j'}_s(x)}\,
\,\text{div}\,\bl(c_s(X^{i,j,j'}_s,x)
\eta_j^2(x)\hat m^{i,j'}_s(x)\br)-a_s(X^{i,j,j'}_s,x)\,.
\end{equation}
This function is an element of $\mathbb
L^2(\R^l,\R^l,\eta_j^2(x)
\hat m^{i,j'}_s(x)\,dx)$\,.

The supremum in $I_1^j(X_s^{i,j,j'},\hat m^{i,j'}_s,s)$ 
is attained at a unique element 
$g^{i,j,j'}_s$ of $\mathbb L^{1,2}_0(\R^l,\R^l,\eta_j^2(x)\hat m^{i,j'}_s(x)\,dx)$
such that
\begin{multline}
  \label{eq:97}
  \int_{\R^l}Dp(x)^Tk^{i,j,j'}_s(x) \eta_j^2(x)\hat m^{i,j'}_s(x)\,dx
= \int_{\R^l}Dp(x)^Tc_s(X^{i,j,j'}_s,x)
g^{i,j,j'}_s(x)\eta_j^2(x)\hat m^{i,j'}_s(x)\,dx
\end{multline}
for all $p\in\mathbb C^{1}_0(\R^l)$ and
\begin{equation}
  \label{eq:107}
  I_1^j(X^{i,j,j'}_s,\hat m^{i,j'}_s,s)=\int_{\R^l}
\frac{1}{2}\,\norm{g^{i,j,j'}_s(x)}_{c_s(X^{i,j,j'}_s,x)}^2
\hat m^{i,j'}_s(x)\eta_j^2(x)
\,dx\,.
\end{equation}
Similarly, the supremum in $I_1^j(X_s^{j,j'},\hat m^{j'}_s,s)$ 
is attained at a unique element 
$g^{j,j'}_s$ of $\mathbb L^{1,2}_0(\R^l,\R^l,\eta_j^2(x)\hat m^{j'}_s(x)\,dx)$
such that
\begin{equation}
  \label{eq:176}
    \int_{\R^l}Dp(x)^Tk^{j,j'}_s(x)m^{j'}_s(x) \eta_j^2(x)\,dx
= \int_{\R^l}Dp(x)^Tc_s(X^{j}_s,x)
 g^{j,j'}_s(x) m^{j'}_s(x)\eta^2_j(x)\,dx
\end{equation}
and
\begin{equation}
  \label{eq:160}
    I_1^j(X^{j,j'}_s,\hat m^{j'}_s,s)=\int_{\R^l}
\frac{1}{2}\,\norm{g^{j,j'}_s(x)}_{c_s(X^{j,j'}_s,x)}^2
\hat m^{j'}_s(x)\eta_j^2(x)
\,dx\,,
\end{equation}
where
\begin{equation}
  \label{eq:127}
          k^{j,j'}_s(x)=\frac{1}{2\,\eta_j^2(x)\hat m^{j'}_s(x)}\,
\,\text{div}\,\bl(c_s(X^{j,j'}_s,x)
\eta_j^2(x)\hat m^{j'}_s(x)\br)-a_s(X^{j,j'}_s,x)\,.
\end{equation}
Let
\begin{align*}
  Q_1&=\int_{\R^l}\norm{Dp(x)}^2_{c_s(X_{s}^{i,j,j'},x)}
\hat m^{i,j'}_s(x)\eta_j^2(x)\,dx\,,&\\
Q_2&=\int_{\R^l}\norm{k_s^{i,j,j'}(x)\,\hat
    m^{i,j'}_s(x) -k_s^{j,j'}(x)
\hat m^{j'}_s(x)}^2_{c_s(X^{i,j,j'}_{s},x)^{-1}}
\frac{\eta_j^2(x)}{\hat m^{i,j'}_s(x)}\,dx\,,\intertext{ and }
Q_3&=\int_{\R^l}\norm{c_s(X_{s}^{j,j'},x)
g^{j,j'}_s(x)}^2_{c_s(X_{s}^{i,j,j'},x)^{-1}}\,\frac{\hat
    m^{j'}_s(x)^2\eta_j^2(x)}{\hat m^{i,j'}_s(x)}\,dx\,.&
\end{align*}
By \eqref{eq:97} and \eqref{eq:176}, we have that
\begin{multline*}
  \int_{\R^l}Dp(x)^Tc_s(X_{s}^{i,j,j'},x)g^{i,j,j'}_s(x)
\hat m^{i,j'}_{s}(x)\,\eta_j^2(x)\,dx
\\=\int_{\R^l}Dp(x)^T\bl(
k_s^{i,j,j'}(x)\,\hat m^{i,j'}_s(x)-
 k_s^{j,j'}(x)\,
\hat m^{j'}_{s}(x)\br)\eta_j^2(x)\,dx\\
+\int_{\R^l}Dp(x)^T
c_s(X_{s}^{j,j'},x)g_s^{j,j'}(x)\hat m^{j'}_s(x)\eta_j^2(x)\,dx\le
\sqrt{Q_1}\,\sqrt{Q_2}+\sqrt{Q_1}\,\sqrt{Q_3}\,.
\end{multline*}
Hence,
\begin{multline*}
  \sqrt{\int_{\R^l}\norm{g^{i,j,j'}_s(x)}^2_{
c_s(X_{s}^{i,j,j'},x)}\hat m^{i,j'}_{s}(x)\,\eta_j^2(x)\,dx}\\=
\sup_{p\in\mathbb C_0^1(\R^l):\,Q_1\le 1}
\int_{\R^l}Dp(x)c_s(X_{s}^{i,j,j'},x)
g^{i,j,j'}_s(x)\hat m^{i,j'}_{s}(x)\,\eta_j^2(x)\,dx
\le\sqrt{Q_2}+\sqrt{Q_3}\,.
\end{multline*}
By \eqref{eq:107},  for arbitrary $\kappa>0$\,,
\begin{equation*}
I_1^j(X^{i,j,j'}_s,\hat m^{i,j'}_s,s)
\le \frac{1}{2}\,\bl(1+\frac{1}{\kappa}\br)Q_2
+\frac{1}{2}\,(1+\kappa)Q_3\,.
\end{equation*}

By  \eqref{eq:188},
$\norm{\hat{m}^{i,j'}_s-\hat m^{j'}_s}_{\mathbb
  W^{1,2}(S_{2j})}\to 0$ as $i\to\infty$, see, e.g.,
 Lemma 3.16 on p.66 in Adams
  and Fournier \cite{MR56:9247},  so, on recalling \eqref{eq:192},
  \eqref{eq:127}, and Condition 
\ref{con:positive}, we have that
 $Q_2\to0$ as $i\to\infty$\,.
The integrand in $Q_3$ tends to 
$\norm{g^{j,j'}_s(x)}^2_{c_s(X_{s}^{j,j'},x)}\hat
    m^{j'}_s(x)\eta_j^2(x)$ in Lebesgue measure, see \eqref{eq:72}. Since 
the function $\hat m^{j'}_s(x)/\hat m_s^{i,j'}(x)$ 
is  bounded  in  $x$ and  $i$\,, by dominated
convergence, 
$  Q_3$ converges to $\int_{\R^l}\norm{g^{j,j'}_s(x)}^2_{c_s(X_{s}^{j,j'},x)}\hat
    m^{j'}_s(x)\eta_j^2(x)\,dx\,$ as $i\to\infty$ so that, on recalling
  \eqref{eq:160},
\begin{equation*}
     \limsup_{i\to\infty}
  I_1^{j}(X_s^{i,j,j'},\hat m^{i,j'}_s,s)
\le  I_1^{j}(X^{j,j'}_s,\hat m^{j'}_s,s)
\,.
\end{equation*}  
On the other hand, by \eqref{eq:75} and integration by parts,
\begin{multline*}
  I_1^j(X^{i,j,j'}_s,\hat m^{i,j'}_s,s)=
\sup_{h\in \mathbb{C}_0^2(\R^l)}\int_{\R^l}
\bl(-\frac{1}{2}\, \text{tr}\,(
c_s(X^{i,j,j'}_s,x)D^2  h(x))
-D  h(x)^Ta_s(X^{i,j,j'}_s,x)
\\-\frac{1}{2}\,
\norm{D  h(x)}_{c_s(X^{i,j,j'}_s,x)}^2\br)
\hat m^{i,j'}_s(x)\,\eta_j^2(x)
\,dx
\end{multline*}
and a similar representation holds for $I_1^j(X^{j,j'}_s,\hat m^{j'}_s,s)$\,,
which facts imply, in view of       \eqref{eq:72} and the continuity
properties in Condition \ref{con:coeff},
  that
  \begin{equation}
    \label{eq:190}
         \liminf_{i\to\infty}
  I_1^{j}(X_s^{i,j,j'},\hat m^{i,j'}_s,s)
\ge  I_1^{j}(X^{j,j'}_s,\hat m^{j'}_s,s)
\,.
\end{equation}
We have  proved \eqref{eq:134}.
We now show that integrals with respect to $s$ converge too.
Let us note that, by \eqref{eq:97} and \eqref{eq:107}, 
\begin{equation*}
I_1^{j}(X^{i,j,j'}_s,\hat m^{i,j'}_s,s)\le
\int_{\R^l}
\frac{1}{2}\,\norm{k_s^{i,j,j'}(x)}_{c_s(X^{i,j,j'}_s,x)^{-1}}^2
\hat m^{i,j'}_s(x)\eta_j^2(x)
\,dx\,,
\end{equation*}
so, by  \eqref{eq:192},  and conditions \ref{con:coeff} and \ref{con:positive}
there exists $M>0$ such that
\begin{equation*}
  I_1^{i,j,j'}(X^{i,j,j'}_s,\hat m^{i,j'}_s,s)\le M\bl(1+
\int_{\R^l}\frac{\abs{D\hat m^{i,j'}_s(x)}^2}{\hat m^{i,j'}_s(x)}\,\eta_j^2(x)\,dx\br)\,.
\end{equation*}
Accounting for \eqref{eq:188} we have that
\begin{multline*}
\frac{1}{2}\,  \frac{\abs{D\hat m^{i,j'}_s(x)}^2}{\hat m^{i,j'}_s(x)}=
\sup_{y\in\R^l}\bl(y^TD\hat m^{i,j'}_s(x)-\frac{1}{2}\,\abs{y}^2\hat
m^{i,j'}_s(x)\br)\\ \le \int_{\R^l} \rho_{1/i}(\tilde x)
\sup_{y\in\R^l}\bl(y^TD\hat m^{j'}_s(x-\tilde x)-\frac{1}{2}\,\abs{y}^2\hat
m^{j'}_s(x-\tilde x)\br)\,d\tilde x=
\frac{1}{2}\,\int_{\R^l} \rho_{1/i}(\tilde x)
\frac{\abs{D\hat m^{j'}_s(x-\tilde x)}^2}{\hat m^{j'}_s(x-\tilde x)}\,d\tilde x\,.
\end{multline*}
Therefore, recalling the definition of $\hat m_s^{j'}(x)$ in
\eqref{eq:188}, 
\begin{equation*}
    \int_{\R^l}\frac{\abs{D\hat m^{i,j'}_s(x)}^2}{\hat
    m^{i,j'}_s(x)}\,\eta_j^2(x)\,dx
\le \int_{\R^l}\frac{\abs{D\hat m^{j'}_s(x)}^2}{\hat m^{j'}_s(x)}\,dx
\le\int_{\R^l}\frac{\abs{D m_s(x)}^2}{ m_s(x)}\,dx\,.
\end{equation*}
Since $\int_0^t \int_{\R^l}\abs{D m_s(x)}^2/
  m_s(x)\,dx\,ds<\infty$ by Theorem \ref{le:vid},
\eqref{eq:134} and
  Fatou's lemma yield the convergence
  \begin{equation}
    \label{eq:177}
     \lim_{i\to\infty}
 \int_0^t I_1^{j}(X^{i,j,j'}_s,\hat m^{i,j'}_s,s)
\,ds
= \int_0^t I_1^{j}(X^{j,j'}_s,\hat m^{j'}_s,s)
\,ds
\,.
\end{equation}

Let us show that
\begin{equation}
  \label{eq:189}
      \lim_{j'\to\infty}\int_0^t
I_1^j(X^{j,j'}_s,\hat m^{j'}_s,s)\,ds
= \int_0^t
I_1^j(X^{j}_s, m_s,s)\,ds\,.
\end{equation}

We have that
\begin{multline*}
  \abs{I_1^{j}(X^{j,j'}_s,\hat m^{j'}_s,s)-
 I_1^{j}(X^{j,j'}_s, m_s,s)}\\
\le\frac{1}{2}\, \int_{\R^l}
\bl(
\norm{\frac{1}{2\eta_j^2(x) m_s(x)}\,
\text{div}\,(c_s(X^{j,j'}_s,x)\eta_j^2(x) m_s(x))
-a_s(X^{j,j'}_s,x)}^2_{c_s(X^{j,j'}_s,x)^{-1}}\\
+
\norm{\frac{1}{2\eta_j^2(x) }\,
\text{div}\,(c_s(X^{j,j'}_s,x)\eta_j^2(x) )
-a_s(X^{j,j'}_s,x)}^2_{c_s(X^{j,j'}_s,x)^{-1}}\br)
\eta_j^2(x) m_s(x) (1-\ind_{[1/j',j']}(m_s(x)))\,dx\,,
\end{multline*}
so,  by dominated convergence,
\begin{equation}
  \label{eq:183}
  \lim_{j'\to\infty}\int_0^t\abs{I_1^{j}(X^{j,j'}_s,\hat m^{j'}_s,s)-
 I_1^{j}(X^{j,j'}_s, m_s,s)}\,ds=0\,.
\end{equation}
Let 
 $\vartheta>0$ be such that
 $\norm{y}_{  c_s(u,x)}^2\ge \vartheta \abs{y}^2$\,, for all
 $s\in[0,t]$\,,
 for all $u$ from a large enough ball,  all $x$\,, and all $y$\,.
By the convergence of $X^{j,j'}$ to $X^j$ as $j'\to\infty$\,, the continuity
of $c_s(u,x)$ in $u$ locally uniformly in $s$ and uniformly in $x$\,, 
and by $c_s(u,x)$ being positive definite uniformly in $x$ and locally
uniformly in $(s,u)$,
given arbitrary
$\delta\in(0,1)$ and $\kappa\in(0,1)$\,, for all $j'$ great enough, 
locally uniformly in $s$\,,
\begin{multline}
    \label{eq:185}
I_1^j(X^{j,j'}_s,m_s,s)
\le   \sup_{h\in \mathbb{C}_0^1(\R^l)}\int_{\R^l}\bl( D  h(x)^T 
k^{j}_s(x)
-\frac{1}{2}\,
(1-\delta)(1-\kappa)\norm{D  h(x)}_{c_s(X^j_s,x)}^2\br)m_s(x)\eta_j^2(x)
\,dx\\
+
\sup_{h\in \mathbb{C}_0^1(\R^l)}\Bl(\int_{\R^l}\bl( D  h(x)^T \bl(
\frac{1}{2}\,\text{div}\,((c_s(X^{j,j'}_s,x)-c_s(X^j_s,x))m_s(x)\eta_j^2(x))\\
-\bl(a_s(X^{j,j'}_s,x)-a_s(X^j_s,x)\br)m_s(x)\eta_j^2(x) \br)
-\frac{1}{2}\,
\delta(1-\kappa)\norm{D  h(x)}_{c_s(X^j_s,x)}^2\,m_s(x)\eta_j^2(x)\br)
\,dx\Br)\\
\le (1-\delta)^{-1}
(1-\kappa)^{-1} \sup_{h\in \mathbb{C}_0^1(\R^l)}\int_{\R^l}
\bl(Dh(x)^T k^{j}_s(x)
-\frac{1}{2}\,
\norm{Dh(x)}_{ c_s(X^j_s,x)}^2\br)
m_s(x)\,\eta_j^2(x)
\,dx\\
+\delta^{-1}(1-\kappa)^{-1}\frac{\vartheta^{-1}}{2}
\int_{\R^l}
\abs{
\frac{1}{2}\,
\frac{\text{div}\,\bl((c_s(X^{j,j'}_s,x)-c_s(X^j_s,x))m_s(x)\eta_j^2(x)\br)}
{m_s(x)\eta_j^2(x)}\\ -\bl(a_s(X^{j,j'}_s,x)-a_s(X^j_s,x)\br)}^2
m_s(x)\eta_j^2(x)
\,dx\,.
\end{multline}
By the convergence of $X^{j,j'}_s$ to $X^j_s$ as $j'\to\infty$\,, 
Condition 2.1 and the convergence of
$\int_0^t\int_{\R^l}\abs{Dm_s(x)}^2/m_s(x)\,dx\,ds$\,,  
the integral  from $0$ to $t$ of the second integral
on the rightmost side of \eqref{eq:185}
 tends to zero as $j'\to\infty$\,.
Therefore, by 
\eqref{eq:183}, 
\eqref{eq:167},   \eqref{eq:65}, and 
\eqref{eq:75},
\begin{equation*}
       \limsup_{j'\to\infty}\int_0^t
I_1^j(X^{j,j'}_s,\hat m^{j'}_s,s)\,ds
\le \int_0^t
I_1^j(X^{j}_s, m_s,s)\,ds
\end{equation*}
and by an analogue of \eqref{eq:190}, we obtain \eqref{eq:189}.

We now take a limit as $j\to\infty$\,.
By a similar reasoning to the one used in \eqref{eq:185},
given arbitrary
$\delta\in(0,1)$ and $\kappa\in(0,1)$\,, for all $j$ great enough, 
locally uniformly in $s$\,,
\begin{multline}
    \label{eq:69}
I_1^j(X^{j}_s,m_s,s)
\le (1-\delta)^{-2}
(1-\kappa)^{-1} \sup_{h\in \mathbb{C}_0^1(\R^l)}\int_{\R^l}
\bl(Dh(x)^T k_s(x)
-\frac{1}{2}\,
\norm{Dh(x)}_{ c_s(X_s,x)}^2\br)
m_s(x)\,\eta_j^2(x)
\,dx\\+\delta^{-1}(1-\delta)^{-1}(1-\kappa)^{-1}\frac{1}{2}\int_{\R^l}
\norm{D\eta_j(x)}^2_{c_s(X_s,x)}\,m_s(x)\,dx\\+
\delta^{-1}(1-\kappa)^{-1}\frac{\vartheta^{-1}}{2}
\int_{\R^l}
\abs{
\frac{1}{2}\,
\frac{\text{div}\,\bl((c_s(X^{j}_s,x)-c_s(X_s,x))m_s(x)\eta_j^2(x)\br)}
{m_s(x)\eta_j^2(x)}\\ -\bl(a_s(X^{j}_s,x)-a_s(X_s,x)\br)}^2
m_s(x)\eta_j^2(x)
\,dx
\end{multline}
so that, by Condition \ref{con:int_conv} (with $\lambda=0$) and  Condition
\ref{con:coeff},
we have, on recalling
 \eqref{eq:65}, that
\begin{equation}
  \label{eq:38}
    \lim_{j\to\infty}\int_0^t
I_1^j(X^{j}_s,m_s,s)\,ds
= \mathbf{ I}_t^{\ast\ast}(X,\mu)\,.
\end{equation}
Putting together \eqref{eq:177}, \eqref{eq:189}, and \eqref{eq:38}
yields the convergence
\begin{equation}
  \label{eq:191}
       \lim_{j\to\infty}\lim_{j'\to\infty}\lim_{i\to\infty}
 \int_0^t I_1^{j}(X^{i,j,j'}_s,\hat m^{i,j'}_s,s)
\,ds= \mathbf{ I}_t^{\ast\ast}(X,\mu)\,.
\end{equation}
We now show that the term $I_2^j$ is inconsequential.
On recalling that
$\abs{a_s(u,x)}$  grows at most linearly in $\abs{x}$
and   $\abs{\text{div}\,c_s(u,x)}$ and $\norm{c_s(u,x)}$ are bounded
in $x$ locally uniformly in $(s,u)$\,, we have that, for some $L>0$\,,
all $(i,j)$\,, and all $s\le t$\,, 
\begin{multline*}
I_2^j(X_s^{i,j,j'},s)
\le 
\int_{x\in\R^l:\,\abs{x}\ge j}\sup_{y\in\R^l}\Bl( y^T \bl(
\frac{1}{2}\,
\,\frac{\text{div}\,(c_s(X^{i,j,j'}_s,x)\,(1-\eta^2_j(x)))}{1-\eta^2_j(x)}
\\-\alpha c_s(X^{i,j,j'}_s,x)\,\frac{x}{2\abs{x}}-a_s(X^{i,j,j'}_s,x)
 \br)-\frac{1}{2}\,
\norm{y}_{c_s(X^{i,j,j'}_s,x)}^2\Br)\,(1-\eta^2_j(x))e^{-\alpha \abs{x}}\,dx\\
\le 
\int_{x\in\R^l:\,\abs{x}\ge j}L\bl(
1+\alpha^2+\abs{x}^2
+\frac{1}{j^2}\frac{\abs{D\eta(\abs{x}/j)}^2}{1-\eta^2(\abs{x}/j)}
 \br)e^{-\alpha \abs{x}}\,dx\,.
\end{multline*}
Since  $\eta(y)=0$ for $y\ge
2$ and \eqref{eq:118} holds, 
the latter integral
tends to 0 as $j\to\infty$\,,
 so,
\begin{equation}\label{eq:26}
\lim_{j\to\infty}\limsup_{j'\to\infty}\limsup_{i\to\infty}\int_0^t
I_2^j(X_s^{i,j,j'},s)\,ds=0\,.
\end{equation}
 By \eqref{eq:72}, \eqref{eq:184}, \eqref{eq:184a}, \eqref{eq:162},
\eqref{eq:191}, and \eqref{eq:26},
\begin{equation*}
     \limsup_{j\to\infty}\limsup_{j'\to\infty}\limsup_{i\to\infty}
  \mathbf{ I}_t^{\ast\ast}(X^{i,j,j'},\mu^{i,j,j'})\le 
\mathbf{ I}_t^{\ast\ast}(X,\mu)\,.
\end{equation*}
Thus, there exist  sequences $j'(j)\to\infty$ and 
$i(j)\to\infty$ as $j\to\infty$
such that $(X^{i(j),j,j'(j)},\mu^{i(j),j,j'(j)})\to (X,\mu)$ and
\begin{equation*}
    \limsup_{j\to\infty}  \mathbf{ I}_t^{\ast\ast}(X^{i(j),j,j'(j)},\mu^{i(j),j,j'(j)})\le 
\mathbf{ I}_t^{\ast\ast}(X,\mu)\,.   
\end{equation*}
The reverse inequality follows from the lower semicontinuity of 
$\mathbf{ I}_t^{\ast\ast}(X,\mu)$ (see \eqref{eq:133}, where we let 
$\mathbf{ I}_t^{\ast\ast}(X,\mu)=\infty$ if 
$\mathbf{ I}^{\ast\ast}(X,\mu)=\infty$), so 
\begin{equation}
  \label{eq:56}
\lim_{j\to\infty}      \mathbf{ I}_t^{\ast\ast}(X^{i(j),j,j'(j)},\mu^{i(j),j,j'(j)})=
\mathbf{ I}_t^{\ast\ast}(X,\mu)\,,
\end{equation} and one can take
$(X^{(j)},\mu^{(j)})=(X^{i(j),j,j'(j)},\mu^{i(j),j,j'(j)})$\,.

Suppose now that
$C_t(u,x)-G_t(u,x)
c_t(u,x)^{-1}G_t(u,x)^T$
 is positive definite uniformly in $x$ and
locally uniformly in $(t,u)$\,.
We proceed similarly to  the case where $C_t(u,x)=0$
and define $m^{i,j,j'}_s(x)$\,, $ M_s^{i,j,j'}$\,, $ M_s^{j,j'}$\,,
$M_s^j$\,, and $m_s^j(x)$ by the respective relations
\eqref{eq:126}, \eqref{eq:161}, \eqref{eq:188},
\eqref{eq:47}--\eqref{eq:47b}, \eqref{eq:175}, and \eqref{eq:175a}.
We let

\begin{equation}
  \label{eq:54}
\dot X^{i,j,j'}_s=\dot X^{j,j'}_s=
\dot X^{j}_s=\dot X_s\ind_{\{\abs{\dot X_s}\le j\}}(s)\,,\,
 X^{i,j,j'}_0= X^{j,j'}_0=
 X^{j}_0=X_0\,.
\end{equation}
The convergences
in  \eqref{eq:72}, \eqref{eq:184}, and \eqref{eq:184a} still hold.

Replacing $a_s(X_s,x)$ with $a_s(X_s,x)+G_s(X_s,x)^T\lambda$ 
in the proof above,
 we conclude that in analogy with \eqref{eq:56},
 there exist  sequences $i(j)\to\infty$ and $j'(j)\to\infty$
 as $j\to\infty$ such that, for all $\lambda\in\R^n$
with rational components,
  \begin{multline*}
  \lim_{j\to\infty}
\int_0^t    \sup_{h\in \mathbb{C}_0^1(\R^l)}
\int_{\R^l}\Bl( D  h(x)^T \bl(
\frac{1}{2}\,
\,\text{div}\,(c_s(X^{j}_s,x) m^{i(j),j,j'(j)}_s(x))
\\-(a_s(X^{(j)}_s,x)+G_s(X_s^{(j)},x)^T\lambda)m^{i(j),j,j'(j)}_s(x) \br)
-\frac{1}{2}\,
\norm{D  h(x)}_{c_s(X^{j}_s,x)}^2m^{i(j),j,j'(j)}_s(x)\Br)\,dx\,ds\\=
\int_0^t    \sup_{h\in \mathbb{C}_0^1(\R^l)}
\int_{\R^l}\Bl( D  h(x)^T \bl(
\frac{1}{2}\,
\,\text{div}\,(c_s(X_s,x) m_s(x))
-(a_s(X_s,x)+G_s(X_s,x)^T\lambda)m_s(x) \br)
\\-\frac{1}{2}\,
\norm{D  h(x)}_{c_s(X_s,x)}^2m_s(x)\Br)\,dx\,ds\,,
  \end{multline*}
which, in particular, implies that
  \begin{multline*}
  \lim_{j\to\infty}
   \sup_{h\in \mathbb{C}_0^1(\R^l)}
\int_{\R^l}\Bl( D  h(x)^T \bl(
\frac{1}{2}\,
\,\text{div}\,(c_s(X^{j}_s,x) m^{i(j),j,j'(j)}_s(x))\\-
(a_s(X^{j}_s,x)+G_s(X_s^{(j)},x)^T\lambda)m^{i(j),j,j'(j)}_s(x) \br)
-\frac{1}{2}\,
\norm{D  h(x)}_{c_s(X^{j}_s,x)}^2m^{i(j),j,j'(j)}_s(x)\Br)\,dx\\=
   \sup_{h\in \mathbb{C}_0^1(\R^l)}
\int_{\R^l}\Bl( D  h(x)^T \bl(
\frac{1}{2}\,
\,\text{div}\,(c_s(X_s,x) m_s(x))-
(a_s(X_s,x)+G_s(X_s,x)^T\lambda)m_s(x) \br)\\
-\frac{1}{2}\,
\norm{D  h(x)}_{c_s(X_s,x)}^2m_s(x)\Br)\,dx\,,
\end{multline*}
for almost all $s\in[0,t]$\,.

We write these convergences, relabelling $m^{i(j),j,j'(j)}$ as $m^{(j)}$
and $X^j_s$ as $X^{(j)}_s$\,, as
\begin{subequations}
  \begin{multline}
\label{eq:58}
\lim_{j\to\infty}
\int_0^t    \int_{\R^l}\norm{\frac{Dm^{(j)}_s(x)}{2m^{(j)}_s(x)}
-\Phi_{s,m^{(j)}_s(\cdot),X^{(j)}_s}(x)
- \Psi_{s,m^{(j)}_s(\cdot),X^{(j)}_s}(x)\lambda}^2_{
c_s(X^{(j)}_{s},x)}m^{(j)}_{s}(x)\,dx\,ds\\
=\int_0^t
\int_{\R^l}\norm{\frac{Dm_s(x)}{2m_s(x)}-\Phi_{s,m_s(\cdot),X_s}(x)
- \Psi_{s,m_s(\cdot),X_s}(x)\lambda}^2_{
c_s(X_{s},x)}m_{s}(x)\,dx\,ds
\end{multline}
and
\begin{multline}
\label{eq:58a}
\lim_{j\to\infty}
    \int_{\R^l}\norm{\frac{Dm^{(j)}_s(x)}{2m^{(j)}_s(x)}
-\Phi_{s,m^{(j)}_s(\cdot),X^{(j)}_s}(x)
- \Psi_{s,m^{(j)}_s(\cdot),X^{(j)}_s}(x)\lambda}^2_{
c_s(X^{(j)}_{s},x)}m^{(j)}_{s}(x)\,dx\\
=
\int_{\R^l}\norm{\frac{Dm_s(x)}{2m_s(x)}-\Phi_{s,m_s(\cdot),X_s}(x)
- \Psi_{s,m_s(\cdot),X_s}(x)\lambda}^2_{
c_s(X_{s},x)}m_{s}(x)\,dx\,,
\end{multline}
\end{subequations}
respectively.

By \eqref{eq:37} and \eqref{eq:58} with $\lambda=0$\,, it remains to prove that
\begin{multline}
  \label{eq:182}
  \lim_{j\to\infty}\int_0^t
  \norm{\dot{X}^{(j)}_{s}-\int_{\R^l}A_s(X^{(j)}_{s},x)m^{(j)}_{s}(x)\,dx
-\int_{\R^l}
 G_s(X^{(j)}_{s},x)\bl(\frac{Dm^{(j)}_s(x)}{2m^{(j)}_s(x)}\\-\Phi_{s,m^{(j)}_s(\cdot),X^{(j)}_s}(x)\br)
m^{(j)}_s(x)\,dx}^2_{(\int_{\R^l}
 Q_{s,m^{(j)}_s(\cdot)}(X^{(j)}_{s},x)m^{(j)}_s(x)\,dx)^{-1}}\,ds\\
=\int_0^t
  \norm{\dot{X}_{s}-\int_{\R^l}A_s(X_{s},x)m_{s}(x)\,dx-
\int_{\R^l}
 G_s(X_{s},x)\bl(\frac{Dm_s(x)}{2m_s(x)}\\-\Phi_{s,m_s(\cdot),X_s}(x)\br)
m_s(x)\,dx}^2_{(\int_{\R^l}
Q_{s,m_s(\cdot)}(X_{s},x)m_s(x)\,dx)^{-1}}\,ds\,.
\end{multline}

 On subtracting two versions of \eqref{eq:58a} for $\lambda$
 differing by a sign,
\begin{multline}
    \label{eq:60}
  \lim_{j\to\infty}\int_{\R^l}\Psi_{s,m^{(j)}_s(\cdot),X^{(j)}_s}(x)^T
c_s(X^{(j)}_{s},x)
\bl(\frac{Dm^{(j)}_s(x)}{2m^{(j)}_s(x)}-\Phi_{s,m^{(j)}_s(\cdot),X^{(j)}_s}(x)\br)
m^{(j)}_s(x)\,dx\\=\int_{\R^l}\Psi_{s,m_s(\cdot),X_s}(x)^T
c_s(X_{s},x)
\bl(\frac{Dm_s(x)}{2m_s(x)}-\Phi_{s,m_s(\cdot),X_s}(x)\br)
m_s(x)\,dx\,,
\end{multline}
so, on using \eqref{eq:151},
\begin{multline}
  \label{eq:4}
      \lim_{j\to\infty}\int_{\R^l}
 G_s(X^{(j)}_{s},x)\bl(\frac{Dm^{(j)}_s(x)}{2m^{(j)}_s(x)}-\Phi_{s,m^{(j)}_s(\cdot),X^{(j)}_s}(x)\br)
m^{(j)}_s(x)\,dx\\=\int_{\R^l}
 G_s(X_{s},x)\bl(\frac{Dm_s(x)}{2m_s(x)}-\Phi_{s,m_s(\cdot),X_s}(x)\br)
m_s(x)\,dx\,.
\end{multline}

By \eqref{eq:58a} and \eqref{eq:60},
\begin{equation*}
  \lim_{j\to\infty}\int_{\R^l} 
\norm{
\Psi_{s,m^{(j)}(\cdot),X^{(j)}_s}(x)}_{ c_s(X^{(j)}_s,x)}^2m^{(j)}_s(x)\,dx=
\int_{\R^l} 
\norm{
\Psi_{s,m(\cdot),X_s}(x)}_{ c_s(X_s,x)}^2m_s(x)\,dx\,,
\end{equation*}
so, by \eqref{eq:88},
\begin{equation}
  \label{eq:141}
       \lim_{j\to\infty}\int_{\R^l} Q_{s,m^{(j)}_s(\cdot)}(X^{(j)}_{s},x)m^{(j)}_s(x)\,dx=
\int_{\R^l}Q_{s,m_s(\cdot)}(X_{s},x)m_s(x)\,dx\,.
\end{equation}

By  \eqref{eq:54}, \eqref{eq:72}, \eqref{eq:184},  \eqref{eq:184a},
 \eqref{eq:4},
and \eqref{eq:141}, one has  pointwise convergence of the
integrands with respect to $ds$ on the lefthand side of \eqref{eq:182}
to the integrand on the righthand side.
Since the matrices $\int_{\R^l} Q_{s,m^{(j)}_s(\cdot)}(X^{(j)}_{s},x)\\m^{(j)}_s(x)\,dx$
are uniformly positive definite, 
the $j$th integrand is  bounded above
 by
 \begin{equation*}
M\bl(\abs{\dot{X}_{s}}^2+1+\int_{\R^l}\abs{
\frac{Dm^{(j)}_s(x)}{2m^{(j)}_s(x)}-\Phi_{s,m^{(j)}_s(\cdot),X^{(j)}_s}(x)}^2
m^{(j)}_s(x)\,dx\br)
\,,   
 \end{equation*}
 for some $M>0$\,. By \eqref{eq:58} and \eqref{eq:58a}, by the fact
that the right hand side of \eqref{eq:55} is finite, so
$\int_0^t\abs{\dot X_s}^2\,ds<\infty$\,,
 and by dominated convergence, we conclude that
\eqref{eq:182} holds.

We have thus proved that
in both cases there exist $(X^{(j)},\mu^{(j)})$ with needed regularity
properties that converge to
$(X,\mu)$ and are such that
 $\mathbf{I}^{\ast\ast}_t(X^{(j)},\mu^{(j)})\to
\mathbf{I}^{\ast\ast}_t(X,\mu)$ for all $t\in\R_+$\,. 
Picking a suitable subsequence, we can assume that
  $\mathbf{I}^{\ast\ast}_{j}(X^{(j)},\mu^{(j)})\le 
\mathbf{I}^{\ast\ast}_j(X,\mu)+1/j$\,.
We redefine the subsequence $(X^{(j)}_t,\mu^{(j)}_t)$ for $t\ge j$
such that $\mathbf{I}^{\ast\ast}_j(X^{(j)},\mu^{(j)})=
\mathbf{I}^{\ast\ast}(X^{(j)},\mu^{(j)})$\,,
thanks to 
  Lemma \ref{le:zero}.
The obtained sequence
 will still converge to $(X,\mu)$\,. In addition, 
$\limsup_{j\to\infty}$
$\mathbf{I}^{\ast\ast}(X^{(j)},\mu^{(j)})\le
\mathbf{I}^{\ast\ast}(X,\mu)$\,, 
which yields the assertion of Theorem~\ref{the:appr} by the lower
semicontinuity of $\mathbf{I}^{\ast\ast}$\,.
\end{proof}

\section{Proof of Theorem \ref{the:ldp}}
\label{sec:proof-theor-refth}

Suppose that 
$  \lim_{\epsilon\to0}\mathbf{P}^\epsilon(\abs{X^\epsilon_0-\hat{u}}>\kappa)^\epsilon=0
$, for arbitrary $\kappa>0$\,. Then any large deviation limit
point $\tilde{\mathbf{ I}}$ of $\mathbf P^\epsilon$ is such that 
$\tilde{\mathbf{ I}}(X,\mu)=\infty$ unless $X_0=\hat u$\,.
If  $({X},{\mu})$ is such that 
${X}_0=\hat{u}$ and
$\mathbf{I}^{\ast\ast}(X,\mu)<\infty$\,, by
Theorems
 \ref{the:equation},
 \ref{the:iden_reg},
 and \ref{the:appr}, there exist $(X^i,\,\mu^i)$\,, which
satisfy the hypotheses on $(\hat X,\hat \mu)$ in Theorem~\ref{the:iden_reg},
such that $\mathbf{I}^{\ast\ast}(X^i,\mu^i)=
\tilde{\mathbf{I}}(X^i,\mu^i)$,
$(X^i,\mu^i)\to(X,\mu)$ as $i\to\infty$\,,
and $\mathbf{I}^{\ast\ast}(X^i,\mu^i) \to
\mathbf{I}^{\ast\ast}(X,\mu)$ as $i\to\infty$\,.
By  Theorem \ref{the:id} (with the role of 
$\mathcal{U}$ being played by the set of functions
$U_{t\wedge\tau}^{\lambda(\cdot),f}$ in Theorem
\ref{the:equation} and with the role of $\tilde{\mathcal{U}}$ being 
played by the
set of functions $\theta^N$ in Lemma \ref{le:sup_ld_function})
and Theorem~\ref{le:vid},
$
\tilde{\mathbf{I}}(X,\mu)=\mathbf{I}^{\ast\ast}(X,\mu)=
\mathbf{I}(X,\mu)
$ for
all $(X,\mu)$\,.

 In the general setting of
Theorem~\ref{the:ldp}, let $\mathcal{L}^\epsilon_{\hat u}$ denote the regular
conditional distribution of $(X^\epsilon,\mu^\epsilon)$ given that 
$X^\epsilon_0=\hat u$\,, where $\hat u\in\R^n$ and is otherwise
arbitrary. 
By what has been proved,
 if $u^\epsilon\to\hat
u$ as $\epsilon\to0$, then the $\mathcal{L}^\epsilon_{u^\epsilon}$ obey
the LDP in
$\bC(\R_+,\R^n)\times\mathbb{C}_\uparrow(\R_+,\mathbb{M}(\R^l))$ 
with the large deviation function $\breve{\mathbf{I}}_{\hat u}$ as
 defined in the statement of Theorem~\ref{the:ldp}, where 
  $\mathbf I_0(\hat u)=0$ and $\mathbf I_0(u)=\infty$ if
 $u\not=\hat u$\,.
Since by the hypotheses of Theorem~\ref{the:ldp} the distributions of 
$X^\epsilon_0$ obey the LDP with a large deviation function
$\mathbf{I}_0$, it follows that the distributions of
$(X^\epsilon,\mu^\epsilon)$ obey the LDP with 
$\mathbf{I}(X,\mu)=\mathbf{I}_0(X_0)+\breve{\mathbf{I}}_{X_0}(X,\mu)$\,,
see, e.g., Chaganty \cite{Cha97}, Puhalskii \cite{Puh95}.
 Theorem~\ref{the:ldp} has been
proved.

\appendix

\section{Appendix}
\label{sec:appendix}

\begin{proof}[Proof of Lemma \ref{le:gradient}]
Let $\eta_r(x)=\eta(\abs{x}/r)$ and
\begin{equation}
  \label{eq:200}
   k_s(x,\lambda)= c_s(X_s,x)^{-1}\bl(\frac{\text{div}\,\bl(c_s(X_s,x)m_s(x)
\br)}{2m_s(x)}\,
-\bl(a_s(X_s,x)+  G_s(X_s,x)^T\lambda\br)\br)\,.
\end{equation}
By \eqref{eq:19} and Theorem \ref{le:vid}, $k_s(\cdot,\lambda)\in
\mathbb L^2(\R^l,\R^l,c_s(x,X_s),m_s(x)\,dx)$\,, for almost all $s$\,.
We  prove that, for those $s$\,,
\begin{multline}
  \label{eq:4}
  \lim_{r\to\infty}
    \sup_{h\in \mathbb{C}_0^1(\R^l)}\int_{\R^l}\bl( D  h(x)^T
c_s(X_s,x)k_s(x,\lambda)
-\frac{1}{2}\,\norm{D
  h(x)}_{c_s(X_s,x)}^2\,
\br)\eta_r^2(x)\,m_s(x) dx 
\\=    \sup_{h\in \mathbb{C}_0^1(\R^l)}\int_{\R^l}\bl( D  h(x)^T
c_s(X_s,x)k_s(x,\lambda)
-\frac{1}{2}\,\norm{D
  h(x)}_{c_s(X_s,x)}^2
\br) m_s(x) dx\,.
\end{multline}
Since 
\begin{multline*}
  \lim_{r\to\infty}
   \int_{\R^l}\bl( D  h(x)^T
c_s(X_s,x)k_s(x,\lambda)
-\frac{1}{2}\,\norm{D
  h(x)}_{c_s(X_s,x)}^2\,
\br)\eta_r^2(x)\,m_s(x) dx 
\\=    \int_{\R^l}\bl( D  h(x)^T
c_s(X_s,x)k_s(x,\lambda)
-\frac{1}{2}\,\norm{D
  h(x)}_{c_s(X_s,x)}^2
\br) m_s(x) dx\,,
\end{multline*}
we have that
\begin{multline*}
  \liminf_{r\to\infty}
    \sup_{h\in \mathbb{C}_0^1(\R^l)}\int_{\R^l}\bl( D  h(x)^T
c_s(X_s,x)k_s(x,\lambda)
-\frac{1}{2}\,\norm{D
  h(x)}_{c_s(X_s,x)}^2\,
\br)\eta_r^2(x)\,m_s(x) dx 
\\\ge   \sup_{h\in \mathbb{C}_0^1(\R^l)}\int_{\R^l}\bl( D  h(x)^T
c_s(X_s,x)k_s(x,\lambda)
-\frac{1}{2}\,\norm{D
  h(x)}_{c_s(X_s,x)}^2
\br) m_s(x) dx\,,
\end{multline*}
so, we need to prove that
\begin{multline}
  \label{eq:204}
  \limsup_{r\to\infty}
    \sup_{h\in \mathbb{C}_0^1(\R^l)}\int_{\R^l}\bl( D  h(x)^T
c_s(X_s,x)k_s(x,\lambda)
-\frac{1}{2}\,\norm{D
  h(x)}_{c_s(X_s,x)}^2\,
\br)\eta_r^2(x)\,m_s(x) dx 
\\\le   \sup_{h\in \mathbb{C}_0^1(\R^l)}\int_{\R^l}\bl( D  h(x)^T
c_s(X_s,x)k_s(x,\lambda)
-\frac{1}{2}\,\norm{D
  h(x)}_{c_s(X_s,x)}^2
\br) m_s(x) dx\,.
\end{multline}
On denoting 
 by $H_r$  the Hilbert space
$\mathbb   L^{1,2}_0(\R^l,\R^l,c_s(x,X_s),\eta_r^2(x)m_s(x)\,dx)$\,,
one can  write \eqref{eq:204} as
\begin{multline}
  \label{eq:180}
    \limsup_{r\to\infty}
    \sup_{Dh\in H_r}\int_{\R^l}\bl( D  h(x)^T
c_s(X_s,x)k_s(x,\lambda)
-\frac{1}{2}\,\norm{D
  h(x)}_{c_s(X_s,x)}^2\,
\br)\eta_r^2(x)\,m_s(x) dx 
\\\le   \sup_{Dh\in \mathbb{L}_0^{1,2}(\R^l,\R^l,c_s(x,X_s),m_s(x)\,dx)}\int_{\R^l}\bl( D  h(x)^T
c_s(X_s,x)k_s(x,\lambda)
-\frac{1}{2}\,\norm{D
  h(x)}_{c_s(X_s,x)}^2
\br) m_s(x) dx\,.
\end{multline}
Given $R>0$ and $\kappa\in(0,1)$\,, for all $r$ great enough,
\begin{multline}
  \label{eq:194}
  \sup_{Dh\in H_r}\int_{\R^l}\bl( D  h(x)^T
c_s(X_s,x)k_s(x,\lambda)
-\frac{1}{2}\,\norm{D
  h(x)}_{c_s(X_s,x)}^2\,
\br)\eta_r^2(x)\,m_s(x) dx\\
\le \sup_{Dh\in H_r}\int_{\R^l}\bl( D  h(x)^T
c_s(X_s,x)k_s(x,\lambda)\ind_{\{\abs{x}\le R\}}(x)
-\frac{1-\kappa}{2}\,\norm{D
  h(x)}_{c_s(X_s,x)}^2\,
\br)\eta_r^2(x)\,m_s(x) dx\\
+\int_{\R^l}\sup_{y\in\R^l}\bl( y^T
c_s(X_s,x)k_s(x,\lambda)\ind_{\{\abs{x}>R\}}(x)
-\frac{\kappa}{2}\,\norm{y}_{c_s(X_s,x)}^2\,
\br)\eta_r^2(x)\,m_s(x) dx\\
=\frac{1}{1-\kappa}\,
\sup_{Dh\in H_r}\int_{\R^l}\bl( D  h(x)^T
c_s(X_s,x)k_s(x,\lambda)\ind_{\{\abs{x}\le R\}}(x)
-\frac{1}{2}\,\norm{D
  h(x)}_{c_s(X_s,x)}^2\,
\br)\eta_r^2(x)m_s(x)\, dx
\\+\frac{1}{2\kappa}\,\int_{\R^l}\norm{k_s(x,\lambda)}^2_{c_s(X_s,x)}
\ind_{\{\abs{x}>R\}}(x)
\eta_r^2(x)m_s(x)\, dx\\
\le
\frac{1}{1-\kappa}\,
\sup_{Dh\in H_r}\int_{\R^l}\bl( D  h(x)^T
c_s(X_s,x)k_s(x,\lambda)\ind_{\{\abs{x}\le R\}}(x)
-\frac{1}{2}\,\norm{D
  h(x)}_{c_s(X_s,x)}^2\,
\eta_r^2(x)\br)m_s(x)\, dx
\\+\frac{1}{2\kappa}\,\int_{\R^l}\norm{k_s(x,\lambda)}^2_{c_s(X_s,x)}
\ind_{\{\abs{x}>R\}}(x)
m_s(x)\, dx\,.
\end{multline}
If $\norm{Dh(\cdot)}_{\mathbb
  L^2(\R^l,\R^l,c_s(x,X_s),\eta_r^2(x)
m_s(x)\,dx)}>2\norm{k_s(x,\lambda)}_{\mathbb
  L^2(\R^l,\R^l,c_s(x,X_s),m_s(x)\,dx)}$\,, then, by the
Cauchy--Schwarz inequality, the first integral on the rightmost side of
\eqref{eq:194} is negative.
Therefore, on
denoting by $K_r$ the subset of $H_r$ of (the equivalence classes of)
 functions $Dh$ such that
$\norm{Dh}_{\mathbb
  L^2(\R^l,\R^l,c_s(x,X_s),\eta_r^2(x)
m_s(x)\,dx)}\le 2\norm{k_s(x,\lambda)}_{\mathbb
  L^2(\R^l,\R^l,c_s(x,X_s),m_s(x)\,dx)}$\,, we have that
\begin{multline}
  \label{eq:181}
  \sup_{Dh\in H_r}\int_{\R^l}\bl( D  h(x)^T
c_s(X_s,x)k_s(x,\lambda)\ind_{\{\abs{x}\le R\}}(x)
-\frac{1}{2}\,\norm{D
  h(x)}_{c_s(X_s,x)}^2\,
\eta_r^2(x)\br)m_s(x)\, dx \\
=\sup_{Dh\in K_r}\int_{\R^l}\bl( D  h(x)^T
c_s(X_s,x)k_s(x,\lambda)\ind_{\{\abs{x}\le R\}}(x)
-\frac{1}{2}\,\norm{D
  h(x)}_{c_s(X_s,x)}^2\,
\eta_r^2(x)\br)m_s(x)\, dx \,.
\end{multline}
Being  closed subspaces of the reflexive Banach spaces
$\mathbb{L}^{2}(\R^l,\R^l,c_s(x,X_s),\eta_r^2(x)m_s(x)\,dx)$\,, the spaces 
$H_r$ are reflexive Banach spaces
in their own right. The sets $K_r$ are
 weakly compact subsets of the $H_r$\,, being convex, bounded, and
 closed.
The  $K_r$ make up an  inverse system of compact Hausdorff spaces
with   identifications of elements of $K_r$ as elements
of $K_{r'}$\,, for $r>r'$\,,
as bonding maps $\pi_{rr'}$\,. The inverse limit of the $K_r$\,, as $r\to\infty$\,,
which we denote by $K$ is a compact Hausdorff space and can be
identified with the subspace  of $
\mathbb   L^{1,2}_0(\R^l,\R^l,c_s(x,X_s),(x)m_s(x)\,dx)$
of the elements  $Dh$ such that 
$\norm{Dh}_{\mathbb
  L^2(\R^l,\R^l,c_s(x,X_s),
m_s(x)\,dx)}\le 2\norm{k_s(x,\lambda)}_{\mathbb
  L^2(\R^l,\R^l,c_s(x,X_s),m_s(x)\,dx)}$\,. The projections from $K$
to $K_r$ are identifications of the elements of $K$ as elements of $K_r$\,.
For each $r$\,,
the integral in \eqref{eq:181} is a concave and
continuous function of $Dh$ for the norm topology on $H_r$\,,
so, it is weakly upper semicontinuous on $H_r$\,, and, hence, on $K_r$\,.
Those integrals decrease as $r$ increases and converge to
$\int_{\R^l}\bl( D  h(x)^T
c_s(X_s,x)k_s(x,\lambda)\ind_{\{\abs{x}\le R\}}(x)
-\norm{D
  h(x)}_{c_s(X_s,x)}^2/2
\br)m_s(x)\, dx$\,, as $r\to\infty$\,.
By the version of Dini's lemma in Remark \ref{re:dini}, 
\begin{multline}
  \label{eq:195}
  \lim_{r\to\infty}\sup_{Dh\in K_r}\int_{\R^l}\bl( D  h(x)^T
c_s(X_s,x)k_s(x,\lambda)\ind_{\{\abs{x}\le R\}}(x)
-\frac{1}{2}\,\norm{D
  h(x)}_{c_s(X_s,x)}^2\,
\eta_r^2(x)\br)m_s(x)\, dx =\\
\sup_{Dh\in K}\int_{\R^l}\bl( D  h(x)^T
c_s(X_s,x)k_s(x,\lambda)\ind_{\{\abs{x}\le R\}}(x)
-\frac{1}{2}\,\norm{D
  h(x)}_{c_s(X_s,x)}^2\,
\br)m_s(x)\, dx \,.
\end{multline}
By \eqref{eq:194}, \eqref{eq:181}, and \eqref{eq:195},
\begin{multline}
  \label{eq:196}
  \limsup_{r\to\infty}
  \sup_{Dh\in H_r}\int_{\R^l}\bl( D  h(x)^T
c_s(X_s,x)k_s(x,\lambda)
-\frac{1}{2}\,\norm{D
  h(x)}_{c_s(X_s,x)}^2\,
\br)\eta_r^2(x)\,m_s(x) dx\\
\le
\frac{1}{1-\kappa}\,
\sup_{Dh\in \mathbb{L}_0^{1,2}(\R^l,\R^l,c_s(x,X_s),m_s(x)\,dx)}\int_{\R^l}\bl( D  h(x)^T
c_s(X_s,x)k_s(x,\lambda)\ind_{\{\abs{x}\le R\}}(x)
\\-\frac{1}{2}\,\norm{D
  h(x)}_{c_s(X_s,x)}^2
\br)m_s(x)\, dx
+\frac{1}{2\kappa}\,\int_{\R^l}\norm{k_s(x,\lambda)}^2_{c_s(X_s,x)}
\ind_{\{\abs{x}> R\}}(x)
m_s(x)\, dx\,.
\end{multline}
Since, similarly to \eqref{eq:195},
\begin{multline*}
  \sup_{Dh\in \mathbb{L}_0^{1,2}(\R^l,\R^l,c_s(x,X_s),m_s(x)\,dx)}\int_{\R^l}\bl( D  h(x)^T
c_s(X_s,x)k_s(x,\lambda)\ind_{\{\abs{x}\le R\}}(x)
-\frac{1}{2}\,\norm{D
  h(x)}_{c_s(X_s,x)}^2
\br)m_s(x)\, dx\\
\le \frac{1}{1-\kappa}\,
\sup_{Dh\in \mathbb{L}_0^{1,2}(\R^l,\R^l,c_s(x,X_s),m_s(x)\,dx)}\int_{\R^l}\bl( D  h(x)^T
c_s(X_s,x)k_s(x,\lambda)
-\frac{1}{2}\,\norm{D
  h(x)}_{c_s(X_s,x)}^2
\br)m_s(x)\, dx\\
+\frac{1}{2\kappa}\,\int_{\R^l}\norm{k_s(x,\lambda)}^2_{c_s(X_s,x)}
\ind_{\{\abs{x}> R\}}(x)
m_s(x)\, dx\,,
\end{multline*}
one obtains \eqref{eq:180} by letting $R\to\infty$ and $\kappa\to0$ on
the righthand side of \eqref{eq:196} and accounting for
$k_s(\cdot,\lambda)$ being square integrable.
The convergence in 
\eqref{eq:125} is obtained by an application of Lebesgue's dominated
convergence theorem.

By Theorem \ref{le:vid}, if $\mathbf I'(X,\mu)<\infty$\,, then
\begin{multline}
    \label{eq:172}
  \int_0^t\sup_{h\in\mathbb C_0^1(\R^l)}\int_{\R^l}
\bl(Dh(x)^T(\frac{1}{2}\,\text{div}\,c_s(X_s,x)
- a_s(X_s,x))-\frac{1}{2}\,\norm{Dh(x)}^2_{c_s(X_s,x)}\,
\br)m_s(x)\,dx\,ds<\infty\,.
\end{multline}
Suppose \eqref{eq:193} holds and let $L$ denote an upper bound for
 the lefthand side of \eqref{eq:172}. 
By \eqref{eq:121} in the statement of
Lemma \ref{le:density} and Condition \ref{con:coeff}, 
$L$ is also an upper bound on the integrals on
the left of \eqref{eq:172} for $h\in\mathbb W^{1,q}_0(S)$\,, where 
$S$ is an open ball in $\R^l$\,,
$q\ge 2$\,, and $q>l$\,.
 On taking $h(x)=
\kappa(\abs{x}^2\vee r_1^2\wedge r_2^2-r_2^2)$\,, where $\kappa>0$ and
 $0<r_1<r_2$\,, we have that 
\begin{multline*}
  \int_0^t\int_{x\in\R^l:\,r_1\le\abs{x}\le r_2}\bl(\kappa x^T\,
\bl(\frac{1}{2}\,\text{div}\,c_s(X_s,x)-a_s(X_s,x)\br)
-\kappa^2\norm{c_s(X_s,x)}\abs{x}^2\br)m_s(x)\,dx\,ds\le L\,.
\end{multline*}
If $r_1$ is great enough, there exists $\delta>0$ such that
$x^Ta_s(X_s,x)\le -\delta \abs{x}^2$ if $\abs{x}\ge r_1$\,.
Therefore, for small enough $\kappa>0$\,, great enough $r_1$\,, and
all $r_2>r_1$\,,
\begin{equation*}
\frac{\kappa\delta}{2}
  \int_0^t\int_{x\in\R^l:\,r_1\le\abs{x}\le
    r_2}\abs{x}^2m_s(x)\,dx\,ds\le L\,.
\end{equation*}
The square integrability of $a_s(X_s,x)$ now follows by Condition
\ref{con:coeff}. 

Suppose now \eqref{eq:30} holds.
We take, for given $s$\,,
\begin{equation*}
  h(x)=-
(\hat a_s(X_s,x)\vee(-\delta)\wedge \delta)
\eta_r(x)\,.
\end{equation*}
Then
\begin{equation*}
  Dh(x)=-D\hat a_s(X_s,x)\,\ind_{\{\abs{ \hat a_s(X_s,x)}\le \delta\}}(x)\eta_r(x)
-(\hat a_s(X_s,x)\vee(-\delta)\wedge \delta)\,
\frac{1}{r}\,\frac{x}{\abs{x}}\,D\eta\bl(\frac{\abs{x}}{r}\br)\,,
\end{equation*} 
  $h\in \mathbb W^{1,q}_0(S)$\,, for large enough ball $S$\,, and,
 for $\kappa\in(0,1)$\,,
\begin{multline*}
  \int_{\R^l}\bl( D  h(x)^T \bl(
\frac{1}{2}\,\text{div}\,c_s(X_s,x)-
a_s(X_s,x)\br)
-\frac{1}{2}\,
\norm{D  h(x)}_{c_s(X_s,x)}^2\br)m_s(x)\,dx\\\ge
\frac{1-\kappa}{2}\,
\int_{\R^l}\norm{D_x \hat a_s(X_s,x)}^2_{c_s(X_s,x)}\ind_{\{\abs{\hat
    a_s(X_s,x)}\le \delta\}}(x)
\eta_r(x)\,m_s(x)\,dx\\
-\int_{\R^l}(\hat a_s(X_s,x)\vee(-\delta)\wedge\delta)
\frac{1}{r}\,D\eta\bl(\frac{\abs{x}}{r}\br)\,\frac{x^T}{\abs{x}}\,
c_s(X_s,x)D_x\hat a_s(X_s,x)\,m_s(x)\,dx\\
-\frac{1}{2r^2}\,\bl(1+\frac{1}{\kappa}\br)
\int_{\R^l}(\hat a_s(X_s,x)\vee(-\delta)\wedge\delta)^2
\norm{D\eta\bl(\frac{\abs{x}}{r}\br)}^2_{c_s(X_s,x)}m_s(x)\,dx
\,.
\end{multline*}
As $r\to\infty$\,, the integrals from $0$ to $t$ of 
the latter two integrals converge to zero 
(we recall that by Theorem \ref{le:vid},
$\int_0^t\int_{\R^l}\abs{x^Ta_s(X_s,x)}/\abs{x}m_s(x)\,dx\,ds<\infty$\,,
so 
$\int_0^t\int_{\R^l}\abs{x^Tc_s(X_s,x)
D_x\hat a_s(X_s,x)}/\abs{x}m_s(x)\,dx\,ds<\infty$)\,.
Therefore,
\begin{equation*}
\frac{1}{2}\,  \int_0^t \int_{\R^l}\norm{
a_s(X_s,x)-\frac{1}{2}\,\text{div}\,c_s(X_s,x)
}^2_{c_s(X_s,x)^{-1}}\,m_s(x)\,dx\,ds\le L\,,
\end{equation*}
which implies the square integrability of $a_s(X_s,\cdot)$ thanks to
Conditions \ref{con:coeff} and \ref{con:positive}.
\end{proof}
\begin{remark}\label{re:dini}
  The version of Dini's lemma invoked in the proof is as follows. 
Let $(K_\sigma,\pi_{\sigma\tau},\Sigma)$
 be an inverse system  of compact Hausdorff topological spaces over directed set
 $\Sigma$\,. In particular, for $\tau\le\sigma$\,,
 $\pi_{\sigma\tau}$  is a continuous map from $\,K_\sigma$  to $ K_\tau$\,. Let $K$
  represent the inverse limit of the $K_\sigma$ and let
  $\pi_\sigma:\,K\to K_\sigma$ be the canonical projections. We note
  that $K$ is a compact Hausdorff topological space.
Let real--valued functions $f_\sigma$ be defined and upper semicontinuous
on the $K_\sigma$\,. Suppose that the sequence $f_\sigma$ is monotonically
decreasing  in the sense that if $x_\sigma\in K_\sigma$\,, then
$f_\sigma(x_\sigma)\le 
f_\tau(\pi_{\sigma\tau}x_\sigma)$\,, for all $\tau\le \sigma$\,. 
Let $x\in K$\,. Since, for $\tau\le\sigma$\,, $f_\tau(\pi_\tau x)=
f_\tau(\pi_{\sigma\tau}\circ \pi_\sigma x)\ge f_\sigma(\pi_\sigma
x)$\,, the net $f_\sigma(\pi_\sigma x)$ is monotonically decreasing.
Let $f(x)$ represent the  limit
of $f_\sigma(\pi_\sigma x)$\,, as $\sigma\in\Sigma$\,. Then 
$\sup_{x_\sigma\in K_\sigma}f_\sigma(x_\sigma)
\to \sup_{x\in K}f(x)$\,. For a proof, one notes
that
 the sets $\pi_\sigma^{-1}\bl(\{x_\sigma\in K_\sigma:
\,f_\sigma(x_\sigma)\ge \sup_{x\in K}f(x)+\epsilon\}\br)$ form a
nested collection of closed subsets of $K$ with an empty intersection.
(There is also a sequential version of this result.) \end{remark}
\begin{remark}
  A ''down--to--earth'' version of the proof of \eqref{eq:195} goes as
  follows.
Let the supremum on the lefthand side be attained at $Dh_r$\,. 
All of the $Dh_r$ belong to
$K_{r_0}$\,, provided $r\ge r_0$\,. By the diagonal process, we may
assume that the $Dh_r$ converge weakly to some $D\hat h$ that belongs to all
$K_{r_0}$\,, so it belongs to $K$\,. We also have that
\begin{multline*}
  \int_{\R^l}\bl( D  h_r(x)^T
c_s(X_s,x)k_s(x,\lambda)\ind_{\{\abs{x}\le R\}}(x)
-\frac{1}{2}\,\norm{D
  h_r(x)}_{c_s(X_s,x)}^2\,
\eta_r^2(x)\br)m_s(x)\, dx\\
\le \int_{\R^l}\bl( D  h_r(x)^T
c_s(X_s,x)k_s(x,\lambda)\ind_{\{\abs{x}\le R\}}(x)
-\frac{1}{2}\,\norm{D
  h_r(x)}_{c_s(X_s,x)}^2\,
\eta_{r_0}^2(x)\br)m_s(x)\, dx\,.
\end{multline*}
By the upper semicontinuity in $K_{r_0}$\,,
\begin{multline*}
  \limsup_{r\to\infty}\int_{\R^l}\bl( D  h_r(x)^T
c_s(X_s,x)k_s(x,\lambda)\ind_{\{\abs{x}\le R\}}(x)
-\frac{1}{2}\,\norm{D
  h_r(x)}_{c_s(X_s,x)}^2\,
\eta_{r_0}^2(x)\br)m_s(x)\, dx\\
\le \int_{\R^l}\bl( D  \hat h(x)^T
c_s(X_s,x)k_s(x,\lambda)\ind_{\{\abs{x}\le R\}}(x)
-\frac{1}{2}\,\norm{D
  \hat h(x)}_{c_s(X_s,x)}^2\,
\eta_{r_0}^2(x)\br)m_s(x)\, dx\,.
\end{multline*}
Letting $r_0\to\infty$ yields
\begin{multline*}
  \limsup_{r\to\infty}\int_{\R^l}\bl( D  h_r(x)^T
c_s(X_s,x)k_s(x,\lambda)\ind_{\{\abs{x}\le R\}}(x)
-\frac{1}{2}\,\norm{D
  h_r(x)}_{c_s(X_s,x)}^2\,
\eta_{r}^2(x)\br)m_s(x)\, dx\\
\le \int_{\R^l}\bl( D  \hat h(x)^T
c_s(X_s,x)k_s(x,\lambda)\ind_{\{\abs{x}\le R\}}(x)
-\frac{1}{2}\,\norm{D
  \hat h(x)}_{c_s(X_s,x)}^2\,
\br)m_s(x)\, dx\,.
\end{multline*}

\end{remark}
\begin{proof}[Proof of Lemma \ref{le:exp_tight}]
    By Ascoli's theorem,  set $\Gamma\subset
  \bC(\R_+,\mathbb{M}(\R^l))$ is relatively compact if and only if for each
  $t\in\R_+$ there exists  compact $K_t\subset
\mathbb{M}(\R^l)$ such that $\tilde\nu_t\in
  K_t$ for all $\tilde\nu=(\tilde\nu_s\,,s\in\R_+)\in\Gamma$ and 
  \begin{equation}
    \label{eq:147}
\lim_{\delta\to0}\sup_{\tilde\nu\in \Gamma}
\sup_{s_1,s_2\in[0,t]:\,\abs{s_1-s_2}\le\delta}d(\tilde\nu_{s_1},\tilde\nu_{s_2})=0\,. 
  \end{equation}
Suppose, the net $\nu_\epsilon$ is  sequentially exponentially tight
 for rate $1/\epsilon$\,.
Given arbitrary $t>0$ and $\eta>0$\,, 
let $\{\epsilon_i,\,i\in\N\}$ be a sequence converging to zero as
$i\to\infty$ such that 
\begin{align*}
\lim_{i\to\infty}
\mathbf{P}_{\epsilon_i}\bl(\nu_{{\epsilon_i},t}(x\in\R^l:\,\abs{x}>i)>\eta\br)^{{\epsilon_i}}=
  \lim_{N\to\infty}\limsup_{\epsilon\to0}
\mathbf{P}_\epsilon\bl(\nu_{\epsilon,t}(x\in\R^l:\,\abs{x}>N)>\eta\br)^{\epsilon}
\intertext{and} 
\lim_{i\to\infty}
\sup_{s_1\in[0,t]}
\mathbf{P}_{\epsilon_i}\bl(\sup_{s_2\in [s_1,s_1+1/i]}
d(\nu_{{\epsilon_i},s_1},\nu_{{\epsilon_i},s_2})
>\eta\br)^{{\epsilon_i}}=
\lim_{\delta\to0}\limsup_{\epsilon\to0}
\sup_{s_1\in[0,t]}
\mathbf{P}_\epsilon\bl(\sup_{s_2\in [s_1,s_1+\delta]}
d(\nu_{\epsilon,s_1},\nu_{\epsilon,s_2})
>\eta\br)^{\epsilon}\,.
\end{align*}
 Since 
the sequence $\{\nu_{\epsilon_i},\,i\in\N\}$ is exponentially
tight for rate $1/{\epsilon_i}$\,,  given
$\kappa>0$, there exists compact $\Gamma$ such that
$\limsup_{i\to\infty}
\mathbf{P}_{\epsilon_i}(\nu_{\epsilon_i}\notin\Gamma)^{\epsilon_i}<\kappa$\,. Hence,
for the associated $K_t$, $\limsup_{i\to\infty}
\mathbf{P}_{\epsilon_i}(\nu_{\epsilon_i,t}\notin K_t)^{\epsilon_i}<\kappa$\,.
By compactness of $K_t$,  for arbitrary $\eta>0$ there exists 
$i$ such that
$\sup_{\tilde{\nu}\in K_t}\tilde\nu\bl(x:\,\abs{x}>i\br)\le\eta$, so 
$\limsup_{i\to\infty}\mathbf{P}_{\epsilon_i}\bl(\nu_{\epsilon_i,t}
(x:\,\abs{x}>i)>\eta\br)^{\epsilon_i}
<\kappa\,.$ Similarly, choosing $i$ in such a way that 
$\sup_{\nu\in\Gamma}\sup_{s_1,s_2\in[0,t]:\,\abs{s_1-s_2}<1/i}d(\nu_{s_1},\nu_{s_2})\le
\eta$
we obtain that 
$\limsup_{i\to\infty}
\mathbf{P}_{\epsilon_i}\bl(\sup_{s_1,s_2\in[0,t]:\,\abs{s_1-s_2}<1/i}
d(\nu_{\epsilon_i,s_1},\nu_{\epsilon_i,s_2})
>\eta\br)^{\epsilon_i}<\kappa$\,, which implies that
$\limsup_{i\to\infty}
\sup_{s_1\in[0,t]}
\mathbf{P}_\epsilon\bl(\sup_{s_2\in [s_1,s_1+1/i]}
d(\nu_{\epsilon_i,s_1},\nu_{\epsilon_i,s_2})
>\eta\br)^{\epsilon_i}<\kappa$\,.

Suppose now that the convergences in the hypotheses hold. We first
show that 
\begin{equation*}
\lim_{\delta\to0}\limsup_{\epsilon\to0}
\mathbf{P}_\epsilon\bl(\sup_{s_1,s_2\in[0,t]:\,
\abs{s_1-s_2}\le\delta}
d(\nu_{\epsilon,s_1},\nu_{\epsilon,s_2})
>\eta\br)^{\epsilon}=0\,.
\end{equation*}
It follows by the bound (cf. Billingsley \cite[Chapter 2]{Bil68})
\begin{multline*}
  \mathbf{P}_\epsilon\bl(\sup_{s_1,s_2\in[0,t]:\,
\abs{s_1-s_2}\le\delta}
d(\nu_{\epsilon,s_1},\nu_{\epsilon,s_2})
>\eta\br)\le
\mathbf{P}_\epsilon\Bl(\bigcup_{i=0}^{\lfloor t/\delta\rfloor}
\{3\sup_{s\in[i\delta,(i+1)\delta]}
d(\nu_{\epsilon,i\delta},\nu_{\epsilon,s})>\eta\}
\Br)\\\le
\sum_{i=0}^{\lfloor t/\delta\rfloor}
\mathbf{P}_\epsilon\bl(3\sup_{s\in[i\delta,(i+1)\delta]}
d(\nu_{\epsilon,i\delta},\nu_{\epsilon,s})>\eta\br) \le
\bl(\frac{t}{\delta}+1\br)\sup_{s_1\in[0,t]}\mathbf{P}_\epsilon\bl(
\sup_{s_2\in[s_1,s_1+\delta]}
d(\nu_{\epsilon,s_1},\nu_{\epsilon,s_2})>\frac{\eta}{3}\br).
\end{multline*}
Let $\epsilon_i\to0$ as $i\to\infty$\,. 
Given $\kappa>0$\,, $t\in\R_+$\,, and $j\in\N$\,,
one can choose $\delta_{j,t}>0$  such that $\delta_{j,t}\to0$ as
$j\to\infty$ and
\begin{equation*}
\limsup_{i\to\infty} \mathbf{P}_{\epsilon_i}\bl(\sup_{s_1,s_2\in[0,t]:\,
\abs{s_1-s_2}\le\delta_{j,t}}
d(\nu_{\epsilon_i,s_1},\nu_{\epsilon_i,s_2})
>\frac{1}{j}\br)^{{\epsilon_i}}<\frac{\kappa}{2^j2^t}\,,
\end{equation*}
 for all $i\in\N$ and all $j\in\N$\,.
Since the space $\mathbb{M}(\R^l)$ is complete and separable, each of
the measures $\nu_\epsilon$ has a tight distribution, so we can choose
$\tilde{\delta}_{j,t}>0$ such that
$\tilde{\delta}_{j,t}\to0$ as $j\to\infty$ and  the inequality
\begin{equation}
  \label{eq:149}
    \mathbf{P}_{\epsilon_i}\bl(\sup_{s_1,s_2\in[0,t]:\,
\abs{s_1-s_2}\le\tilde\delta_{j,t}}
d(\nu_{\epsilon_i,s_1},\nu_{\epsilon_i,s_2})
>\frac{1}{j}\br)^{{\epsilon_i}}<\frac{\kappa}{2^j2^t}
\end{equation}
holds for all $i$\,.
Similarly, one can choose  $N_{j,t}\to\infty$ as $j\to\infty$
 such that the inequality
\begin{equation}
  \label{eq:148}
  \mathbf{P}_{\epsilon_i}\bl(
\max_{L=1,2,\ldots,\lfloor t/\tilde\delta_{j,t}\rfloor}
\nu_{{\epsilon_i},L\tilde \delta_{j,t}}(x\in\R^l:\,\abs{x}>
N_{j,t})>\frac{1}{j}\br)^{{\epsilon_i}}<\frac{\kappa}{2^j2^t}
\end{equation}
holds for all $i\in\N$\,.

Let 
\begin{multline*}
\Gamma=\bigcap_{M\in\N}\bigcap_{j\in\N}
\Bl(\{\tilde\nu\in \bC(\R_+,\mathbb{M}(\R^l)):\,
\sup_{s_1,s_2\in[0,M]:\,
\abs{s_1-s_2}\le\tilde\delta_{j,M}}
d(\tilde\nu_{s_1},\tilde\nu_{s_2})
\le\frac{1}{j}\}\\\bigcap\bigcap_{L=1}^{\lfloor M/\tilde\delta_{j,M}\rfloor}
\{\tilde\nu\in \bC(\R_+,\mathbb{M}(\R^l)):\,
\tilde\nu_{L\tilde\delta_{j,M}}
(\abs{x}> N_{j,M})\le\frac{1}{j}\}\Br)\,.
\end{multline*}
According to the definition, \eqref{eq:147} holds for all $t\in\R_+$\,.
In addition, if $M\ge t$, then, for $\tilde\nu\in \Gamma$, 
using the definition of $d(\cdot,\cdot)$\,,
\begin{equation*}
  \tilde\nu_t
(\abs{x}>
 N_{j,M}+1)\le
\max_{L=1,2,\ldots,\lfloor M/\tilde\delta_{j,M}\rfloor}  \tilde\nu_{L\tilde\delta_{j,M}}
(\abs{x}>
 N_{j,M})+\frac{1}{j}\le \frac{2}{j}\,.
\end{equation*}
Hence,
\begin{equation*}
  \lim_{j\to\infty}\sup_{\tilde\nu\in \Gamma}
  \tilde\nu_t
(\abs{x}>
 N_{j,M}+1)=0\,,
\end{equation*}
so by Prohorov's theorem, the $\tilde\nu_t$ belong to a compact set.
It follows that $\Gamma$ is relatively 
compact in $\bC(\R_+,\mathbb{M}(\R^l))$\,.

In addition, by  \eqref{eq:149} and \eqref{eq:148}, assuming $\epsilon_i<1$\,,
\begin{multline*}
  \mathbf{P}_{\epsilon_i}(\nu_{\epsilon_i}\notin
  \Gamma)^{\epsilon_i}\le
\sum_{M=1}^\infty\sum_{j=1}^\infty\Bl(
    \mathbf{P}_{\epsilon_i}\bl(\sup_{s_1,s_2\in[0,M]:\,
\abs{s_1-s_2}\le\tilde\delta_{j,M}}
d(\nu_{\epsilon_i,s_1},\nu_{\epsilon_i,s_2})
>\frac{1}{j}\br)^{{\epsilon_i}}\\+
  \mathbf{P}_{\epsilon_i}\bl(
\max_{L=1,2,\ldots,\lfloor M/\tilde\delta_{j,M}\rfloor}
\nu_{{\epsilon_i},L\tilde\delta_{j,M}}(x\in\R^l:\,\abs{x}>
N_{j,M})>\frac{1}{j}\br)^{{\epsilon_i}}\Br)\le
2\kappa\,.
\end{multline*}
Thus, the sequence $\{\nu_{\epsilon_i},\,i\in\N\}$  is exponentially tight for rate
$1/\epsilon_i$ as $i\to\infty$\,. This completes the proof of the
first part of the lemma. The second part is proved similarly.
\end{proof}
\begin{proof}[Proof of Lemma \ref{le:spaces}]
  
  We follow the approach of R\"ockner and Zhang
\cite[pp.204,205]{RocZha92}, \cite{RocZha94}, see also Bogachev, Krylov, and R\"ockner
\cite{BogKryRoc96}.
Let $\psi$ be a bounded function from $\mathbb W^{1,2}(O,m(x)\,dx)$\,.
Let, for $j\in\N$\,, $b_j$ represent an $\R_+$-valued
$\mathbb C_0^\infty$-function on $\R$ such that, for $y\in\R$\,,
$\ind_{[-j,j]}(y)\le b_j(y)\le
\ind_{(-j-1,j+1)}(y)$ and  $\abs{Db_j(y)}\le
2$\,, and let
 $\phi_j(x)=b_j(\ln m(x))$
 if $ m(x)>0$ and $\phi_j(x)=0$ if $m(x)=0$\,, where $x\in\R^d$\,.
It is noteworthy that $\phi_j\in\mathbb{W}^{1,2}(O)$\,.
 We have that
\begin{multline*}
  \int_{O}\abs{\psi(x)-\phi_j(x)\psi(x)}^2\,m(x)\,dx\le
  \int_{O}\abs{\psi(x)}^2\ind_{\{\abs{\ln m(x)}\ge
    j\}}(x)\,m(x)\,dx\to 0\text{ as }j\to\infty\,.
\end{multline*}
In addition,
\begin{multline*}
  \int_{O}\abs{D\psi(x)-D(\phi_j(x)\psi(x))}^2\,m(x)\,dx\le
2  \int_{O}\abs{D\psi(x)}^2
\ind_{\{\abs{\ln m(x)\ge
    j}\}}(x)\,m(x)\,dx\\
+8 \int_{O}\abs{\psi(x)}^2\ind_{\{\abs{\ln m(x)}\ge
    j\}}(x)\,\frac{\abs{Dm(x)}^2}{m(x)}
\,dx
\to 0\text{ as }j\to\infty\,.
\end{multline*}
Thus, $\phi_j\psi\to \psi$ in
$\mathbb W^{1,2}(O,m(x)\,dx)$ as $j\to\infty$\,.

We have that
\begin{equation*}
  \int_{O}\abs{\phi_j(x)\psi(x)}^2\,dx=
\int_{O}\abs{\phi_j(x)\psi(x)}^2\ind_{\{m(x)> e^{-j-1}\}}\,dx
\le e^{j+1}\int_{O}\abs{\psi(x)}^2m(x)\,dx\,.
\end{equation*}
Similarly,
\begin{equation*}
\int_{O}\abs{D(\phi_j(x)\psi(x))}^2\,dx\le
8e^{j+1}\int_{O}
\frac{\abs{Dm(x)}^2}{m(x)}\,\abs{\psi(x)}^2\,dx+
2e^{j+1}\int_{O}\abs{D\psi(x)}^2m(x)\,dx\,.
\end{equation*}
Thus, $\phi_j\psi\in \mathbb W^{1,2}(O)$\,, so there exists 
sequence $\psi_i$ of $\mathbb C^\infty(O)$-functions with bounded
$\abs{\psi_i(x)}$ and $\abs{D\psi_i(x)}$ that converges to 
$\phi_j\psi$ in $ \mathbb W^{1,2}(O)$ as $i\to\infty$\,.
The following inequalities show that
 $\phi_{j+1}\psi_i\to \phi_{j}\psi$ in
$\mathbb W^{1,2}(O,m(x)\,dx)$ as $i\to\infty$\,: since
$\phi_j=\phi_j\phi_{j+1}$\,, 
\begin{multline*}
  \int_{O}\bl(\abs{\phi_j(x)\psi(x)-\phi_{j+1}(x)\psi_i(x)}^2+
\abs{D(\phi_j(x)\psi(x))-D(\phi_{j+1}(x)\psi_i(x))}^2\br)\,m(x)\,dx\\\le
  \int_{O}\bl(9\abs{\phi_j(x)\psi(x)-\psi_i(x)}^2
+2\abs{D(\phi_j(x)\psi(x))-D\psi_i(x)}^2\br)
\,\ind_{\{\abs{\ln m(x)}\le    j+2\}}(x)\,m(x)\,dx\\\le
e^{j+2}\int_{O}\bl(9\abs{\phi_j(x)\psi(x)-\psi_i(x)}^2+2
\abs{D(\phi_j(x)\psi(x))-D\psi_i(x)}^2\br)
\,dx\to0\;\text{ as }i\to\infty\,.
\end{multline*}

It  remains to check that $\phi_{j+1}\psi_i\in
\mathbb H^{1,2}(O,m(x)\,dx)$\,, which
follows provided $\phi_{j+1}\in \mathbb H^{1,2}(O,m(x)\,dx)$\,.
If $O$ is an open ball in $\R^d$\,, we
let $\eta(x)$ represent a $\mathbb{C}_0^\infty(\R^d)$-function such that 
$\eta(x)\in[0,1]$ and $\eta(x)=1$ on $O$ and let 
$\tilde{m}(x)=m(x)\eta(x)+(1-\eta(x))/(1+\abs{x}^{d+1})$ and 
$\tilde\phi_{j+1}(x)=b_j(\ln\tilde m(x))$\,. 
Then 
 $\tilde m(x)=m(x)$ and $\tilde\phi_{j+1}(x)=\phi_{j+1}(x)$
 for $x\in O$\,. If $O=\R^d$\,, we let 
$\tilde m(x)=m(x)$ and $\tilde\phi_{j+1}(x)=\phi_{j+1}(x)$ for all $x\in\R^d$\,.
It suffices to prove that
$\tilde\phi_{j+1}\in \mathbb H^{1,2}(\R^d,\tilde m(x)\,dx)$\,.
 We have that 
 $\sqrt{\tilde m}\in \mathbb W^{1,2}(\R^d)$ for both cases where $O$
 is an open ball   and  $O=\R^d$\,. By R\"ockner and Zhang
 \cite{RocZha94}
(see pp.461--463),  $\tilde\phi_{j+1}$ belongs to the
domain  of the closure of the Dirichlet form
 $\mathcal{E}^0$  
on $\mathbb{L}^2(\R^d,\,\tilde{m}(x)\,dx)$ defined by 
$\mathcal{E}^0(f,g)=\int_{\R^d}Df(x)^T Dg(x)\,\tilde{m}(x)\,dx$\,, where
$f,g\in \mathbb{C}^\infty_0(\R^d)$\,. 
Thus, there exist $q_i\in \mathbb{C}^\infty_0(\R^d)$ such that 
$\int_{\R^d}\abs{Dq_i(x)-D\tilde\phi_{j+1}(x)}^2\,\tilde m(x)\,dx
+\int_{\R^d}\abs{q_i(x)-\tilde\phi_{j+1}(x)}^2\,\tilde m(x)\,dx\to0$ as
$i\to\infty$\, which implies the needed property.


If  $\psi\in \mathbb{H}^{1,2}(\R^d)$ is not bounded, then it is the limit of the
functions $\psi\wedge i\vee(-i)$  in $\mathbb{W}^{1,2}(\R^d)$
as $i\to\infty$\,.

\end{proof}
\begin{proof}[Proof of Lemma \ref{le:lok_kwadr}]

We follow the approach of
 Bogachev, Krylov, and
R\"ockner \cite{BogKryRoc96}, see also Metafune, Pallara, and Rhandi
 \cite{MetPalRha05}.
  Let 
\begin{equation*}
  \rho(x)=
  \begin{cases}
\displaystyle\bl(
\int_{y\in\R^d:\,\abs{y}<1}e^{1/(\abs{y}^2-1)}\,dy\br)^{-1}\,e^{1/(\abs{x}^2-1)},&\text{if }\abs{x}<1\\
0,&\text{if }\abs{x}\ge1,
      \end{cases}
\end{equation*}
let
$  \rho_\epsilon(x)=(1/\epsilon^d)\,\rho\bl(x/\epsilon\br),$
and let 
$     p_\epsilon(x)=\int_{\R^d}\rho_\epsilon(x-y)\,p(y)\,dy$\,,
where  $\epsilon>0$ and
$p\in \mathbb{C}_0^\infty(\R^d)$\,.
We note that 
$\int_{\R^d}\rho_\epsilon(x)\,dx=1$\,,
 $p_\epsilon\in \mathbb{C}_0^\infty(\R^d)$\,, and 
$\int_{\R^d}c(x) \rho_\epsilon(x-\cdot)m(x)\,dx\in
\mathbb{C}_0^\infty(\R^d,
\R^{d\times d})$\,.
One can see that, similarly to
 the calculation on p.227 of Bogachev, Krylov, and R\"ockner
\cite{BogKryRoc96}, 
\begin{multline*}
\int_{\R^d}\text{tr}\,\bl(c(x)D ^2p_\epsilon(x)\br)m(x)\,dx+
\int_{\R^d}b(x)^T D p_\epsilon(x)m(x)\,dx=
-\int_{\R^d} D p(x)^T c(x)\int_{\R^d}
D \rho_\epsilon(x-y)m(y)\,dy\,dx\\
-\int_{\R^d} D p(x)^T\int_{\R^d}(c(y)-c(x)) D \rho_\epsilon(x-y)m(y)\,dy\,dx+
\int_{\R^d}D p(x)^T  \int_{\R^d}b(y)
\rho_\epsilon(x-y)m(y)\,dy\,dx\,.
\end{multline*}
Since the lefthand side equals zero, introducing
\begin{equation}
  \label{eq:145}
  m_\epsilon(x)=\int_{\R^d}\rho_\epsilon(x-y)\,m(y)\,dy\,,
  \end{equation}
we have that
\begin{multline}
  \label{eq:57}
    \int_{\R^d} D p(x)^T c(x)
D m_\epsilon(x)\,dx=
-\int_{\R^d} D p(x)^T\int_{\R^d}(c(y)-c(x)) D \rho_\epsilon(x-y)m(y)\,dy\,dx\\+
\int_{\R^d}D p(x)^T  \int_{\R^d}b(y)
\rho_\epsilon(x-y)m(y)\,dy\,dx\,.
\end{multline}
Let $\eta\in \mathbb{C}_0^\infty(\R^d)$ be such that
$\abs{\eta(x)}\le 1$\,,
let $\kappa>0$\,, and let
$  p(x)=\eta^2(x)\ln\bl(m_\epsilon(x)+\kappa\br)$\,.
By  (\ref{eq:57}),
\begin{multline*}
  \int_{\R^d}\eta^2(x)\frac{D m_\epsilon(x)^T
    c(x)\,D m_\epsilon(x)}{m_\epsilon(x)+\kappa}\,dx=
-\int_{\R^d}\ln(m_\epsilon(x)+\kappa)\,2\eta(x)\,D \eta(x)^T
c(x)\,D m_\epsilon(x)\,dx\\
-\int_{\R^d}
\eta^2(x)\frac{D m_\epsilon(x)^T}{m_\epsilon(x)+\kappa}\int_{\R^d}(c(y)-c(x))D \rho_\epsilon(x-y)m(y)\,dy\,dx\\
-\int_{\R^d}\ln(m_\epsilon(x)+\kappa)\,2\eta(x)\,D \eta(x)^T
\int_{\R^d}(c(y)-c(x))D \rho_\epsilon(x-y)m(y)\,dy
\,dx\\
+\int_{\R^d}
\eta^2(x)\frac{D m_\epsilon(x)^T}{m_\epsilon(x)+\kappa}\int_{\R^d}
b(y)\rho_\epsilon(x-y)m(y)\,dy\,dx\\
+\int_{\R^d}\ln(m_\epsilon(x)+\kappa)\,2\eta(x)\,D \eta(x)^T\int_{\R^d}
b(y)\rho_\epsilon(x-y)m(y)\,dy\,dx\,.
\end{multline*}
The next step is to provide bounds on the terms on the righthand side.
By the Cauchy inequality, for arbitrary $\delta>0$,
  \begin{align*}
    \begin{split}
 &  \abs{\int_{\R^d}\ln(m_\epsilon(x)+\kappa)\,2\eta(x)\,D \eta(x)^T
c(x)\,D m_\epsilon(x)\,dx}\le    \delta
\int_{\R^d}\eta^2(x)\,\frac{D m_\epsilon(x)^T
c(x)\,D m_\epsilon(x)}{m_\epsilon(x)+\kappa}\,dx
\\+&\frac{1}{\delta}\,\int_{\R^d}\bl(\ln(m_\epsilon(x)+\kappa))^2(m_\epsilon(x)+\kappa)\,D \eta(x)^T c(x)D \eta(x)\,dx,
    \end{split}\\
    \begin{split}
&\abs{      \int_{\R^d}
\eta^2(x)\frac{D m_\epsilon(x)^T}{m_\epsilon(x)+\kappa}\int_{\R^d}(c(y)-c(x))D \rho_\epsilon(x-y)m(y)\,dy\,dx}\\\le&
\delta\int_{\R^d}
\frac{\eta^2(x)}{m_\epsilon(x)+\kappa}\,\abs{D m_\epsilon(x)}^2\,dx+
\frac{1}{4\delta}\,
\int_{\R^d}\frac{\eta^2(x)}{m_\epsilon(x)+\kappa}\,\abs{\int_{\R^d}(c(y)-c(x))D \rho_\epsilon(x-y)m(y)\,dy}^2\,dx,
    \end{split}
\\    \begin{split}    
    &\abs{  \int_{\R^d}\ln(m_\epsilon(x)+\kappa)\,2\eta(x)\,D \eta(x)
^T\int_{\R^d}(c(y)-c(x))D \rho_\epsilon(x-y)m(y)\,dy
\,dx}\\\le&
\int_{\R^d}\bl(\ln(m_\epsilon(x)+\kappa))^2(m_\epsilon(x)+\kappa)\,\abs{D \eta(x)}^2\,dx+
\int_{\R^d}\frac{\eta^2(x)}{m_\epsilon(x)+\kappa}\,\abs{\int_{\R^d}(c(y)-c(x))D \rho_\epsilon(x-y)m(y)\,dy}^2\,dx,
    \end{split}
\\    \begin{split}
      &\abs{\int_{\R^d}
\eta^2(x)\frac{D m_\epsilon(x)^T}{m_\epsilon(x)+\kappa}\int_{\R^d}
b(y)\rho_\epsilon(x-y)m(y)\,dy\,dx}\\\le&
\delta
\int_{\R^d}\eta^2(x)\,\frac{\abs{D
    m_\epsilon(x)}^2}{m_\epsilon(x)+\kappa}\,dx
+
\frac{1}{4\delta}\,\int_{\R^d}\frac{\eta^2(x)}{m_\epsilon(x)+\kappa}\,
\abs{\int_{\R^d}
b(y)\rho_\epsilon(x-y)m(y)\,dy}^2\,dx,    \end{split}\\
\begin{split}
&  \int_{\R^d}\ln(m_\epsilon(x)+\kappa)\,2\eta(x)\,D \eta(x)^T\int_{\R^d}
b(y)\rho_\epsilon(x-y)m(y)\,dy\,dx\\\le&
\int_{\R^d}\bl(\ln(m_\epsilon(x)+\kappa))^2(m_\epsilon(x)+\kappa)\,\abs{D \eta(x)}^2\,dx+
\int_{\R^d}\frac{\eta^2(x)}{m_\epsilon(x)+\kappa}\,
\abs{\int_{\R^d}
b(y)\rho_\epsilon(x-y)m(y)\,dy}^2\,dx\,.
\end{split}
  \end{align*}
Therefore, assuming $\vartheta>0$ is such that 
$\vartheta\le (y/\abs{y})^T c(x)(y/\abs{y})\le
\vartheta^{-1}$ for all $x,y\in\R^d$\,,
\begin{multline*}
(\vartheta-\delta\vartheta^{-1}-2\delta)    \int_{\R^d}\eta^2(x)\frac{\abs{D m_\epsilon(x)}^2}{m_\epsilon(x)+\kappa}\,dx\le
\bl(\frac{\vartheta^{-1}}{\delta}+2\br)\,\int_{\R^d}\bl(\ln(m_\epsilon(x)+\kappa))^2(m_\epsilon(x)+\kappa)\,\abs{D \eta(x)}^2\,dx\\
+\bl(\frac{1}{4\delta}+1\br)\,
\int_{\R^d}\frac{\eta^2(x)}{m_\epsilon(x)+\kappa}\,\abs{\int_{\R^d}(c(y)-c(x))D \rho_\epsilon(x-y)m(y)\,dy}^2\,dx
\\+\bl(\frac{1}{4\delta}+1\br)\,\int_{\R^d}\frac{\eta^2(x)}{m_\epsilon(x)+\kappa}\,
\abs{\int_{\R^d}
b(y)\rho_\epsilon(x-y)m(y)\,dy}^2\,dx\,.
\end{multline*}
By the Cauchy-Schwarz inequality and \eqref{eq:145},
\begin{multline}
  \label{eq:100}
(\vartheta-\delta\vartheta^{-1}-2\delta)    \int_{\R^d}\eta^2(x)\frac{\abs{D m_\epsilon(x)}^2}{m_\epsilon(x)+\kappa}\,dx\le
\bl(\frac{\vartheta^{-1}}{\delta}+2\br)\,\int_{\R^d}\bl(\ln(m_\epsilon(x)+\kappa))^2(m_\epsilon(x)+\kappa)\,\abs{D \eta(x)}^2\,dx\\
+\bl(\frac{1}{4\delta}+1\br)\,
\int_{\R^d}\eta^2(x)\,\int_{\abs{x-y}<\epsilon}\frac{\abs{(c(y)-c(x))D \rho_\epsilon(x-y)}^2}{\rho_\epsilon(x-y)}m(y)\,dy\,dx
\\+\bl(\frac{1}{4\delta}+1\br)\,\int_{\R^d}\eta^2(x)\int_{\R^d}\abs{ b(y)}^2\rho_\epsilon(x-y)m(y)\,dy\,dx\,.
\end{multline}
Let $N>0$ be such that $\eta(x)=0$ if $\abs{x}\ge N$\,.
By Jensen's inequality,
\begin{multline}
  \label{eq:101}
  \int_{\R^d}\bl(\ln(m_\epsilon(x)+\kappa))^2(m_\epsilon(x)+\kappa)\,\abs{D \eta(x)}^2\,dx
\\\le \sup_{\abs{x}\le N}\abs{D\eta(x)}^2
\int_{\abs{x}\le
  N}\int_{\R^d}\bl(\ln(m(y)+\kappa))^2(m(y)+\kappa)\rho_\epsilon(x-y)\,dy\,dx
\\\le 
\sup_{\abs{x}\le N}\abs{D\eta(x)}^2\int_{\abs{y}\le
  N+\epsilon}\bl(\ln(m(y)+\kappa))^2(m(y)+\kappa)\,dy\,.
\end{multline}

Since $c(x)$ is locally Lipschitz continuous, there exists $L>0$ which
only depends on $N$ such
that
$\norm{c(y)-c(x)}\le L\abs{x-y}$ for all $\epsilon>0$ small enough
 provided $\abs{x-y}\le \epsilon$ and
$\abs{x}\le N$\,. It follows  that
\begin{multline}
    \int_{\R^d}\eta^2(x)\,\int_{\abs{x-y}<\epsilon}\frac{\abs{(c(y)-c(x))D \rho_\epsilon(x-y)}^2}{\rho_\epsilon(x-y)}m(y)\,dy\,dx\\\le
  \int_{\R^d}\int_{\abs{x-y}<\epsilon}\eta^2(x)\,\frac{\norm{c(y)-c(x)}^2}{\epsilon^2}
\frac{4\abs{(y-x)/\epsilon}^2}{(\abs{(y-x)/\epsilon}^2-1)^4}\, \rho_\epsilon(y-x)
\,dx\, m(y)\,dy
  \label{eq:102}
\\\le 4L^2   \int_{\abs{x}<1}
\frac{\abs{x}^4}{(\abs{x}^2-1)^4}\, \rho(x)
\,dx\,.
\end{multline}
Also,
\begin{multline}
  \label{eq:103}
  \int_{\R^d}\eta^2(x)\int_{\R^d}\abs{
    b(y)}^2\rho_\epsilon(x-y)m(y)\,dy\,dx
\le
\int_{\abs{x}\le N}\int_{\abs{y}\le N+\epsilon}\abs{
  b(y)}^2\rho_\epsilon(x-y)m(y)\,dy\,dx\\\le
\int_{\abs{y}\le N+\epsilon}\abs{
  b(y)}^2m(y)\,dy\,.
\end{multline}
Combining (\ref{eq:100}), (\ref{eq:101}), (\ref{eq:102}), and
(\ref{eq:103}) and assuming that  $\delta>0$ is small enough yields
\begin{multline}
  \label{eq:68}
  \limsup_{\epsilon\to0}     \int_{\R^d}
     \eta^2(x)\frac{\abs{D m_\epsilon(x)}^2}{m_\epsilon(x)}\,dx\le
\frac{1}{\vartheta-\delta\vartheta^{-1}-2\delta }\bl(
\bl(\frac{\vartheta^{-1}}{\delta}+2\br)
\sup_{\abs{x}\le N}\abs{D\eta(x)}^2\int_{\abs{x}\le N}(\ln m(x))^2m(x)\,dx\\+
\bl(\frac{1}{4\delta}+1\br)\,4L^2   \int_{\abs{x}<1}
\frac{\abs{x}^4}{(\abs{x}^2-1)^4}\, \rho(x)
\,dx
+\bl(\frac{1}{4\delta}+1\br)\,\int_{\abs{x}\le N}\abs{ b(x)}^2m(x)\,dx\br)\,.
\end{multline}
Thus, the net $\sqrt{m_\epsilon}$ is weakly relatively compact in
$\mathbb{W}^{1,2}(S)$\,, so it
 is strongly relatively compact in $\mathbb{L}^2(S)$
for any open ball $S\subset \R^d$\,, see the Rellich-Kondrashov
theorem on p.168 of Adams and Fournier \cite{MR56:9247}.  Hence,
the net $m_\epsilon$ is strongly relatively compact in
$\mathbb{L}^1(S)$\,. Furthermore, 
any limit point of $\sqrt{m_\epsilon}$ in the
weak topology of $\mathbb{W}^{1,2}(S)$ is a limit point of
$ m_\epsilon$ in  $\mathbb{L}^1(S)$\,. On the other
hand, by \eqref{eq:145},
the net $m_\epsilon$ converges to $m$ in $\mathbb{L}^1(\R^d)$\,.
It follows that $\sqrt{m}$ is the unique weak limit of
$\sqrt{m_\epsilon}$ in $\mathbb{W}^{1,2}(S)$\,. 
Thus, $\sqrt{m}\in \mathbb{W}^{1,2}_{\text{loc}}(\R^d)$\,.
Letting $\epsilon\to0$ in \eqref{eq:68} yields \eqref{eq:108}.

\end{proof}
\bibliographystyle{plain}
\def\cprime{$'$} \def\cprime{$'$} \def\cprime{$'$} \def\cprime{$'$}
  \def\cprime{$'$} \def\polhk#1{\setbox0=\hbox{#1}{\ooalign{\hidewidth
  \lower1.5ex\hbox{`}\hidewidth\crcr\unhbox0}}} \def\cprime{$'$}
  \def\cprime{$'$} \def\cprime{$'$} \def\cprime{$'$} \def\cprime{$'$}

\end{document}